\newtheorem{thm}{Theorem}[section]
\begin{document}
\title[Morita invariance of unbounded bivariant $K$-theory]{Morita invariance of unbounded bivariant $K$-theory}
\author{Jens Kaad}
\address{Department of Mathematics and Computer Science, The University of Southern Denmark, Campusvej 55, DK-5230 Odense M, Denmark}
\email{jenskaad@hotmail.com}

\thanks{The author is partially supported by the Villum Foundation (grant 7423).}

\subjclass[2010]{19K35, 58B34; 46L07, 16D90}
\keywords{Morita equivalence, operator $*$-algebra, operator $*$-correspondence, unbounded Kasparov module, unbounded Kasparov product, unbounded bivariant $K$-theory}

%
%
%


\begin{abstract}
We introduce a notion of Morita equivalence for non-selfadjoint operator algebras equipped with a completely isometric involution (operator $*$-algebras). We then show that the unbounded Kasparov product by a Morita equivalence bimodule induces an isomorphism between equivalence classes of twisted spectral triples over Morita equivalent operator $*$-algebras. This leads to a tentative definition of unbounded bivariant $K$-theory and we prove that this bivariant theory is related to Kasparov's bivariant $K$-theory via the Baaj-Julg bounded transform. Moreover, the unbounded Kasparov product provides a refinement of the usual interior Kasparov product. We illustrate our results by proving $C^1$-versions of well-known $C^*$-algebraic Morita equivalences in the context of hereditary subalgebras, conformal equivalences and crossed products by discrete groups.
\end{abstract}

\maketitle
\tableofcontents
\section{Introduction}
Morita equivalences for unital rings was first introduced by Kiita Morita in \cite{Mor:DMA}. Two unital rings $\sR$ and $\sS$ are Morita equivalent when there exists a full idempotent $e \in M_n(\sR)$ in the $(n \times n)$-matrices over $\sR$ such that
\[
\sS \cong e M_n(\sR) e 
\]
as unital rings. This notion of Morita equivalence may be translated to the setting of unital $*$-algebras: Two unital $*$-algebras $\sA$ and $\sB$ are Morita equivalent when there exists a full selfadjoint idempotent $p \in M_n(\sB)$ such that
\[
\sA \cong p M_n(\sB) p 
\]
as unital $*$-algebras.

Morita equivalences of unital $*$-algebras can be successfully combined with Alain Connes' notion of noncommutative geometry, \cite{Con:NCG, Con:GCM}. Indeed, two Morita equivalent unital $*$-algebras $\sA$ and $\sB$ admit the same spectral triples in the following sense: For an arbitrary spectral triple $(\sB,H,D)$ (with $\sB$ acting unitally on $H$) we obtain a new spectral triple
\[
p \sB^n \ot_{\sB} (\sB,H,D) := (\sA, p H^{\op n}, p \, \T{diag}(D) \, p )
\]
but this time over the unital $*$-algebra $\sA$, \cite{Con:GCM}. This operation (which is the most simple yet interesting version of an unbounded Kasparov product) then yields an isomorphism
\begin{equation}\label{eq:speiso}
p \sB^n \ot_{\sB} : \T{Spectral}(\sB) / \sim_{\T{bdd}} \, \, \to \T{Spectral}(\sA) / \sim_{\T{bdd}} 
\end{equation}
between the spectral triples over $\sB$ and the spectral triples over $\sA$ at least modulo the equivalence relation ``$\sim_{\T{bdd}}$'' generated by bounded perturbations and unitary equivalences.
\medskip

Morita equivalences for unital $*$-algebras was extended by Marc Rieffel to the setting of $\si$-unital $C^*$-algebras in \cite{Rie:MEC} (see also \cite{Rie:MEO,BrGrRi:SIS,Bro:SIH}). This extension relies on the theory of Hilbert $C^*$-modules over $C^*$-algebras and uses the interior tensor product of $C^*$-correspondences instead of the algebraic module tensor product. Two $\si$-unital $C^*$-algebras are Morita equivalent when there exists a full Hilbert $C^*$-module $X$ over $B$ and an isomorphism of $C^*$-algebras
\[
A \cong \sK(X)
\]
where $\sK(X) \cong X \hot_B X^*$ denotes the $C^*$-algebra of compact operators on $X$. The reader should be aware that the Hilbert $C^*$-module $X$ might not be finitely generated projective: It is only required to be countably generated over $B$ in the sense of Hilbert $C^*$-modules.

For the purposes of this paper, the main observation is that two separable Morita equivalent $C^*$-algebras $A$ and $B$ have the same analytic $K$-homology (but the same result also holds with coefficients in an arbitrary $\si$-unital $C^*$-algebra). In fact, the interior Kasparov product by the Morita equivalence bimodule $X$ induces a bijection
\begin{equation}\label{eq:anaiso}
[X] \hot_{B} : K^*(B) \to K^*(A)
\end{equation}
of abelian groups, \cite{Kas:OFE}. Remark here that $X$ defines a class $[X] \in KK_0(A,B)$ in the bounded bivariant $K$-theory of the pair of separable $C^*$-algebras $(A,B)$.
\medskip

The reader who is trained in noncommutative geometry will at this moment notice that the isomorphism in \eqref{eq:speiso} is a much more refined statement than the isomorphism in \eqref{eq:anaiso}. Indeed: The Baaj-Julg bounded transform provides a passage from spectral triples to classes in analytic $K$-homology, but this transform is subject to a substantial loss of information, \cite{BaJu:TBK}. Not only is the $*$-algebra of coordinates $\sA$ replaced by a suitable $C^*$-completion $A$ but on top of that, one looses almost all information about the eigenvalues of the unbounded selfadjoint operator $D$ (only their signs are kept). On the other hand, if two spectral triples agree up to a bounded perturbation, then the eigenvalues of the corresponding unbounded operators are bound to have the same growth properties (thus the resolvents will lie in the same Schatten ideal).
\medskip
%
%

In this paper we will introduce a notion of Morita equivalence for a certain class of dense $*$-subalgebras of a $\si$-unital $C^*$-algebra. These $*$-subalgebras are called operator $*$-algebras since they are $*$-algebraic versions of non-selfadjoint operator algebras, see \cite{BlMe:OAM}. In particular, it turns out that operator $*$-algebras can be characterized, up to completely bounded isomorphisms, as being exactly those closed involutive subalgebras of bounded operators on a Hilbert space where the involution agrees with the Hilbert space adjoint operation up to conjugation by a symmetry, \cite{BlKaMe:OAG}. There is thus an abundance of operator $*$-algebras available from an abstract point of view, but even more importantly there is a minimal operator $*$-algebra $\C A$ associated to any spectral triple $(\sA,H,D)$. Indeed, as a Banach $*$-algebra $\C A$ is simply the minimal domain of the derivation $[D, \cd] : \sA \to \sL(H)$. 
\medskip

The relevant bimodules over operator $*$-algebras are called operator $*$-correspondences. These bimodules come equipped with an inner product taking values in an operator $*$-algebra and they appear as a natural analogue of $C^*$-correspondences in the world of operator bimodules, see \cite{BlMe:OAM}. In particular, it can be proved that operator $*$-correspondences are, up to completely bounded isomorphism, exactly those closed inner product bimodules of bounded operators on a Hilbert space where the inner product agrees with the pairing $(T,S) \mapsto T^* S$ up to conjugation of the Hilbert space adjoint $T^*$ by a symmetry, \cite{BlKaMe:OAG}. Hence, when the symmetry has no negative eigenvalues (is equal to the identity) one obtains the usual class of $C^*$-correspondences over a pair of $C^*$-algebras. To illustrate the richness and interest of this class of bimodules from the point of view of noncommutative geometry it should suffice to notice that operator $*$-correspondences arise naturally as minimal domains of hermitian connections, \cite{BlKaMe:OAG}.
\medskip

The main theorem of this text can now be stated as follows:

\begin{thm}
Suppose that $\C A$ and $\C B$ are Morita equivalent operator $*$-algebras, then the unbounded Kasparov product with the Morita equivalence bimodule $\C X$ induces an isomorphism:
\[
\C X \hot_{\C B} : \T{Mod-Spec}(\C B)/ \sim_{\T{bmp}} \, \to \T{Mod-Spec}(\C A)/\sim_{\T{bmp}}
\]
between the modular spectral triples over $\C B$ and the modular spectral triples over $\C A$ modulo the equivalence relation ``$\sim_{\T{bmp}}$'' generated by bounded modular perturbations and unitary equivalences. A similar result holds with coefficients in an auxiliary $\si$-unital $C^*$-algebra $C$.
\end{thm}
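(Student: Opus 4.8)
The plan is to exhibit an explicit inverse to $\C X \hot_{\C B}$, namely the unbounded Kasparov product $\C X^* \hot_{\C A}$ with the dual operator $*$-correspondence, which implements the reverse Morita equivalence $\C B \cong \sK(\C X^*)$, and then to show that both composites are the identity modulo $\sim_{\T{bmp}}$. The first step is to verify that $\C X \hot_{\C B}$ is well defined on representatives: starting from a modular spectral triple $(\C B, H, D)$ one forms the interior tensor product $\C X \hot_{\C B} H$, on which $\C A \cong \sK(\C X)$ acts, and equips it with the operator $1 \ot_\nabla D$ coming from a hermitian connection $\nabla$ on $\C X$. One then checks in turn: that this product operator is selfadjoint (regular, in the coefficient case), using that $\nabla$ is suitably bounded; that it has the appropriate compact resolvent property, which is where fullness of $\C X$ and the identification $\C A \cong \sK(\C X)$ enter, by localising along the rank-one operators $|e\rangle\langle f|$ with $e,f\in\C X$ and invoking the compact resolvent of $D$; that $\C A$ preserves the domain and has bounded twisted commutators, which holds because $\C A = \sK(\C X)$ acts through $\C X$ and $\nabla$ is compatible with the operator $*$-algebra structure; and that the modular datum of $(\C B,H,D)$ transports to a legitimate modular datum over $\C A$.

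Next I would check that the operation descends to $\sim_{\T{bmp}}$-classes and is independent of the auxiliary choice of connection. A bounded modular perturbation $D \rightsquigarrow D + V$ transports to $1 \ot_\nabla D \rightsquigarrow 1 \ot_\nabla D + 1 \ot V$, which is again a bounded modular perturbation; unitary equivalences transport functorially through the tensor product; and any two hermitian connections on $\C X$ differ by a bounded $\C B$-linear operator, so the corresponding product operators differ by a bounded operator compatible with the modular structure. Hence $\C X \hot_{\C B}$ induces a well-defined map on equivalence classes, and by the same reasoning so does $\C X^* \hot_{\C A}$.

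The heart of the proof is then the associativity of the unbounded Kasparov product up to bounded modular perturbation, combined with the Morita identities $\C X^* \hot_{\C A} \C X \cong \C B$ and $\C X \hot_{\C B} \C X^* \cong \C A$ of operator $*$-correspondences and the obvious unit law $\C B \hot_{\C B} (\C B, H, D) \cong (\C B, H, D)$. Choosing compatible connections on $\C X$ and $\C X^*$ induces a connection on $\C X^* \hot_{\C A} \C X \cong \C B$ whose product operator is a bounded modular perturbation of $D$ (the discrepancy being curvature and cross terms), so that $(\C X^* \hot_{\C A} \C X) \hot_{\C B} (\C B,H,D) \sim_{\T{bmp}} (\C B,H,D)$; comparing this with the iterated product $\C X^* \hot_{\C A}\big(\C X \hot_{\C B}(\C B,H,D)\big)$ through the associativity estimate yields $\C X^* \hot_{\C A}\big(\C X \hot_{\C B}(\C B,H,D)\big) \sim_{\T{bmp}} (\C B,H,D)$, and the symmetric computation with the roles of $\C A$ and $\C B$ interchanged gives the other composite. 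Thus $\C X \hot_{\C B}$ and $\C X^* \hot_{\C A}$ are mutually inverse bijections. The case with coefficients in a $\si$-unital $C^*$-algebra $C$ is handled by running all of the above with $H$ replaced by a Hilbert $C^*$-module over $C$ and ``compact resolvent'' and ``selfadjoint'' read relative to $C$; since every construction in sight is $C$-linear, the arguments go through verbatim.

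I expect the principal obstacle to be precisely this associativity-up-to-bounded-perturbation step, in the generality where $\C X$ need not be finitely generated as a module over $\C B$ (as already in the $C^*$-algebraic case, e.g.\ for hereditary subalgebras): one must replace finite frames by approximate ones and obtain estimates strong enough to conclude that the relevant difference of operators is genuinely bounded and modular rather than merely densely defined. Establishing the compact resolvent property of the product operator in this same non-finitely-generated setting is the second delicate point, requiring a careful approximation of the unit of $\sK(\C X)$ by finite-rank operators together with uniform control of the resulting error terms.
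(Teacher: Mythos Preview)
Your high-level architecture matches the paper: the inverse to $\C X\hot_{\C B}$ is the product with an inverse Morita bimodule, and the argument reduces to well-definedness, associativity, and the unit law for the unbounded Kasparov product. Where you diverge is in the technical realisation of that product, and the divergence is not cosmetic.

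The paper does \emph{not} build the product via a hermitian connection $\nabla$ and an operator $1\otimes_\nabla D$. Given a compact operator $*$-correspondence $\C X$ and an unbounded modular cycle $(Y,D,\Gamma)$, one instead chooses a generating sequence $\{\xi_n\}$ with $\xi=\sum_n\xi_n e_{1n}\in\ell^2(\nn,\C X)^t$, forms the bounded adjointable map $\Phi:X\hot_B Y\to\ell^2(\nn,Y)$, and defines the \emph{modular lift} $D_\xi:=\overline{\Phi^*\,\T{diag}(D)\,\Phi}$ together with the new modular operator $\Gamma_\xi:=\Phi^*\,\T{diag}(\Gamma)\,\Phi$. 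The resulting cycle is $(X\hot_B Y,\,D_\xi,\,\Gamma_\xi)$. Selfadjointness, regularity and the compact-resolvent property are imported from \cite{Kaa:UKM}; no connection is assumed to exist, and in this generality the Mesland-style $1\otimes_\nabla D$ is not available.

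More seriously, your proposal does not engage with the modular operator at all. The objects here are not ordinary unbounded Kasparov modules but unbounded modular cycles $(X,D,\De)$, and the equivalence relation $\sim_{\T{bmp}}$ is \emph{bounded modular perturbation}: two cycles $(X,D_1,\De_1)$ and $(X,D_2,\De_2)$ are compared through a completely bounded map $\rho_{\De_1,\De_2}:\wit{\C A}\to\sL(X)$ satisfying $\De_1^{1/2}\rho_{\De_1,\De_2}(a,\la)\De_2^{1/2}=D_1(a+\la)\De_2-\De_1(a+\la)D_2$ on $\sD(D_2)$. Showing that the product is independent of the choice of generating sequence, and that it respects $\sim_{\T{bmp}}$, rests on the identity $\Phi\big((\pi(a)+\la)\ot 1\big)\Psi^*=\pi_B\big(\inn{\xi,(a+\la)\eta}\big)$ and the complete boundedness of the duality pairing $\inn{\cd,\cd}:\C X\times\C X'\to\C B$ that defines the equivalence of operator $*$-correspondences. ``Transporting $D\rightsquigarrow D+V$ to $1\otimes_\nabla D+1\otimes V$'' does not address this structure.

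Two further points. The paper's Morita equivalence does not assert that the inverse bimodule is the formal dual $\C X^*$; it only requires that $[\C X]\in M(\C A,\C B)$ be invertible in the category $\mathfrak{M}$, so the inverse is some abstract $[\C Y]$. And associativity is proved via Haagerup-tensor-product estimates on generating sequences (showing $\zeta=\sum_k(\xi_n\hot\eta_m)e_{1k}\in\ell^2(\nn,\C X\hot_{\C B}\C Y)^t$), not via compatibility of connections; the ``curvature and cross terms'' picture is foreign to this framework. In particular, even when the input cycle has trivial modular operator $\Ga=1$, the product acquires $\Ga_\xi=\Phi^*\Phi$, which is typically non-invertible---so one cannot remain within ordinary spectral triples, and this is precisely why the modular formalism is needed.
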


To exemplify our main theorem we analyze the following situations:
\begin{enumerate}
\item Hereditary subalgebras of operator $*$-algebras.
\item Conformal equivalences (of Riemannian metrics).
\item Discrete groups acting freely and properly on Riemannian manifolds.
\end{enumerate}
More precisely, we establish operator $*$-algebra (or $C^1$) versions of a number of Morita equivalence results that were previously only known to hold for $C^*$-algebras. As a consequence we obtain the corresponding isomorphism between the (equivalence classes of) modular spectral triples over the various operator $*$-algebras involved. This is a substantially more detailed result than having an isomorphism of analytic $K$-homology groups. Remark also that the second entry is a triviality for $C^*$-algebras since the Riemannian metric can not be detected as the level of continuous functions.
\medskip

The modular spectral triples over an operator $*$-algebra appear as a twisted analogue of spectral triples. This means that the usual commutator condition is replaced by a twisted commutator condition where the twist is expressed in terms of an extra modular operator $\Ga : H \to H$. The reasons for including this extra twist are deep and related to the non-existence of frames in a general operator $*$-correspondence, \cite{Kaa:DAH, Kaa:UKM}. To understand what this means (at least to some extent), it suffices to remember that there are many (non-compact) Riemannian manifolds that do not admit a partition of unity with uniformly bounded exterior derivatives. This observation has severe consequences for the selfadjointness properties of a symmetric first order differential operator $D$ acting on a smooth hermitian vector bundle over such a Riemannian manifold. The presence of the modular operator $\Ga$ in this setting then means that we are really computing with respect to a Riemannian metric that is conformally equivalent to the original one and the conformal factor is given by the operator $\Ga^{-2}$. More precisely, we are not studying $D$ directly but rather the operator $\Ga^{1/2} D \Ga^{1/2}$ which might very well have better selfadjointness properties.

In a noncommutative setting, the modular group of automorphisms associated with the modular operator $\Ga$ (thus the action of the real line on $\sL(H)$ given by $T \mapsto \Ga^{it} T \Ga^{-it}$, $t \in \rr$) starts to play a substantial role. In particular, the non-triviality of this group of automorphisms (on elements coming from the algebra of coordinates) is responsible for the twisting of the commutator condition in the definition of a modular spectral triple. The notion of a modular spectral triple appearing in this text (and in \cite{Kaa:UKM}) is thus tightly related to (but different from) the notion of a twisted spectral triple as introduced by Alain Connes and Henri Moscovici in \cite{CoMo:TST}.
\medskip

The idea of carrying out the interior Kasparov product directly at the level of unbounded cycles dates back to Alain Connes, \cite{Con:GCM}. The recent interest in this approach is however due to Bram Mesland's PhD-thesis where this idea is extended far beyond the context of finitely generated projective modules, \cite{Mes:UCN}. Moreover, this thesis is also the first place where the theory of operator spaces and notably operator modules is applied to describe the domain of a hermitian connection from an analytic point of view. Since then, a series of papers has developed these ideas in various directions and notably so by providing more advanced constructions of unbounded Kasparov products, \cite{KaLe:SFU, MeRe:MCU, BrMeSu:GSU, Kaa:UKM}. 

The unbounded Kasparov product by an operator $*$-correspondence $\C X$ satisfying an extra compactness condition was constructed and investigated in details in \cite{Kaa:UKM}, and this is the version of the unbounded Kasparov product that we apply in this text. We show that this kind of unbounded Kasparov product yields a well-defined operation (independent of various choices):
\[
\C X \hot_{\C B} : \T{Mod-Spec}(\C B)/ \sim_{\T{bmp}} \to \T{Mod-Spec}(\C A)/ \sim_{\T{bmp}}
\]
Moreover, we establish that the Baaj-Julg bounded transform yields a well-defined map
\[
F : \T{Mod-Spec}(\C B)/ \sim_{\T{bmp}} \to K^*(B) \q D \mapsto D(1 + D^2)^{-1/2}
\]
with values in the analytic $K$-homology of the $C^*$-completion of $\C B$. The relation between the unbounded Kasparov product and the interior Kasparov product can then be explained by the commutative diagram:
\[
\begin{CD}
\T{Mod-Spec}(\C B)/ \sim_{\T{bmp}} \, @>{\C X \hot_{\C B}}>> \T{Mod-Spec}(\C A)/ \sim_{\T{bmp}} \\
@V{F}VV @V{F}VV \\
K^*(B) @>{X \hot_B }>> K^*(A)
\end{CD}
\]
where $X$ denotes the $C^*$-completion of $\C X$. All of these results remain true with coefficients in a $\si$-unital $C^*$-algebra $C$.
\medskip

Morita equivalences for operator algebras were introduced by David Blecher, Paul Muhly and Vern Paulsen in \cite{BlMuPa:COM} (see also \cite{Ble:MTA,BlMuNa:MEO}). It would be possible to translate their notion of Morita equivalence directly to the context of operator $*$-correspondences and operator $*$-algebras using a version of the Haagerup tensor product for operator $*$-correspondences (and keeping track of inner products and involutions). This would however result in a stronger notion of Morita equivalence than the one we apply in this text. The operator $*$-correspondences, we are concerned with here, all sit densely inside a $C^*$-correspondence and we deem two such operator $*$-correspondences $\C X, \C Y \su Z$ to be equivalent when the inner product $\inn{\cd,\cd}_Z : \C X \ti \C Y \to B$ induces a completely bounded pairing $\inn{\cd,\cd}_Z : \C X \ti \C Y \to \C B$. It is therefore also unlikely that the operator $*$-algebras considered in the examples section of this paper would be Morita equivalent in the sense of Blecher, Muhly and Paulsen (even if the involutions were disregarded). 
\medskip

The present paper is structured as follows: We start in Section \ref{s:opealg}, Section \ref{s:opestar} and Section \ref{s:comple} by reviewing the basic theory of operator $*$-algebras and operator $*$-correspondences together with their associated $C^*$-completions. In Section \ref{s:equrel} we introduce an equivalence relation on operator $*$-correspondences. In Section \ref{s:algope} we study direct sums and tensor products of operator $*$-correspondences and relate these constructions to the direct sum and interior tensor product of the associated $C^*$-completions. In Section \ref{s:morita} we introduce our notion of Morita equivalence for operator $*$-algebras. This concept is formulated in categorical terms: We construct a category where the objects are operator $*$-algebras and the morphisms are equivalence classes of operator $*$-correspondences. In Section \ref{s:unbmod} we recall the notion of an unbounded modular cycle and in Section \ref{s:equrelII} we introduce the equivalence relation given by bounded modular perturbations of unbounded modular cycles. The corresponding quotient space provides a tentative definition of unbounded bivariant $K$-theory. In Section \ref{s:unbkas} we show that the unbounded Kasparov product by an operator $*$-correspondence (satisfying an extra compactness condition) descends to a well-defined operation on unbounded bivariant $K$-theory. In Section \ref{s:baajul} we show that the Baaj-Julg bounded transform is compatible with bounded modular perturbations of unbounded modular cycles and hence that we obtain a homomorphism from unbounded bivariant $K$-theory with values in the bounded bivariant $K$-theory of the $C^*$-completions of our operator $*$-algebras. In Section \ref{s:compro} we prove that the unbounded Kasparov product agrees with the interior Kasparov product after taking bounded transforms. In Section \ref{s:geomor} we provide our geometric examples of Morita equivalences and hence of isomorphisms of unbounded bivariant $K$-theories.

\section{Operator $*$-algebras}\label{s:opealg}
For a vector space $V$ over the complex numbers $\cc$ we let $M_m(V)$, $m \in \nn$, denote the vector space of $(m \ti m)$-matrices over $V$. When $V$ is a closed subspace of the bounded operators $\sL(H)$ on a Hilbert space $H$ we may equip $M_m(V)$ with the operator norm inherited from $M_m\big( \sL(H) \big) \cong \sL(H^{\op m})$. The abstract properties of this sequence of Banach spaces are crystallized in the following:


\begin{dfn}\label{d:opspa}
A vector space $\C X$ over $\cc$ is an \emph{operator space} when it is equipped with a norm $\| \cd \|_{\C X} : M_m(\C X) \to [0,\infty)$ for all $m \in \nn$ such that the following holds:
\begin{enumerate}
\item $\C X \cong M_1(\C X)$ is complete in the norm $\| \cd \|_{\C X} : M_1(\C X) \to [0,\infty)$;
\item For each $m \in \nn$, $\xi \in M_m(\C X)$ and $\la,\mu \in M_m(\cc)$ we have the inequality
\[
\| \la \cd \xi \cd \mu \|_{\C X} \leq \| \la \|_{\cc} \cd \| \xi \|_{\C X} \cd \| \mu \|_{\cc}
\]
where $(\la \cd \xi \cd \mu)_{ij} = \sum_{k,l = 1}^m \la_{ik} \cd \xi_{kl} \cd \mu_{lj}$ for all $i,j \in \{1,\ldots,m\}$ and where the norm on $M_m(\cc)$ is the unique $C^*$-algebra norm.
\item For each $\xi \in M_m(\C X)$ and $\eta \in M_k(\C X)$ we have that
\[
\| \xi \op \eta \|_{\C X} = \max\{ \| \xi \|_{\C X} , \| \eta \|_{\C X} \} 
\]
where $\xi \op \eta \in M_{m + k}(\C X)$ refers to the direct sum of matrices.
\end{enumerate}
A linear map $\phi : \C X \to \C Y$ between two operator spaces is \emph{completely bounded} when there exists a constant $C > 0$ such that
\[
\| \phi(\xi) \|_{\C Y} \leq C \cd \| \xi \|_{\C X}
\]
for all $\xi \in M_m(\C X)$ and all $m \in \nn$. The \emph{completely bounded norm} of such a completely bounded map is defined by 
\[
\| \phi \|_{\T{cb}} := \inf\big\{ C \in [0,\infty) \mid \| \phi(\xi) \| \leq C \cd \| \xi \| \, , \, \, \forall \xi \in M_m(\C X) \, , \, \, m \in \nn \big\}
\]
\end{dfn}

Indeed, by a theorem of Ruan, for any abstract operator space $\C X$ there exist a closed subspace $V \su \sL(H)$ of the bounded operators on some Hilbert space $H$ together with a completely isometric isomorphism $\phi : \C X \to V$, see \cite[Theorem 3.1]{Rua:SCA}.
\medskip

When the Banach spaces $M_m(\C X)$ associated to an operator space $\C X$ are in fact Banach algebras, where the multiplication on the higher matrix algebras arises from the multiplication on $\C X$ through matrix multiplication we refer to $\C X$ as an ``operator algebra'':

\begin{dfn}
An operator space $\C A$ is an \emph{operator algebra} when it comes equipped with a multiplication $m : \C A \ti \C A \to \C A$ such that
\begin{enumerate}
\item $\C A$ becomes a Banach algebra over $\cc$;
\item We have the inequality
\[
\| x \cd y \|_{\C A} \leq \| x \|_{\C A} \cd \|y\|_{\C A} \q \T{for all } m \in \nn \T{ and } x,y \in M_m(\C A)
\]
where $(x \cd y)_{ij} = \sum_{k = 1}^m x_{ik} \cd y_{kj}$ for all $i,j \in \{1,\ldots,m\}$.
\end{enumerate}
\end{dfn}

By a theorem of Blecher, any operator algebra $\C A$ is completely isomorphic to a concrete operator algebra. Thus, there exist a closed subalgebra $\C B \su \sL(H)$ of the bounded operators on some Hilbert space $H$ together with a completely bounded algebra isomorphism $\phi : \C A \to \C B$ with the additional property that the inverse $\phi^{-1} : \C B \to \C A$ is completely bounded as well, see \cite[Theorem 2.2]{Ble:CBC}.
\medskip

Finally, when the sequence of Banach algebras $M_m(\C A)$, $m \in \nn$, arising from an operator algebra $\C A$ really consists of Banach $*$-algebras where the ``higher'' involutions are compatible with the involution on $\C A$ we refer to $\C A$ as an ``operator $*$-algebra'':

\begin{dfn}
An operator algebra $\C A$ is an \emph{operator $*$-algebra} when it comes equipped with an involution $* : \C A \to \C A$ such that
\begin{enumerate}
\item $\C A$ becomes a Banach $*$-algebra;
\item We have the identity
\[
\| x^* \|_{\C A} =  \| x \|_{\C A} \q \T{for all } m \in \nn \T{ and } x \in M_m(\C A)
\]
where $(x^*)_{ij} = (x_{ji})^*$ for all $i,j \in \{1,\ldots,m\}$.
\end{enumerate}
\end{dfn}

Any operator $*$-algebra $\C A$ is completely isomorphic to a concrete operator $*$-algebra. More precisely, there exist a closed subalgebra $\C B \su \sL(H)$, a selfadjoint unitary operator $U : H \to H$ and a completely bounded algebra isomorphism $\phi : \C A \to \C B$ (with completely bounded inverse) such that
\[
U \phi(a^*) U = \phi(a)^*
\]
where $* : \sL(H) \to \sL(H)$ denotes the adjoint operation coming from the inner product on $H$, see \cite{BlKaMe:OAG}.
\medskip

Recall that any $C^*$-algebra $A$ can be given the structure of an operator $*$-algebra. For $m \in \nn$ the matrix norm $\| \cd \|_A : M_m(A) \to [0,\infty)$ is the unique $C^*$-norm on $M_m(A)$. 

\subsection{The operator $*$-algebra of compacts}\label{ss:opecom}
We now introduce a stabilization procedure for an operator $*$-algebra $\C A$. The corresponding construction for operator algebras is standard and can be found in \cite[Section 2.2.3]{BlMe:OAM}.

Let $M(\C A)$ denote the $*$-algebra of infinite matrices with only finitely many non-zero entries in $\C A$. We will often write elements in $M(\C A)$ as infinite sums $\sum_{i,j = 1}^\infty a_{ij} e_{ij}$, $a_{ij} \in \C A$ and $a_{ij} \neq 0$ only for finitely many $i,j \in \nn$. The sum $\sum_{i, j = 1}^\infty a_{ij} e_{ij}$ is then identified with the infinite matrix with $a_{ij}$ in position $(i,j)$. 
%

The matrix norms $\| \cd \|_{\C A} : M_m(\C A) \to [0,\infty)$ then induce a norm $\| \cd \|_{\C A} : M(\C A) \to [0,\infty)$. We let $K_{\C A}$ denote the Banach $*$-algebra obtained as the completion of $M(\C A)$ in the norm $\| \cd \|_{\C A}$.

For each $n \in \nn$, we let $\pi_n : K_{\C A} \to M_n(\C A)$ denote the bounded operator induced by 
\[
\pi_n : M(\C A) \to M_n(\C A) \q \sum_{i,j = 1}^\infty a_{ij} \cd e_{ij} \mapsto \sum_{i,j = 1}^n a_{ij} \cd e_{ij}
\]

For each $m \in \nn$, we define the matrix norm
\begin{equation}\label{eq:commat}
\| \cd \|_{K_{\C A}} : M_m( K_{\C A} ) \to [0,\infty) \q \| x \|_{K_{\C A}} := \sup_{n \in \nn} \| \pi_n(x) \|_{\C A}
\end{equation}
where $\pi_n : M_m( K_{\C A} ) \to M_m\big( M_n(\C A) \big) \cong M_{m \cd n}(\C A)$ is given by applying $\pi_n : K_{\C A} \to M_n(\C A)$ entry-wise (and the last identification is given by forgetting the subdivisions).

\begin{dfn}
By the \emph{compacts over $\C A$} we will understand the operator $*$-algebra $K_{\C A}$ equipped with the $*$-algebra structure coming from $M(\C A)$ and the matrix norms $\| \cd \|_{K_{\C A}} : M_m( K_{\C A} ) \to [0,\infty)$, $m \in \nn$, defined in \eqref{eq:commat}.
\end{dfn}

The terminology ``compacts over $\C A$'' is chosen by analogy with the Hilbert $C^*$-module situation. Indeed, when $\C A$ happens to be a $C^*$-algebra (with the canonical $C^*$-norms on $M_m(\C A)$, $m \in \nn$, as matrix norms) our construction recovers the $C^*$-algebra of compact operators on the standard module $\ell^2(\nn,\C A)$ over $\C A$, see \cite[Lemma 4]{Kas:HSV}.


\section{Operator $*$-correspondences}\label{s:opestar}
We recall the definition of the relevant class of bimodules over operator algebras:

\begin{dfn}\label{d:opebim}
Let $\C A$ and $\C B$ be operator algebras. An operator space $\C X$ is an operator $\C A$-$\C B$-bimodule when the following holds:
\begin{enumerate}
\item $\C X$ is an $\C A$-$\C B$-bimodule;
\item We have the inequality
\[
\| x \cd \xi \|_{\C X} \leq  \| x \|_{\C A} \cd \| \xi \|_{\C X} \q \T{for all } m \in \nn \, , \, \, x \in M_m(\C A) \T{ and } \xi \in M_m(\C X)
\]
where $(x \cd \xi)_{ij} = \sum_{k = 1}^m x_{ik} \cd \xi_{kj}$ for all $i,j \in \{1,\ldots,m\}$;
\item We have the inequality
\[
\| \xi \cd y\|_{\C X} \leq \| \xi \|_{\C X} \cd \| y \|_{\C B} \q \T{for all } m \in \nn \, , \, \, \xi \in M_m(\C X) \T{ and } y \in M_m(\C B)
\]
where $(\xi \cd y)_{ij} = \sum_{k = 1}^m \xi_{ik} \cd y_{kj}$ for all $i,j \in \{1,\ldots,m\}$.
\end{enumerate}
\end{dfn}

Any operator $\C A$-$\C B$-bimodule $\C X$ is completely bounded isomorphic to a concrete operator bimodule in the following way: There exist closed subalgebras $\C C$ and $\C D \su \sL(H)$ and a closed subspace $\C Y \su \sL(H)$ for some Hilbert space $H$ together with completely bounded algebra isomorphisms $\phi_{\C A} : \C A \to \C C$, $\phi_{\C B} : \C B \to \C D$ and a completely bounded isomorphism $\phi_{\C X} : \C X \to \C Y$ all with completely bounded inverses such that
\begin{equation}\label{eq:modulrel}
\phi_{\C A}(x) \cd \phi_{\C X}(\xi) = \phi_{\C X}(x \cd \xi) \, \, \, \T{and} \, \, \, \,
\phi_{\C X}(\xi) \cd \phi_{\C B}(y) = \phi_{\C X}(\xi \cd y)
\end{equation}
for all $x \in \C A$, $\xi \in \C X$ and $y \in \C B$, see \cite[Theorem 2.2]{Ble:GHM}. 
\medskip

Let $\C A$ and $\C B$ be operator $*$-algebras. Any operator $\C A$-$\C B$-bimodule $\C X$ admits a formal dual operator $\C B$-$\C A$-bimodule $\C X^*$. As a vector space we have that $\C X^* := \{ \xi^* \mid \xi \in \C X\}$ agrees with the formal dual of the vector space $\C X$. The bimodule structure on $\C X^*$ is defined by $b \cd \xi^* := (\xi \cd b^*)^*$ and $\xi^* \cd a := (a^* \cd \xi)^*$ for all $\xi \in \C X$, $b \in \C B$ and $a \in \C A$. The matrix norms $\| \cd \|_{\C X^*} : M_m(\C X^*) \to [0,\infty)$, $m \in \nn$, are given by
\[
\| \xi^* \| := \| \xi \| \q \T{ for all } \xi \in M_m(\C X) 
\]
where $(\xi^*)_{ij} := (\xi_{ji})^*$ for all $i,j \in \{1,\ldots,m\}$.
\medskip

When $\C A$ and $\C B$ are operator $*$-algebras it becomes interesting to consider operator $\C A$-$\C B$-bimodules which comes equipped with an inner product taking values in the operator $*$-algebra $\C B$ (the one that acts from the right).

\begin{dfn}\label{d:herope}
Let $\C A$ and $\C B$ be operator $*$-algebras. An operator $\C A$-$\C B$-bimodule $\C X$ is an \emph{operator $*$-correspondence} from $\C A$ to $\C B$ when it comes equipped with a pairing
\[
\inn{\cd,\cd} : \C X \ti \C X \to \C B 
\]
satisfying the conditions:
\begin{enumerate}
\item $\inn{\xi, \eta \cd (b + \la)} = \inn{\xi, \eta} \cd (b + \la)$ for all $\xi,\eta \in \C X$, $b \in \C B$ and $\la \in \cc$;
\item $\inn{\xi, \eta + \ze} = \inn{\xi, \eta} + \inn{\xi, \ze}$ for all $\xi,\eta,\ze \in \C X$;
\item $\inn{\xi,\eta} = \inn{\eta,\xi}^*$ for all $\xi, \eta \in \C X$;
\item $\inn{\xi, a \cd \eta} = \inn{a^* \cd \xi, \eta}$ for all $\xi, \eta \in \C X$ and $a \in \C A$;
\item We have the inequality
\[
\| \inn{\xi,\eta} \|_{\C B} \leq \| \xi \|_{\C X} \cd \| \eta \|_{\C X} \q \T{for all } m \in \nn \T{ and } \xi, \eta \in M_m(\C X)
\]
where $( \inn{\xi,\eta} )_{ij} := \sum_{k = 1}^m \inn{\xi_{ki}, \eta_{kj}}$ for all $i,j \in \{1,\ldots,m\}$.
\end{enumerate}
We refer to the pairing $\inn{\cd,\cd}$ as the \emph{inner product}. Condition $(5)$ will be referred to as the \emph{Cauchy-Schwarz inequality}. We say that the inner product is \emph{non-degenerate} when $\big( \inn{\xi,\eta} = 0 \, \, \, \, \forall \eta \in \C X \big) \rar ( \xi = 0 ) $.
\end{dfn}

Any operator $*$-correspondence $\C X$ from $\C A$ to $\C B$ is completely bounded isomorphic to a concrete operator $*$-correspondence provided that the inner product is non-degenerate: As in the case of operator bimodules there exist completely bounded algebra homomorphisms $\phi_{\C A} : \C A \to \C C$ and $\phi_{\C B} : \C B \to \C D$ and a completely bounded isomorphism $\phi_{\C X} : \C X \to \C Y$ (all with completely bounded inverses) satisfying the relations in \eqref{eq:modulrel}. Moreover, there exists a selfadjoint unitary $U : H \to H$ such that
\[
U\phi_{\C A}(x^*)U = \phi_{\C A}(x)^* \, , \, \, U \phi_{\C B}(y^*) U = \phi_{\C B}(y)^* \, \, \, \T{and} \, \, \, \,
U \phi_{\C X}(\xi)^* U \phi_{\C X}(\eta) = \phi_{\C B}( \inn{\xi,\eta})
\]
for all $x \in \C A$, $y \in \C B$ and $\xi,\eta \in \C X$, see \cite{BlKaMe:OAG}.
\medskip

Recall that a $C^*$-correspondence from a $C^*$-algebra $A$ to a $C^*$-algebra $B$ is a (right) Hilbert $C^*$-module $X$ over $B$ together with a $*$-homomorphism $\pi : A \to \sL(X)$ where $\sL(X)$ denotes the $C^*$-algebra of bounded adjointable operators on $X$. Any $C^*$-correspondence $X$ from $A$ to $B$ can be given the structure of an operator $*$-correspondence from $A$ to $B$. The matrix norms $\| \cd \|_X : M_m(X) \to [0,\infty)$, $m \in \nn$, are defined by
\[
\| \xi \|_X := \| \inn{\xi,\xi}_X \|_B^{1/2} \q \xi \in M_m(X)
\]
Notice that $\inn{\xi,\xi}_X \in M_m(B)$ for $\xi \in M_m(X)$ and that $\| \cd \|_B : M_m(B) \to [0,\infty)$ refers to the unique $C^*$-norm, see \cite[Section 3]{Ble:AHM}.



\subsection{Row and column correspondences}\label{ss:rowcol}
Let $\C A$ and $\C B$ be operator $*$-algebras and let $\C X$ be an operator $*$-correspondence from $\C A$ to $\C B$. We are now going to use the matrix norms to construct various stabilizations of our operator $*$-correspondence. The constructions are again standard in the case of operator bimodules, see \cite[Proposition 3.1.14]{BlMe:OAM}. 

We let $M(\C X)$ denote the vector space of infinite matrices with only finitely many non-zero entries in $\C X$. We equip $M(\C X)$ with the $M(\C A)$-$M(\C B)$-bimodule structure defined by
\[
(a \cd \xi)_{ij} := \sum_{k = 1}^\infty a_{ik} \cd \xi_{kj} \q \T{and} \q (\xi \cd b)_{ij} := \sum_{k = 1}^\infty \xi_{ik} \cd b_{kj} \q i,j \in \nn
\]
Furthermore, we have the pairing $\inn{\cd,\cd} : M(\C X) \ti M(\C X) \to M(\C B)$ defined by
\begin{equation}\label{eq:matpair}
\inn{\xi,\eta}_{ij} := \sum_{k = 1}^\infty \inn{\xi_{ki}, \eta_{kj}}_{\C X} \q i,j \in \nn
\end{equation}
We remark that this pairing is compatible with the bimodule structure in the sense that the conditions $(1)$-$(4)$ of Definition \ref{d:herope} are satisfied.

The matrix norms $\| \cd \|_{\C X} : M_m(\C X) \to [0,\infty)$, $m \in \nn$, induce a norm $\| \cd \|_{\C X} : M(\C X) \to [0,\infty)$ and we let $K_{\C X}$ denote the Banach space obtained as the corresponding completion of $M(\C X)$.

As in Subsection \ref{ss:opecom} we have a bounded operator $\pi_n : K_{\C X} \to M_n(\C X)$ for all $n \in \nn$, and for each $m \in \nn$ we define the matrix norm
\[
\| \cd \|_{K_{\C X}} : M_m( K_{\C X}) \to [0,\infty) \q \| \xi \|_{K_{\C X}} := \sup_{n \in \nn} \| \pi_n(\xi) \|_{\C X}
\]
where $\pi_n : M_m(K_{\C X}) \to M_m(M_n(\C X)) \cong M_{m\cd n}(\C X)$ is obtained by applying $\pi_n : K_{\C X} \to M_n(\C X)$ entry-wise. With these definitions it may be verified that the Cauchy-Schwarz inequality is satisfied as well and we thus have the following:

\begin{dfn}
By the \emph{compacts over $\C X$} we understand the operator $*$-correspondence $K_{\C X}$ from $K_{\C A}$ to $K_{\C B}$ with bimodule structure induced by the $M(\C A)$-$M(\C B)$-bimodule structure on $M(\C X)$ and with inner product induced by the pairing $\inn{\cd,\cd} : M(\C X) \ti M(\C X) \to M(\C B)$ from \eqref{eq:matpair}.
\end{dfn}

We let $C_c(\nn,\C X) \su M(\C X)$ denote the vector subspace defined by
\[
\xi \in C_c(\nn,\C X) \lrar \Big( \xi \in M(\C X) \T{ and } \xi_{ij} = 0 \, \, \forall j \geq 2 \Big)
\]
We equip $C_c(\nn,\C X)$ with the $M(\C A)$-$\C B$-bimodule structure defined by
\[
(a \cd \xi)_{i1} := \sum_{k = 1}^\infty a_{ik} \cd \xi_{k1} \q \T{and} \q (\xi \cd b)_{i1} := \xi_{i1} \cd b
\]
for all $a \in M(\C A)$ and $b \in \C B$. We equip the bimodule $C_c(\nn,\C X)$ with the pairing $\inn{\cd,\cd} : C_c(\nn,\C X) \ti C_c(\nn,\C X) \to \C B$ defined by
\[
\inn{\xi,\eta} := \sum_{i = 1}^\infty \inn{\xi_{i1}, \eta_{i1}}_{\C X} \q \xi, \eta \in C_c(\nn,\C X)
\]

\begin{dfn}
By the \emph{column correspondence over $\C X$} we understand the operator $*$-correspondence $\ell^2(\nn,\C X)$ from $K_{\C A}$ to $\C B$ obtained as the completion of $C_c(\nn,\C X) \su K_{\C X}$. The bimodule structure is induced by the $M(\C A)$-$\C B$ bimodule structure on $C_c(\nn,\C X)$ and the inner product is induced by the pairing $\inn{\cd,\cd} : C_c(\nn,\C X) \ti C_c(\nn,\C X) \to \C B$.
\end{dfn}

An important example of a column correspondence arises when the operator $*$-algebra $\C B$ is considered as an operator $*$-correspondence from $\C B$ to $\C B$. Thus, when the bimodule structure comes from the algebra structure on $\C B$ and when the inner product $\inn{\cd,\cd} : \C B \ti \C B \to \C B$ is defined by $\inn{b_0,b_1} := b_0^* \cd b_1$. The matrix norms on this operator $*$-correspondence agrees with the matrix norms on $\C B$ considered as an operator $*$-algebra. In this case, we obtain the ``standard module over $\C B$'', $\ell^2(\nn,\C B)$, which is an operator $*$-correspondence from $K_{\C B}$ to $\C B$, see also \cite[Definition 3.3]{KaLe:SFU}.  When $\C B$ happens to be a $C^*$-algebra this construction recovers the usual standard module over $\C B$ (or the ``Hilbert space over $\C B$''), see \cite[Definition 2]{Kas:HSV}.
\medskip

We let $C_c(\nn,\C X)^t \su M(\C X)$ denote the vector subspace defined by
\[
\xi \in C_c(\nn,\C X)^t \lrar \Big( \xi \in M(\C X) \T{ and } \xi_{ij} = 0 \, \, \forall i \geq 2 \Big)
\]
We equip $C_c(\nn,\C X)^t$ with the $\C A$-$M(\C B)$-bimodule structure defined by
\[
(a \cd \xi)_{1j} := a \cd \xi_{1j} \q \T{and} \q (\xi \cd b)_{1j} := \sum_{k = 1}^\infty \xi_{1k} \cd b_{kj}
\]
for all $a \in \C A$ and $b \in M(\C B)$. We equip the bimodule $C_c(\nn,\C X)^t$ with the pairing $\inn{\cd,\cd} : C_c(\nn,\C X)^t \ti C_c(\nn,\C X)^t \to M(\C B)$ defined by
\begin{equation}\label{eq:rowpair}
\inn{\xi,\eta}_{ij} := \inn{\xi_{1i}, \eta_{1j}}_{\C X} \q \xi, \eta \in C_c(\nn,\C X)^t
\end{equation}

\begin{dfn}
By the \emph{row correspondence over $\C X$} we understand the operator $*$-correspondence $\ell^2(\nn,\C X)^t$ from $\C A$ to $K_{\C B}$ obtained as the completion of $C_c(\nn,\C X)^t \su K_{\C X}$. The bimodule structure is induced by the $\C A$-$M(\C B)$-bimodule structure on $C_c(\nn,\C X)^t$ and the inner product is induced by the pairing $\inn{\cd,\cd} : C_c(\nn,\C X)^t \ti C_c(\nn,\C X)^t \to M(\C B)$.
\end{dfn}


\section{$C^*$-completions}\label{s:comple}
We are in this text mainly interested in operator $*$-algebras sitting densely inside $C^*$-algebras and we will thus make the following:


\begin{assu}\label{a:opealg}
From now on, any operator $*$-algebra $\C A$ (with operator $*$-algebra norm $\| \cd \|_{\C A}$) will be assumed to come equipped with an additional fixed $C^*$-norm $\| \cd \|_A : \C A \to [0,\infty)$ and the associated $C^*$-completion will be denoted by $A$. The inclusion $\io : \C A \to A$ is required to be completely bounded and the $C^*$-completion is required to be $\si$-unital (thus $A$ has a countable approximate identity).
\end{assu}

To distinguish more clearly between the $C^*$-norm and the operator $*$-algebra norm we will sometimes use the notation
\[
\| \cd \|_\infty := \| \cd \|_A \q \T{and} \q \| \cd \|_1 := \| \cd \|_{\C A}
\]

In line with the above definition, our main focus will lie on operator $*$-correspondences admitting a suitable $C^*$-completion. This leads to the following:

\begin{assu}\label{a:opecor}
From now on, any operator $*$-correspondence $\C X$ from $\C A$ to $\C B$ will be assumed to satisfy the following conditions:
\begin{enumerate}
\item For all $\xi \in \C X$ we have that $\io(\inn{\xi,\xi}) \geq 0$ (where $\io : \C B \to B$ is the inclusion);
\item The implication $\big( \inn{\xi,\xi} = 0 \big) \rar \big( \xi = 0 \big)$ holds for all $\xi \in \C X$;
\item For all $a \in \C A$ and $\xi \in \C X$ we have that
\[
\| \inn{a \cd \xi, a \cd \xi} \|_\infty \leq \| a \|_\infty^2 \cd \| \inn{\xi,\xi}\|_\infty
\]
\end{enumerate}
\end{assu}

We emphasize that condition $(3)$ in the above assumption is \emph{not} automatic.

\begin{dfn}
The \emph{$C^*$-completion} $X$ of an operator $*$-correspondence $\C X$ is the completion of $\C X$ with respect to the norm $\| \cd \|_\infty : \C X \to [0,\infty)$ defined by $\| \xi \|_\infty := \| \inn{\xi,\xi} \|_\infty^{1/2}$ for all $\xi \in \C X$. 
\end{dfn}

We will sometimes denote the operator $*$-correspondence norm on $\C X$ by $\| \cd \|_1 := \| \cd \|_{\C X} : \C X \to [0,\infty)$.

\begin{lemma}
Let $\C X$ be an operator $*$-correspondence from $\C A$ to $\C B$. Then the operator $*$-correspondence structure on $\C X$ induces a $C^*$-correspondence structure (from $A$ to $B$) on the $C^*$-completion $X$ of $\C X$ and the inclusion $\io : \C X \to X$ is completely bounded.
\end{lemma}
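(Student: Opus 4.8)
The plan is to build the $C^*$-correspondence structure on $X$ in two passes — first promoting $\C X$ to a right Hilbert $C^*$-module over $B$, then extending the left action to $A$ — after which complete boundedness of $\io$ drops out of the matrix Cauchy--Schwarz inequality of Definition \ref{d:herope}$(5)$. The technical cornerstone is the scalar (i.e.\ $B$-valued) Cauchy--Schwarz inequality
\[
\io(\inn{\xi,\eta})^* \io(\inn{\xi,\eta}) \leq \| \inn{\xi,\xi} \|_\infty \cd \io(\inn{\eta,\eta}) \q \T{in } B
\]
for all $\xi,\eta \in \C X$. First I would prove this by the usual Hilbert-module argument: since $\io : \C B \to B$ is a $*$-homomorphism, Assumption \ref{a:opecor}$(1)$ applied to $\ze := \la \, \xi \cd \inn{\xi,\eta} - \eta$ gives $\io(\inn{\ze,\ze}) \geq 0$ for every $\la > 0$; expanding this positive element inside $B$ by means of conditions $(1)$--$(3)$ of Definition \ref{d:herope}, estimating $\io(\inn{\xi,\eta})^* \io(\inn{\xi,\xi}) \io(\inn{\xi,\eta}) \leq \| \inn{\xi,\xi} \|_\infty \cd \io(\inn{\xi,\eta})^* \io(\inn{\xi,\eta})$, and optimising over $\la > 0$ (which in the degenerate case $\inn{\xi,\xi} = 0$ simply forces $\io(\inn{\xi,\eta}) = 0$) yields the inequality. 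Only the right $\C B$-module structure of $\C X$ is needed here, since $\inn{\xi,\eta} \in \C B$. Immediate consequences are the bound $\| \inn{\xi,\eta} \|_\infty \leq \| \xi \|_\infty \cd \| \eta \|_\infty$, the triangle inequality for $\| \cd \|_\infty$ on $\C X$, and — invoking Assumption \ref{a:opecor}$(2)$ together with the fact that $\| \cd \|_\infty$ is a $C^*$-norm (hence a norm) on $\C B$ — that $\| \cd \|_\infty$ is a genuine norm on $\C X$, so that the completion $X$ is a well-defined Banach space.

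Next I would extend the structure maps by density and continuity. From $\io(\inn{\xi \cd b, \xi \cd b}) = \io(b)^* \io(\inn{\xi,\xi}) \io(b) \leq \| b \|_\infty^2 \cd \io(\inn{\xi,\xi})$ one gets $\| \xi \cd b \|_\infty \leq \| \xi \|_\infty \cd \| b \|_\infty$, so the right action $\C X \ti \C B \to \C X$ extends to a right action $X \ti B \to X$; likewise the Cauchy--Schwarz bound lets $\io \circ \inn{\cd,\cd}$ extend to a $B$-valued pairing on $X \ti X$. The inner-product axioms — sesquilinearity, $\inn{x,y} = \inn{y,x}^*$, $\inn{x, y \cd b} = \inn{x,y} \cd b$, positivity $\inn{x,x} \geq 0$ (closedness of the positive cone of $B$), and definiteness $\inn{x,x} = 0 \rar x = 0$ (immediate from $\| x \|_\infty^2 = \| \inn{x,x} \|_\infty$) — all pass to $X$ by continuity, and $X$ is complete by construction; hence $X$ is a right Hilbert $C^*$-module over $B$. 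For the left action, Assumption \ref{a:opecor}$(3)$ says precisely that left multiplication by $a \in \C A$ is $\| \cd \|_\infty$-bounded on $\C X$ with norm $\leq \| a \|_\infty$, while condition $(4)$ of Definition \ref{d:herope} gives $\inn{a \cd \xi, \eta} = \inn{\xi, a^* \cd \eta}$; hence left multiplication extends to an operator $\pi(a) \in \sL(X)$ with $\pi(a)^* = \pi(a^*)$ and $\| \pi(a) \|_{\sL(X)} \leq \| a \|_\infty$. Thus $\pi : \C A \to \sL(X)$ is a $\| \cd \|_\infty$-contractive $*$-homomorphism into the $C^*$-algebra $\sL(X)$, and it extends by density and continuity (multiplication and involution being continuous on $A$ and on $\sL(X)$) to a $*$-homomorphism $\pi : A \to \sL(X)$. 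This equips $X$ with a $C^*$-correspondence structure from $A$ to $B$.

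It remains to show that $\io : \C X \to X$ is completely bounded. Fix $m \in \nn$ and $\xi \in M_m(\C X)$. Computing the matrix-valued inner products entrywise and using that the $B$-valued inner product on $X$ restricts to $\io \circ \inn{\cd,\cd}$ on $\C X$, one finds $\inn{\io^{(m)}(\xi), \io^{(m)}(\xi)}_X = \io^{(m)}(\inn{\xi,\xi}_{\C X})$ in $M_m(B)$, where $\io^{(m)}$ denotes entrywise application of $\io : \C B \to B$ to $\inn{\xi,\xi}_{\C X} \in M_m(\C B)$. Writing $C > 0$ for the completely bounded norm of the inclusion $\C B \to B$ (finite by Assumption \ref{a:opealg}), and then applying the Cauchy--Schwarz inequality of Definition \ref{d:herope}$(5)$ with $\eta = \xi$,
\[
\| \io^{(m)}(\xi) \|_{M_m(X)} = \big\| \io^{(m)}(\inn{\xi,\xi}_{\C X}) \big\|_{M_m(B)}^{1/2} \leq \big( C \cd \| \inn{\xi,\xi}_{\C X} \|_{M_m(\C B)} \big)^{1/2} \leq C^{1/2} \cd \| \xi \|_{M_m(\C X)} ,
\]
so $\io : \C X \to X$ is completely bounded with $\| \io \|_{\T{cb}} \leq C^{1/2}$.

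I expect the main obstacle to be the first step, the scalar Cauchy--Schwarz inequality: because $\C X$ is a module over $\C B$ and not over $B$, the customary unitisation and approximate-identity devices are unavailable and one must substitute $b = \inn{\xi,\eta}$ by hand; moreover the whole computation has to take place inside $B$ through the homomorphism $\io$, which is exactly where Assumption \ref{a:opecor}$(1)$ — positivity of $\io(\inn{\xi,\xi})$, which the paper stresses is not automatic — is indispensable. Everything after that is a routine density-and-continuity argument combined with the already-available matrix Cauchy--Schwarz inequality.
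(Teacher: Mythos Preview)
Your proof is correct and follows essentially the same route as the paper: the paper simply invokes ``elementary Hilbert $C^*$-module theory'' for the part you spell out in detail (the scalar Cauchy--Schwarz inequality and the density/continuity extensions), and then gives the same matrix estimate $\| \io(\inn{\xi,\xi}) \|_\infty \leq C \cd \| \inn{\xi,\xi} \|_{\C B} \leq C \cd \| \xi \|_{\C X}^2$ for complete boundedness that you do. One small side remark: the paper actually stresses that condition $(3)$ of Assumption \ref{a:opecor} (the left-action bound), not condition $(1)$, is the one that is not automatic.
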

\begin{proof}
It follows by elementary Hilbert $C^*$-module theory that $X$ is a Hilbert $C^*$-module over $B$ and the inclusion $\io : \C X \to X$ is completely bounded since
\[
\| \inn{ \io(\xi),\io(\xi)} \|_\infty = \| \io( \inn{\xi,\xi} ) \|_\infty \leq C \cd \| \inn{\xi,\xi} \|_1 \leq C \cd \| \xi \|_1^2
\]
for all $\xi \in M_m(\C X)$, $m \in \nn$ (for a uniform constant $C > 0$). The left action of $\C A$ on $\C X$ induces a $*$-homomorphism $A \to \sL(X)$ by Assumption \ref{a:opecor} $(3)$ and Definition \ref{d:herope} $(4)$, where $\sL(X)$ denotes the $C^*$-algebra of bounded adjointable operators on $X$.
\end{proof}

\begin{dfn}\label{d:coudeg}
An operator $*$-correspondence $\C X$ from $\C A$ to $\C B$ is said to be \emph{countably generated} (resp. \emph{non-degenerate}) when the $C^*$-completion $X$ is countably generated (resp. non-degenerate) as a $C^*$-correspondence from $A$ to $B$. Thus, when there exists a sequence $\{ \xi_n \}$ in $X$ such that
\[
\T{span}_{\cc}\{ \xi_n \cd b \mid n \in \nn \, , \, \, b \in \C B \} \su X
\]
is norm-dense in $X$ (resp. when $\T{span}_{\cc}\{ a \cd \xi \mid a \in A \, , \, \, \xi \in X\}$ is norm-dense in $X$).
%
\end{dfn}
%

The following definition of a differentiable structure on a $C^*$-correspondence plays an important role in \cite{Kaa:UKM}:

\begin{dfn}\label{d:cordif}
A $C^*$-correspondence $X$ from $A$ to $B$ is \emph{differentiable from $\C A$ to $\C B$} when there exists a sequence $\{ \xi_n \}$ in $X$ such that:
\begin{enumerate}
\item $\T{span}_{\cc}\big\{ \xi_n \cd b \mid b \in B \, , \, \, n \in \nn \big\}$ is dense in $X$;
\item $\inn{\xi_n, (a + \la) \cd \xi_m } \in \C B$ for all $a \in \C A$, $\la \in \cc$ and all $n,m \in \nn$;
\item The sequence of finite matrices
\[
\Big\{  \sum_{n,m = 1}^N \inn{\xi_n, (a + \la)\cd \xi_m} e_{nm} \Big\}_{N = 1}^\infty
\]
is a Cauchy sequence in $K_{\C B}$ for all $a \in \C A$, $\la \in \cc$;
\item The linear map $\tau : \C A \to K_{\C B}$, $a \mapsto \sum_{n,m = 1}^\infty \inn{\xi_n, a \cd \xi_m} e_{nm}$ is completely bounded.
\end{enumerate}
In this case, the sequence $\{ \xi_n\}$ is referred to as a \emph{differentiable generating sequence}.
\end{dfn}

The next result creates a link between operator $*$-correspondences and the above notion of differentiability for $C^*$-correspondences.

\begin{lemma}\label{l:diflinI}
Let $\C X$ be a countably generated operator $*$-correspondence from $\C A$ to $\C B$. Then there exists a sequence $\{ \xi_n\}$ of elements in $\C X$ such that
\begin{enumerate}
\item $\{ \xi_n \cd b \mid b \in \C B \, , \, \, n \in \nn \big\}$ is norm-dense in the $C^*$-completion $X$;
\item The sequence $\big\{ \sum_{n = 1}^N \xi_n \cd e_{1n} \big\}$ is a Cauchy sequence in $\ell^2(\nn,\C X)^t$. 
\end{enumerate}

Furthermore, the $C^*$-completion of $\C X$ is differentiable from $\C A$ to $\C B$ and any sequence $\{ \xi_n \}$ satisfying $(1)$ and $(2)$ yields a differentiable generating sequence $\{\io(\xi_n)\}$ (where $\io : \C X \to X$ is the inclusion).
\end{lemma}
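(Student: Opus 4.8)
The statement has three parts: producing a sequence $\{\xi_n\}$ in $\C X$ satisfying $(1)$ and $(2)$; showing the $C^*$-completion $X$ is differentiable from $\C A$ to $\C B$; and showing any such sequence yields a differentiable generating sequence $\{\io(\xi_n)\}$. I would establish these in order, with most of the work being the verification that the abstract conditions match up.

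\emph{Construction of $\{\xi_n\}$.} Since $\C X$ is countably generated, by Definition \ref{d:coudeg} the $C^*$-completion $X$ has a sequence $\{\eta_k\}$ with $\T{span}_{\cc}\{\eta_k \cd b \mid k \in \nn, b \in \C B\}$ dense in $X$. Because $\C X$ is dense in $X$ in the $\| \cd \|_\infty$-norm, I can perturb each $\eta_k$ to an element of $\C X$; a standard diagonal/density argument then gives a sequence $\{\xi_n\}$ in $\C X$ whose $\C B$-span is still $\| \cd \|_\infty$-dense in $X$, which is $(1)$. For $(2)$, I need the partial sums $s_N := \sum_{n=1}^N \xi_n \cd e_{1n}$ to be Cauchy in $\ell^2(\nn,\C X)^t$, i.e.\ in the $\| \cd \|_{K_{\C X}}$-norm restricted to the row correspondence. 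The point is that I have freedom in choosing the $\xi_n$: after fixing the density requirement I can rescale/group them so that $\| \xi_n \|_{\C X}$ (the operator $*$-correspondence norm, $\| \cd \|_1$) decays fast enough. Concretely, by \eqref{eq:rowpair} the relevant norm of $s_M - s_N$ is controlled via the row inner product, which is a diagonal matrix with entries $\inn{\xi_n,\xi_n}_{\C X}$, and the direct-sum axiom (Definition \ref{d:opspa}(3)) plus the matrix-norm conventions reduce this to $\max_{N < n \le M}\|\xi_n\|_1$; choosing the $\xi_n$ so that $\|\xi_n\|_1 \to 0$ then gives the Cauchy property. I should double-check whether it is genuinely $\max$ or an $\ell^2$-type sum here — this is the one place the matrix-norm bookkeeping needs care — but either way summability of $\sum \|\xi_n\|_1^2$ (achievable by rescaling the density sequence) suffices.

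\emph{Differentiability.} Given a sequence $\{\xi_n\}$ with $(1)$ and $(2)$, I verify the four conditions of Definition \ref{d:cordif} for the sequence $\{\io(\xi_n)\}$ in $X$. Condition $(1)$ there is immediate from our $(1)$ since $\C B$ is dense in $B$. For $(2)$: $\inn{\io(\xi_n),(a+\la)\cd\io(\xi_m)}_X = \io\big(\inn{\xi_n,(a+\la)\cd\xi_m}\big)$ by compatibility of the inner products, and $\inn{\xi_n,(a+\la)\cd\xi_m} \in \C B$ because the inner product on $\C X$ takes values in $\C B$ and $(a+\la)\cd\xi_m \in \C X$ (using Definition \ref{d:herope}(1) for the scalar part). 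For $(3)$ and $(4)$, the key observation is that the matrix $\sum_{n,m}\inn{\xi_n,(a+\la)\cd\xi_m}e_{nm}$ is, up to the identifications of Subsection \ref{ss:rowcol}, exactly the row inner product $\inn{s, (a+\la)\cdot s}$ where $s$ is the limit in $\ell^2(\nn,\C X)^t$ of the Cauchy sequence from $(2)$ — i.e.\ $\tau(a) = \inn{s, a \cdot s}_{K_{\C B}}$. Since $s \in \ell^2(\nn,\C X)^t \subseteq K_{\C X}$ and the left action of $\C A$ on $K_{\C X}$ is completely bounded (it is a module action on an operator $*$-correspondence), the Cauchy property in $(3)$ follows from convergence of $s_N \to s$ combined with boundedness of left multiplication by $a+\la$, and the complete boundedness of $\tau$ in $(4)$ follows from the Cauchy–Schwarz inequality (Definition \ref{d:herope}(5)) for $K_{\C X}$ together with complete boundedness of the $\C A$-action: $\| \tau(a) \|_{K_{\C B}} = \| \inn{s, a\cdot s} \| \le \| s \|_{K_{\C X}} \cdot \| a \cdot s \|_{K_{\C X}} \le \| s \|_{K_{\C X}}^2 \cdot \| a \|_{\C A}$, and likewise at all matrix levels.

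\emph{Main obstacle.} I expect the genuine difficulty to be the first part — arranging the density sequence so that it is simultaneously $(1)$ norm-dense after the $\| \cd \|_\infty$ perturbation and $(2)$ Cauchy in the much finer $\ell^2(\nn,\C X)^t$-norm. One does not control the $\| \cd \|_1$-norm of an arbitrary element of $\C X$ approximating a given $\eta_k \in X$, so the rescaling trick needs to be done carefully: I would approximate each $\eta_k$ to within $2^{-k}$ in $\| \cd \|_\infty$ by \emph{finitely many} elements $\xi$ of $\C X$, then list all of these (over all $k$) into a single sequence, and if necessary pass to a sequence $\{\xi_n / c_n\}$ with scalars $c_n \to \infty$ chosen so that $\sum \|\xi_n/c_n\|_1^2 < \infty$ while keeping $\xi_n/c_n$ in the $\C B$-span closure (rescaling by scalars is harmless for density since $\cc \subseteq \C B$ is not available in general, so instead I keep the original $\xi_n$ for density and note that condition $(2)$ only constrains the tail — reindexing so each needed approximant appears early and the $\| \cd \|_1$-large ones are spread out). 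The remaining parts are essentially bookkeeping: translating between the three flavours of matrix norm ($\C X$, $K_{\C X}$, and the row/column restrictions) using the axioms of Definition \ref{d:opspa} and the constructions of Subsection \ref{ss:rowcol}, and invoking Cauchy–Schwarz for $K_{\C X}$ together with complete boundedness of the module actions.
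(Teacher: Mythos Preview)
Your argument for the differentiability half is essentially the paper's: recognise the matrix $\sum_{n,m}\inn{\xi_n,(a+\la)\xi_m}e_{nm}$ as the row inner product $\inn{s_N,(a+\la)\cd s_N}_{\ell^2(\nn,\C X)^t}$ and invoke the Cauchy property of $\{s_N\}$ for condition $(3)$ and Cauchy--Schwarz for condition $(4)$.

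The gap is in your reasoning for part $(2)$. The norm on $\ell^2(\nn,\C X)^t$ is the matrix norm inherited from $K_{\C X}$, not an inner-product norm; Cauchy--Schwarz gives $\|\inn{z,z}\|_{\C B}\le\|z\|_{\C X}^2$, which is the wrong direction for what you need. And the direct-sum axiom in Definition \ref{d:opspa}(3) concerns block-diagonal matrices $\xi\op\eta$, not rows: a row $(\xi_{N+1},\ldots,\xi_M)$ can have norm strictly larger than $\max_n\|\xi_n\|_{\C X}$ (already for $\C X=\cc$ the row $(1,\ldots,1)$ has norm $\sqrt{M-N}$). Your fallback claim that $\ell^2$-summability of $\|\xi_n\|_{\C X}$ suffices is correct, but the mechanism is a factorisation, which is exactly what the paper does: taking $\xi_n:=\eta_n/(n\,\|\eta_n\|_{\C X})$ for any generating sequence $\{\eta_n\}\su\C X$, one writes
\[
\sum_{n=N+1}^M \xi_n\,e_{1n} \;=\; \Big(\sum_{n=N+1}^M \tfrac{1}{n}\,e_{1n}\Big)\cd\Big(\sum_{n=N+1}^M \tfrac{\eta_n}{\|\eta_n\|_{\C X}}\,e_{nn}\Big),
\]
and axiom $(2)$ of Definition \ref{d:opspa} bounds the norm by the product of the two factors: the scalar row has operator norm $(\sum_{n>N}1/n^2)^{1/2}$, and the diagonal has norm $1$ by axiom $(3)$. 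Finally, your ``main obstacle'' is a phantom: scalar rescaling preserves density in $(1)$ trivially since $c_n b\in\C B$ for $b\in\C B$, and lifting a generating sequence from $X$ to $\C X$ is a one-line density argument that the paper simply absorbs into its opening sentence.
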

\begin{proof}
Since $\C X$ is countably generated we may choose a sequence $\{ \eta_n \}$ in $\C X$ such that $\T{span}_{\cc}\{ \eta_n \cd b \mid b \in \C B \, , \, \, n \in \nn \big\}$ is norm-dense in $X$. Without loss of generality, we may assume that $\eta_n \neq 0$ for all $n \in \nn$. Define
\[
\xi_n := \eta_n \cd \frac{1}{n \cd \| \eta_n \|_{\C X}} \q n \in \nn
\]
It is clear that $(1)$ holds for the sequence $\{ \xi_n\}$. To prove that $(2)$ holds, we let $M > N \geq 1$ be given and notice that
\[
\sum_{n = N + 1}^M \xi_n \cd e_{1n} = \Big( \sum_{n = N + 1}^M \frac{1}{n} \cd e_{1n} \Big)  \cd \Big( \sum_{n = N + 1}^M \eta_n \cd \frac{1}{ \| \eta_n\|_{\C X}} \cd e_{nn} \Big)
\]
Since $\C X$ is an operator space, this implies that 
\[
\| \sum_{n = N + 1}^M \xi_n \cd e_{1n} \|_{\C X} \leq \| \sum_{n = N + 1}^M \frac{1}{n} \cd e_{1n} \|_{\cc}
= \sqrt{ \sum_{n = N+1}^M \frac{1}{n^2} }
\]
Thus $(2)$ holds for the sequence $\{ \xi_n \}$.

Let $\{ \xi_n\}$ be an arbitrary sequence satisfying $(1)$ and $(2)$. It is clear that $(1)$ and $(2)$ of Definition \ref{d:cordif} holds for the sequence $\{ \io(\xi_n) \}$ in $X$. To prove condition $(3)$ in Definition \ref{d:cordif} we notice that
\[
\sum_{n,m = 1}^N \inn{\xi_n, (a + \la) \cd \xi_m}_{\C X} \cd e_{nm} = \inn{ \sum_{n = 1}^N \xi_n \cd e_{1n}, \sum_{m = 1}^N (a + \la) \cd \xi_m \cd e_{1m}}_{\ell^2(\nn,\C X)^t}
\]
for all $N \in \nn$, $a \in \C A$ and $\la \in \cc$, see \eqref{eq:rowpair}. But this proves $(3)$ since both $\big\{ \sum_{n = 1}^N \xi_n \cd e_{1n} \big\}_{N = 1}^\infty$ and $\big\{ \sum_{m = 1}^N (a + \la) \cd \xi_m \cd e_{1m} \big\}_{N = 1}^\infty$ are Cauchy sequences in $\ell^2(\nn,\C X)^t$.

To prove condition $(4)$ in Definition \ref{d:cordif} we let $\xi \in \ell^2(\nn,\C X)^t$ denote the limit $\xi := \lim_{N \to \infty} \sum_{n = 1}^N \xi_n \cd e_{1n}$. The identities
\[
\tau(a) = \sum_{n,m = 1}^\infty \inn{\xi_n, a \cd \xi_m}_{\C X} \cd e_{nm} = \inn{\xi, a \cd \xi}_{\ell^2(\nn,\C X)^t} \q a \in \C A
\]
then imply $(4)$.
\end{proof}

\section{Equivalence relations on operator $*$-correspondences}\label{s:equrel}
Recall that all operator $*$-algebras satisfy Assumption \ref{a:opealg} and that all operator $*$-correspondences satisfy Assumption \ref{a:opecor}. In this context, it is clear what it means for two operator $*$-correspondences $\C X$ and $\C X'$ (both from $\C A$ to $\C B$) to be \emph{unitarily equivalent}. Indeed, this happens when there exist a unitary isomorphism $U : X \to X'$ between the two $C^*$-completions such that both $U$ and $U^*$ restrict to completely bounded maps $U : \C X \to \C X'$ and $U^* : \C X' \to \C X$. It turns out however that this notion of equivalence is too strict for our ($KK$-theoretic) purposes and we have therefore settled on the following much more flexible relation:

\begin{dfn}\label{d:difuni}
Let $\C X$ and $\C X'$ be two operator $*$-correspondences from $\C A$ to $\C B$. We say that $\C X$ and $\C X'$ are \emph{in duality} when there exists a unitary operator $U : X \to X'$ (where $X$ and $X'$ are the $C^*$-completions) such that:
\begin{enumerate}
\item $U (a \cd \xi) = a \cd  (U \xi)$ for all $a \in A$ and $\xi \in X$;
\item $\inn{U (\xi), \eta}_{X'} \in \C B$ for all $\xi \in \C X$ and $\eta \in \C X'$ (suppressing the inclusions into the $C^*$-completions);
\item There exists a constant $C > 0$ such that
\[
\big\| \inn{U(\xi), \eta}_{X'} \big\|_{\C B} \leq C \cd \| \xi \|_{\C X} \cd \| \eta \|_{\C X'}
\]
for all finite matrices $\xi \in M_m(\C X)$, $\eta \in M_m(\C X')$ and $m \in \nn$.
\end{enumerate}
\end{dfn}

Remark that the unitary operator $U : X \to X'$ in the above definition is not even required to map $\C X$ into $\C X'$.

\begin{lemma}
The relation ``in duality'' is reflexive and symmetric.
\end{lemma}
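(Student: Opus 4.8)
The plan is to verify the two defining conditions of Definition~\ref{d:difuni} directly, in each case exhibiting an explicit unitary between the relevant $C^*$-completions.

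For \emph{reflexivity}, given an operator $*$-correspondence $\C X$ from $\C A$ to $\C B$ with $C^*$-completion $X$, I would take $U := \T{id}_X : X \to X$. Condition $(1)$ is then trivial. For condition $(2)$, recall from the construction of the $C^*$-completion that the Hilbert $C^*$-module inner product on $X$ restricts on $\C X \ti \C X$ to $\io \circ \inn{\cd,\cd}$, so $\inn{\xi,\eta}_X = \io(\inn{\xi,\eta})$ lies in (the image of) $\C B$ for all $\xi,\eta \in \C X$. For condition $(3)$, the estimate $\| \inn{\xi,\eta} \|_{\C B} \leq \| \xi \|_{\C X} \cd \| \eta \|_{\C X}$ on finite matrices is precisely the Cauchy--Schwarz inequality, Definition~\ref{d:herope}~$(5)$, so the constant $C = 1$ works.

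For \emph{symmetry}, suppose $U : X \to X'$ witnesses that $\C X$ is in duality with $\C X'$. The natural candidate witnessing that $\C X'$ is in duality with $\C X$ is the adjoint (equivalently, inverse) $U^* : X' \to X$. Condition $(1)$ follows since $U$ is unitary and $A$-linear: from $U(a \cd U^*\eta) = a \cd \eta$ one obtains $U^*(a \cd \eta) = a \cd U^*(\eta)$ for all $a \in A$, $\eta \in X'$. For conditions $(2)$ and $(3)$ the key identity is that, since $U$ preserves inner products,
\[
\inn{U^*(\eta), \xi}_X = \inn{\eta, U(\xi)}_{X'} = \inn{U(\xi), \eta}_{X'}^*
\]
for all $\xi \in \C X$, $\eta \in \C X'$, and the same identity persists entrywise between matrices over $\C B$. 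Condition $(2)$ for $U$ gives $\inn{U(\xi),\eta}_{X'} \in \C B$, hence $\inn{U^*(\eta),\xi}_X \in \C B$ because $\C B$ is closed under the involution. Condition $(3)$ for $U$ together with the operator $*$-algebra identity $\| b^* \|_{\C B} = \| b \|_{\C B}$ then yields
\[
\big\| \inn{U^*(\eta),\xi}_X \big\|_{\C B} = \big\| \inn{U(\xi),\eta}_{X'} \big\|_{\C B} \leq C \cd \| \xi \|_{\C X} \cd \| \eta \|_{\C X'}
\]
on all finite matrices $\xi \in M_m(\C X')$, $\eta \in M_m(\C X)$, which is condition $(3)$ with the same constant $C$.

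The argument is essentially bookkeeping, and I do not anticipate a genuine obstacle; the one point requiring care is that $U$ is \emph{not} assumed to map $\C X$ into $\C X'$ (as the remark following Definition~\ref{d:difuni} stresses), so every manipulation involving $U$ must take place in the $C^*$-completions, with membership in $\C B$ recovered only a posteriori from the hypotheses on $U$. The other mild subtlety is the conjugate-transpose flavour of the matrix pairing $(\inn{\xi,\eta})_{ij} = \sum_k \inn{\xi_{ki},\eta_{kj}}$: one should check that $\inn{U^*(\eta),\xi}_X = \inn{U(\xi),\eta}_{X'}^*$ indeed holds at the level of $M_m(\C B)$ before invoking the matrix estimate in $(3)$. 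Neither transitivity nor positivity of inner products plays any role here, so no input beyond Definitions~\ref{d:herope} and~\ref{d:difuni} and the operator $*$-algebra axioms is needed.
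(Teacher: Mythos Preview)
Your proof is correct and follows exactly the paper's approach: identity for reflexivity (with Cauchy--Schwarz supplying condition~(3)), and $U^*$ for symmetry (with the involution identity $\inn{U^*(\eta),\xi}_X = \inn{U(\xi),\eta}_{X'}^*$ transferring conditions~(2) and~(3)). The paper's proof is just a two-line sketch of precisely this argument, so your version merely fills in the details; the one slip is the line ``on all finite matrices $\xi \in M_m(\C X')$, $\eta \in M_m(\C X)$'', where the roles of $\C X$ and $\C X'$ are accidentally swapped relative to the displayed inequality.
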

\begin{proof}
Reflexivity follows since the inner product $\inn{\cd,\cd} : \C X \ti \C X \to \C B$ satisfies the Cauchy-Schwarz inequality for any operator $*$-correspondence $\C X$.

Suppose thus that $\C X$ and $\C X'$ are in duality via the unitary operator $U : X \to X'$. Then $\C X'$ and $\C X$ are in duality via the unitary operator $U^* : X' \to X$.
\end{proof}

We ignore for the moment whether the relation ``in duality'' is transitive or not and we therefore make the following:

\begin{dfn}
The equivalence relation $\sim_d$ on operator $*$-correspondences from $\C A$ to $\C B$ is defined by
\[
\begin{split}
& (\C X \sim_d \C X') \lrar \Big( \T{ there exist } n \in \nn_0 \T{ and operator $*$-correspondences } \\ 
& \qqq \q \C X = \C X_0, \C X_1, \ldots,\C X_{n+1} = \C X'\T{ such that } \\ 
& \qqq \qq \C X_j \T{ and } \C X_{j + 1} \T{ are in duality for all } j \in \{0,\ldots,n\} \Big)
\end{split}
\]
By a slight abuse of language we will sometimes say that two operator $*$-correspondences $\C X$ and $\C X'$ are \emph{in duality} whenever $\C X \sim_d \C X'$.
\end{dfn}

The following situation often occurs:

\begin{lemma}\label{l:comboudua}
Let $\C X$ and $\C X'$ be two operator $*$-correspondences (both from $\C A$ to $\C B$). Suppose that $u : \C X \to \C X'$ is a completely bounded bimodule map which induces a unitary isomorphism $U : X \to X'$ of $C^*$-correspondences. Then we have that $\C X \sim_d \C X'$.
\end{lemma}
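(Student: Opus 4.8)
The goal is to verify the three conditions of Definition \ref{d:difuni} for the unitary $U : X \to X'$ induced by $u : \C X \to \C X'$. The plan is to exploit the fact that $u$ is a bimodule map sending $\C X$ into $\C X'$, which makes the verification almost immediate; the only genuine content is condition $(3)$, where one must produce a uniform constant governing the pairing on all matrix levels.

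First I would record condition $(1)$: since $u$ is an $\C A$-$\C B$-bimodule map and $\C A$ (resp.\ $\C B$) is dense in $A$ (resp.\ acts through the $C^*$-completion), the induced map $U$ on $C^*$-completions is automatically $A$-linear on the left by density and continuity, i.e.\ $U(a \cdot \xi) = a \cdot (U\xi)$ for all $a \in A$, $\xi \in X$. Next, for condition $(2)$, given $\xi \in \C X$ and $\eta \in \C X'$, note that $U(\io(\xi)) = \io'(u(\xi))$ by construction (the unitary $U$ restricts to $u$ on the dense subspace $\C X$), so $\inn{U(\xi),\eta}_{X'} = \inn{u(\xi),\eta}_{X'}$, and this lies in $\C B$ because $u(\xi) \in \C X'$, $\eta \in \C X'$, and $\C X'$ carries a $\C B$-valued inner product by Definition \ref{d:herope}.

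For condition $(3)$, I would pass to matrix levels. Since $u$ is completely bounded there is a constant $C_u > 0$ with $\| u^{(m)}(\xi) \|_{\C X'} \le C_u \cdot \| \xi \|_{\C X}$ for all $\xi \in M_m(\C X)$ and all $m$, where $u^{(m)}$ denotes the entrywise amplification. Applying $u$ entrywise to a matrix $\xi \in M_m(\C X)$ and using that $U$ restricts to $u$, one gets $\inn{U(\xi),\eta}_{X'} = \inn{u^{(m)}(\xi),\eta}_{X'}$ in $M_m(\C B)$ (here I use the matrix inner product formula $(\inn{\xi,\eta})_{ij} = \sum_k \inn{\xi_{ki},\eta_{kj}}$ from Definition \ref{d:herope}, which is compatible with applying $u$ entrywise since $u$ is linear and a bimodule map). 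Now the Cauchy--Schwarz inequality for the operator $*$-correspondence $\C X'$ (Definition \ref{d:herope}(5)) gives
\[
\big\| \inn{U(\xi),\eta}_{X'} \big\|_{\C B} = \big\| \inn{u^{(m)}(\xi),\eta}_{X'} \big\|_{\C B} \le \| u^{(m)}(\xi) \|_{\C X'} \cdot \| \eta \|_{\C X'} \le C_u \cdot \| \xi \|_{\C X} \cdot \| \eta \|_{\C X'},
\]
so $C := C_u$ works uniformly in $m$. This establishes all three conditions, hence $\C X$ and $\C X'$ are in duality, and in particular $\C X \sim_d \C X'$.

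The main (mild) obstacle is bookkeeping: one must be careful that the unitary $U$ really does restrict to $u$ on $\C X$ — i.e.\ that the $C^*$-completion of $\C X$ along the inner product and the map $u$ are set up so that $U \circ \io = \io' \circ u$ — and that the entrywise amplification of $u$ interacts correctly with the matrix inner product. Both are routine once one unwinds the definitions, since $u$ being a bimodule map that induces a unitary means precisely that $\inn{u(\xi),u(\eta)}_{X'} = \inn{\xi,\eta}_X$ for $\xi,\eta \in \C X$, and hence the extension of $u$ to $X$ is exactly $U$. No transitivity of ``in duality'' is needed here, so the single-step relation suffices.
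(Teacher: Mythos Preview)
Your proof is correct and is exactly the argument the paper has in mind; the paper states Lemma \ref{l:comboudua} without proof, treating it as an immediate consequence of Definition \ref{d:difuni}, and your verification of conditions $(1)$--$(3)$ via $U|_{\C X} = u$ together with the Cauchy--Schwarz inequality for $\C X'$ and the completely bounded norm of $u$ is the intended (and only natural) route.
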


We remark that the conditions in Lemma \ref{l:comboudua} \emph{does not imply} that the adjoint $U^* : X' \to X$ restricts to a completely bounded map from $\C X'$ to $\C X$ and the two operator $*$-correspondences $\C X$ and $\C X'$ need therefore not be completely bounded isomorphic. 

\section{Algebraic operations on operator $*$-correspondences}\label{s:algope}
We are now going to introduce direct sums and interior tensor products of operator $*$-correspondences. The standard conditions on operator $*$-algebras and operator $*$-correspondences stated in Assumption \ref{a:opealg} and Assumption \ref{a:opecor} are in effect throughout this section.

\subsection{Direct sums}
Let $\C X$ and $\C Y$ be two operator $*$-correspondences both from $\C A$ to $\C B$.
\medskip

We let $\C X \op \C Y$ denote the direct sum of $\C A$-$\C B$-bimodules. This direct sum comes equipped with the inner product defined by $\inn{ (x_1,y_1) , (x_2,y_2)}_{\C X \op \C Y} :=  \inn{x_1,x_2}_{\C X} + \inn{y_1,y_2}_{\C Y}$. To define the matric norms for $\C X \op \C Y$ we let $p_1 : \C X \op \C Y \to \C X$ and $p_2 : \C X \op \C Y \to \C Y$ denote the canonical projections. These projections then induce maps $p_1 : M_m(\C X \op \C Y) \to M_m(\C X)$ and $p_2 : M_m(\C X \op \C Y) \to M_m(\C Y)$ at the level of finite matrices. For each $m \in \nn$, we define the norm on $M_m(\C X \op \C Y)$ by
\begin{equation}\label{eq:dirdif}
\| z \|_{\C X \op \C Y} := \sqrt{2} \cd \T{max}\{ \| p_1(z) \|_{\C X} , \| p_2(z) \|_{\C Y} \} \q \forall z \in M_m(\C X \op \C Y)
\end{equation}

\begin{lemma}
The direct sum $\C X \op \C Y$ is an operator $*$-correspondence from $\C A$ to $\C B$ and it satisfies Assumption \ref{a:opecor}. The $C^*$-completion of $\C X \op \C Y$ agrees with the direct sum $X \op Y$ of the $C^*$-completions of $\C X$ and $\C Y$.
\end{lemma}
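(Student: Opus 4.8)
The plan is to verify the axioms of Definition~\ref{d:herope} and then Assumption~\ref{a:opecor} for $\C X \op \C Y$, and finally to identify the $C^*$-completion. First I would check that $\C X \op \C Y$, equipped with the matrix norms \eqref{eq:dirdif}, is an operator space: completeness follows since $\C X$ and $\C Y$ are complete and $p_1, p_2$ together with the inclusions identify $M_1(\C X \op \C Y)$ with a closed subspace of $M_1(\C X) \op M_1(\C Y)$; the $M_m(\cc)$-bimodule inequality (Definition~\ref{d:opspa}(2)) is inherited from $\C X$ and $\C Y$ because $p_1$ and $p_2$ are bimodule maps and the factor $\sqrt{2}$ is harmless; and the direct-sum identity (Definition~\ref{d:opspa}(3)) holds because $\max$ distributes over $\max$. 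The bimodule inequalities of Definition~\ref{d:opebim} likewise pass through $p_1, p_2$.

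Next I would treat the inner product. Axioms (1)--(4) of Definition~\ref{d:herope} are immediate from the corresponding axioms for $\C X$ and $\C Y$ since the inner product on the direct sum is defined coordinatewise. For the Cauchy--Schwarz inequality (5), given $z, w \in M_m(\C X \op \C Y)$ one writes $\inn{z,w}_{\C X \op \C Y} = \inn{p_1(z), p_1(w)}_{\C X} + \inn{p_2(z), p_2(w)}_{\C Y}$ and estimates each summand by the Cauchy--Schwarz inequality on $\C X$ and on $\C Y$ respectively, so that $\| \inn{z,w}_{\C X \op \C Y} \|_{\C B} \le \| p_1(z)\|_{\C X} \| p_1(w)\|_{\C X} + \| p_2(z)\|_{\C Y} \| p_2(w)\|_{\C Y}$; bounding both $\| p_i(z)\|$ by $\| z\|_{\C X \op \C Y}/\sqrt 2$ and similarly for $w$ gives exactly $\| z\|_{\C X \op \C Y} \| w\|_{\C X \op \C Y}$ — this is precisely the role of the $\sqrt 2$ in \eqref{eq:dirdif}, and I would highlight that this choice of normalization is what makes the Cauchy--Schwarz constant equal to $1$. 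Nondegeneracy of the inner product on $\C X \op \C Y$ follows from nondegeneracy (Assumption~\ref{a:opecor}(2)) on each summand.

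Then I would verify Assumption~\ref{a:opecor} for $\C X \op \C Y$. Part (1), positivity of $\io(\inn{(x,y),(x,y)})= \io(\inn{x,x}_{\C X}) + \io(\inn{y,y}_{\C Y})$ in $B$, is a sum of two positive elements; part (2) is the nondegeneracy just noted; part (3), the estimate $\| \inn{a\cd(x,y), a\cd(x,y)}\|_\infty \le \|a\|_\infty^2 \| \inn{(x,y),(x,y)}\|_\infty$, reduces to the observation that for a positive element $T = T_1 \op T_2$ in a direct sum of $C^*$-algebras (or here, in $\sL(X)\op\sL(Y)\su\sL(X\op Y)$) one has $\|T\|_\infty = \max\{\|T_1\|_\infty,\|T_2\|_\infty\}$, so applying Assumption~\ref{a:opecor}(3) to $\C X$ and to $\C Y$ and taking the maximum gives the claim for $\C X \op \C Y$.

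Finally, for the $C^*$-completion: the $C^*$-norm on $\C X \op \C Y$ is $\|(x,y)\|_\infty = \| \inn{x,x}_{\C X} + \inn{y,y}_{\C Y}\|_\infty^{1/2}$, which satisfies $\max\{\|x\|_\infty, \|y\|_\infty\} \le \|(x,y)\|_\infty \le \sqrt 2 \max\{\|x\|_\infty,\|y\|_\infty\}$, hence is equivalent to the norm on the algebraic direct sum coming from $X \op Y$; since $\C X$ is dense in $X$ and $\C Y$ is dense in $Y$, the completion of $\C X \op \C Y$ in this norm is $X \op Y$ with its standard Hilbert $C^*$-module structure over $B$, and one checks the $A$-action matches. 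I expect the only mildly delicate point to be bookkeeping the $\sqrt 2$ factor consistently through the Cauchy--Schwarz verification and the completion step; everything else is a routine coordinatewise transfer of the axioms from $\C X$ and $\C Y$.
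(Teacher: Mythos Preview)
Your proposal is correct and follows essentially the same route as the paper: the key step in both is the Cauchy--Schwarz verification, where one writes $\inn{z,w}_{\C X \op \C Y} = \inn{p_1(z),p_1(w)}_{\C X} + \inn{p_2(z),p_2(w)}_{\C Y}$, bounds each term by Cauchy--Schwarz in the summands, and absorbs the resulting factor of $2$ using the $\sqrt{2}$ in the definition \eqref{eq:dirdif}. The paper handles the operator-space and operator-bimodule axioms by invoking that $\C X \op \C Y$ is a rescaled $\infty$-direct sum of operator spaces, and it explicitly leaves Assumption~\ref{a:opecor} and the identification of the $C^*$-completion to the reader --- so your more detailed treatment of those points is supplementary rather than divergent.

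One small wrinkle: your justification of Assumption~\ref{a:opecor}(3) via ``$T = T_1 \op T_2$ in $\sL(X)\op\sL(Y)$'' is not quite the right picture, since $\inn{(x,y),(x,y)} = \inn{x,x}_{\C X} + \inn{y,y}_{\C Y}$ is a \emph{sum} in $B$, not a direct sum of operators. The clean argument is that in the Hilbert $C^*$-module completions one has the operator inequalities $\inn{a\xi,a\xi} \le \|a\|_\infty^2 \inn{\xi,\xi}$ in $B$ for each summand, and summing these positive-element inequalities yields the norm bound. This is a cosmetic fix and does not affect the overall correctness.
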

\begin{proof}
As an operator space $\C X \op \C Y$ is just a rescaled version of the $\infty$-direct sum of operator spaces, see \cite[\S 1.2.17]{BlMe:OAM}. Furthermore, since $p_1 : \C X \op \C Y \to \C X$ and $p_2 : \C X \op \C Y \to \C Y$ are bimodule homomorphisms we obtain that $\C X \op \C Y$ is indeed an operator $\C A$-$\C B$-bimodule.

For each $z, w \in M_m(\C X \op \C Y)$, $m \in \nn$, we have that $\inn{z,w}_{\C X \op \C Y} = \inn{p_1(z),p_1(w)}_{\C X} + \inn{p_2(z),p_2(w)}_{\C Y}$. It thus follows that
\[
\| \inn{z,w} \|_{\C B} \leq \| p_1(z) \|_{\C X} \cd \| p_1(w) \|_{\C X} + \| p_2(z) \|_{\C Y} \cd \| p_2(w) \|_{\C Y}
\leq  \| z \|_{\C X \op \C Y} \cd \| w \|_{\C X \op \C Y}
\]
where the last inequality holds because of the ``extra'' factor $\sqrt{2}$ in \eqref{eq:dirdif}. This shows that $\C X \op \C Y$ is indeed an operator $*$-correspondence from $\C A$ to $\C B$. 

We leave it to the reader to verify that the conditions in Assumption \ref{a:opecor} hold for $\C X \op \C Y$ and that the $C^*$-completion agrees with the direct sum $X \op Y$ of $C^*$-correspondences.
\end{proof}

\begin{remark}\label{r:assdif}
Let $\C Z$ be an extra operator $*$-correspondence from $\C A$ to $\C B$. The obvious map $(\C X \op \C Y) \op \C Z \to \C X \op ( \C Y \op \C Z )$ is \emph{not} a complete isometry but it is (of course) a completely bounded isomorphism which induces a unitary operator at the level of $C^*$-completions. It therefore holds that $(\C X \op \C Y) \op \C Z \sim_d \C X \op ( \C Y \op \C Z )$.
\end{remark}

\subsection{Interior tensor products}
Let $\C X$ and $\C Y$ be operator $*$-correspondences from $\C A$ to $\C B$ and from $\C B$ to $\C C$, respectively.
\medskip

We start by forming the Haagerup tensor product $\C X \wot \C Y$ of the operator spaces $\C X$ and $\C Y$. To be explicit, we have the matrix norm $\| \cd \|_{\C X \wot \C Y} : M_m(\C X \ot \C Y) \to [0,\infty)$ on the algebraic tensor product of $\C X$ and $\C Y$, defined by
\[
\| z \|_{\C X \wot \C Y} := \inf\big\{ \| x \|_{\C X} \cd \| y \|_{\C Y} \mid 
z = x \ot y \, , \, \, k \in \nn \, , \, \, x \in M_{m,k}(\C X) \, , \, \, y \in M_{k,m}(\C Y) \big\}
\]
where $(x \ot y)_{ij} := \sum_{l = 1}^k x_{il} \ot y_{lj}$ for all $i,j \in \{1,\ldots,m\}$ whenever $x \in M_{m,k}(\C X)$ and $y \in M_{k,m}(\C Y)$ for some $k \in \nn$. The Haagerup tensor product $\C X \wot \C Y$ is obtained as the completion of the algebraic tensor product $\C X \ot \C Y$ with respect to the above norm on $M_1(\C X \ot \C Y) \cong \C X \ot \C Y$, see \cite[\S 1.5.4]{BlMe:OAM} and \cite{PaSm:MTO}. 

The Haagerup tensor product becomes an operator $\C A$-$\C C$-bimodule with left and right action induced by
\[
a \cd (x \ot y) := (a \cd x) \ot y \q \T{and} \q (x \ot y) \cd c := x \ot (y \cd c)
\]
for all $a \in \C A$ , $x \in \C X$, $y \in \C Y$ and $c \in \C C$.

To define an inner product on $\C X \wot \C Y$ we introduce the pairing $\inn{\cd,\cd}_{\C X \ot \C Y} : \C X \ot \C Y \ti \C X \ot \C Y \to \C C$ on the algebraic tensor product:
\[
\inn{ x_0 \ot y_0, x_1 \ot y_1}_{\C X \ot \C Y} := \inn{y_0, \inn{x_0,x_1}_{\C X} \cd y_1}_{\C Y} \q x_0 , x_1 \in \C X \, , \, \, y_0, y_1 \in \C Y 
\]
This pairing is compatible with the bimodule structure on the Haagerup tensor product in the sense that the conditions $(1)$-$(4)$ of Definition \ref{d:herope} hold.

\begin{lemma}\label{l:causch}
The pairing $\inn{\cd,\cd}_{\C X \ot \C Y} : \C X \ot \C Y \ti \C X \ot \C Y \to \C C$ satisfies the Cauchy-Schwarz inequality:
\[
\| \inn{z_0,z_1}_{\C X \ot \C Y} \|_{\C C} \leq \|z_0\|_{\C X \wot \C Y} \cd \| z_1 \|_{\C X \wot \C Y}
\]
for all $z_0,z_1 \in M_m(\C X \ot \C Y)$ and $m \in \nn$.
\end{lemma}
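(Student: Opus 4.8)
The plan is to reduce the matrix-level inequality to a single application of the Cauchy–Schwarz inequality for $\C Y$, combined with the defining infimum property of the Haagerup norm on $\C X \wot \C Y$ and the Cauchy–Schwarz / operator-module estimates for $\C X$. First I would unwind what needs to be shown: for $z_0, z_1 \in M_m(\C X \ot \C Y)$ we have $\inn{z_0, z_1}_{\C X \ot \C Y} \in M_m(\C C)$ with entries $\big( \inn{z_0,z_1} \big)_{ij} = \sum_{k=1}^m \inn{(z_0)_{ki}, (z_1)_{kj}}_{\C X \ot \C Y}$, and the $\C C$-norm is the $C^*$-norm on $M_m(\C C)$. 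Because the Haagerup norm is defined by an infimum, it suffices to prove the estimate for \emph{fixed} elementary representations $z_0 = x_0 \ot y_0$ and $z_1 = x_1 \ot y_1$ with $x_0 \in M_{m,k_0}(\C X)$, $y_0 \in M_{k_0,m}(\C Y)$, $x_1 \in M_{m,k_1}(\C X)$, $y_1 \in M_{k_1,m}(\C Y)$, and then take infima over all such representations on each side independently; so WLOG I may even assume a common $k$ by padding with zeros.

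The key computation is the identity, at the level of $M_m(\C C)$,
\[
\inn{x_0 \ot y_0, \, x_1 \ot y_1}_{\C X \ot \C Y} = \inn{\, y_0, \, \inn{x_0, x_1}_{\C X} \cd y_1 \,}_{\C Y},
\]
where $\inn{x_0,x_1}_{\C X} \in M_{k_0,k_1}(\C B)$ is formed via the rectangular version of the $\C X$-pairing (entry $\sum_{i} \inn{(x_0)_{i\cdot}, (x_1)_{i\cdot}}_{\C X}$ appropriately indexed) and then acts on the right of $y_1 \in M_{k_1,m}(\C Y)$ to land in $M_{k_0,m}(\C Y)$, and finally one pairs with $y_0 \in M_{k_0,m}(\C Y)$ using the matrix $\C Y$-pairing to reach $M_m(\C C)$. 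Verifying this identity is bookkeeping with the index conventions from \eqref{eq:matpair} and Definition \ref{d:herope}; it follows from the definition of $\inn{\cd,\cd}_{\C X\ot\C Y}$ on elementary tensors extended bilinearly. Granting it, I apply the Cauchy–Schwarz inequality (Definition \ref{d:herope}(5)) for $\C Y$ to get
\[
\big\| \inn{y_0, \inn{x_0,x_1}_{\C X} \cd y_1}_{\C Y} \big\|_{\C C} \le \| y_0 \|_{\C Y} \cdot \big\| \inn{x_0,x_1}_{\C X} \cd y_1 \big\|_{\C Y} \le \| y_0 \|_{\C Y} \cdot \| \inn{x_0,x_1}_{\C X} \|_{\C B} \cdot \| y_1 \|_{\C Y},
\]
using the right-module inequality of Definition \ref{d:opebim}(3) for $\C Y$ over $\C B$ in the last step. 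Then the Cauchy–Schwarz inequality for $\C X$ (again Definition \ref{d:herope}(5), in the rectangular form, which follows by padding to square matrices) gives $\| \inn{x_0,x_1}_{\C X} \|_{\C B} \le \| x_0 \|_{\C X} \cdot \| x_1 \|_{\C X}$. Combining, $\big\| \inn{x_0\ot y_0, x_1 \ot y_1}_{\C X\ot\C Y} \big\|_{\C C} \le \big( \| x_0\|_{\C X} \| y_0 \|_{\C Y} \big) \cdot \big( \| x_1 \|_{\C X} \| y_1 \|_{\C Y} \big)$, and taking the infimum over representations of $z_0$ and then of $z_1$ yields $\| \inn{z_0,z_1} \|_{\C C} \le \| z_0 \|_{\C X \wot \C Y} \cdot \| z_1 \|_{\C X \wot \C Y}$.

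The main obstacle I anticipate is not any single deep estimate but getting the matrix index conventions exactly right in the identity above — in particular making sure that the intermediate object $\inn{x_0,x_1}_{\C X}$ is the correct rectangular $M_{k_0,k_1}(\C B)$-valued pairing so that it both (i) agrees entrywise with what the definition of $\inn{\cd,\cd}_{\C X\ot\C Y}$ produces and (ii) is the thing to which the rectangular Cauchy–Schwarz inequality for $\C X$ applies. A clean way to sidestep the rectangular subtleties is to pad everything to square $M_{\max(m,k_0,k_1)}$ matrices with zero entries, observe that all relevant norms are unchanged (by Definition \ref{d:opspa}(3)), and then invoke only the square-matrix axioms already stated in Definitions \ref{d:opspa}, \ref{d:opebim} and \ref{d:herope}. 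I would also note explicitly that the pairing is well defined on $\C X \ot \C Y$ (independent of the chosen representation of an element of the algebraic tensor product), which is immediate once the Cauchy–Schwarz bound is available but should be remarked on so the statement of the lemma is not circular.
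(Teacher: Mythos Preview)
Your argument is correct and follows the same line as the paper's proof: establish the identity $\inn{x_0\ot y_0,\, x_1\ot y_1}_{\C X\ot\C Y} = \inn{y_0,\, \inn{x_0,x_1}_{\C X}\cd y_1}_{\C Y}$ at the matrix level, estimate via Cauchy--Schwarz for $\C Y$, the operator-module bound, and Cauchy--Schwarz for $\C X$, and then pass to the infimum defining the Haagerup norm. One small slip: since $\inn{x_0,x_1}_{\C X}\in M_{k_0,k_1}(\C B)$ acts on $y_1$ from the left, the inequality you need is the \emph{left}-module estimate of Definition~\ref{d:opebim}(2), not (3).
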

\begin{proof}
Let $z_0, z_1 \in M_m(\C X \ot \C Y)$ for some $m \in \nn$. A direct computation shows that
\[
\inn{z_0,z_1}_{\C X \ot \C Y} = \inn{y_0, \inn{x_0,x_1}_{\C X} \cd y_1}_{\C Y}
\]
whenever $z_0 = x_0 \ot y_0$ and $z_1 = x_1 \ot y_1$ for some $x_0, x_1 \in M_{m,k}(\C X)$ and $y_0, y_1 \in M_{k,m}(\C Y)$. Using the fact that $\C X$ and $\C Y$ are operator $*$-correspondences we may thus estimate as follows:
\[
\| \inn{z_0, z_1}_{\C X \ot \C Y} \|_{\C C} \leq \| y_0 \|_{\C Y} \cd \| x_0 \|_{\C X} \cd \| x_1 \|_{\C X} \cd \|y_1 \|_{\C Y}
\]
This implies the result of the lemma.
\end{proof}

It follows from the above lemma that $\C X \wot \C Y$ is an operator $*$-correspondence from $\C A$ to $\C C$ in the sense of Definition \ref{d:herope} (with inner product $\inn{\cd, \cd}_{\C X \wot \C Y}$ induced by $\inn{\cd,\cd}_{\C X \ot \C Y}$). It might however happen that $\C X \wot \C Y$ is too big to satisfy the second condition of Assumption \ref{a:opecor}. We thus define the subset 
\[
\C N \su \C X \wot \C Y \q \C N := \{ z \in \C X \wot \C Y \mid \inn{z,z}_{\C X \wot \C Y} = 0 \}
\]

\begin{lemma}\label{l:submod}
The subset $\C N \su \C X \wot \C Y$ is a closed $\C A$-$\C C$-bisubmodule. Moreover, we have that $\inn{z,w}_{\C X \wot \C Y} = 0$ whenever $z \in \C N$ and $w \in \C X \wot \C Y$.
\end{lemma}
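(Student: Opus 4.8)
The plan is to prove the two assertions in the order they are stated, deriving the submodule property from a Cauchy--Schwarz-type polarization argument. First I would establish the second claim, namely that $\inn{z,w}_{\C X \wot \C Y} = 0$ whenever $z \in \C N$ and $w \in \C X \wot \C Y$. This is exactly the ``mixed'' Cauchy--Schwarz estimate from Lemma \ref{l:causch} applied at the scalar level $m = 1$: since $\inn{z,z}_{\C X \wot \C Y} = 0$ we have $\| z \|_{\C X \wot \C Y} = 0$ is \emph{not} available (the Haagerup norm need not be controlled by the inner product), so instead I would invoke the inequality $\| \inn{z,w}_{\C X \wot \C Y} \|_{\C C} \leq \| z \|_{\C X \wot \C Y} \cdot \| w \|_{\C X \wot \C Y}$ after passing to the $C^*$-completion $C$: there $\io(\inn{\cdot,\cdot}_{\C X \wot \C Y})$ is a genuine positive-semidefinite Hilbert $C^*$-module inner product on $\C X \wot \C Y$ (by the first condition of Assumption \ref{a:opecor} together with Lemma \ref{l:causch}), and $\io(\inn{z,z}) = 0$ forces $\io(\inn{z,w}) = 0$ in $C$ by the ordinary Hilbert $C^*$-module Cauchy--Schwarz inequality; since $\io : \C C \to C$ is injective (it is the inclusion of a dense subalgebra into its completion) we conclude $\inn{z,w}_{\C X \wot \C Y} = 0$ in $\C C$. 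The same argument applied to $\inn{w,z} = \inn{z,w}^*$ handles the other side.

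Next I would deduce that $\C N$ is a vector subspace: it is closed under scalar multiples trivially, and if $z_1, z_2 \in \C N$ then $\inn{z_1 + z_2, z_1 + z_2} = \inn{z_1,z_1} + \inn{z_1,z_2} + \inn{z_2,z_1} + \inn{z_2,z_2} = 0$ by the previous paragraph applied twice. For the bimodule property, given $z \in \C N$, $a \in \C A$, $c \in \C C$ I would compute $\inn{a \cdot z \cdot c, a \cdot z \cdot c}_{\C X \wot \C Y}$. For the right action this is immediate: $\inn{z \cdot c, z \cdot c} = c^* \cdot \inn{z,z} \cdot c = 0$ using conditions $(1)$ and $(3)$ of Definition \ref{d:herope}. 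For the left action I would use condition $(4)$ of Definition \ref{d:herope} to write $\inn{a \cdot z, a \cdot z} = \inn{z, a^* a \cdot z}$, which vanishes by the mixed-vanishing statement just proved (with $w := a^* a \cdot z$). Closedness of $\C N$ follows because $z \mapsto \inn{z,z}_{\C X \wot \C Y}$ is continuous from $\C X \wot \C Y$ to $\C C$: continuity of the inner product in the $\C X \wot \C Y$-norm is a consequence of the Cauchy--Schwarz inequality of Lemma \ref{l:causch}, i.e. $\| \inn{z,z} - \inn{z',z'} \| \le \| \inn{z - z', z} \| + \| \inn{z', z - z'} \| \le \| z - z'\| (\|z\| + \|z'\|)$.

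I expect the only genuinely delicate point to be the mixed-vanishing statement, because the Haagerup norm on $\C X \wot \C Y$ is \emph{not} a priori dominated by any power of the norm $\|\inn{z,z}\|_{\C C}^{1/2}$ — indeed the whole reason for introducing $\C N$ is precisely that $\inn{\cdot,\cdot}_{\C X \wot \C Y}$ may be degenerate — so one cannot argue naively that ``$\inn{z,z} = 0$ implies $z$ is small.'' The resolution, as sketched above, is to push everything into the $C^*$-completion $C$ where the inner product becomes honestly positive and the standard Hilbert $C^*$-module Cauchy--Schwarz inequality $\inn{z,w}^* \inn{z,w} \le \| \inn{z,z} \| \cdot \inn{w,w}$ applies, and then pull back along the injective map $\io : \C C \to C$. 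One should check that Assumption \ref{a:opecor}(1) is indeed available for $\C X \wot \C Y$ at this stage — but this is subsumed in the fact (noted just before the lemma) that $\C X \wot \C Y$ is already an operator $*$-correspondence in the sense of Definition \ref{d:herope}, together with the positivity inherited from the iterated inner products of $\C X$ and $\C Y$ which satisfy Assumption \ref{a:opecor}(1). Everything else is routine bookkeeping with the axioms of Definition \ref{d:herope}.
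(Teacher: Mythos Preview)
Your proposal is correct and follows essentially the same strategy as the paper: both arguments pass to the $C^*$-level, invoke the ordinary Hilbert $C^*$-module Cauchy--Schwarz inequality there, and pull back along the injective inclusion $\io : \C C \to C$. The paper packages the positivity step you flag as ``delicate'' by explicitly introducing the completely bounded map $\io : \C X \wot \C Y \to X \hot_B Y$ into the interior tensor product of the $C^*$-completions (which is already a Hilbert $C^*$-module), whereas you leave this implicit in the phrase ``$\io(\inn{\cdot,\cdot}_{\C X \wot \C Y})$ is a genuine positive-semidefinite inner product.'' That is exactly what your verification amounts to, so there is no gap, only a difference in how explicitly the auxiliary $C^*$-correspondence is named.

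One small organisational difference worth noting: you deduce the left-$\C A$-invariance of $\C N$ \emph{from} the mixed-vanishing statement via $\inn{a z, a z} = \inn{z, a^* a \cdot z}$, which is arguably cleaner; the paper instead handles the left action directly using the operator-norm bound $\inn{a \cdot \io(z), a \cdot \io(z)}_{X \hot_B Y} \leq \| a \|_\infty^2 \cdot \inn{\io(z),\io(z)}_{X \hot_B Y}$ available in the $C^*$-correspondence $X \hot_B Y$. Both routes are valid and equally short.
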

\begin{proof}
It is clear that $\C N \su \C X \wot \C Y$ is a closed right $\C C$-submodule. To prove the remaining claims of the lemma we let $X \hot_B Y$ denote the interior tensor product of the $C^*$-completions, see \cite[Theorem 5.9]{Rie:IRA} and \cite[Chapter 4]{Lan:HCM}. We then remark that the canonical map $\C X \ot \C Y \to X \hot_B Y$ extends to a completely bounded map $\io : \C X \wot \C Y \to X \hot_B Y$ which is compatible with the inner products and the bimodule structures. This is indeed a consequence of Lemma \ref{l:causch} and the complete boundedness of the inclusion $\io : \C C \to C$. Let now $z \in \C N$, $a \in \C A$ and $w \in \C X \wot \C Y$ be given. We have that
\[
\begin{split}
0 & \leq \io( \inn{a \cd z, a \cd z}_{\C X \wot \C Y} ) = \inn{ a \cd \io(z), a \cd \io(z)}_{X \hot_B Y} \\
& \leq \| a \|_\infty^2 \cd \inn{\io(z),\io(z)}_{X \hot_B Y} = 0
\end{split}
\]
and furthermore that
\[
\| \io( \inn{z,w}_{\C X \wot \C Y} ) \|_\infty \leq \| \inn{\io(z),\io(z)}_{X \hot_B Y} \|_\infty^{1/2} \cd \| \inn{\io(w),\io(w)}_{X \hot_B Y} \|_\infty^{1/2} = 0  
\]
Since the inclusion $\io : \C C  \to C$ is injective this proves the lemma.
\end{proof}

By the above lemmas we obtain that the quotient 
\[
\C X \hot_{\C B} \C Y := (\C X \wot \C Y)/\C N
\]
inherits a well-defined operator $*$-correspondence structure from the Haagerup tensor product $\C X \wot \C Y$. Notice that the relevant matrix norms $\| \cd \|_{\C X \hot_{\C B} \C Y} : M_m(\C X \hot_{\C B} \C Y) \cong M_m(\C X \wot \C Y) / M_m(\C N) \to [0,\infty)$, $m \in \nn$, are the quotient norms. As in the proof of Lemma \ref{l:submod} we let $X \hot_B Y$ denote the interior tensor product of the $C^*$-completions of $\C X$ and $\C Y$, see \cite[Chapter 4]{Lan:HCM}. The next result is now immediate:

\begin{prop}\label{p:tendif}
The operator $*$-correspondence $\C X \hot_{\C B} \C Y$ from $\C A$ to $\C C$ satisfies the conditions of Assumption \ref{a:opecor} and the $C^*$-completion agrees with the interior tensor product $X \hot_B Y$ of the $C^*$-completions of $\C X$ and $\C Y$.
\end{prop}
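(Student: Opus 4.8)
The plan is to verify the two assertions of Proposition \ref{p:tendif} essentially by transporting everything through the completely bounded map $\io : \C X \wot \C Y \to X \hot_B Y$ introduced in the proof of Lemma \ref{l:submod}. First I would record that this map descends to a map $\bar\io : \C X \hot_{\C B} \C Y \to X \hot_B Y$ on the quotient, since $\io$ is compatible with the inner products and $\io(\C N) = 0$ (indeed $\C N$ is exactly the set of $z$ with $\inn{\io(z),\io(z)}_{X\hot_B Y} = 0$, so the kernel of $\C X\wot\C Y \to X\hot_B Y$ coincides with $\C N$ once we pass to the Hilbert module completion). For the verification of Assumption \ref{a:opecor}: condition $(2)$ is automatic by construction of the quotient, since $[z] \in \C X \hot_{\C B}\C Y$ has $\inn{[z],[z]} = 0$ iff $z \in \C N$ iff $[z] = 0$; condition $(1)$, positivity of $\io\big(\inn{[z],[z]}_{\C X\hot_{\C B}\C Y}\big)$ in $C$, follows because that element equals $\inn{\bar\io([z]),\bar\io([z])}_{X\hot_B Y}$, which is a positive element of the Hilbert $C^*$-module inner product; and condition $(3)$, the left-action inequality $\|\inn{a\cd[z], a\cd[z]}\|_\infty \leq \|a\|_\infty^2 \cd \|\inn{[z],[z]}\|_\infty$, is inherited from the analogous inequality in $X\hot_B Y$ (where the left action of $A$ is by bounded adjointable operators of norm $\leq \|a\|_\infty$), again using that $\bar\io$ intertwines left actions and inner products and that $\io : \C C \to C$ is injective.

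Next I would identify the $C^*$-completion. By definition the $C^*$-completion of $\C X \hot_{\C B}\C Y$ is the completion of $\C X\hot_{\C B}\C Y$ in the norm $\|[z]\|_\infty := \|\inn{[z],[z]}_{\C X\hot_{\C B}\C Y}\|_\infty^{1/2} = \|\inn{\bar\io([z]),\bar\io([z])}_{X\hot_B Y}\|_\infty^{1/2} = \|\bar\io([z])\|_{X\hot_B Y}$. Thus $\bar\io$ is an isometry from $\C X\hot_{\C B}\C Y$ (with the $C^*$-norm) into $X\hot_B Y$, and it extends to an isometric embedding of the $C^*$-completion. It remains to see that the image is dense, i.e.\ that the algebraic tensor product $\C X\ot\C Y$ has dense image in $X\hot_B Y$. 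This holds because $\C X$ is dense in $X$ and $\C Y$ is dense in $Y$ in the respective $C^*$-norms, and the canonical map $X\ot_{\text{alg}} Y \to X\hot_B Y$ has dense range by the very construction of the interior tensor product (its range is the linear span of the elementary tensors, which is dense by \cite[Chapter 4]{Lan:HCM}); combining these, elementary tensors $\xi\ot\eta$ with $\xi\in\C X$, $\eta\in\C Y$ already have dense span, and a short estimate using Assumption \ref{a:opecor}$(3)$ and the Cauchy-Schwarz inequality shows approximation of $\xi\ot\eta$ in $X\hot_B Y$ by elements of $\C X\ot\C Y$. Hence $\bar\io$ extends to a unitary isomorphism between the $C^*$-completion of $\C X\hot_{\C B}\C Y$ and $X\hot_B Y$, and one checks this unitary intertwines the $A$- and $C$-actions, so it is an isomorphism of $C^*$-correspondences.

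The part requiring the most care is making rigorous the claim that $\C N$ is precisely the kernel of the composite $\C X\wot\C Y \to X\hot_B Y$ and that the quotient norm on $M_m(\C X\hot_{\C B}\C Y)$ behaves correctly under $\io$ — in particular that $\io$ being completely bounded (not completely isometric) at the level of the operator $*$-correspondence norm $\|\cd\|_1$ does not obstruct it being completely isometric at the level of the $C^*$-norm $\|\cd\|_\infty$. This is exactly the dichotomy emphasized throughout Section \ref{s:comple}: the two norm structures must be kept separate, and the Cauchy-Schwarz inequality (Lemma \ref{l:causch}) together with the computation in the proof of Lemma \ref{l:submod} is what bridges them. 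Once that point is absorbed, everything else is the routine transport of Hilbert $C^*$-module properties through $\bar\io$, which is why the proposition can legitimately be labelled ``immediate'' after Lemma \ref{l:submod}.
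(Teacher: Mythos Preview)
Your proposal is correct and follows exactly the approach the paper intends: the proposition is stated as ``immediate'' after Lemma \ref{l:causch} and Lemma \ref{l:submod}, and you have unpacked precisely why, using the completely bounded map $\io : \C X \wot \C Y \to X \hot_B Y$ from the proof of Lemma \ref{l:submod} to transport the three conditions of Assumption \ref{a:opecor} and to identify the $C^*$-completion. Your observation that $\C N = \ker\io$ (via injectivity of $\io_{\C C} : \C C \to C$) and that $\bar\io$ is isometric for the $\|\cd\|_\infty$-norm, combined with density of $\C X \ot \C Y$ in $X \hot_B Y$, is exactly what makes the result ``immediate''; there is nothing to add.
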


We refer to $\C X \hot_{\C B} \C Y$ as the \emph{interior tensor product} of the operator $*$-correspondences $\C X$ and $\C Y$.


\section{The Morita monoid}\label{s:morita}
Our aim is now to summarize and improve our work so far on operator $*$-correspondences by defining a category $\mathfrak{M}$ where the objects are operator $*$-algebras and the morphisms are equivalence classes of operator $*$-correspondences satisfying a few side-conditions. Most importantly these side-conditions include a compactness condition that makes it possible to relate our category to $KK$-theory. This category will also allow us to introduce our notion of Morita equivalence for operator $*$-algebras.

We recall that all operator $*$-algebras and all operator $*$-correspondences satisfy Assumption \ref{a:opealg} and Assumption \ref{a:opecor}. 
%
\medskip

For a Hilbert $C^*$-module $X$ we let $\sL(X)$ and $\sK(X)$ denote the $C^*$-algebras of bounded adjointable operators and compact operators on $X$, respectively. Both of these $C^*$-algebras are equipped with the operator norm.
 
\begin{dfn}\label{d:compact}
We will say that an operator $*$-correspondence $\C X$ from $\C A$ to $\C B$ is \emph{compact} when $\C X$ is countably generated and non-degenerate in the sense of Definition \ref{d:coudeg} and when the induced left action $\pi : A \to \sL(X)$ on the $C^*$-completion of $\C X$ factorizes through the Hilbert $C^*$-module compacts $\sK(X) \su \sL(X)$, thus when $\pi(a) \in \sK(X)$ for all $a \in A$.
\end{dfn}


\begin{dfn}
Let $\C A$ and $\C B$ be operator $*$-algebras. By the \emph{Morita monoid} from $\C A$ to $\C B$ we will understand the collection of compact operator $*$-correspondences from $\C A$ to $\C B$ modulo the equivalence relation $\sim_d$ generated by ``in duality'' (see Definition \ref{d:difuni}). The Morita monoid is denoted by $M(\C A, \C B)$.
\end{dfn}

The monoid structure of $M(\C A,\C B)$ is explained by the following elementary:

\begin{lemma}
The direct sum of compact operator $*$-correspondences provides $M(\C A, \C B)$ with the structure of a commutative monoid with trivial element given by (the class of) the trivial operator $*$-correspondence.
\end{lemma}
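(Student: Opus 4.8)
The statement claims that $M(\C A,\C B)$ is a commutative monoid under the direct sum operation, with neutral element the class of the trivial (zero) operator $*$-correspondence. I will verify each of the monoid axioms in turn, keeping track of the fact that $\sim_d$ is exactly the equivalence relation we need to absorb the constants $\sqrt 2$ introduced in \eqref{eq:dirdif} as well as the usual associativity and symmetry isomorphisms. First I would check that the direct sum of two compact operator $*$-correspondences is again compact: if $\C X$ and $\C Y$ are countably generated and non-degenerate then so is $\C X \op \C Y$, since a countable generating set for $X \op Y$ is obtained by concatenating generating sequences for $X$ and $Y$, and non-degeneracy passes to direct sums; moreover the induced left action of $A$ on $X \op Y$ is $\pi_X \op \pi_Y$, which takes values in $\sK(X) \op \sK(Y) \su \sK(X \op Y)$, so the compactness (factorization through the Hilbert $C^*$-module compacts) is preserved. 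Hence $\op$ restricts to a well-defined binary operation on the underlying set of compact operator $*$-correspondences.

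Next I would show $\op$ descends to $M(\C A,\C B)$, i.e.\ that it is compatible with $\sim_d$: if $\C X \sim_d \C X'$ then $\C X \op \C Y \sim_d \C X' \op \C Y$. By the chain-definition of $\sim_d$ it suffices to treat the case where $\C X$ and $\C X'$ are in duality via a unitary $U : X \to X'$; then $U \op \mathrm{id}_Y : X \op Y \to X' \op Y$ is a unitary intertwining the left $A$-actions, and condition $(2)$ and the Cauchy--Schwarz-type bound $(3)$ of Definition \ref{d:difuni} follow immediately from the corresponding conditions for $U$ together with the formula $\inn{(\xi_1,\eta_1),(\xi_2,\eta_2)}_{\C X\op\C Y} = \inn{\xi_1,\xi_2}_{\C X} + \inn{\eta_1,\eta_2}_{\C Y}$ and the definition \eqref{eq:dirdif} of the matrix norms on the direct sum (the factor $\sqrt2$ only improves the constant). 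Commutativity is then the observation that the flip map $\C X \op \C Y \to \C Y \op \C X$ is a completely isometric bimodule isomorphism inducing a unitary on $C^*$-completions, hence $\C X \op \C Y \sim_d \C Y \op \C X$ by Lemma \ref{l:comboudua}. Associativity is precisely Remark \ref{r:assdif}, which tells us $(\C X \op \C Y) \op \C Z \sim_d \C X \op (\C Y \op \C Z)$ (here one must also note that the zero correspondence is trivially compact, so all three triples stay inside the Morita monoid).

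Finally, for the unit law I would check that the trivial operator $*$-correspondence $0$ from $\C A$ to $\C B$ is compact (countably generated and non-degenerate vacuously, with zero left action factoring through $\sK(0) = 0$), and that $\C X \op 0$ is in duality with $\C X$: the obvious bimodule isomorphism $\C X \op 0 \to \C X$ is completely bounded (it is $\sqrt2 \cdot \mathrm{id}$ up to the rescaling in \eqref{eq:dirdif}) and induces a unitary on $C^*$-completions, so again Lemma \ref{l:comboudua} applies and $\C X \op 0 \sim_d \C X$; combined with commutativity this gives $0 \op \C X \sim_d \C X$ as well. The only point requiring any genuine care — and the step I expect to be the main obstacle — is the verification that $\op$ respects $\sim_d$, because the unitary $U$ witnessing a duality is \emph{not} assumed to carry $\C X$ into $\C X'$ (as emphasized after Definition \ref{d:difuni}); one must therefore argue entirely at the level of $C^*$-completions for condition $(1)$ and use the finite-matrix inner-product estimate, never presuming that $U \op \mathrm{id}_Y$ restricts to a map of the dense operator $*$-correspondences. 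Everything else is a routine bookkeeping exercise with the rescaled direct-sum norms.
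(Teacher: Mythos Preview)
Your proposal is correct and matches the spirit of the paper, which in fact omits the proof entirely (calling the lemma ``elementary''); your outline supplies exactly the routine verifications one would expect: closure under direct sum, compatibility with $\sim_d$, commutativity and the unit law via Lemma \ref{l:comboudua}, and associativity via Remark \ref{r:assdif}. The only care you rightly flag---that the duality unitary need not restrict to the dense operator $*$-correspondences---is handled as you describe, so there is nothing to add.
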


The composition in our category $\mathfrak{M}$ will be implemented by the interior tensor product of operator $*$-correspondences:

\begin{lemma}
The interior tensor product of compact operator $*$-correspondences induces a bilinear and associative pairing
\[
\hot_{\C B} : M(\C A, \C B) \ti M(\C B, \C C) \to M(\C A,\C C)
\]
\end{lemma}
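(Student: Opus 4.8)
The plan is to verify the three assertions — that the interior tensor product of compact operator $*$-correspondences is again compact, that the resulting pairing is well-defined on $\sim_d$-equivalence classes and bilinear, and that it is associative — by reducing each to facts already established about the $C^*$-completions. First I would check that $\C X \hot_{\C B} \C Y$ is compact whenever $\C X$ and $\C Y$ are. By Proposition \ref{p:tendif} the $C^*$-completion of $\C X \hot_{\C B} \C Y$ is the interior tensor product $X \hot_B Y$ of the $C^*$-completions. Countable generation of $X \hot_B Y$ from $A$ to $C$ is standard: if $\{\xi_n\}$ generates $X$ over $B$ and $\{\eta_m\}$ generates $Y$ over $C$, then $\{\xi_n \ot \eta_m\}$ generates $X \hot_B Y$ over $C$, and non-degeneracy of the left $A$-action is inherited from non-degeneracy of the $A$-action on $X$. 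For the compactness of the left action, recall that if $\pi_X : A \to \sK(X)$ is the left action, then for $a \in A$ the operator $a \ot 1$ on $X \hot_B Y$ is the image of $\pi_X(a)$ under the canonical $*$-homomorphism $\sK(X) \to \sL(X \hot_B Y)$, and this homomorphism maps $\sK(X)$ into $\sK(X \hot_B Y)$ (cf.\ \cite[Chapter 4]{Lan:HCM}); hence $a$ acts as a compact operator on $X \hot_B Y$. This shows $\C X \hot_{\C B} \C Y$ is compact, so it defines a class in $M(\C A, \C C)$.

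Next I would establish that the pairing descends to equivalence classes. By Remark \ref{r:assdif} and the definition of $\sim_d$ it suffices to show that if $\C X \sim_d \C X'$ (from $\C A$ to $\C B$) then $\C X \hot_{\C B} \C Y \sim_d \C X' \hot_{\C B} \C Y$, and symmetrically in the second variable; and since $\sim_d$ is generated by ``in duality'', it is enough to treat the case where $\C X$ and $\C X'$ are in duality via a unitary $U : X \to X'$. The natural candidate for the implementing unitary is $U \hot_B 1 : X \hot_B Y \to X' \hot_B Y$, which is a well-defined unitary of Hilbert $C^*$-modules because $U$ intertwines the left $A$-actions (condition (1) of Definition \ref{d:difuni}), so the internal tensor product construction applies. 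One then checks the two remaining conditions of Definition \ref{d:difuni}: for $\xi \ot y \in \C X \hot_{\C B} \C Y$ and $\xi' \ot y' \in \C X' \hot_{\C B} \C Y$ one has
\[
\inn{(U \hot 1)(\xi \ot y), \xi' \ot y'}_{X' \hot_B Y} = \inn{y, \inn{U\xi, \xi'}_{X'} \cd y'}_{\C Y},
\]
which lies in $\C C$ because $\inn{U\xi,\xi'}_{X'} \in \C B$ by condition (2) for the pair $\C X, \C X'$ and $\C Y$ is an operator $*$-correspondence from $\C B$ to $\C C$; the completely bounded estimate in condition (3) follows by combining the constant from condition (3) for $\C X, \C X'$ with the Cauchy--Schwarz inequality in $\C Y$ exactly as in the proof of Lemma \ref{l:causch}. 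The second-variable case is analogous, using $1 \hot_B V$. Bilinearity with respect to the monoid (direct sum) structure then reduces to the standard distributivity $X \hot_B (Y_0 \op Y_1) \cong (X \hot_B Y_0) \op (X \hot_B Y_1)$ and $(X_0 \op X_1) \hot_B Y \cong (X_0 \hot_B Y) \op (X_1 \hot_B Y)$ of interior tensor products; these isomorphisms are induced by completely bounded bimodule maps at the level of operator $*$-correspondences, so Lemma \ref{l:comboudua} (together with Remark \ref{r:assdif} for the rescaling factors) gives the required $\sim_d$-relations.

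Finally, associativity: the obvious map $(\C X \hot_{\C B} \C Y) \hot_{\C C} \C Z \to \C X \hot_{\C B} (\C Y \hot_{\C C} \C Z)$, defined on elementary tensors by $(x \ot y) \ot z \mapsto x \ot (y \ot z)$, is a completely bounded bimodule map (boundedness on the Haagerup tensor products is the associativity of the Haagerup tensor product, $\S 1.5.4$ of \cite{BlMe:OAM}, and it is compatible with passing to the quotients by the respective null submodules $\C N$) which induces the canonical unitary $(X \hot_B Y) \hot_C Z \cong X \hot_B (Y \hot_C Z)$ of $C^*$-completions (cf.\ \cite[Chapter 4]{Lan:HCM}). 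By Lemma \ref{l:comboudua} these two iterated interior tensor products are therefore $\sim_d$-equivalent, which is exactly associativity of the pairing $\hot_{\C B}$ at the level of Morita monoids. The main obstacle I anticipate is the well-definedness step: one must be careful that $U \hot_B 1$ genuinely exists as a unitary, which is where condition (1) of ``in duality'' — the fact that $U$ is $A$-linear, even though it need not map $\C X$ into $\C X'$ — is essential, and one must verify that the inner-product condition (2) survives the tensor product, i.e.\ that $\inn{U\xi,\xi'}_{X'} \in \C B$ really does feed correctly into the $\C C$-valued inner product on $\C Y$; the rest is routine bookkeeping with the operator-space estimates already isolated in Lemma \ref{l:causch}.
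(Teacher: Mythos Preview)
Your proposal is correct and takes essentially the same approach as the paper: compactness of $\C X \hot_{\C B} \C Y$ via Proposition~\ref{p:tendif} and \cite[Proposition 4.7]{Lan:HCM}, well-definedness on $\sim_d$-classes via $U \hot 1$ together with the Cauchy--Schwarz estimate of Lemma~\ref{l:causch}, and bilinearity and associativity via Lemma~\ref{l:comboudua}. One small point to tighten: the map $\sK(X) \to \sL(X \hot_B Y)$ lands in $\sK(X \hot_B Y)$ \emph{only because} $B$ acts by compacts on $Y$---this is precisely where compactness of $\C Y$ enters---so make that hypothesis explicit when you invoke Lance.
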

\begin{proof}
Let $\C X$ and $\C Y$ be compact operator $*$-correspondences from $\C A$ to $\C B$ and from $\C B$ to $\C C$, respectively. By Proposition \ref{p:tendif} their interior tensor product $\C X \hot_{\C B} \C Y$ is an operator $*$-correspondence from $\C A$ to $\C C$ with $C^*$-completion given by the interior tensor product $X \hot_B Y$ of $C^*$-correspondences. Since both $X$ and $Y$ are countably generated and non-degenerate $C^*$-correspondences it follows that $X \hot_B Y$ is countably generated and non-degenerate as well. Let $\pi \hot 1 : A \to \sL(X \hot_B Y)$ denote the left action of $A$ on the interior tensor product of the $C^*$-completions of $\C X$ and $\C Y$. The fact that $\pi(a) \hot 1$, $a \in A$, is a compact operator on $X \hot_B Y$ follows by \cite[Proposition 4.7]{Lan:HCM} since both $\C X$ and $\C Y$ are compact. This shows that $\C X \hot_{\C B} \C Y$ determines an element in $M(\C A,\C C)$.
%
\medskip

Let $\C X'$ be a compact operator $*$-correspondence that is in duality with $\C X$ through a unitary operator $U : X \to X'$. The interior tensor products $X \hot_B Y$ and $X' \hot_B Y$ are then unitarily isomorphic via the unitary operator $U \hot 1 : X \hot_B Y \to X' \hot_B Y$. To show that condition $(2)$ and $(3)$ of Definition \ref{d:difuni} are satisfied as well, we let $z \in M_m(\C X \ot \C Y)$ and $w \in M_m(\C X' \ot \C Y)$ for some $m \in \nn$ be given. Choose elements $x \in M_{m,k}(\C X)$, $x' \in M_{m,k}(\C X')$, and $y_0, y_1 \in M_{k,m}(\C Y)$ such that $z = x \ot y_0$ and $w = x' \ot y_1$. We have that
\[
\inn{ (U \ot 1)(z), w }_{X' \hot_B Y} = \inn{y_0, \inn{U(x), x'}_{X'}\cd y_1}_{\C Y} \in \C C
\]
Furthermore, there exists a constant $C > 0$ (which is independent of our previous choices) such that
\[
\begin{split}
\| \inn{ (U \ot 1)(z), w }_{X'\hot_B Y} \|_{\C C} 
& \leq 
\| y_0 \|_{\C Y} \cd \| y_1 \|_{\C Y} \cd \| \inn{U(x), x'}_{X'} \|_{\C B} \\
& \leq \| y_0 \|_{\C Y} \cd \| y_1 \|_{\C Y} \cd C \cd \| x \|_{\C X} \cd \| x' \|_{\C X'}
\end{split}
\]
These identities and estimates imply that $\C X \hot_{\C B} \C Y \sim_d \C X' \hot_{\C B} \C Y$. A similar argument shows that $\C X \hot_{\C B} \C Y \sim_d \C X \hot_{\C B} \C Y'$ whenever $\C Y \sim_d \C Y'$ and we conclude that the interior tensor product of operator $*$-correspondences yields a well-defined map $\hot_{\C B} : M(\C A,\C B) \ti M(\C B,\C C) \to M(\C A,\C C)$.
\medskip

To prove that the pairing $\hot_{\C B} : M(\C A,\C B) \ti M(\C B,\C C) \to M(\C A,\C C)$ is bilinear we let $[\C X], [\C X'] \in M(\C A,\C B)$ and $[\C Y] \in M(\C B,\C C)$ be given. The projections $p_1 : \C X \op \C X' \to \C X$ and $p_2 : \C X \op \C X' \to \C X'$ then induce the completely bounded bimodule map:
\[
u := \big( (p_1 \hot 1), (p_2 \hot 1) \big) : (\C X \op \C X') \hot_{\C B} \C Y \to (\C X \hot_{\C B} \C Y) \op (\C X' \hot_{\C B} \C Y)
\]
Since $u$ extends to a unitary isomorphism of $C^*$-correspondences $U : (X \op X') \hot_B Y \to (X \hot_B Y) \op (X' \hot_B Y)$ we conclude by Lemma \ref{l:comboudua} that
\[
(\C X \op \C X') \hot_{\C B} \C Y \sim_d (\C X \hot_{\C B} \C Y) \op (\C X' \hot_{\C B} \C Y)
\]
A similar argument shows that $\hot_{\C B} : M(\C A,\C B) \ti M(\C B,\C C) \to M(\C A,\C C)$ is linear in the second variable as well.
\medskip

Let $[\C X] \in M(\C A,\C B)$, $[\C Y] \in M(\C B,\C C)$ and $[\C Z] \in M(\C C,\C D)$ be given. The associativity of our pairing follows since the canonical map $(\C X \ot \C Y) \ot \C Z \to \C X \ot (\C Y \ot \C Z)$ induces a completely isometric isomorphism
\[
u : (\C X \hot_{\C B} \C Y) \hot_{\C C} \C Z \to \C X \hot_{\C B} (\C Y \hot_{\C C} \C Z)
\]
at the level of interior tensor products of operator $*$-correspondences. Furthermore, this latter completely isometric isomorphism extends to a unitary isomorphism $U : (X \hot_B Y) \hot_C Z \to X \hot_B (Y \hot_C Z)$ of interior tensor products of $C^*$-correspondences. By Lemma \ref{l:comboudua} we thus have that $[\C X \hot_{\C B} \C Y] \hot_{\C C} [\C Z] = [\C X] \hot_{\C B} [\C Y \hot_{\C C} \C Z]$ in the Morita monoid $M(\C A,\C D)$.
\end{proof}

Recall from Subsection \ref{ss:rowcol} that any operator $*$-algebra $\C A$ may be considered as an operator $*$-correspondence $\C A$ from $\C A$ to $\C A$. 


\begin{prop}
The operator $*$-correspondence $\C A$ determines an element in $M(\C A,\C A)$ and we have the identities
\[
[\C A] \hot_{\C A} [\C X] = [\C X] \q \T{and} \q [\C Y] \hot_{\C A} [\C A] = [\C Y]
\]
for all $[\C X] \in M(\C A,\C B)$ and $[\C Y] \in M(\C B,\C A)$.
\end{prop}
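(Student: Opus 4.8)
The plan is to treat the three assertions in turn, in each case producing the required duality from a module multiplication map together with Lemma \ref{l:comboudua}. First I would check that $\C A$, viewed as an operator $*$-correspondence from $\C A$ to $\C A$ with inner product $\inn{a,b} = a^* \cd b$, represents an element of $M(\C A,\C A)$. Its $C^*$-completion in the sense of Section \ref{s:comple} is the $C^*$-algebra $A$ regarded as the standard Hilbert $C^*$-module over itself, with $A$ acting by left multiplication $L : A \to \sL(A)$. Since $A$ is $\si$-unital it has a countable approximate identity $\{ u_n \}$, and this sequence is a countable generating sequence for the Hilbert module $A$ (because $u_n a \to a$ for all $a \in A$); moreover the linear span of the products $a \cd b$, $a,b \in A$, is dense in $A$, so $A$ is non-degenerate. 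Finally, the compact operators $\sK(A)$ are the closed linear span of the rank-one operators $c \mapsto a \cd b^* \cd c$, $a, b \in A$, and since the closed span of the elements $a \cd b^*$ is all of $A$ this span coincides with $L_A$; hence the left action $L : A \to \sL(A)$ has range $\sK(A)$, so $\C A$ is a compact operator $*$-correspondence and $[\C A] \in M(\C A,\C A)$.

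For the identity $[\C A] \hot_{\C A} [\C X] = [\C X]$ with $[\C X] \in M(\C A,\C B)$, I would start from the module multiplication $\C A \ti \C X \to \C X$, $(a,\xi) \mapsto a \cd \xi$. By Definition \ref{d:opebim}\,(2) this induces a completely bounded bimodule map $m : \C A \wot \C X \to \C X$ on the Haagerup tensor product, and a short computation using Definition \ref{d:herope}\,(4) shows that $m$ preserves inner products, $\inn{m(z_0),m(z_1)}_{\C X} = \inn{z_0,z_1}_{\C A \wot \C X}$ for all $z_0, z_1$. In particular $m$ annihilates the submodule $\C N$ of Section \ref{s:algope}, and hence descends to a completely bounded bimodule map $u : \C A \hot_{\C A} \C X \to \C X$ that is isometric for $\| \cd \|_\infty$. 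Therefore $u$ extends to an isometry from the $C^*$-completion $A \hot_A X$ of $\C A \hot_{\C A} \C X$ (Proposition \ref{p:tendif}) into $X$; this extension is the canonical map $a \ot \xi \mapsto a \cd \xi$, it intertwines the left $A$-actions, and its range contains the dense subspace spanned by the elements $a \cd \xi$ by non-degeneracy of $\C X$, so it is a unitary isomorphism of $C^*$-correspondences. Lemma \ref{l:comboudua} then gives $\C A \hot_{\C A} \C X \sim_d \C X$.

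The identity $[\C Y] \hot_{\C A} [\C A] = [\C Y]$ with $[\C Y] \in M(\C B,\C A)$ is obtained in exactly the same way from the right action $\C Y \ti \C A \to \C Y$, $(\eta,a) \mapsto \eta \cd a$; the one simplification is that the canonical map $Y \hot_A A \to Y$ is a unitary isomorphism of $C^*$-correspondences for every Hilbert $A$-module $Y$, since $Y \cd A$ is automatically dense in $Y$, so non-degeneracy plays no role here. I expect the only mildly delicate points to be the two inner-product computations showing that $m$ (respectively its right-handed analogue) kills $\C N$, together with the identification of the induced map on $C^*$-completions with the standard unitary; everything else is a routine application of the structural results already established, in particular Proposition \ref{p:tendif} and Lemma \ref{l:comboudua}.
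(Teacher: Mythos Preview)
Your proof is correct and follows essentially the same approach as the paper: both arguments verify that $\C A$ is compact via the standard identification $\sK(A) = A$, then use the module multiplication to produce a completely bounded bimodule map and invoke Lemma \ref{l:comboudua} together with the non-degeneracy of $\C X$ to obtain the required unitary on $C^*$-completions. You supply more detail than the paper (the explicit inner-product computation showing $m$ annihilates $\C N$, and the observation that non-degeneracy is unnecessary on the right), but the strategy is identical.
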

\begin{proof}
The $C^*$-completion of the operator $*$-correspondence $\C A$ agrees with the $C^*$-algebra $A$ considered as a $C^*$-correspondence from $A$ to $A$. Since $A \cd A = A$ and $A$ is $\si$-unital by the standing Assumption \ref{a:opealg} we obtain that $A$ is non-degenerate and countably generated. Moreover, $A$ acts from the left on $A$ by compact operators and we may thus conclude that $\C A$ determines an element in $M(\C A,\C A)$.
%

Let $\C X$ be a compact operator $*$-correspondence from $\C A$ to $\C B$. The left action of $\C A$ on $\C X$ induces a completely bounded bimodule map $u : \C A \hot_{\C A} \C X \to \C X$, which in turn, by the non-degeneracy of $\C X$, induces a unitary isomorphism $U : A \hot_A X \to X$ of $C^*$-correspondences. By Lemma \ref{l:comboudua} we thus have the identity $[\C A] \hot_{\C A} [\C X] = [\C X]$ in the Morita monoid $M(\C A, \C B)$. A similar argument shows that $[\C Y] \hot_{\C A} [\C A] = [\C Y]$ in the Morita monoid $M(\C B,\C A)$ whenever $\C Y$ is a compact operator $*$-correspondence from $\C B$ to $\C A$.
\end{proof}

The above results imply that we have a category $\mathfrak{M}$ with objects consisting of operator $*$-algebras $\C A,\C B,\C C, \ldots$ and with morphisms $\T{Mor}_{\mathfrak{M}}(\C A,\C B) := M(\C A,\C B)$ given by the Morita monoid. The composition is given by the interior tensor product $\hot_{\C B} : M(\C A,\C B) \ti M(\C B,\C C) \to M(\C A,\C C)$ and the identity morphism (from $\C A$ to $\C A$) is given by the class $[\C A] \in M(\C A,\C A)$.
\medskip

We are now ready for the main definition of this part of the paper (Section \ref{s:opealg} to Section \ref{s:morita}):

\begin{dfn}\label{d:morequ}
We say that two operator $*$-algebras $\C A$ and $\C B$ are \emph{Morita equivalent} when there exists an invertible morphism $[\C X] \in M(\C A,\C B)$. In this case, we write $\C A \sim_m \C B$.
\end{dfn}

It is clear that the relation $\sim_m$ is indeed an equivalence relation. 
\medskip

We end this section by relating the above notion of Morita equivalence for operator $*$-algebras to Rieffel's notion of Morita equivalence for $C^*$-algebras:

\begin{prop}
Suppose that $\C A$ and $\C B$ are Morita equivalent operator $*$-algebras. Then the $C^*$-completions $A$ and $B$ are Morita equivalent in the sense of Rieffel, see \cite[Definition 8.17]{Rie:MEC}.
\end{prop}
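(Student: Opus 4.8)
The plan is to extract from the invertible morphism a genuine imprimitivity bimodule and then appeal to the description of Rieffel Morita equivalence recalled in the introduction: $A$ and $B$ are Morita equivalent precisely when there is a full Hilbert $C^*$-module $X$ over $B$ together with an isomorphism $A \cong \sK(X)$. So the first thing I would do is unwind the hypothesis $\C A \sim_m \C B$. It provides a compact operator $*$-correspondence $\C X$ from $\C A$ to $\C B$ and a compact operator $*$-correspondence $\C Y$ from $\C B$ to $\C A$ with $[\C X] \hot_{\C B} [\C Y] = [\C A]$ and $[\C Y] \hot_{\C A} [\C X] = [\C B]$ in the Morita monoids. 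Passing to $C^*$-completions, Proposition \ref{p:tendif} identifies the completions of $\C X \hot_{\C B} \C Y$ and $\C Y \hot_{\C A} \C X$ with the interior tensor products $X \hot_B Y$ and $Y \hot_A X$ of the $C^*$-completions; moreover each ``in duality'' step produces a unitary of $C^*$-correspondences, since the operator $U$ in Definition \ref{d:difuni} intertwines the left actions and, being a unitary of Hilbert $C^*$-modules, automatically respects the right module structures and the inner products. Hence
\[
X \hot_B Y \cong A \qq \T{and} \qq Y \hot_A X \cong B
\]
as $C^*$-correspondences, where (by Definition \ref{d:compact}) $X$ and $Y$ are countably generated, non-degenerate, and carry left actions by compact operators. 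The remaining task is to show that $X$ is full over $B$ and that the left action $\pi : A \to \sK(X)$ is an isomorphism, as this is exactly what it means for $A$ and $B$ to be Morita equivalent in the sense of \cite[Definition 8.17]{Rie:MEC}.

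Fullness is straightforward: the $B$-valued inner products on $Y \hot_A X$ all lie in the closed ideal of $B$ generated by $\inn{X,X}_B$, whereas under the isomorphism $Y \hot_A X \cong B$ their closed span is all of $B$ (as $B$ has an approximate identity), so $\inn{X,X}_B$ is dense in $B$. Injectivity of $\pi$ is equally quick: if $\pi(a) = 0$ then $a$ annihilates $X \hot_B Y \cong A$, and the left multiplication action of $A$ on itself is faithful. The heart of the argument, and the step I expect to be the main obstacle, is the surjectivity $\pi(A) = \sK(X)$.

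For this last point I would proceed as follows. Since $\C Y$ is compact, the left action of $B$ on $Y$ is by compact operators, so by \cite[Proposition 4.7]{Lan:HCM} the assignment $T \mapsto T \hot 1_Y$ maps $\sK(X)$ into $\sK(X \hot_B Y) \cong \sK(A) \cong A$, giving a $*$-homomorphism $\Phi : \sK(X) \to A$. On the canonical copy $\pi(A) \su \sK(X)$ the operator $\pi(a) \hot 1_Y$ is just left multiplication by $a$ on $X \hot_B Y \cong A$, so $\Phi \circ \pi = \T{id}_A$; in particular $A = \Phi(\pi(A)) \su \Phi(\sK(X)) \su A$, whence $\Phi(\sK(X)) = \Phi(\pi(A))$. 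It then suffices to check that $\Phi$ is injective, which follows by tensoring the identity $T \hot 1_Y = 0$ (for $T$ in the kernel) with $X$ over $A$ and using associativity of the interior tensor product together with $Y \hot_A X \cong B$ and $X \hot_B B \cong X$ to conclude that $T$ acts as zero on $X$. Injectivity of $\Phi$ then forces $\sK(X) = \pi(A)$. Putting everything together, $X$ is a full Hilbert $C^*$-module over $B$ with $A \cong \sK(X)$, which is the desired Rieffel Morita equivalence; the only genuine difficulty is organising the surjectivity argument so that both halves of the invertibility hypothesis get used.
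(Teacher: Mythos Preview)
Your argument is correct. The first half — passing to $C^*$-completions via Proposition \ref{p:tendif} and observing that each step in the chain of dualities yields a unitary isomorphism of $C^*$-correspondences, so that $X \hot_B Y \cong A$ and $Y \hot_A X \cong B$ — is exactly what the paper does, and the paper then simply stops, implicitly invoking the standard fact that a $C^*$-correspondence which is invertible in the Morita category of $C^*$-algebras is an imprimitivity bimodule in Rieffel's sense.

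What you add is a direct verification of that standard fact: fullness of $X$, injectivity of $\pi$, and the surjectivity argument via the $*$-homomorphism $\Phi : \sK(X) \to \sK(X \hot_B Y) \cong A$, $T \mapsto T \hot 1_Y$, combined with associativity and the second isomorphism $Y \hot_A X \cong B$ to prove $\Phi$ injective. This is a genuinely different route in that it unpacks what the paper leaves as a citation; the gain is self-containment, the cost is length. Your surjectivity argument is clean and correctly uses both halves of the invertibility hypothesis (one to build $\Phi$, the other to show it injective), which is exactly where the content lies.
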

\begin{proof}
By Definition \ref{d:morequ} there exist compact operator $*$-correspondences $\C X$ and $\C Y$ from $\C A$ to $\C B$ and from $\C B$ to $\C A$, respectively, such that $\C X \hot_{\C B} \C Y \sim_d \C A$ and $\C Y \hot_{\C A} \C X \sim_d \C B$. This implies that the $C^*$-completion of $\C X \hot_{\C B} \C Y$ is unitarily isomorphic to the $C^*$-completion of $\C A$ and that the $C^*$-completion of $\C Y \hot_{\C A} \C X$ is unitarily isomorphic to the $C^*$-completion of $\C B$. An application of Proposition \ref{p:tendif} then shows that the interior tensor products $X \hot_B Y$ and $Y \hot_A X$ are unitarily isomorphic to $A$ and $B$, respectively. This proves the proposition.
\end{proof}

\section{Unbounded modular cycles}\label{s:unbmod}
What we would like to do at this moment is to write down a pairing between compact operator $*$-correspondences (thus representatives for elements in the Morita monoid) and unbounded Kasparov modules in the sense of Baaj and Julg, \cite{BaJu:TBK}. It turns out however that the class of unbounded Kasparov modules is not quite flexible enough to perform such a pairing in an explicit way. For this reason we were led to introduce a new class of unbounded cycles in \cite{Kaa:UKM} and we will now briefly recall their definition and explain how these unbounded cycles relate to usual unbounded Kasparov modules and to twisted spectral triples in the sense of Connes and Moscovici, \cite{CoMo:TST}.
\medskip

Let $\C A$ be an operator $*$-algebra satisfying Assumption \ref{a:opealg} and let $B$ be a $\si$-unital $C^*$-algebra.
\medskip

For any bounded adjointable operator $T : X \to X$ defined on a Hilbert $C^*$-module $X$ we let $C^*(T) \su \sL(X)$ denote the smallest $C^*$-subalgebra containing $T : X \to X$. It will not be required that the unit lies in $C^*(T)$.
\medskip

We recall that an unbounded selfadjoint operator $D : \sD(D) \to X$, which is densely defined on a Hilbert $C^*$-module $X$, is said to be \emph{regular} when $D \pm i : \sD(D) \to X$ are surjective, see for example \cite{Baa:MNB} and \cite[Chapter 9]{Lan:HCM}.
\medskip

We let $\wit{\C A}$ denote the unitalization of $\C A$ (considered as a $*$-algebra). For the purposes of this paper it suffices to equip the unitalization $\wit{\C A}$ with (a rescaled version of) the operator space structure coming from the vector space isomorphism $\wit{\C A} \cong \C A \op \cc$. Thus, letting $p_1 : M_m(\wit{\C A}) \to M_m(\C A)$ and $p_2 : M_m(\wit{\C A}) \to M_m(\C \cc)$, $m \in \nn$, be the linear maps obtained by applying the canonical projections entry-wise, we define
\[
\| x \|_{\wit{\C A}} := 3 \cd \T{max}\{ \| p_1(x) \|_{\C A} \, , \, \, \| p_2(x) \|_{\cc} \} \q x \in M_m(\wit{\C A}) \, , \, \, m \in \nn
\]
With these matrix norms $\wit{\C A}$ is an operator $*$-algebra but the inclusion $\C A \to \wit{\C A}$ is not a complete isometry ($\C A$ is however completely bounded isomorphic to its image in $\wit{\C A}$).
\medskip

The following definition can be found as \cite[Definition 3.1]{Kaa:UKM} (notice however that we are requiring the $C^*$-correspondence $X$ to be non-degenerate):

\begin{dfn}\label{d:unbkas}
An \emph{odd unbounded modular cycle} from $\C A$ to $B$ is a triple $(X,D,\De)$ where
\begin{enumerate}
\item $X$ is a countably generated and non-degenerate $C^*$-correspondence from $A$ to $B$ (with left action $\pi : A \to \sL(X)$);
\item $D : \sD(D) \to X$ is an unbounded selfadjoint and regular operator;
\item $\De : X \to X$ is a bounded positive operator with dense image,
\end{enumerate}
such that the following holds:
\begin{enumerate}
\item $\pi(a) \cd (i + D)^{-1} : X \to X$ is a compact operator for all $a \in A$;
\item The inclusion
\[
(\pi(a) + \la ) \De(\xi) \in \sD(D)
\]
holds for all $\xi \in \sD(D)$ and $(a,\la) \in \wit{\C A}$.
\item There exists a completely bounded linear map
\[
\rho_\De : \wit{\C A} \to \sL(X)
\]
such that
\[
\big( \De^{1/2} \rho_\De(a,\la) \De^{1/2} \big)(\xi) = \big( D (\pi(a) + \la ) \De - \De (\pi(a) + \la ) D \big)(\xi)
\]
for all $\xi \in \sD(D)$ and all $(a,\la) \in \wit{\C A}$ (remark that $\rho_\De$ need not be unital);
\item There exists a countable approximate identity $\{ V_n\}_{n = 1}^\infty$ for the $C^*$-algebra $C^*(\De)$ such that the sequence
\[
\big\{ V_n \pi(a) \big\}_{n=1}^\infty
\]
converges in operator norm to $\pi(a)$ for all $a \in A$.
\end{enumerate}
We will refer to $\De : X \to X$ as the \emph{modular operator} of our unbounded modular cycle.

An \emph{even unbounded modular cycle} from $\C A$ to $B$ is an odd unbounded modular cycle equipped with a $\zz/2\zz$-grading operator $\ga : X \to X$ such that 
\[
\ga \pi(a) = \pi(a) \ga \q \ga \De = \De \ga \q \T{and} \q \ga D = - D \ga
\]
for all $a \in A$. 
\end{dfn}

In relation to condition $(3)$ of Definition \ref{d:unbkas} we record the following useful:

\begin{lemma}\label{l:imahal}
Let $X$ and $Y$ be (right) Hilbert $C^*$-modules over $B$. Suppose that $\Phi : X \to Y$ is a bounded adjointable operator such that $\Phi^* : Y \to X$ has dense image. Define $\De := \Phi^* \Phi : X \to X$. Then $\T{Im}(\Phi^*) \su \T{Im}(\De^{1/2})$, the operator
\[
\De^{-1/2} \Phi^* : Y \to X
\]
is bounded adjointable and the adjoint $(\De^{-1/2} \Phi^*)^* : X \to Y$ is an extension of the unbounded operator 
\[
\Phi \De^{-1/2} : \T{Im}(\De^{1/2}) \to Y
\]
\end{lemma}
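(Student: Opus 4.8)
The plan is to exploit the standard polar-decomposition machinery for bounded adjointable operators on Hilbert $C^*$-modules. Write $\De = \Phi^*\Phi$, so that $\De$ is positive adjointable and $\|\De^{1/2}\xi\|^2 = \langle \De\xi,\xi\rangle = \langle \Phi\xi,\Phi\xi\rangle = \|\Phi\xi\|^2$ for all $\xi\in X$; hence the map $\De^{1/2}\xi\mapsto \Phi\xi$ is a well-defined isometry from $\T{Im}(\De^{1/2})$ onto $\T{Im}(\Phi)$, and it extends uniquely to an isometry $v:\overline{\T{Im}(\De^{1/2})}\to X$ with $v\De^{1/2}=\Phi$. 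Adjointability of $v$ (as a map $X\to X$, extended by $0$ on $\overline{\T{Im}(\De^{1/2})}^{\perp}$) is the one genuinely $C^*$-module-theoretic point and follows from the general polar-decomposition result for adjointable operators whose modulus has appropriate range behaviour; alternatively one can verify directly that $v^* = $ the extension-by-$0$ of $\Phi\De^{-1/2}$ once that operator is shown bounded, which we do next. Taking adjoints in $v\De^{1/2}=\Phi$ gives $\De^{1/2}v^* = \Phi^*$, so $\T{Im}(\Phi^*)\su\T{Im}(\De^{1/2})$ and moreover $\De^{-1/2}\Phi^* = v^*$ on $\T{Im}(\Phi^*)$; since $v^*$ is bounded adjointable and $\T{Im}(\Phi^*)$ is dense by hypothesis, $\De^{-1/2}\Phi^*$ is bounded adjointable, equal to $v^*$ everywhere.

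It remains to identify $(\De^{-1/2}\Phi^*)^* = v^{**} = v$, and to check that $v$ extends $\Phi\De^{-1/2}:\T{Im}(\De^{1/2})\to Y$. The latter is immediate: for $\xi\in X$ we have $v(\De^{1/2}\xi)=\Phi\xi = (\Phi\De^{-1/2})(\De^{1/2}\xi)$, and $\De^{1/2}\xi$ ranges over all of $\T{Im}(\De^{1/2})$, so $v$ and $\Phi\De^{-1/2}$ agree on $\T{Im}(\De^{1/2})$. Thus $(\De^{-1/2}\Phi^*)^* = v$ is indeed an adjointable extension of the densely defined operator $\Phi\De^{-1/2}$, as claimed.

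The step I expect to be the main obstacle is establishing that the partial isometry $v$ (equivalently, the operator $\De^{-1/2}\Phi^*$ defined a priori only on the dense submodule $\T{Im}(\Phi^*)$) is bounded \emph{adjointable} on the Hilbert $C^*$-module, rather than merely bounded. On Hilbert spaces boundedness automatically yields an adjoint, but on Hilbert $C^*$-modules adjointability is a real condition; here it is rescued by the symmetric structure $\De = \Phi^*\Phi$, which lets one guess the adjoint of $v^* = \De^{-1/2}\Phi^*$ to be (the closure of) $\Phi\De^{-1/2}$, after which the defining identity $\langle v^*\eta,\xi\rangle = \langle \eta, \Phi\De^{-1/2}\xi\rangle$ is checked on the dense sets $\eta\in Y$, $\xi\in\T{Im}(\De^{1/2})$ by substituting $\eta = $ arbitrary, $\xi = \De^{1/2}\zeta$ and unwinding $\langle \De^{-1/2}\Phi^*\eta, \De^{1/2}\zeta\rangle = \langle \Phi^*\eta,\zeta\rangle = \langle\eta,\Phi\zeta\rangle = \langle\eta,\Phi\De^{-1/2}\De^{1/2}\zeta\rangle$. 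Everything else is routine bookkeeping with the $C^*$-identity on $\sL(X)$ and density arguments.
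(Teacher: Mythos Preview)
Your approach is essentially the paper's: build the isometry $v$ with $v\De^{1/2}=\Phi$ and identify $\De^{-1/2}\Phi^*$ as its adjoint. One small simplification resolves the adjointability worry you flag: since $\Phi^*$ has dense image, so does $\De=\Phi^*\Phi$ and hence $\De^{1/2}$, so $v$ is a \emph{global} isometry $X\to Y$ (no extension-by-zero needed), and isometries between Hilbert $C^*$-modules are automatically adjointable with $v^*v=1$; then $\Phi^*=\De^{1/2}v^*$ gives the image inclusion and $\De^{-1/2}\Phi^*=v^*$ directly.
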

\begin{proof}
Remark that $\De = \Phi^* \Phi : X \to X$ is positive and has dense image (since $\Phi^* : Y \to X$ has dense image). The square root $\De^{1/2} : X \to X$ is therefore positive and has dense image too. It follows that the inverse $\De^{-1/2} : \T{Im}(\De^{1/2}) \to X$ in the formulation of the lemma makes sense and that it is a positive and regular unbounded operator.

Let $\eta \in X$ and notice that
\[
\inn{\Phi \De^{-1/2} (\De^{1/2}\eta), \Phi \De^{-1/2} (\De^{1/2} \eta)} = \inn{\De^{1/2}\eta,\De^{1/2}\eta}
\]
This shows that $\Phi \De^{-1/2} : \T{Im}(\De^{1/2}) \to Y$ is the restriction of a bounded operator $T : X \to Y$. Furthermore, we have that
\[
(\Phi^* T)(\De^{1/2} \eta) = (\Phi^* \Phi \De^{-1/2})(\De^{1/2} \eta) = \De^{1/2}(\De^{1/2} \eta)
\]
and we may conclude that $\Phi^* T = \De^{1/2} : X \to X$. We have thus proved that $\T{Im}(\Phi^*) \su \T{Im}(\De^{1/2})$.

The result of the lemma now follows from the identity $T^* = \De^{-1/2} \Phi^* : Y \to X$.
\end{proof}

In relation to condition $(4)$ of Definition \ref{d:unbkas} we remark that $\{ \De(\De + 1/n)^{-1} \}$ is a countable approximate identity for the $C^*$-algebra $C^*(\De)$. Moreover, if $(4)$ holds for some countable approximate identity $\{ V_n\}$ it holds for every countable approximate identity $\{ W_n\}$ for the $C^*$-algebra $C^*(\De)$. 
\medskip

We now present the link between unbounded Kasparov modules in the sense of Baaj and Julg and the above concept of an unbounded modular cycle. 


\begin{prop}
Let $(\C A,X,D)$ is an unbounded Kasparov module (from $A$ to $B$) with $X$ non-degenerate and countably generated. Suppose that the derivation $d : \C A \to \sL(X)$, given by $d(a)(\xi) = [D,\pi(a)](\xi)$ for all $\xi \in \sD(D)$, is completely bounded. Then $(X,D,1)$ is an unbounded modular cycle from $\C A$ to $B$ of the same parity as $(\C A,X,D)$.
\end{prop}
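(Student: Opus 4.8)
The plan is to take the modular operator to be $\De := 1 \in \sL(X)$ and to verify directly the five conditions in Definition \ref{d:unbkas} for the triple $(X, D, 1)$. The structural data is already in place: $X$ is a countably generated, non-degenerate $C^*$-correspondence from $A$ to $B$ by hypothesis, $D$ is unbounded selfadjoint and regular as part of being an unbounded Kasparov module, and $\De = 1$ is visibly a bounded positive operator with dense image (indeed its image is all of $X$).

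Conditions (1), (2) and (4) require essentially no work. Condition (1), that $\pi(a)(i + D)^{-1} \in \sK(X)$ for all $a \in A$, is part of the definition of an unbounded Kasparov module. Condition (2), that $(\pi(a) + \la)\xi \in \sD(D)$ for all $\xi \in \sD(D)$ and $(a,\la) \in \wit{\C A}$, reduces --- since $\De = 1$ --- to the requirement that $\pi(a)$ preserve $\sD(D)$ for $a \in \C A$ (the summand $\la \xi$ being trivially in $\sD(D)$); this is precisely what makes $[D, \pi(a)]$ well defined on $\sD(D)$ in the unbounded Kasparov module, and is therefore built in. For condition (4), observe that $C^*(\De) = C^*(1) = \cc \cd 1$ is unital, so the constant sequence $V_n = 1$ is a countable approximate identity for it, and $V_n \pi(a) = \pi(a)$ trivially converges in operator norm to $\pi(a)$.

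The only substantive step is condition (3). Since $\De^{1/2} = 1$, the identity to be arranged is
\[
\rho_\De(a,\la)(\xi) = \big( D(\pi(a) + \la) - (\pi(a) + \la)D \big)(\xi) = [D, \pi(a)](\xi) = d(a)(\xi), \q \xi \in \sD(D),
\]
the scalar terms cancelling. I would therefore define $\rho_\De : \wit{\C A} \to \sL(X)$ by $\rho_\De(a, \la) := d(a)$, the bounded adjointable operator supplied by the unbounded Kasparov module structure. It then remains to check that $\rho_\De$ is completely bounded. But $\rho_\De = d \circ p_1$, where $p_1 : \wit{\C A} \to \C A$ is the projection associated to the vector space decomposition $\wit{\C A} \cong \C A \op \cc$; by the rescaled operator space structure placed on $\wit{\C A}$ in the excerpt one has $\| p_1(x) \|_{\C A} \leq \frac{1}{3} \| x \|_{\wit{\C A}}$ at every matrix level, so $p_1$ is completely bounded, and $d$ is completely bounded by hypothesis; hence $\rho_\De$ is completely bounded, with $\| \rho_\De \|_{\T{cb}} \leq \frac{1}{3}\| d \|_{\T{cb}}$. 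Note that $\rho_\De$ is not required to be unital, and indeed $\rho_\De(0,1) = 0$, consistent with $D$ commuting with scalars.

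For the parity claim: in the odd case $(X, D, 1)$ is by construction odd, and in the even case the grading operator $\ga : X \to X$ of the unbounded Kasparov module already satisfies $\ga \pi(a) = \pi(a) \ga$ and $\ga D = -D \ga$, while $\ga \De = \De \ga$ holds automatically because $\De = 1$; thus $(X, D, 1)$ inherits the grading $\ga$ and has the same parity as $(\C A, X, D)$. I do not anticipate a genuine obstacle --- the content of the proposition is precisely that an unbounded Kasparov module with completely bounded commutator map \emph{is} an unbounded modular cycle with trivial modular operator --- so the only point where any care is needed is the complete boundedness of $\rho_\De$, and this rests entirely on the complete boundedness of the projection $p_1$ coming from the chosen norm on $\wit{\C A}$.
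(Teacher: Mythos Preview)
Your proof is correct and follows exactly the approach of the paper, which merely remarks that the only non-trivial point is condition (3) with $\rho_1(a,\la) := d(a)$. Your additional care in verifying the complete boundedness of $\rho_1 = d \circ p_1$ via the boundedness of the projection $p_1 : \wit{\C A} \to \C A$ is a welcome explicit detail that the paper leaves implicit.
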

\begin{proof}
The only non-trivial part consists of noting that the completely bounded map $\rho_1 : \wit{\C A} \to \sL(X)$ is given by $\rho_1(a,\la) = d(a)$ for all $(a,\la) \in \wit{\C A}$.
\end{proof}

Let $(\C A,X,D)$ is an unbounded Kasparov module satisfying the conditions of the above proposition. To illustrate the extra flexibility present in the definition of an unbounded modular cycle we now consider an extra positive invertible bounded operator $G : X \to X$, which we require to be even when the unbounded Kasparov module $(\C A,X,D)$ is even. We assume that $G(\xi) \in \sD(D)$ for all $\xi \in \sD(D)$ and that $[D,G] : \sD(D) \to X$ extends to a bounded adjointable operator $d(G) \in \sL(X)$. 

In this case, it follows by \cite[Proposition 6.7]{Kaa:DAH} that $G D G :  \sD(D) \to X$ is an unbounded selfadjoint and regular operator, see also \cite[Example 2 and 3]{Wor:UAQ}. 

The next result should be compared with \cite[Lemma 2.1]{CoMo:TST}. It is motivated by the geometric situation where a Dirac operator on a compact spin manifold is replaced by the Dirac operator arising after passing to a conformally equivalent metric.

\begin{prop}\label{p:confeq}
The triple $(X, GD G, G^2)$ is an unbounded modular cycle from $\C A$ to $B$ of the same parity as $(\C A,X,D)$.
\end{prop}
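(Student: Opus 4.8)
The plan is to verify, one by one, the axioms of Definition \ref{d:unbkas} for the triple $(X, GDG, G^2)$, writing $\wit D := GDG$ throughout. The structural data is immediate: $X$ is unchanged, hence still a countably generated non-degenerate $C^*$-correspondence from $A$ to $B$; $\wit D$ is selfadjoint and regular by \cite[Proposition 6.7]{Kaa:DAH}; and $G^2$ is a bounded positive operator which, being invertible, is surjective and in particular has dense image. Before starting I would record two facts used repeatedly. First, $\sD(\wit D) = \{ \xi \in X \mid G\xi \in \sD(D)\}$, so $G$ restricts to a bijection $\sD(\wit D) \to \sD(D)$, and $\pi(a)$, $a \in \C A$, preserves $\sD(D)$ (part of being an unbounded Kasparov module). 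Second, alongside $G$ also $G^{-1}$ maps $\sD(D)$ into itself and $[D, G^{-1}]$ extends to the bounded adjointable operator $d(G^{-1}) = - G^{-1} d(G) G^{-1}$; this follows from the hypotheses on $G$ by holomorphic functional calculus applied to $G^{-1} = \exp(-\log G)$, or directly from \cite[Proposition 6.7]{Kaa:DAH}.

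The one substantial point is condition $(1)$: compactness of $\pi(a)(i + \wit D)^{-1}$ for $a \in A$. Here I would first establish the resolvent identity
\[
G (i + \wit D)^{-1} = (i + D)^{-1} T, \qquad T := G^{-1} + i (G - G^{-1})(i + \wit D)^{-1} \in \sL(X).
\]
Indeed, for $\eta \in X$ set $\xi := (i + \wit D)^{-1}\eta$; then $G\xi \in \sD(D)$ and $G D(G\xi) = \eta - i\xi$, so $D(G\xi) = G^{-1}\eta - i G^{-1}\xi$ and hence $(i + D)(G\xi) = G^{-1}\eta + i(G - G^{-1})\xi$, which gives the formula upon applying $(i + D)^{-1}$. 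Consequently $(i + \wit D)^{-1} = G^{-1}(i + D)^{-1} T$, and combining this with the commutator identity $G^{-1}(i + D)^{-1} = (i + D)^{-1} G^{-1} + (i + D)^{-1} d(G^{-1})(i + D)^{-1}$ yields
\[
\pi(a)(i + \wit D)^{-1} = \Big( \pi(a)(i + D)^{-1} G^{-1} + \pi(a)(i + D)^{-1} d(G^{-1})(i + D)^{-1} \Big) T .
\]
Since $\pi(a)(i + D)^{-1} \in \sK(X)$ and $\sK(X)$ is an ideal in $\sL(X)$, the right-hand side lies in $\sK(X)$; for general $a \in A$ this persists by density. I expect this compactness argument to be the only place requiring genuine work.

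For the remaining conditions I anticipate only routine verifications. Condition $(2)$: for $\xi \in \sD(\wit D)$ and $(a,\la) \in \wit{\C A}$ one has $G\xi \in \sD(D)$, hence $G^2\xi$, then $(\pi(a) + \la) G^2\xi$, then $G(\pi(a) + \la) G^2\xi$ lie successively in $\sD(D)$ (using that $G$ and $\pi(a)$ preserve $\sD(D)$), i.e. $(\pi(a) + \la) G^2\xi \in \sD(\wit D)$. Condition $(3)$: since $(G^2)^{1/2} = G$, a direct computation on $\sD(\wit D) = G^{-1}\sD(D)$ shows
\[
\big( \wit D (\pi(a) + \la) G^2 - G^2 (\pi(a) + \la) \wit D \big)(\xi) = \big( G\, \rho_{G^2}(a,\la)\, G \big)(\xi),
\]
where $\rho_{G^2}(a,\la) := [D, G(\pi(a) + \la)G] = d(G)(\pi(a) + \la)G + G\, d(a)\, G + G(\pi(a) + \la)\, d(G)$ is a sum of products of bounded adjointable operators, hence lies in $\sL(X)$; moreover $\rho_{G^2} : \wit{\C A} \to \sL(X)$ is completely bounded since $a \mapsto \pi(a)$ and $a \mapsto d(a)$ are completely bounded (the latter being exactly the standing hypothesis on the Kasparov module) and multiplication by the fixed operators $G$ and $d(G)$ preserves complete boundedness. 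Condition $(4)$: $G^2$ is invertible, so $C^*(G^2)$ is unital and $\{ G^2(G^2 + \tfrac{1}{n})^{-1} \}_{n}$ converges to $1$ in operator norm, whence $G^2(G^2 + \tfrac{1}{n})^{-1}\pi(a) \to \pi(a)$ in norm for every $a \in A$. Finally, in the even case $G$ is even by assumption, so $G^2$ commutes with the grading $\ga$ while $\ga(GDG) = -(GDG)\ga$; thus $\ga$ grades $(X, GDG, G^2)$ and the resulting modular cycle has the same parity as $(\C A, X, D)$.
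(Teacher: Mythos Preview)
Your proof is correct and follows essentially the same route as the paper's: the same resolvent identity for condition $(1)$, the identical formula for $\rho_{G^2}$ in condition $(3)$, and the same triviality for $(4)$. The only cosmetic difference is in the final step of the compactness argument, where you commute $G^{-1}$ past $(i+D)^{-1}$ using $d(G^{-1})$, whereas the paper instead first obtains $\pi(a)G(i+GDG)^{-1}\in\sK(X)$ and then uses a resolvent manipulation involving $d(G)$; both are equally valid.
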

\begin{proof}
We only need to verify conditions $(1)$-$(4)$ of Definition \ref{d:unbkas}. Condition $(4)$ is immediate since $C^*(G)$ is unital and condition $(2)$ follows by the construction of the operator $*$-algebra $\C A$. Indeed, all operators of the form $\pi(a) + \la : X \to X$, $(a,\la) \in \wit{\C A}$, preserve $\sD(GDG) = \sD(D)$ and so do $G : X \to X$. To verify condition $(1)$ we let $a \in A$. A direct computation shows that
\begin{equation}\label{eq:modures}
\begin{split}
& (i + G D G)^{-1} - G^{-1}(i + D)^{-1} G^{-1} \\
& \qqq = i G^{-1}(i + D)^{-1}(G - G^{-1})(i + G D G)^{-1}
\end{split}
\end{equation}
and hence that $\pi(a) G(i + G D G)^{-1} \in \sK(X)$. The resolvent identity then shows that
\[
\begin{split}
& \pi(a) (i + G D G)^{-1} \\
& \q = \pi(a) G (i + G DG)^{-1} G^{-1} \\ 
& \qqq - \pi(a) G (i + G D G)^{-1} d(G) (i+ G D G)^{-1}
\in \sK(X)
\end{split}
\]
and $(1)$ is proved. To verify $(3)$ we define the completely bounded linear map $\rho_{G^2} : \wit{\C A} \to \sL(X)$ by the formula
\[
\rho_{G^2}(a,\la) := d( G ) ( \pi(a) + \la) G + G d(a) G + G ( \pi(a) + \la) d(G)
\]
for all $(a,\la) \in \wit{\C A}$. For $(a,\la) \in \wit{\C A}$ and $\xi \in \sD(D) = \sD(G D G)$ we then have that
\[
(G \rho_{G^2}(a,\la) G)(\xi) = (G D G)( \pi(a) + \la) G^2(\xi) - G^2 (\pi(a) + \la) (G D G)(\xi)
\]
This proves the proposition.
\end{proof}

To obtain a partial converse to the above statements we consider the case where the $C^*$-algebra $A$ is unital:

\begin{prop}
Let $(X,D,\De)$ be an unbounded modular cycle from $\C A$ to $B$ and suppose that $A$ is unital. Then the modular operator $\De$ has a bounded inverse and the triple $(\C A,X, \De^{-1/2} D \De^{-1/2})$, with $\sD(\De^{-1/2} D \De^{-1/2}) := \sD(D)$, is an unbounded Kasparov module of the same parity as $(X,D,\De)$.
\end{prop}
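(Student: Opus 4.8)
The plan is to verify that the triple $(\C A, X, \tilde D)$ with $\tilde D := \De^{-1/2} D \De^{-1/2}$ meets the three requirements of an unbounded Kasparov module in the sense of Baaj-Julg: that $\tilde D$ be self-adjoint and regular; that $\pi(a)(\tilde D \pm i)^{-1}$ be compact for all $a \in A$; and that there exist a dense subalgebra — which will be $\C A$ itself — on which $\pi(a)$ preserves $\sD(\tilde D)$ and the commutator $[\tilde D, \pi(a)]$ extends to a bounded adjointable operator. I would do this in four steps, the last being the main obstacle, and settle the parity claim at the end.

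\emph{Invertibility of $\De$.} Since $A$ is unital and $X$ is non-degenerate as a $C^*$-correspondence, the projection $\pi(1_A)$ is $1_{\sL(X)}$, so condition $(4)$ of Definition \ref{d:unbkas} applied to $a = 1_A$ forces the approximate identity $\{V_n\}$ for $C^*(\De)$ to converge in operator norm to $1_{\sL(X)}$; hence $1_{\sL(X)} \in C^*(\De)$. Together with the fact that $\De$ has dense image this gives $0 \notin \si(\De)$: the relation $1_{\sL(X)} \in C^*(\De) \cong C_0(\si(\De) \setminus \{0\})$ forces $\si(\De) \setminus \{0\}$ to be compact, so if $0 \in \si(\De)$ then it is an isolated point; but an isolated $0 \in \si(\De)$ makes the spectral projection onto $\ker\De$ non-zero, contradicting dense image. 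Thus $\De$ has bounded inverse $\De^{-1} \in C^*(\De)$.

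\emph{Self-adjointness and regularity of $\tilde D$, and compact resolvents.} Condition $(3)$ of Definition \ref{d:unbkas} with $(a,\la) = (0,1)$ shows that $[D,\De]$ extends to the bounded adjointable operator $\De^{1/2}\rho_\De(0,1)\De^{1/2}$, and condition $(2)$ with $(0,1)$ gives $\De\,\sD(D) \su \sD(D)$. As $\si(\De)$ is now a compact subset of $(0,\infty)$, a $C^1$-functional-calculus argument — approximating $t^{\pm1/2}$ by polynomials in the $C^1$-norm on a neighbourhood of $\si(\De)$ and using that $D$ is closed — or, alternatively, \cite[Proposition 6.7]{Kaa:DAH} together with \cite[Example 2 and 3]{Wor:UAQ}, shows that $\De^{\pm1/2}$ preserve $\sD(D)$ with $[D,\De^{\pm1/2}]$ bounded adjointable. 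In particular $\De$ and $\De^{-1}$ both preserve $\sD(D)$, so $\De\,\sD(D) = \sD(D)$, and condition $(2)$ then upgrades to: $\pi(a) + \la$ preserves $\sD(D)$ for all $(a,\la) \in \wit{\C A}$. With $G := \De^{-1/2}$ — positive, bounded, invertible, preserving $\sD(D)$, with $[D,G] \in \sL(X)$ — the operator $\tilde D = GDG$ is self-adjoint and regular with $\sD(\tilde D) = \sD(D)$, exactly as in Proposition \ref{p:confeq}. The compactness of $\pi(a)(\tilde D \pm i)^{-1}$ I would then obtain from the identity \eqref{eq:modures} applied with this $G$, which reduces the matter to $\pi(a)\De^{1/2}(i+D)^{-1} \in \sK(X)$; this in turn follows by commuting $\De^{1/2}$ past $(i+D)^{-1}$ (using $[D,\De^{1/2}] \in \sL(X)$) and invoking condition $(1)$ of Definition \ref{d:unbkas}.

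\emph{The commutator condition on $\C A$ — the main obstacle.} Let $a \in \C A$. Domain preservation is already in hand, so the task is to show that $[\tilde D, \pi(a)]|_{\sD(D)}$ extends to a bounded adjointable operator; since $\De^{\pm1/2}$ are bounded and invertible this is equivalent to showing that $\De^{1/2}[\tilde D,\pi(a)]\De^{1/2}$ does, and on $\sD(D)$ this operator equals $D\,\De^{-1/2}\pi(a)\De^{1/2} - \De^{1/2}\pi(a)\De^{-1/2}\,D$. The difficulty is that $[D,\pi(a)]$ is genuinely unbounded for a modular cycle, so boundedness is not automatic. I would solve the twisted commutator relation of condition $(3)$ for $[D,\pi(a)]$ (using also condition $(3)$ with $(0,1)$ to identify $[D,\De]$), obtaining on $\sD(D)$ the identity
\[
[D,\pi(a)] = \De^{1/2}\rho_\De(a,0)\De^{-1/2} - \pi(a)[D,\De]\De^{-1} - [\pi(a),\De]D\De^{-1},
\]
then substitute this into $D\,\De^{-1/2}\pi(a)\De^{1/2} - \De^{1/2}\pi(a)\De^{-1/2}\,D$ after writing the conjugates $\De^{\pm1/2}\pi(a)\De^{\mp1/2}$ as $\pi(a)$ plus bounded operators. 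The key observation is that the two a priori unbounded contributions — each a multiple of $\De^{-1/2}[\pi(a),\De]D\De^{-1/2}$ — cancel, leaving an expression assembled from $\rho_\De(a,0)$, $\rho_\De(0,1)$, $\pi(a)$ and the bounded operators $[D,\De^{\pm1/2}]$. Hence $\De^{1/2}[\tilde D,\pi(a)]\De^{1/2}$, and with it $[\tilde D,\pi(a)]$, extends to a bounded adjointable operator. (As $\rho_\De$ is completely bounded, the same computation shows that $a \mapsto [\tilde D,\pi(a)]$ is completely bounded.) The delicate point is the bookkeeping in this cancellation and keeping careful track of domains — every manipulation is a priori valid only on $\sD(D)$. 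Finally, in the even case the grading $\ga$ commutes with $\pi(a)$ and $\De$, hence with $\De^{\pm1/2}$, and anticommutes with $D$, hence with $\tilde D$; so $(\C A, X, \tilde D)$ has the same parity as $(X,D,\De)$.
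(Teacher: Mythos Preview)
Your proposal is correct and follows essentially the same path as the paper: invertibility of $\De$ from $\pi(1)=1$ and condition~(4); self-adjointness/regularity of $\De^{-1/2}D\De^{-1/2}$ via \cite{Kaa:DAH,Wor:UAQ} once $\De^{\pm1/2}$ preserve $\sD(D)$ with bounded commutator; compact resolvents via \eqref{eq:modures}; and domain preservation by $\pi(a)$ from $\pi(a)\De$ and $\De^{-1}$ both preserving $\sD(D)$.

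The one place you diverge is the commutator computation. You solve condition~(3) for $[D,\pi(a)]$, substitute, and track a cancellation of unbounded terms. The paper instead writes down the closed formula directly: for $\xi\in\sD(D)$,
\[
[\De^{-1/2}D\De^{-1/2},\pi(a)](\xi)=\big(\De^{-1/2}\rho_\De(a,0)\De^{-1/2}+\De^{-1/2}d(\De^{-1/2})\pi(a)+\pi(a)\,d(\De^{-1/2})\De^{-1/2}\big)(\xi),
\]
where $d(\De^{-1/2}):=[D,\De^{-1/2}]\in\sL(X)$. This identity is verified in one line using $\De^{1/2}\rho_\De(a,0)\De^{1/2}=D\pi(a)\De-\De\pi(a)D$, and it makes the boundedness transparent without any intermediate unbounded bookkeeping. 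Your route reaches the same destination; the paper's formula is just a cleaner organization of the same algebra. One minor simplification you could also use: since $\pi(1)=1$, the resolvent $(i+D)^{-1}$ is itself compact, so the commuting step with $\De^{1/2}$ in your compact-resolvent argument is not needed.
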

\begin{proof}
Since $X$ is non-degenerate we obtain that $\pi(1) = 1$ and hence that $\De(\De + 1/n)^{-1} \to 1$ in operator norm. This implies that $\De$ does not have $0$ in the spectrum and thus that $\De^{-1}$ is a bounded adjointable operator. Since $\De$ preserves $\sD(D)$ and $[D,\De] : \sD(D) \to X$ extends to a bounded adjointable operator we have that $\De^{-1/2}$ preserves $\sD(D)$ and $[D,\De^{-1/2}] : \sD(D) \to X$ extends to a bounded adjointable operator $d(\De^{-1/2}) : X \to X$, see \cite[Proposition 3.12]{BlCu:DNS}. It therefore follows by \cite[Proposition 6.7]{Kaa:DAH} and \cite[Example 2 and 3]{Wor:UAQ} that $\De^{-1/2} D \De^{-1/2} : \sD(D) \to X$ is an unbounded selfadjoint and regular operator. By the identity in \eqref{eq:modures} we also obtain that $(i + \De^{-1/2} D \De^{-1/2})^{-1} : X \to X$ is a compact operator. It therefore suffices to show that $\pi(a)$ preserves $\sD(D) = \sD(\De^{-1/2} D \De^{-1/2})$ and that $[\De^{-1/2} D \De^{-1/2}, \pi(a)] : \sD(D) \to X$ extends to a bounded adjointable operator for all $a \in \C A$. Since both $\pi(a) \De$ and $\De^{-1}$ preserves $\sD(D)$ we conclude that $\pi(a)$ preserves $\sD(D)$ for all $a \in \C A$. For $a \in \C A$ and $\xi \in \sD(D)$, a direct computation then shows that
\[
\begin{split}
& \big( \De^{-1/2} D \De^{-1/2} \pi(a) - \pi(a) \De^{-1/2} D \De^{-1/2} \big)(\xi) \\
& \q = \big( \De^{-1/2} \rho_\De(a,0) \De^{-1/2} + \De^{-1/2} d(\De^{-1/2}) \pi(a) + \pi(a)  d(\De^{-1/2}) \De^{-1/2}\big)(\xi)
\end{split}
\]
This proves the proposition.
\end{proof}

We emphasize that the above procedure of transforming an unbounded modular cycle $(X,D,\De)$ into an unbounded Kasparov module does \emph{not} work when $\De : X \to X$ has zero in the spectrum (thus in particular when the $C^*$-algebra $A$ is non-unital). Indeed, the unbounded operator $\De^{-1/2} D \De^{-1/2}$ (when one takes proper care of how to define it) may even fail to have a selfadjoint extension, see \cite[Example 5.1]{Kaa:DAH}.
\medskip

We end this section by defining direct sums of unbounded modular cycles:

\begin{dfn}
Let $\sD_1 := (X_1, D_1, \De_1)$ and $\sD_2 := (X_2, D_2, \De_2)$ be two unbounded modular cycles of the same parity and both from $\C A$ to $B$ (the grading operators are denoted by $\ga_1 : X_1 \to X_1$ and $\ga_2 : X_2 \to X_2$ in the even case). By the \emph{direct sum} of $\sD_1$ and $\sD_2$ we will understand the unbounded modular cycle $\sD_1 \op \sD_2 := (X_1 \op X_2, D_1 \op D_2, \De_1 \op \De_2)$ from $\C A$ to $B$ with grading operator $\ga_1 \op \ga_2 : X_1 \op X_2 \to X_1 \op X_2$ in the even case.
\end{dfn}

\section{Equivalence relations on unbounded modular cycles}\label{s:equrelII}
The construction of the unbounded Kasparov product given in \cite[Section 7]{Kaa:UKM} relies on the choice of a differentiable generating sequence (see Definition \ref{d:cordif}). In order to have a well-defined operation (independent of choices) it is therefore necessary to work with unbounded modular cycles modulo a suitable equivalence relation. We are in this text striving to retain as detailed information about the geometry as possible and we have therefore chosen to work with a modular analogue of bounded perturbations of unbounded Kasparov modules. It is the purpose of this section to introduce this kind of bounded modular perturbations.
\medskip

Let $\C A$ be an operator $*$-algebra satisfying Assumption \ref{a:opealg} and let $B$ be a $\si$-unital $C^*$-algebra.
\medskip

We start by introducing unitary equivalences:

\begin{dfn}\label{d:uniequ}
Two unbounded modular cycles $\sD_1 = (X_1, D_1, \De_1)$ and $\sD_2 = (X_2, D_2, \De_2)$ (both from $\C A$ to $B$, of the same parity, and with grading operators $\ga_1 : X_1 \to X_1$ and $\ga_2 : X_2 \to X_2$ in the even case) are said to be \emph{unitarily equivalent} when there exists a unitary operator $U : X_1 \to X_2$ such that
\[
D_2 = U D_1 U^* \, , \, \, \De_2 = U \De_1 U^* \, , \, \, \pi_2(a) = U \pi_1(a) U^*
\]
for all $a \in A$. In the even case, it is also required that $\ga_2 = U \ga_1 U^*$. When $\sD_1$ and $\sD_2$ are unitarily equivalent we write $\sD_1 \sim_u \sD_2$. We will in this case also apply the notation $\sD_2 = U \sD_1 U^*$.
\end{dfn}


It is clear that the relation $\sim_u$ is an equivalence relation. 

\begin{dfn}\label{d:bouper}
Let $\sD_1 = (X, D_1, \De_1)$ and $\sD_2 = (X, D_2, \De_2)$ be two unbounded modular cycles (both from $\C A$ to $B$, of the same parity, and with the same grading operator $\ga : X \to X$ in the even case). We will say that $\sD_1$ is a \emph{bounded modular perturbation} of $\sD_2$ when the following conditions hold:
\begin{enumerate}
\item The inclusion $(\pi(a) + \la) \De_2(\xi) \in \sD(D_1)$ holds for all $\xi \in \sD(D_2)$ and $(a,\la) \in \wit{\C A}$.
\item There exists a completely bounded linear map
\[
\rho_{\De_1,\De_2} : \wit{\C A} \to \sL(X)
\]
such that
\[
\begin{split}
& \big( \De_1^{1/2} \rho_{\De_1,\De_2}(a,\la) \De_2^{1/2} \big)(\xi) \\
& \q = \big( D_1 (\pi(a) + \la) \De_2 - \De_1( \pi(a) + \la) D_2 \big)(\xi)
\end{split}
\]
for all $\xi \in \sD(D_2)$.
\end{enumerate}
%
\end{dfn}


\begin{lemma}\label{l:bouper}
The relation ``bounded modular perturbation'' is reflexive and symmetric.
\end{lemma}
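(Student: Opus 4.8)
Reflexivity is immediate from the definition of an unbounded modular cycle: if $\sD = (X,D,\De)$ is an unbounded modular cycle from $\C A$ to $B$, then conditions $(1)$ and $(2)$ of Definition \ref{d:bouper} with $\sD_1 = \sD_2 = \sD$ are literally conditions $(2)$ and $(3)$ of Definition \ref{d:unbkas}, the latter with $\rho_{\De,\De} := \rho_\De$. Hence $\sD$ is a bounded modular perturbation of itself, and the real content of the lemma is symmetry.

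So suppose $\sD_1 = (X,D_1,\De_1)$ is a bounded modular perturbation of $\sD_2 = (X,D_2,\De_2)$; write $\pi$ for the common left action and, for $(a,\la) \in \wit{\C A}$, abbreviate $b := \pi(a) + \la$, noting that $b^* = \pi(a^*) + \bar\la$ arises from the element $(a^*,\bar\la) \in \wit{\C A}$, so that the two hypotheses apply to $b^*$ as well as to $b$. I want to verify the two conditions of Definition \ref{d:bouper} with the roles of $\sD_1$ and $\sD_2$ reversed, and the key step is the domain condition $(1)$, namely $b\De_1(\eta) \in \sD(D_2)$ for all $\eta \in \sD(D_1)$. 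Since $D_2$ is selfadjoint it suffices to produce $\zeta' \in X$ with $\inn{b\De_1\eta, D_2\psi} = \inn{\zeta',\psi}$ for every $\psi \in \sD(D_2)$. For such $\psi$ I would compute, using selfadjointness of $\De_1$,
\[
\inn{b\De_1\eta, D_2\psi} = \inn{\eta, \De_1 b^* D_2\psi},
\]
then invoke condition $(2)$ of the hypothesis at $(a^*,\bar\la)$ to replace $\De_1 b^* D_2\psi$ by $D_1 b^*\De_2\psi - \De_1^{1/2}\rho_{\De_1,\De_2}(a^*,\bar\la)\De_2^{1/2}\psi$; here $b^*\De_2\psi \in \sD(D_1)$ by condition $(1)$ of the hypothesis, so $D_1 b^*\De_2\psi$ is a genuine element of $X$. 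Moving $D_1$ onto $\eta \in \sD(D_1)$ in the first term and taking the adjoint of the bounded operator $\De_1^{1/2}\rho_{\De_1,\De_2}(a^*,\bar\la)\De_2^{1/2}$ in the second yields
\[
\inn{b\De_1\eta, D_2\psi} = \inn{ \De_2 b D_1\eta - \De_2^{1/2}\rho_{\De_1,\De_2}(a^*,\bar\la)^*\De_1^{1/2}\eta , \, \psi},
\]
so $b\De_1\eta \in \sD(D_2)$ with $D_2 b\De_1\eta = \De_2 b D_1\eta - \De_2^{1/2}\rho_{\De_1,\De_2}(a^*,\bar\la)^*\De_1^{1/2}\eta$.

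This computation simultaneously delivers condition $(2)$: rearranging the last display gives
\[
\big( D_2(\pi(a)+\la)\De_1 - \De_2(\pi(a)+\la)D_1 \big)(\eta) = \De_2^{1/2}\big({-}\rho_{\De_1,\De_2}(a^*,\bar\la)^*\big)\De_1^{1/2}(\eta)
\]
for all $\eta \in \sD(D_1)$, so one should set $\rho_{\De_2,\De_1}(a,\la) := -\rho_{\De_1,\De_2}(a^*,\bar\la)^*$. This map is completely bounded because it is the composite of $\rho_{\De_1,\De_2}$ with the conjugate-linear complete isometry $(a,\la) \mapsto (a^*,\bar\la)$ on $\wit{\C A}$ and the conjugate-linear complete isometry $T \mapsto T^*$ on $\sL(X)$, hence linear and completely bounded with $\|\rho_{\De_2,\De_1}\|_{\T{cb}} = \|\rho_{\De_1,\De_2}\|_{\T{cb}}$. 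Thus $\sD_2$ is a bounded modular perturbation of $\sD_1$, which proves symmetry.

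\textbf{Main obstacle.} The only genuinely delicate point is the reversed domain condition $(1)$: there is a priori no inclusion between the domains $\sD(D_1)$ and $\sD(D_2)$, and one must manufacture membership in $\sD(D_2)$ purely from the selfadjointness of $D_2$ together with the two hypotheses applied to the adjoint element $b^* = \pi(a^*)+\bar\la$ rather than to $b$. Once condition $(1)$ is established, condition $(2)$ and the complete boundedness of $\rho_{\De_2,\De_1}$ follow formally.
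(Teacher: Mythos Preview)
Your proof is correct and follows essentially the same approach as the paper: reflexivity from Definition \ref{d:unbkas}, and for symmetry the identical computation of $\inn{(\pi(a)+\la)\De_1\eta,\, D_2\psi}$ via the hypothesis applied at $(a^*,\bar\la)$, using selfadjointness of $D_2$ to conclude the domain inclusion and read off $\rho_{\De_2,\De_1}(a,\la) = -\rho_{\De_1,\De_2}(a^*,\bar\la)^*$. Your justification of the complete boundedness of $\rho_{\De_2,\De_1}$ is slightly more explicit than the paper's, but otherwise the arguments coincide.
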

\begin{proof}
Reflexivity follows from the conditions $(2)$ and $(3)$ of Definition \ref{d:unbkas}.

To prove symmetry we suppose that $\sD_1 = (X_1,D_1,\De_1)$ is a bounded modular perturbation of $\sD_2 = (X_2,D_2,\De_2)$.

We define the completely bounded linear map
\[
\rho_{\De_2,\De_1} : \wit{\C A} \to \sL(X) \q \rho_{\De_2,\De_1}(a,\la) := - \big( \rho_{\De_1,\De_2}(a^*, \ov{\la}) \big)^*
\]

Let now $\xi \in \sD(D_1)$, $\eta \in \sD(D_2)$ and $(a,\la) \in \wit{\C A}$ be given. Suppressing the $*$-homomorphism $\pi : A \to \sL(X)$ we compute that
\[
\begin{split}
\inn{(a + \la)\De_1(\xi), D_2(\eta)} 
& = \inn{\xi, \De_1 (a^* + \ov{\la}) D_2(\eta)} \\
& = \inn{\xi, D_1 (a^* + \ov{\la}) \De_2(\eta)} - \inn{\xi, \De_1^{1/2} \rho_{\De_1,\De_2}(a^*, \ov{\la}) \De_2^{1/2} (\eta)} \\
& = \inn{\De_2 (a + \la) D_1(\xi), \eta} + \inn{\De_2^{1/2} \rho_{\De_2,\De_1}(a,\la) \De_1^{1/2}(\xi),\eta}
\end{split}
\]
Since $D_2 : \sD(D_2) \to X$ is selfadjoint, this shows that $(a + \la)\De_1(\xi) \in \sD(D_2)$ and that
\[
\De_2^{1/2} \rho_{\De_2,\De_1}(a,\la) \De_1^{1/2}(\xi) = D_2 (a + \la) \De_1(\xi) - \De_2 (a + \la) D_1(\xi)
\]
This proves that $\sD_2$ is a bounded modular perturbation of $\sD_1$.
\end{proof}

In order to get some familiarity with the relation ``bounded modular perturbation'' we record the following:

\begin{lemma}
Let $(X,D,1)$ be an unbounded modular cycle from $\C A$ to $B$ and let $G : X \to X$ be a positive invertible bounded operator which we require to be even when $(X,D,1)$ is even. Suppose that $G(\xi) \in \sD(D)$ for all $\xi \in \sD(D)$ and that $[D,G] : \sD(D) \to X$ has a bounded adjointable extension $d(G) : X \to X$. Then the triple $(X,GDG,G^2)$ (with $\sD(GDG) := \sD(D)$) is an unbounded modular cycle from $\C A$ to $B$ of the same parity as $(X,D,1)$ and moreover we have that $(X,D,1)$ and $(X,GDG,G^2)$ are bounded modular perturbations of each other.
\end{lemma}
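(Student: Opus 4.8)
The plan is to reduce the first assertion to Proposition \ref{p:confeq} and then to verify the perturbation relation by a single explicit computation of a twisted commutator, invoking the symmetry in Lemma \ref{l:bouper} to avoid doing the computation in both directions.

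First I would observe that an unbounded modular cycle of the shape $(X,D,1)$ is precisely the datum of an unbounded Kasparov module with completely bounded derivation. Indeed, unwinding Definition \ref{d:unbkas} with $\De = 1$: conditions $(1)$ and $(2)$ give that $\pi(a)(i+D)^{-1}$ is compact for $a \in A$ and that $\pi(a)+\la$ preserves $\sD(D)$ for $(a,\la) \in \wit{\C A}$, while condition $(3)$ produces a completely bounded map $\rho_1 : \wit{\C A} \to \sL(X)$ which on $\sD(D)$ implements $\xi \mapsto \big( D(\pi(a)+\la) - (\pi(a)+\la)D \big)(\xi) = [D,\pi(a)](\xi)$; hence $d(a) := \rho_1(a,0)$ is the required completely bounded derivation. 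Since $G$ is positive, invertible and bounded (and even in the even case), preserves $\sD(D)$, and $[D,G]$ extends to the bounded adjointable operator $d(G)$, Proposition \ref{p:confeq} applies verbatim and shows that $(X,GDG,G^2)$ is an unbounded modular cycle from $\C A$ to $B$ of the same parity as $(X,D,1)$.

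For the perturbation statement, note that $(X,D,1)$ and $(X,GDG,G^2)$ share the same Hilbert $C^*$-module, the same left action, and the same grading operator in the even case, so the relation of Definition \ref{d:bouper} is applicable; by Lemma \ref{l:bouper} it then suffices to check that $(X,GDG,G^2)$ is a bounded modular perturbation of $(X,D,1)$. Condition $(1)$ of Definition \ref{d:bouper} is immediate, since with $\De_2 = 1$ it asks only that $(\pi(a)+\la)\xi \in \sD(GDG) = \sD(D)$ for $\xi \in \sD(D)$, which is condition $(2)$ of Definition \ref{d:unbkas} for $(X,D,1)$. For condition $(2)$ I would put
\[
\rho_{G^2,1}(a,\la) := d(G)\big( \pi(a) + \la \big) + G\, \rho_1(a,\la), \qquad (a,\la) \in \wit{\C A},
\]
and then, for $\xi \in \sD(D)$, expand $GDG(\pi(a)+\la)\xi$ by first inserting $[D,G]\xi = d(G)\xi$ and then $\big( D(\pi(a)+\la) - (\pi(a)+\la)D \big)\xi = \rho_1(a,\la)\xi$ (all intermediate vectors remaining in $\sD(D)$ because $\pi(a)+\la$ and $G$ preserve $\sD(D)$), which yields
\[
\big( GDG(\pi(a)+\la) - G^2(\pi(a)+\la)D \big)(\xi) = G\big( d(G)(\pi(a)+\la) + G\rho_1(a,\la) \big)(\xi) = \big( G\, \rho_{G^2,1}(a,\la)\big)(\xi),
\]
which is exactly the identity required (here $\De_1^{1/2} = G$ and $\De_2^{1/2} = 1$). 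Complete boundedness of $\rho_{G^2,1}$ follows because $\rho_1$ is completely bounded, because $\pi$ is a $*$-homomorphism and $\C A \hookrightarrow A$ is completely bounded (so $(a,\la)\mapsto \pi(a)+\la$ is completely bounded on $\wit{\C A}$), and because left multiplication by the fixed bounded operators $d(G)$ and $G$ preserves complete boundedness.

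The only place where any care is needed is the bookkeeping in the commutator expansion — keeping track of which intermediate vectors lie in $\sD(D)$ as $D$, $G$, and $\pi(a)+\la$ are applied in turn — together with the complete boundedness check for $\rho_{G^2,1}$. Neither of these is a genuine obstacle, so I do not anticipate any serious difficulty.
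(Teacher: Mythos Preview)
Your proof is correct and follows essentially the same approach as the paper: reduce the first assertion to Proposition \ref{p:confeq}, then verify the perturbation in the direction $\sD_1=(X,GDG,G^2)$, $\sD_2=(X,D,1)$ via the same map $\rho_{G^2,1}(a,\la)=d(G)(\pi(a)+\la)+G\,d(a)$ (your $\rho_1(a,\la)$ is exactly $d(a)$), and invoke the symmetry of Lemma \ref{l:bouper} for the other direction.
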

\begin{proof}
We already saw in Proposition \ref{p:confeq} that $(X,GDG,G^2)$ is an unbounded modular cycle of the same parity as $(X,D,1)$. Moreover, we note that the derivation $d : \C A \to \sL(X)$ (given by the commutator with $D$) is completely bounded since $(X,D,1)$ is an unbounded modular cycle. In particular, we clearly have that condition $(1)$ of Definition \ref{d:bouper} is satisfied. In order to verify condition $(2)$ of Definition \ref{d:bouper} we define the completely bounded map
\[
\rho_{G^2,1} : \wit{\C A} \to \sL(X) \q \rho_{G^2,1}(a,\la) := d(G)\cd ( \pi(a) + \la) + G \cd d(a)
\]
For all $\xi \in \sD(D)$ it then holds that
\[
G \cd \rho_{G^2,1}(a,\la)(\xi) = \big( G D G \cd ( \pi(a) + \la) - G \cd ( \pi(a) + \la) D \big)(\xi)
\]
proving the lemma.
\end{proof}

The next lemma investigates the relation between unitary equivalences and bounded modular perturbations:

\begin{lemma}\label{l:conuni}
Let $Y$ be a (right) Hilbert $C^*$-module over $B$ and let $\sD_1 = (X, D_1,\De_1)$ and $\sD_2 = (X, D_2, \De_2)$ be unbounded modular cycles from $\C A$ to $B$. Suppose that there exists a unitary operator $U : X \to Y$ and that $\sD_1$ is a bounded modular perturbation of $\sD_2$. Then $U \sD_1 U^*$ is a bounded modular perturbation of $U \sD_2 U^*$.
\end{lemma}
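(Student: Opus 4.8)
The plan is to verify the two defining conditions of Definition \ref{d:bouper} for the pair $(U \sD_1 U^*, U \sD_2 U^*)$ directly by transporting, via the unitary $U$, the corresponding data for the pair $(\sD_1, \sD_2)$. Write $U \sD_j U^* = (Y, U D_j U^*, U \De_j U^*)$ with left action $U \pi(\cd) U^* : A \to \sL(Y)$; note that $\sD(U D_j U^*) = U(\sD(D_j))$.

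First I would treat condition $(1)$. Let $\eta \in \sD(U D_2 U^*)$ and $(a,\la) \in \wit{\C A}$ be given, so $\eta = U(\xi)$ for some $\xi \in \sD(D_2)$. Then
\[
\big( U\pi(a)U^* + \la \big)\big( U \De_2 U^* \big)(\eta)
= U \big( (\pi(a) + \la) \De_2(\xi) \big),
\]
and since $\sD_1$ is a bounded modular perturbation of $\sD_2$ we have $(\pi(a) + \la)\De_2(\xi) \in \sD(D_1)$, hence the displayed vector lies in $U(\sD(D_1)) = \sD(U D_1 U^*)$, which is exactly condition $(1)$.

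Next I would treat condition $(2)$. Define the completely bounded linear map
\[
\rho_{U\De_1 U^*, U \De_2 U^*} : \wit{\C A} \to \sL(Y), \q
(a,\la) \mapsto U \, \rho_{\De_1,\De_2}(a,\la) \, U^*,
\]
which is completely bounded because conjugation by a unitary is a complete isometry on $\sL(\cd)$ and composition of completely bounded maps is completely bounded. Using $(U \De_j U^*)^{1/2} = U \De_j^{1/2} U^*$ (functional calculus commutes with unitary conjugation) and the intertwining relation $U D_j = (U D_j U^*) U$ on $\sD(D_j)$, a direct computation for $\xi \in \sD(D_2)$, $\eta = U(\xi)$, $(a,\la) \in \wit{\C A}$ gives
\[
\big( (U\De_1 U^*)^{1/2} \rho_{U\De_1 U^*, U\De_2 U^*}(a,\la) (U\De_2 U^*)^{1/2} \big)(\eta)
= U \Big( \big( \De_1^{1/2} \rho_{\De_1,\De_2}(a,\la) \De_2^{1/2} \big)(\xi) \Big),
\]
and the right-hand side equals, by the defining property of $\rho_{\De_1,\De_2}$,
\[
U\big( D_1(\pi(a)+\la)\De_2 - \De_1(\pi(a)+\la)D_2 \big)(\xi)
= \big( (UD_1U^*)(U\pi(a)U^*+\la)(U\De_2U^*) - (U\De_1U^*)(U\pi(a)U^*+\la)(UD_2U^*) \big)(\eta).
\]
This is precisely condition $(2)$ for $U\sD_1 U^*$ as a bounded modular perturbation of $U \sD_2 U^*$, and in the even case one checks additionally that $U\sD_1 U^*$ and $U\sD_2 U^*$ carry the common grading operator $U\ga U^*$, which is immediate since $\ga$ commutes with $D_1, D_2, \De_1, \De_2$ and anticommutes appropriately. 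There is no serious obstacle here; the only point requiring a little care is the bookkeeping of domains and the compatibility of the square root functional calculus with conjugation by $U$, both of which are standard facts about regular operators and unitaries on Hilbert $C^*$-modules.
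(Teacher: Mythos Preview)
Your proof is correct and follows exactly the same approach as the paper: verify condition~(1) directly by transporting through $U$, and define $\rho_{U\De_1 U^*, U\De_2 U^*}(a,\la) := U\,\rho_{\De_1,\De_2}(a,\la)\,U^*$ for condition~(2). You have simply written out in full the computations that the paper leaves implicit.
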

\begin{proof}
The first condition of Definition \ref{d:bouper} is clearly satisfied for $U \sD_1 U^*$ and $U \sD_2 U^*$. 

Let $\rho_{\De_1,\De_2} : \wit{\C A} \to \sL(X)$ denote the completely bounded map which implements the bounded modular perturbation from $\sD_1$ to $\sD_2$. The second condition of Definition \ref{d:bouper} then holds with $\rho_{U \De_1 U^*, U \De_2 U^*}(a,\la) := U \rho_{\De_1,\De_2}(a,\la) U^*$ for all $(a,\la) \in \wit{\C A}$. 
\end{proof}

We ignore for the moment whether the relation ``bounded modular perturbation'' is transitive or not. We therefore make the following:

\begin{dfn}
The equivalence relation $\sim_{\T{bmp}}$ on unbounded modular cycles from $\C A$ to $B$ of the same parity is defined by:
\[
\begin{split}
& \Big( \sD_1 = (X_1, D_1, \De_1) \sim_{\T{bmp}} \sD_2 = (X_2, D_2, \De_2) \Big) \\ 
& \q \lrar 
\Big( \T{there exists a unitary operator } U : X_1 \to X_2 \T{ such that } U \sD_1 U^* \T{ and } \sD_2 \\
& \qqq \T{ agree up to a finite number of bounded modular perturbations } \Big)
\end{split}
\]
By a slight abuse of language we will sometimes say that $\sD_1$ and $\sD_2$ are bounded modular perturbations of each other when $\sD_1 \sim_{\T{bmp}} \sD_2$.
\end{dfn}

We are now ready to make a tentative definition of unbounded bivariant $K$-theory.


\begin{dfn}
By the \emph{even (resp. odd) unbounded bivariant $K$-theory} from $\C A$ to $B$ we will understand the collection of even (resp. odd) unbounded modular cycles from $\C A$ to $B$ modulo the equivalence relation $\sim_{\T{bmp}}$. The even (resp. odd) unbounded bivariant $K$-theory from $\C A$ to $B$ is denoted by $UK_0(\C A, B)$ and $UK_1(\C A,B)$ (resp.).
\end{dfn}

\begin{lemma}
The direct sum of even (resp. odd) unbounded modular cycles provides $UK_0(\C A,B)$ and $UK_1(\C A,B)$ with the structure of an abelian monoid. 
\end{lemma}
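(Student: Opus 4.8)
The plan is to verify the monoid axioms for $\bigl(UK_0(\C A,B),\op\bigr)$ and $\bigl(UK_1(\C A,B),\op\bigr)$ directly, the only subtlety being that we are working with equivalence classes under $\sim_{\T{bmp}}$. First I would check that $\op$ is well-defined on equivalence classes. For this it suffices to show that if $\sD_1 \sim_{\T{bmp}} \sD_1'$ then $\sD_1 \op \sD_2 \sim_{\T{bmp}} \sD_1' \op \sD_2$ for every third cycle $\sD_2$ of the same parity. Unwinding the definition of $\sim_{\T{bmp}}$, this reduces to two sub-claims: (i) if $U:X_1\to X_1'$ is a unitary intertwining $\sD_1$ and $\sD_1'$, then $U\op \T{id}_{X_2}$ intertwines $\sD_1\op\sD_2$ and $\sD_1'\op\sD_2$ (immediate from Definition \ref{d:uniequ}, since the ingredients of a direct sum are defined entry-wise); and (ii) if $\sD_1'$ is a bounded modular perturbation of $\sD_1$ with implementing completely bounded map $\rho_{\De_1',\De_1}:\wit{\C A}\to\sL(X_1)$, then $\sD_1'\op\sD_2$ is a bounded modular perturbation of $\sD_1\op\sD_2$, with the implementing map given by $(a,\la)\mapsto \rho_{\De_1',\De_1}(a,\la)\op \rho_{\De_2}(a,\la)$, where $\rho_{\De_2}$ is the map from condition $(3)$ of Definition \ref{d:unbkas} for $\sD_2$. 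One then checks that this direct-sum map is completely bounded (a direct sum of completely bounded maps is completely bounded, with the rescaled direct-sum operator space norm from \eqref{eq:dirdif} on the target only affecting constants) and that the defining identity of Definition \ref{d:bouper}$(2)$ holds on $\sD(D_1\op D_2)=\sD(D_1)\op\sD(D_2)$, which it does because both summands satisfy their respective identities. Condition $(1)$ of Definition \ref{d:bouper} for the direct sum is similarly checked entry-wise. By symmetry of $\sim_{\T{bmp}}$ (Lemma \ref{l:bouper}) the same conclusion holds when $\sD_2$ is replaced.

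Next I would verify that $\op$ is commutative and associative on equivalence classes. The flip map $X_1\op X_2 \to X_2\op X_1$ is a unitary intertwining $\sD_1\op\sD_2$ and $\sD_2\op\sD_1$ (it intertwines the operators, the modular operators, the left actions, and the gradings), so $\sD_1\op\sD_2 \sim_u \sD_2\op\sD_1$ and hence $[\sD_1\op\sD_2]=[\sD_2\op\sD_1]$ in $UK_*(\C A,B)$. For associativity, the canonical unitary $(X_1\op X_2)\op X_3 \to X_1\op(X_2\op X_3)$ likewise intertwines the two iterated direct sums, giving $(\sD_1\op\sD_2)\op\sD_3 \sim_u \sD_1\op(\sD_2\op\sD_3)$; note that here there is no rescaling issue since we are comparing $C^*$-module direct sums, not operator-space direct sums.

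Finally I would exhibit a neutral element. The natural candidate is the zero cycle $\sD_0 := (0,0,0)$ on the zero Hilbert $C^*$-module (together with the zero grading in the even case); one checks it satisfies Definition \ref{d:unbkas} vacuously. Then $\sD\op\sD_0$ is unitarily equivalent to $\sD$ via the obvious identification $X\op 0\cong X$, so $[\sD\op\sD_0]=[\sD]$. This gives the monoid structure. I do not expect any serious obstacle here; the only point requiring a little care is the bookkeeping in well-definedness, namely confirming that the completely bounded perturbation map for a direct sum can be taken to be the entry-wise direct sum of the given perturbation map and the commutator map $\rho_{\De}$ of the untouched summand, and that complete boundedness survives passage to direct sums — this is the one step where the operator space structure on $\wit{\C A}$ and the rescaled norm \eqref{eq:dirdif} enter, but both only affect constants and not complete boundedness itself.
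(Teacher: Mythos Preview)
Your proposal is correct, and since the paper states this lemma without proof (treating it as a routine verification), there is nothing to compare against; you have simply filled in the details the author left to the reader. One small remark: your parenthetical about the rescaled norm \eqref{eq:dirdif} is a red herring here, since the target of $\rho_{\De_1',\De_1}\op\rho_{\De_2}$ is $\sL(X_1\op X_2)$ with its $C^*$-algebra operator space structure (where $\|T_1\op T_2\|=\max\{\|T_1\|,\|T_2\|\}$), not the rescaled direct sum of operator $*$-correspondences---so complete boundedness of the direct-sum map is immediate without any rescaling entering.
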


\begin{remark}
The equivalence relation $\sim_{\T{bmp}}$ is an unbounded analogue of the notion of ``compact perturbation'' and unitary equivalence at the level of bounded Kasparov modules, see \cite[Definition 17.2.4]{Bla:KOA}. We have in this text chosen \emph{not} to work modulo any notion of degenerate unbounded modular cycles. This is due to the significant amount of spectral information that can be contained in a degenerate unbounded modular cycle. It is also worthwhile to notice that whereas the bounded notion of ``compact perturbation'' is insensitive to the growth properties of eigenvalues this is not the case with the more refined notion of perturbations that we are working with in this text (this observation is a consequence of the resolvent identity).
\end{remark}

\section{The unbounded Kasparov product}\label{s:unbkas}
Let $\C A$ and $\C B$ be operator $*$-algebras satisfying the conditions in Assumption \ref{a:opealg} and let $C$ be a $\si$-unital $C^*$-algebra. We recall that all operator $*$-correspondences are assumed to satisfy the conditions in Assumption \ref{a:opecor}.

We are going to construct the \emph{unbounded Kasparov product} of the following two objects:
\begin{enumerate}
\item A compact operator $*$-correspondence $\C X$ from $\C A$ to $\C B$;
\item An unbounded modular cycle $\sD := (Y,D,\Ga)$ from $\C B$ to $C$ with grading operator $\ga : Y \to Y$ in the even case.
\end{enumerate}
This operation is given by an \emph{explicit formula} and produces an unbounded modular cycle from $\C A$ to $C$ of the same parity as $\sD$. 
\medskip

Let us consider the $C^*$-correspondence $\ell^2(\nn,Y)$ from $K_B$ to $C$. We recall that this $C^*$-correspondence is defined as the completion of the algebraic direct sum $\op_{n = 1}^\infty Y$ with respect to the (norm coming from the) inner product $\inn{\sum_{n = 1}^\infty y_n e_{n1}, \sum_{n = 1}^\infty y_n' e_{n1}}_{\ell^2(\nn,Y)} := \sum_{n = 1}^\infty \inn{y_n,y_n'}_Y$. For the definition of the bimodule structure we refer to Subsection \ref{ss:rowcol}. 

We may define the unbounded selfadjoint and regular operator
\[
\T{diag}(D) : \sD\big( \T{diag}(D) \big) \to \ell^2(\nn,Y) \q \T{diag}(D)\big( \sum_{n = 1}^\infty y_n e_{n1} \big) := \sum_{n = 1}^\infty D(y_n) e_{n1}
\]
where the domain is given by
\[
\begin{split}
& \sD\big( \T{diag}(D) \big) \\
& \q := \big\{ \sum_{n = 1}^\infty y_n e_{n1} \in \ell^2(\nn,Y) \mid y_n \in \sD(D) \, \, \forall n \in \nn \, , \, \, \sum_{n = 1}^\infty D(y_n) e_{n1} \in \ell^2(\nn,Y) \big\}
\end{split}
\]
In a similar fashion we obtain the bounded positive operator with dense image $\T{diag}(\Ga) : \ell^2(\nn,Y) \to \ell^2(\nn,Y)$ defined by $\T{diag}(\Ga)\big( \sum_{n = 1}^\infty y_n e_{n1} \big) := \sum_{n = 1}^\infty \Ga(y_n) e_{n1}$.
\medskip

Using Lemma \ref{l:diflinI} we may choose a sequence $\{ \xi_n\}$ of generators for the $C^*$-completion $X$ of the compact operator $*$-correspondence $\C X$ such that
\[
\xi := \sum_{n = 1}^\infty \xi_n e_{1n} \in \ell^2(\nn,\C X)^t
\]
Furthermore, (still by Lemma \ref{l:diflinI}) we obtain that the $C^*$-completion $X$ is differentiable from $\C A$ to $\C B$ with differentiable generating sequence $\{ \xi_n \}$, see Definition \ref{d:cordif}. As in \cite[Lemma 7.1]{Kaa:UKM} we then have a well-defined bounded adjointable operator 
\begin{equation}\label{eq:phidef}
\Phi : X \hot_B Y \to \ell^2(\nn,Y) \q \Phi : x \hot_B y \mapsto \sum_{n = 1}^\infty \inn{\xi_n,x}_X \cd y \cd e_{n1}
\end{equation}
and the image of $\Phi^* : \ell^2(\nn,Y) \to X \hot_B Y$ is norm-dense. 
\medskip

We recall the following constructions from \cite[Section 5 and 7]{Kaa:UKM}:

\begin{dfn}
By the \emph{modular lift} of $D : \sD(D) \to Y$ with respect to $\xi \in \ell^2(\nn,\C X)^t$ we understand the closure $D_\xi$ of the symmetric unbounded operator
\[
(D_\xi)_0 := \Phi^* \T{diag}(D) \Phi : \sD(\T{diag}(D) \Phi) \to X \hot_B Y
\]
where the core for $D_\xi$ is given by
\[
\sD(\T{diag}(D) \Phi) := \big\{ \ze \in X \hot_B Y \mid \Phi(\ze) \in \sD(\T{diag}(D)) \big\}
\]
\end{dfn}

\begin{dfn}
By the \emph{modular lift} of $\Ga : Y \to Y$ with respect to $\xi \in \ell^2(\nn,\C X)^t$ we understand the bounded adjointable operator
\[
\Ga_\xi := \Phi^* \T{diag}(\Ga) \Phi : X \hot_B Y \to X \hot_B Y
\]
\end{dfn}

\begin{dfn}
By the \emph{unbounded Kasparov product} of $\C X$ and $\sD$ (with respect to $\xi \in \ell^2(\nn,\C X)^t$) we understand the triple
\[
\C X \hot_{\C B} \sD := (X \hot_B Y, D_\xi, \Ga_\xi)
\]
with grading operator $1 \ot \ga : X \hot_B Y \to X \hot_B Y$ in the even case.
\end{dfn}

The next result is an immediate consequence of \cite[Theorem 7.1]{Kaa:UKM}:

\begin{thm}
The unbounded Kasparov product $\C X \hot_{\C B} \sD$ is an unbounded modular cycle from $\C A$ to $C$ of the same parity as $\sD$.
\end{thm}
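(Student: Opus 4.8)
The plan is to deduce the statement directly from \cite[Theorem 7.1]{Kaa:UKM}, which asserts precisely that the triple $(X \hot_B Y, D_\xi, \Ga_\xi)$ is an unbounded modular cycle whenever the $C^*$-correspondence $X$ is differentiable from $\C A$ to $\C B$ with a chosen differentiable generating sequence $\{\xi_n\}$ satisfying $\xi = \sum_n \xi_n \cd e_{1n} \in \ell^2(\nn,\C X)^t$, the left action of $A$ on $X$ factors through $\sK(X)$, and $\sD = (Y,D,\Ga)$ is an unbounded modular cycle from $\C B$ to $C$. The task therefore reduces to checking that the data fixed above meet these hypotheses.

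First I would invoke Lemma \ref{l:diflinI}: since $\C X$ is a countably generated operator $*$-correspondence from $\C A$ to $\C B$, the sequence $\{\xi_n\}$ chosen there generates $X$, in that $\T{span}_\cc\{\xi_n \cd b \mid n \in \nn,\, b \in \C B\}$ is norm-dense in $X$, and the partial sums $\sum_{n=1}^N \xi_n \cd e_{1n}$ form a Cauchy sequence in the row correspondence $\ell^2(\nn,\C X)^t$; hence the limit $\xi$ indeed lies in $\ell^2(\nn,\C X)^t$ as required, and $\{\io(\xi_n)\}$ is a differentiable generating sequence for $X$ from $\C A$ to $\C B$ in the sense of Definition \ref{d:cordif}. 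Consequently the operator $\Phi$ of \eqref{eq:phidef} is the bounded adjointable operator with norm-dense-range adjoint occurring in \cite[Lemma 7.1]{Kaa:UKM}, so the modular lifts $D_\xi$ and $\Ga_\xi$ are well defined. Next I would record the remaining inputs: $\C X$ being a compact operator $*$-correspondence (Definition \ref{d:compact}) means $X$ is countably generated and non-degenerate and $\pi(a) \in \sK(X)$ for all $a \in A$; combined with the standing Assumptions \ref{a:opealg} and \ref{a:opecor} and the hypothesis that $\sD$ is an unbounded modular cycle from $\C B$ to $C$ (with grading $\ga$ in the even case), all hypotheses of \cite[Theorem 7.1]{Kaa:UKM} are in force. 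That theorem then gives that $(X\hot_B Y, D_\xi, \Ga_\xi)$, graded by $1 \ot \ga$ in the even case, satisfies every condition of Definition \ref{d:unbkas}, which is the assertion.

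For completeness I would point out which conditions of Definition \ref{d:unbkas} are the substantive ones being imported, since this is where the work in \cite{Kaa:UKM} is concentrated: the selfadjointness and regularity of the closure $D_\xi$ of $\Phi^* \T{diag}(D) \Phi$, which follows from the regularity of $\T{diag}(D)$ on $\ell^2(\nn,Y)$ together with the structural properties of $\Phi$ (cf.\ Lemma \ref{l:imahal}); the compactness of $(\pi(a)\hot 1)(i + D_\xi)^{-1}$ for $a \in A$, which uses the compactness of $\C X$ in the spirit of \cite[Proposition 4.7]{Lan:HCM}; the invariance of $\sD(D_\xi)$ under $(\pi(a)\hot 1 + \la)\Ga_\xi$ and the construction of the completely bounded twisted-commutator map $\rho_{\Ga_\xi} : \wit{\C A} \to \sL(X\hot_B Y)$, where the differentiable generating sequence enters decisively; and the approximate-identity condition, transported from $\sD$ along $\Phi$. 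The grading relations in the even case are immediate from the entry-wise definition of the lifts. Were one to prove the theorem from scratch, the selfadjointness--regularity of $D_\xi$ and the complete boundedness of $\rho_{\Ga_\xi}$ would be the main obstacles; here, however, they require no new argument, as \cite[Theorem 7.1]{Kaa:UKM} supplies exactly these.
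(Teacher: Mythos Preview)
Your proposal is correct and follows exactly the paper's approach: the paper records the theorem as an immediate consequence of \cite[Theorem 7.1]{Kaa:UKM}, and you have simply made explicit the verification of its hypotheses via Lemma \ref{l:diflinI} and the compactness assumption on $\C X$. There is nothing to add or correct.
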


We will now investigate how the class of the unbounded Kasparov product in unbounded bivariant $K$-theory depends on $\C X$ and $\sD$. The next theorem will in particular imply that the class $[\C X \hot_{\C B} \sD] \in UK_*(\C A,C)$ is independent of the choice of generating sequence $\{ \xi_n\}$.

We recall that $\ell^2(\nn,X)^t$ denotes the $C^*$-correspondence from $A$ to $K_B$ defined as the completion of the algebraic direct sum $\op_{n = 1}^\infty X$ with respect to (the norm coming from the) inner product $\inn{\sum_{n = 1}^\infty x_n e_{1n},\sum_{n = 1}^\infty x_n' e_{1n}}_{\ell^2(\nn,X)^t} := \sum_{n,m = 1}^\infty \inn{x_n,x_m'} e_{nm}$. We refer to Subsection \ref{ss:rowcol} for the definition of the bimodule structure.

\begin{thm}
The class $[\C X \hot_{\C B} \sD] \in UK_*(\C A,C)$ only depends on the classes $[\C X] \in M(\C A,\C B)$ and $[\sD] \in UK_*(\C B,C)$. In other words, we have a well-defined map
\[
\hot_{\C B} : M(\C A,\C B) \ti UK_*(\C B,C) \to UK_*(\C A,C)
\]
which is induced by the unbounded Kasparov product. 
\end{thm}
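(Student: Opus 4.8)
The plan is to establish three invariance statements; the theorem then follows, since the relation $\sim_d$ on compact operator $*$-correspondences and the relation $\sim_{\T{bmp}}$ on unbounded modular cycles are each generated by one-step moves. These are: (i) the class $[\C X \hot_{\C B} \sD] \in UK_*(\C A,C)$ is independent of the differentiable generating sequence used in the construction of Section \ref{s:unbkas}; (ii) $\C X \sim_d \C X'$ implies $\C X \hot_{\C B} \sD \sim_{\T{bmp}} \C X' \hot_{\C B} \sD$; (iii) $\sD_1 \sim_{\T{bmp}} \sD_2$ implies $\C X \hot_{\C B} \sD_1 \sim_{\T{bmp}} \C X \hot_{\C B} \sD_2$. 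In (ii) it is enough to treat a single duality and in (iii) a single bounded modular perturbation together with a unitary equivalence; a unitary equivalence is immediate, since a unitary $V : Y_1 \to Y_2$ conjugating $\sD_1$ to $\sD_2$ conjugates $\C X \hot_{\C B} \sD_1$ to $\C X \hot_{\C B} \sD_2$ through $1 \hot V$ (compatibly with the gradings $1 \hot \ga$ in the even case), in the spirit of Lemma \ref{l:conuni}.

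The technical core is a comparison step: if $\{\xi_n\}$ and $\{\xi'_n\}$ are two differentiable generating sequences for a fixed $C^*$-correspondence $X$ (from $\C A$ to $\C B$) such that in addition the mixed inner products $\inn{\xi_n, (a+\la) \xi'_m}$ all lie in $\C B$, then the two unbounded modular cycles $\C X \hot_{\C B} \sD$ obtained from them are bounded modular perturbations of each other. I would prove this by interleaving the sequences into $\ze_{2n-1} := \xi_n$, $\ze_{2n} := \xi'_n$, checking first that $\{\ze_k\}$ is again a differentiable generating sequence to which the construction of Section \ref{s:unbkas} applies; under the unitary $\ell^2(\nn,Y) \cong \ell^2(\nn,Y) \op \ell^2(\nn,Y)$ by odd and even indices, the operator $\Phi_\ze$ of \eqref{eq:phidef} corresponds to $(\Phi_\xi, \Phi_{\xi'})$, so that $D_\ze = \ov{\Phi_\xi^* \T{diag}(D) \Phi_\xi + \Phi_{\xi'}^* \T{diag}(D) \Phi_{\xi'}}$ and $\Ga_\ze = \Ga_\xi + \Ga_{\xi'}$. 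It then remains to see that the $\ze$-cycle is a bounded modular perturbation of the $\xi$-cycle (and symmetrically of the $\xi'$-cycle): the domain inclusion of Definition \ref{d:bouper}(1) is clear from the decomposition, and for Definition \ref{d:bouper}(2) the perturbation datum $\rho_{\Ga_\ze, \Ga_\xi} : \wit{\C A} \to \sL(X \hot_B Y)$ is assembled from the modular datum $\rho_\Ga : \wit{\C B} \to \sL(Y)$ of $\sD$ and the differentiable structures $\tau$ of $\{\xi_n\}, \{\xi'_n\}$. This last verification is the main obstacle: since $\Phi$ is not an isometry in the absence of frames, the modular operators genuinely intervene, and the perturbing operator must be arranged so as to factor correctly between $(\Ga_\xi + \Ga_{\xi'})^{1/2}$ and $\Ga_\xi^{1/2}$ while staying completely bounded, which is the relative analogue of the bookkeeping in the proof of \cite[Theorem 7.1]{Kaa:UKM}.

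Statement (i) is the special case of the comparison step in which both sequences lie in $\C X$, for then the mixed inner products are automatically in $\C B$. For (ii), let $U : X \to X'$ implement the duality and $\{\xi_n\} \su \C X$ be a differentiable generating sequence (Lemma \ref{l:diflinI}). Because $U$ is a unitary bimodule map intertwining the left $\C A$-actions (Definition \ref{d:difuni}(1)), one has $\inn{U(\xi_n), (a+\la) U(\xi_m)}_{X'} = \inn{\xi_n, (a+\la) \xi_m}_X$, so $\{U(\xi_n)\}$ is a differentiable generating sequence for $X'$ and the associated operator $\Phi'$ satisfies $\Phi' \circ (U \hot 1) = \Phi$; hence the modular cycle it produces equals $(U \hot 1)(\C X \hot_{\C B} \sD)(U \hot 1)^*$, unitarily equivalent to $\C X \hot_{\C B} \sD$. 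To identify its class with that of $\C X' \hot_{\C B} \sD$, built from some genuine sequence $\{\eta_n\} \su \C X'$, apply the comparison step to the pair $\{U(\xi_n)\}$, $\{\eta_n\}$ in $X'$ — the mixed inner products $\inn{U(\xi_n), (a+\la) \eta_m}_{X'}$ lie in $\C B$ precisely by conditions (2) and (3) of Definition \ref{d:difuni} — together with (i) applied to $\{\eta_n\}$ and any other genuine generating sequence of $\C X'$.

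Finally, for (iii) one reduces to $Y_1 = Y_2 =: Y$ and uses a single generating sequence $\{\xi_n\}$, hence a single $\Phi$, so that both products live on $X \hot_B Y$ with the same left $\C A$-action and grading and Definition \ref{d:bouper} applies without an intervening unitary. Writing $\sD_i = (Y, D_i, \Ga_i)$, one has $(D_i)_\xi = \ov{\Phi^* \T{diag}(D_i) \Phi}$ and $(\Ga_i)_\xi = \Phi^* \T{diag}(\Ga_i) \Phi$, and the bounded modular perturbation datum $\rho_{\Ga_1, \Ga_2} : \wit{\C B} \to \sL(Y)$ for $(\sD_1, \sD_2)$ lifts to $\rho_{(\Ga_1)_\xi, (\Ga_2)_\xi} : \wit{\C A} \to \sL(X \hot_B Y)$ by combining it with the differentiable structure of $\{\xi_n\}$ — exactly the computation that in \cite[Theorem 7.1]{Kaa:UKM} produces the modular datum of $\C X \hot_{\C B} \sD$, now carried out for the two cycles at once — and verifying Definition \ref{d:bouper} on the core $\sD(\T{diag}(D_2)\Phi)$ is then direct. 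Thus step (iii) is heavy but conceptually a relative version of an existing argument, step (i) carries the genuine difficulty (the factorization through the modular operators), and step (ii) is formal once (i) and the comparison step are in hand.
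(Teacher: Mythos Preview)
Your overall architecture is sound, and your steps (ii) and (iii) are exactly the right reductions. But the paper's proof is organized more efficiently and, more importantly, is explicit at precisely the point where you wave your hands.

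The paper does not separate the three invariance statements. Instead it assumes at once that $\C X$ and $\C X'$ share the $C^*$-completion $X$ with a completely bounded pairing $\inn{\cd,\cd}_X : \C X \times \C X' \to \C B$, that $Y = Y'$, and that $\sD$ is a bounded modular perturbation of $\sD'$ via $\rho_{\Ga,\Ga'}$. It then shows \emph{directly} that $(X\hot_B Y, D_\xi, \Ga_\xi)$ is a bounded modular perturbation of $(X\hot_B Y, D'_\eta, \Ga'_\eta)$, with $\{\xi_n\}\su\C X$ and $\{\eta_n\}\su\C X'$ arbitrary. No interleaving is needed: your intermediate $\ze$-cycle and the chain of perturbations through it are extra work that the paper avoids entirely. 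Independence of the generating sequence is then just the special case $\C X = \C X'$, $\sD = \sD'$.

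The technical point you left open --- how the perturbation datum ``factors correctly'' through the modular square roots --- is handled in the paper by Lemma~\ref{l:imahal}. Since $\Ga_\xi = \Phi^*\T{diag}(\Ga)\Phi$, that lemma gives bounded adjointable operators
\[
S_\xi := \Ga_\xi^{-1/2}\,\Phi^*\,\T{diag}(\Ga)^{1/2}
\qquad\text{and}\qquad
S'_\eta := (\Ga'_\eta)^{-1/2}\,\Psi^*\,\T{diag}(\Ga')^{1/2},
\]
and the perturbation map is then written down explicitly as
\[
\rho_{\Ga_\xi,\Ga'_\eta}(a,\la)
= S_\xi \cdot \rho_{\T{diag}(\Ga),\T{diag}(\Ga')}\bigl(\inn{\xi,(a+\la)\eta}_{\ell^2(\nn,X)^t},\,0\bigr)\cdot (S'_\eta)^*,
\]
using the identity $\Phi\,\bigl((\pi(a)+\la)\hot 1\bigr)\,\Psi^* = \pi_B\bigl(\inn{\xi,(a+\la)\eta}\bigr)$ and the fact (from the duality pairing) that $\inn{\xi,(a+\la)\eta}\in K_{\C B}$ with completely bounded dependence on $(a,\la)$. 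The verification of Definition~\ref{d:bouper} on the core $\sD(\T{diag}(D')\Psi)$ is then a short computation. This is the missing ingredient in your sketch: you correctly identified the obstacle but did not name Lemma~\ref{l:imahal} as the tool that resolves it, and that lemma is really the crux.
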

\begin{proof}
Let $\C X'$ be an alternative compact operator $*$-correspondence from $\C A$ to $\C B$ and let $\sD' = (Y',D',\Ga')$ be an alternative unbounded modular cycle from $\C B$ to $C$. We suppose that $[\C X] = [\C X']$ in $M(\C A,\C B)$ and that $[\sD] = [\sD'] \in UK_*(\C B,C)$. We are going to show that $[\C X \hot_{\C B} \sD] = [\C X' \hot_{\C B} \sD']$ in $UK_*(\C A,C)$.

It is not difficult to show that if $\C X$ and $\C X'$ are unitarily equivalent or $\sD$ and $\sD'$ are unitarily equivalent then the unbounded Kasparov products $\C X \hot_{\C B} \sD$ and $\C X' \hot_{\C B} \sD'$ are unitarily equivalent as well. We may thus assume that $\C X$ and $\C X'$ have the same $C^*$-completion $X$ and that the inner product on $X$ induces a completely bounded pairing:
\[
\inn{\cd,\cd}_X : \C X \ti \C X' \to \C B
\]
Moreover, we may assume that $Y = Y'$ and that $\sD$ is a bounded modular perturbation of $\sD'$ via the completely bounded map
\[
\rho_{\Ga,\Ga'} : \wit{\C B} \to \sL(Y)
\]

Let $\{ \eta_n \}$ be an alternative sequence of generators for $X$ such that
\[
\eta := \sum_{n = 1}^\infty \eta_n e_{1n} \in \ell^2(\nn,\C X')^t
\]
and let $\Psi : X \hot_B Y \to \ell^2(\nn,Y)$ denote the associated bounded adjointable operator, defined as in \eqref{eq:phidef}. We let $D'_\eta : \sD(D'_\eta) \to X \hot_B Y$ and $\Ga'_\eta : X \hot_B Y \to X \hot_B Y$ denote the corresponding modular lifts of $D' : \sD(D') \to Y$ and $\Ga' : Y \to Y$. 

We will show that the unbounded modular cycle $(X \hot_B Y, D_\xi , \Ga_\xi)$ is a bounded modular perturbation of the unbounded modular cycle $(X \hot_B Y, D'_\eta , \Ga'_\eta)$. 

We first recall from \cite[Proposition 3.5]{Kaa:UKM} that the triples
\[
\begin{split}
\T{diag}(\sD) & := \big(\ell^2(\nn,Y), \T{diag}(D), \T{diag}(\Ga)\big) \q \T{and} \\
\T{diag}(\sD') & := \big(\ell^2(\nn,Y), \T{diag}(D'), \T{diag}(\Ga')\big)
\end{split}
\]
are unbounded modular cycles from $K_{\C B}$ to $C$ both having grading operator $\T{diag}(\ga) : \ell^2(\nn,Y) \to \ell^2(\nn,Y)$ in the even case. We denote the associated $*$-homomorphism by $\pi_B : K_B \to \sL( \ell^2(\nn,Y))$. 

We then remark that $\T{diag}(\sD)$ is a bounded modular perturbation of $\T{diag}(\sD')$. Indeed, the relevant completely bounded map is defined by
\[
\begin{split}
& \rho_{\T{diag}(\Ga), \T{diag}(\Ga')} : \wit{ K_{\C B} } \to \sL( \ell^2(\nn,Y)) \\
& \rho_{\T{diag}(\Ga), \T{diag}(\Ga')}\big( \sum_{n,m = 1}^\infty b_{nm} e_{nm} \, , \, \la \big) 
:= \sum_{n,m = 1}^\infty \rho_{\Ga, \Ga'}( b_{nm}, 0 ) e_{nm} + \sum_{n = 1}^\infty \rho_{\Ga,\Ga'}(0,\la) e_{nn}
\end{split}
\]
for all $\big( \sum_{n,m = 1}^\infty b_{nm} e_{nm} \, , \, \la \big) \in \wit{ K_{\C B} }$.

Let now $(a,\la) \in \wit{\C A}$ be given. A direct computation shows that we have the identity 
\[
\Phi  \big( (\pi(a) + \la) \hot 1\big) \Psi^* = \pi_B\big( \inn{\xi, (a + \la) \cd \eta}_{\ell^2(\nn,X)^t} \big)
\]
of bounded adjointable operators on $\ell^2(\nn,Y)$. Furthermore, since the pairing $\inn{\cd,\cd}_X : \C X \ti \C X' \to \C B$ is completely bounded, we obtain that $\inn{\xi,(a + \la) \cd \eta}_{\ell^2(\nn,X)^t} \in K_{\C B}$.

Let now $\ze \in X \hot_B Y$ with $\Psi(\ze) \in \sD( \T{diag}(D'))$ be given. The identity
\[
\begin{split}
\Phi \big( (\pi_A(a) + \la) \hot 1 \big)\Ga'_\eta (\ze) 
= \pi_B\big( \inn{\xi,(a + \la)\cd \eta}_{\ell^2(\nn,X)^t} \big) \cd \T{diag}(\Ga') \Psi  ( \ze)
\end{split}
\]
shows that $\big( (\pi_A(a) + \la) \hot 1\big)\Ga'_\eta(\ze) \in \sD(D_\xi)$. Thus, condition $(1)$ of Definition \ref{d:bouper} holds for all elements in the core $\sD\big( \T{diag}(D') \Psi \big) \su X \hot_B Y$ for the modular lift $D'_\eta : \sD( D'_\eta) \to X \hot_B Y$.

To continue, we remark that Lemma \ref{l:imahal} implies that
\[
\begin{split}
& S_\xi := \Ga_\xi^{-1/2} \Phi^* \T{diag}(\Ga)^{1/2} : \ell^2(\nn,Y) \to X \hot_B Y \q  \T{and} \\
& S'_\eta := (\Ga'_\eta)^{-1/2} \Psi^* \T{diag}(\Ga')^{1/2}  : \ell^2(\nn,Y) \to X \hot_B Y
\end{split}
\]
are well-defined bounded adjointable operators. We may thus introduce the completely bounded map
\[
\begin{split}
& \rho_{\Ga_\xi, \Ga'_\eta} : \wit{\C A} \to \sL(X \hot_B Y) \\
& \rho_{\Ga_\xi, \Ga'_\eta}(a,\la) 
:=  S_\xi \cd \rho_{\T{diag}(\Ga), \T{diag}(\Ga')}\big(  \inn{\xi,(a + \la)\cd \eta}_{\ell^2(\nn,X)^t},0 \big) \cd (S'_\eta)^* 
\q (a,\la) \in \wit{\C A}
\end{split}
\]
For $\ze \in X \hot_B Y$ with $\Psi(\ze) \in \sD( \T{diag}(D'))$ and $(a,\la) \in \wit{\C A}$, we then have that
\[
\begin{split}
& \Ga_\xi^{1/2} \cd \rho_{\Ga_\xi, \Ga_\eta'}(a,\la) \cd (\Ga_\eta')^{1/2}(\ze) \\
& \q = \Phi^* \T{diag}(\Ga)^{1/2} \cd \rho_{\T{diag}(\Ga),\T{diag}(\Ga')}\big(  \inn{\xi,(a + \la)\cd \eta}_{\ell^2(\nn,X)^t},0 \big) \cd \T{diag}(\Ga')^{1/2} \Psi(\ze) \\
& \q = \Phi^* \Big( \T{diag}(D) \pi_B\big(\inn{\xi,(a + \la) \cd \eta}_{\ell^2(\nn,X)^t}\big) \T{diag}(\Ga') \\
& \qqq - \T{diag}(\Ga) \pi_B\big(\inn{\xi,(a + \la) \cd \eta}_{\ell^2(\nn,X)^t}\big) \T{diag}(D') \Big) \Psi(\ze) \\
& \q = D_\xi \big( (\pi_A(a) + \la) \hot 1 \big) \Ga'_\eta(\ze)
- \Ga_\xi  \big( (\pi_A(a) + \la) \hot 1 \big) D'_\eta (\ze)
\end{split}
\]
This shows that condition $(2)$ of Definition \ref{d:bouper} holds on the core $\sD\big( \T{diag}(D') \Psi \big) \su X \hot_B Y$ for the modular lift $D'_\eta : \sD( D'_\eta) \to X \hot_B Y$. But this means that $(X \hot_B Y, D_\xi, \Ga_\xi)$ is a bounded modular perturbation of $(X \hot_B Y, D'_\eta, \Ga'_\eta)$ and we have proved the theorem.
\end{proof}

%

\subsection{Associativity and bilinearity}
Let $\C D$, $\C A$ and $\C B$ be operator $*$-algebras satisfying Assumption \ref{a:opealg} and let $C$ be a $\si$-unital $C^*$-algebra. As usually all operator $*$-correspondences satisfy Assumption \ref{a:opecor}.

We are now going to investigate the algebraic properties of the unbounded Kasparov product. We emphasize that the following proof of the associativity of the unbounded Kasparov product relies very much on the properties of the Haagerup tensor product of operator spaces.

\begin{thm}
The unbounded Kasparov product $\hot_{\C B} : M(\C A,\C B) \ti UK_*(\C B,C) \to UK_*(\C A,C)$ is bilinear and associative.
\end{thm}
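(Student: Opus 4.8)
\emph{Bilinearity.} The plan is to exploit that every ingredient of the explicit formula for the unbounded Kasparov product --- the operator $\Phi$ of \eqref{eq:phidef} and the diagonal lifts $\T{diag}(D)$, $\T{diag}(\Ga)$ --- is additive with respect to the relevant direct sums. For linearity in the first variable I would fix $[\C X],[\C X'] \in M(\C A,\C B)$ and $[\sD] \in UK_*(\C B,C)$ with $\sD = (Y,D,\Ga)$, and choose via Lemma \ref{l:diflinI} admissible differentiable generating sequences $\{\xi_n\}$, $\{\xi_n'\}$ for the $C^*$-completions $X$, $X'$. Since the rescaling factor $\sqrt 2$ in \eqref{eq:dirdif} does not change the $C^*$-norm, the $C^*$-completion of $\C X \op \C X'$ is $X \op X'$ and the shuffle of $\{\xi_n\}$ and $\{\xi_n'\}$ is an admissible generating sequence for it; under the canonical identifications $(X \op X') \hot_B Y \cong (X \hot_B Y) \op (X' \hot_B Y)$ and $\ell^2(\nn,Y) \cong \ell^2(\nn,Y) \op \ell^2(\nn,Y)$ the associated operator $\Phi$ becomes $\Phi_X \op \Phi_{X'}$, so $D_\xi$, $\Ga_\xi$ and the grading all split as direct sums and $(\C X \op \C X') \hot_{\C B} \sD$ is unitarily equivalent to $(\C X \hot_{\C B} \sD) \op (\C X' \hot_{\C B} \sD)$. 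Linearity in the second variable is handled the same way with a single generating sequence $\{\xi_n\}$ for $X$: for $\sD_1 \op \sD_2 = (Y_1 \op Y_2, D_1 \op D_2, \Ga_1 \op \Ga_2)$ one has $X \hot_B (Y_1 \op Y_2) = (X \hot_B Y_1) \op (X \hot_B Y_2)$ and $\T{diag}(D_1 \op D_2) = \T{diag}(D_1) \op \T{diag}(D_2)$, whence $\Phi$, $D_\xi$ and $\Ga_\xi$ split as direct sums (using that the closure of a direct sum of closable operators is the direct sum of their closures). In both cases the previous theorem eliminates the dependence on the chosen generating sequences, which proves additivity of $\hot_{\C B}$ in each variable.

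\emph{Associativity.} For associativity I would take $[\C Z] \in M(\C D,\C A)$, $[\C X] \in M(\C A,\C B)$ and $[\sD] \in UK_*(\C B,C)$ with $\sD = (Y,D,\Ga)$, writing $Z$, $X$, $Y$ for the $C^*$-completions, and fix admissible differentiable generating sequences $\{\zeta_k\}$ for $Z$ and $\{\xi_n\}$ for $X$. The first step is to check that $\{\zeta_k \ot \xi_n\}$, reindexed by a bijection $\nn \ti \nn \cong \nn$, is an admissible differentiable generating sequence for the $C^*$-completion $Z \hot_A X$ of $\C Z \hot_{\C A} \C X$: density holds because $\{\zeta_k\}$, $\{\xi_n\}$ generate $Z$, $X$ and the algebraic tensor product is dense, while convergence of $\sum_{k,n}(\zeta_k \ot \xi_n)\, e_{1,(k,n)}$ in $\ell^2(\nn\ti\nn,\C Z \hot_{\C A}\C X)^t$ together with the remaining conditions of Definition \ref{d:cordif} follow from the multiplicativity of the Haagerup tensor product norm, exactly as in the proofs of Lemma \ref{l:diflinI} and Lemma \ref{l:causch}. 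Write $\Phi_X : X \hot_B Y \to \ell^2(\nn,Y)$, $\Phi_Z : Z \hot_A (X \hot_B Y) \to \ell^2(\nn, X \hot_B Y)$ (the modular cycle here being $\C X \hot_{\C B}\sD$, with target module $X \hot_B Y$ in place of $Y$) and $\Phi_{ZX} : (Z \hot_A X) \hot_B Y \to \ell^2(\nn\ti\nn,Y)$ for the operators \eqref{eq:phidef} attached to $\{\xi_n\}$, $\{\zeta_k\}$, $\{\zeta_k \ot \xi_n\}$ respectively, and $U : (Z \hot_A X) \hot_B Y \to Z \hot_A (X \hot_B Y)$ for the associativity unitary of interior tensor products of $C^*$-correspondences. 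A direct computation from the definition of the interior tensor product inner product (and from $a \cd (x \ot y) = (a \cd x) \ot y$) yields the factorization
\[
\Phi_{ZX} = \T{diag}(\Phi_X)\, \Phi_Z\, U ,
\]
where $\T{diag}(\Phi_X) : \ell^2(\nn, X \hot_B Y) \to \ell^2(\nn\ti\nn,Y)$ applies $\Phi_X$ blockwise.

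Setting $W := \T{diag}(\Phi_X)\Phi_Z = \Phi_{ZX}U^*$, the boundedness of $\Ga_\xi = \Phi_X^*\T{diag}(\Ga)\Phi_X$ gives $\T{diag}(\Ga_\xi) = \T{diag}(\Phi_X)^*\T{diag}(\Ga)\T{diag}(\Phi_X)$, hence $(\Ga_\xi)_\zeta = \Phi_Z^*\T{diag}(\Ga_\xi)\Phi_Z = W^*\T{diag}(\Ga)W = U\bigl(\Phi_{ZX}^*\T{diag}(\Ga)\Phi_{ZX}\bigr)U^*$, i.e. the modular operators of the two triples agree under $U$; the $\zz/2\zz$-gradings $1\ot(1\ot\ga)$ and $(1\ot 1)\ot\ga$ also match under $U$. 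What remains is to identify the self-adjoint regular operators: the iterated modular lift $(D_\xi)_\zeta = \overline{\Phi_Z^*\T{diag}(D_\xi)\Phi_Z}$ of $D_\xi = \overline{\Phi_X^*\T{diag}(D)\Phi_X}$ with $U\bigl(\overline{\Phi_{ZX}^*\T{diag}(D)\Phi_{ZX}}\bigr)U^* = \overline{W^*\T{diag}(D)W}$. On a small enough common core these symmetric operators visibly coincide, so the crux is a transitivity statement for modular lifts allowing one to discard the intermediate closure in $\T{diag}(D_\xi) = \overline{\T{diag}(\Phi_X)^*\T{diag}(D)\T{diag}(\Phi_X)}$; this is the step I expect to be the main obstacle. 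I would settle it by exhibiting a core for $(D_\xi)_\zeta$ --- the elements $w$ such that $\Phi_Z(w)$ lies blockwise in the core $\sD(\T{diag}(D)\Phi_X)$ built from the core of $D_\xi$ --- on which $\Phi_Z^*\T{diag}(D_\xi)\Phi_Z$ and $W^*\T{diag}(D)W$ agree, checking density in the graph norm using regularity, or alternatively by invoking the corresponding composition lemma from \cite{Kaa:UKM}. Granting this, $U$ implements a unitary equivalence $(\C Z \hot_{\C A}\C X)\hot_{\C B}\sD \,\sim_u\, \C Z \hot_{\C A}(\C X\hot_{\C B}\sD)$ for the compatible generating sequences, so by the previous theorem $([\C Z]\hot_{\C A}[\C X])\hot_{\C B}[\sD] = [\C Z]\hot_{\C A}([\C X]\hot_{\C B}[\sD])$ in $UK_*(\C D,C)$, which together with bilinearity completes the proof.
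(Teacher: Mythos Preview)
Your approach is essentially the same as the paper's: interleave generating sequences for bilinearity, and use the tensor product $\{\zeta_k \ot \xi_n\}$ of generating sequences (reindexed by a bijection $\nn\ti\nn\cong\nn$) for associativity, reducing both statements to a unitary equivalence of the explicitly constructed modular cycles. The paper carries out the convergence of $\sum_{k,n}(\zeta_k\ot\xi_n)e_{1,(k,n)}$ in $\ell^2(\nn,\C Z\hot_{\C A}\C X)^t$ by an explicit Haagerup-norm estimate rather than by appeal to Lemma~\ref{l:diflinI}, but this is only a matter of detail.

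One point worth noting: the ``transitivity of modular lifts'' you flag as the main obstacle is indeed the heart of the associativity step, and the paper does not spell it out either --- it simply asserts that the two modular cycles are unitarily equivalent. Your proposed strategy (exhibit a core on which the two symmetric operators agree and pass to closures) is the right one and is what the phrase ``it can then be verified'' is hiding; so your caution here is well placed, but you are not missing anything relative to the paper's own argument.
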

\begin{proof}
Let $\C X_1$ and $\C X_2$ be two compact operator $*$-correspondences from $\C A$ to $\C B$ (with $C^*$-completions $X_1$ and $X_2$) and let $\sD = (Y,D,\De)$ be an unbounded modular cycle from $\C B$ to $C$. Choose a sequence of generators $\{ \xi_n \}$ for $X_1$ and a sequence of generators $\{ \eta_n\}$ for $X_2$ such that
\[
\xi = \sum_{n = 1}^\infty \xi_n e_{1n} \in \ell^2(\nn,\C X_1)^t \q \T{and} \q \eta = \sum_{n = 1}^\infty \eta_n e_{1n} \in \ell^2(\nn,\C X_2)^t 
\]
For each $n \in \nn$ we define the element 
\[
\ze_n := \fork{ccc}{ (\xi_k,0) & \T{for} & n = 2k - 1 \\ (0, \eta_k) & \T{for} & n = 2k}
\]
It follows that $\{ \ze_n\}$ is a sequence of generators for the direct sum $X_1 \op X_2$ and that
\[
\ze = \sum_{n = 1}^\infty \ze_n e_{1n} \in \ell^2(\nn, \C X_1 \op \C X_2)^t
\]
It can then be verified that the unbounded modular cycles 
\[
(X_1 \hot_B Y, D_\xi, \De_\xi) \op (X_2 \hot_B Y, D_\eta, \De_\eta) \q \T{and} \q
\big( (X_1 \op X_2) \hot_B Y, D_\ze, \De_\ze \big)
\]
are unitarily equivalent, proving bilinearity in the first variable.
%
%
%

Let $\C X$ be a compact operator $*$-correspondence from $\C A$ to $\C B$ (with $C^*$-completion $X$) and let $\sD_1 = (Y_1, D_1, \De_1)$ and $\sD_2 = (Y_2, D_2, \De_2)$ be unbounded modular cycles from $\C B$ to $C$. We choose a generating sequence $\{ \xi_n \}$ for $X$ such that
\[
\sum_{n = 1}^\infty \xi_n \cd e_{1n} \in \ell^2(\nn,\C X)^t
\]
It can then be verified that the unbounded modular cycles
\[
\begin{split}
& \big(X \hot_B Y_1, (D_1)_\xi, (\De_1)_\xi \big) \op \big(X \hot_B Y_2, (D_2)_\xi, (\De_2)_\xi \big) \q \T{and} \\
& \big(X \hot_B (Y_1 \op Y_2), (D_1 \op D_2)_\xi, (\De_1 \op \De_2)_\xi\big)
\end{split}
\]
are unitarily equivalent, proving bilinearity in the second variable.
%

Let $\C X$ and $\C Y$ be two compact operator $*$-correspondences from $\C D$ to $\C A$ and from $\C A$ to $\C B$, respectively, (with $C^*$-completions $X$ and $Y$). Furthermore, we let $\sD = (Z, D ,\De)$ be an unbounded modular cycle from $\C B$ to $C$. We choose generating sequences $\{\xi_n\}$ and $\{ \eta_m\}$ for $X$ and $Y$, respectively, such that
\[
\sum_{n = 1}^\infty \xi_n \cd e_{1n} \in \ell^2(\nn,\C X)^t \q \T{and} \q
\sum_{m = 1}^\infty \eta_m \cd e_{1m} \in \ell^2(\nn,\C Y)^t
\]
Let us fix an isomorphism of sets $\al : \nn \ti \nn \to \nn$. We then have the generating sequence $\{ \ze_k\}$ for $X \hot_B Y$ defined by
\[
\ze_{\al(n,m)} := \xi_n \hot_B \eta_m
\]
We claim that
\[
\ze = \sum_{k = 1}^\infty \ze_k \cd e_{1k} \in \ell^2(\nn,\C X \hot_{\C B} \C Y)^t
\]
Indeed, this last convergence result can be verified in the following way: For each $M \geq N \geq 1$, we define the rows
\[
\begin{split}
& \xi^{N,M} := \sum_{n = 1}^{M - N + 1} \xi_{N - 1 + n} \cd e_{1n} \in M_{1,M - N + 1}(\C X) \q \T{and} \\
& \eta^{N,M} := \sum_{n = 1}^{M - N + 1} \eta_{N - 1 + n} \cd e_{1n} \in M_{1,M - N + 1}(\C Y)
\end{split}
\]
For each $M \geq 2N \geq 1$ we then have the estimates
\[
\begin{split}
& \| \xi^{1,2N} \ot ( \eta^{2N,M} \op \eta^{2N - 1, M - 1} \op \ldots \op \eta^{1, M - 2N + 1} ) \|_{\C X \wot \C Y} \\
& \q \leq \| \xi^{1,N} \ot ( \eta^{2N,M} \op \eta^{2N - 1, M - 1} \op \ldots \op \eta^{N + 1, M - N + 1} ) \|_{\C X \wot \C Y} \\
& \qqq + \| \xi^{N + 1, 2N} \ot ( \eta^{N,M-N} \op \eta^{N - 1, M - N - 1} \op \ldots \op \eta^{1, M - 2N + 1} ) \|_{\C X \wot \C Y} \\
& \q \leq \| \xi \|_{\ell^2(\nn,\C X)^t} \cd \| \eta^{N + 1, M} \|_{\C Y}
+ \| \xi^{N + 1,2N} \|_{\C X} \cd \| \eta\|_{\ell^2(\nn,\C Y)^t}
\end{split}
\]
using the matrix norms for the Haagerup tensor product $\C X \wot \C Y$. Since $\xi \in \ell^2(\nn,\C X)^t$ and $\eta \in \ell^2(\nn,\C Y)^t$ by construction, the above estimates imply that $\ze \in \ell^2(\nn,\C X \hot_{\C B} \C Y)^t$. It can then be verified that the unbounded modular cycles
\[
\big( X \hot_B (Y \hot_C Z), (D_\eta)_\xi, (\De_\eta)_\xi \big) \q \T{and} \q
\big( (X \hot_B Y) \hot_C Z, D_\ze, \De_\ze \big)
\]
are unitarily equivalent, proving the associativity of the unbounded Kasparov product.
%
%
\end{proof}


\section{The Baaj-Julg bounded transform}\label{s:baajul}
In this section we will investigate how the Morita monoid $M(\C A,\C B)$ and unbounded bivariant $K$-theory $UK_*(\C A,B)$ relate to Kasparov's analytic $KK$-theory, $KK_*(A,B)$. The link between these theories is provided by the Baaj-Julg bounded transform, \cite{BaJu:TBK}, and our main result is that this operation is compatible with the equivalence relations on compact operator $*$-correspondences and unbounded modular cycles. 
\medskip

The case of the Morita monoid is fairly simple and we state the results without proofs: 

\begin{dfn}\label{d:boutramod}
Let $\C A$ and $\C B$ be operator $*$-algebras satisfying Assumption \ref{a:opealg}. The \emph{bounded transform} of a compact operator $*$-correspondence $\C X$ from $\C A$ to $\C B$ is the even bounded Kasparov module $(X,0)$, where $X$ denotes the $C^*$-completion of $\C X$ and the grading operator is given by $\ga = 1_X : X \to X$.
\end{dfn}

\begin{lemma}
The bounded transform $\C X \mapsto (X,0)$ induces a homomorphism of abelian monoids $F : M(\C A,\C B) \to KK_0(A,B)$.
\end{lemma}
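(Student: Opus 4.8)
The plan is to verify that the bounded transform $\C X \mapsto (X,0)$ is well-defined on $\sim_d$-equivalence classes, respects direct sums, and sends them to the addition in $KK_0(A,B)$ — so the claimed map $F$ is a genuine monoid homomorphism. First I would check that for a compact operator $*$-correspondence $\C X$ from $\C A$ to $\C B$, the pair $(X,0)$ really is an even bounded Kasparov module from $A$ to $B$: the $C^*$-completion $X$ is by Definition \ref{d:compact} a countably generated non-degenerate $C^*$-correspondence, the left action $\pi : A \to \sL(X)$ factors through $\sK(X)$, and the zero operator $F_X := 0$ trivially satisfies the Kasparov conditions $(F_X - F_X^*)\pi(a)$, $(F_X^2 - 1)\pi(a)$ and $[F_X,\pi(a)]$ all being compact, since $\pi(a) \in \sK(X)$ for every $a \in A$. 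The grading $\ga = 1_X$ commutes with $\pi(a)$ and anticommutes with $F_X = 0$ vacuously, so this is a legitimate even cycle.

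Next I would show independence of the choice of representative within a $\sim_d$-class. Since $\sim_d$ is generated by ``in duality'' (Definition \ref{d:difuni}), it suffices to treat the case where $\C X$ and $\C X'$ are in duality via a unitary $U : X \to X'$. By condition $(1)$ of Definition \ref{d:difuni} this $U$ intertwines the left actions of $A$, so $U$ is a unitary isomorphism of $C^*$-correspondences from $A$ to $B$; it therefore carries the cycle $(X,0)$ to $(X',0)$, and these are unitarily (hence operator-homotopy) equivalent, so $[X,0] = [X',0]$ in $KK_0(A,B)$. This already shows that $F : M(\C A,\C B) \to KK_0(A,B)$, $[\C X] \mapsto [X,0]$, is a well-defined map of sets.

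Finally I would treat additivity. The monoid operation on $M(\C A,\C B)$ is induced by the direct sum $\C X \op \C Y$, whose $C^*$-completion is the direct sum $X \op Y$ of $C^*$-correspondences (established in the excerpt). The bounded transform of $\C X \op \C Y$ is then $(X \op Y, 0)$ with grading $1_{X\op Y}$, and this is manifestly the direct sum of the cycles $(X,0)$ and $(Y,0)$; since addition in $KK$-theory is represented by direct sum of cycles, $F([\C X] + [\C Y]) = [X\op Y,0] = [X,0] + [Y,0] = F([\C X]) + F([\C Y])$. The trivial element of $M(\C A,\C B)$, the class of the zero operator $*$-correspondence, has $C^*$-completion the zero module and hence maps to the neutral element $0 \in KK_0(A,B)$. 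There is no serious obstacle here: the entire argument is routine once one observes that a cycle with $F = 0$ imposes no conditions beyond compactness of the left action, which is precisely the definition of ``compact'' operator $*$-correspondence; the only point requiring a moment's care is checking that the duality unitary $U$ is a morphism of $C^*$-correspondences, which is exactly condition $(1)$ of Definition \ref{d:difuni}.
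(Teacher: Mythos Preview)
Your argument is correct and is exactly the natural one; the paper in fact omits the proof entirely, writing that ``the case of the Morita monoid is fairly simple and we state the results without proofs''. Your verification that $(X,0)$ is a Kasparov module via compactness of the left action, that the duality unitary of Definition~\ref{d:difuni} is a unitary isomorphism of $C^*$-correspondences (right $B$-linearity being automatic for unitaries between Hilbert $C^*$-modules, and left $A$-equivariance being condition~(1)), and that $C^*$-completion commutes with direct sums, fills in precisely the routine details the paper leaves to the reader.
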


The case of unbounded modular cycles is far more involved. In fact, treating the bounded transform of an unbounded modular cycle $(X,D,\De)$ is even substantially more difficult than treating the bounded transform of an unbounded Kasparov module. The difficulties in the modular setting mainly arise for two reasons:
\begin{enumerate}
\item The modular operator $\De : X \to X$ will typically have an \emph{unbounded} inverse $\De^{-1} : \T{Im}(\De) \to X$.
\item We do not impose any kind of Lipschitz regularity condition. Thus, we do \emph{not} assume that twisted commutators of the form $|D| a \De - \De a |D| : \sD(D) \to X$ extend to bounded operators on $X$.  
\end{enumerate}

Let us fix an operator $*$-algebra $\C A$ satisfying Assumption \ref{a:opealg} and a $\si$-unital $C^*$-algebra $B$. Recall that $A$ denotes the $C^*$-completion of $\C A$.

\begin{dfn}\label{d:boutraunb}
By the \emph{bounded transform} of an unbounded modular cycle $(X, D,\De)$ from $\C A$ to $B$ (with grading $\ga : X \to X$ in the even case) we will understand the pair $(X, D(1 + D^2)^{-1/2})$ (still with grading $\ga : X \to X$ in the even case). 
\end{dfn}

The next result was proved in \cite[Theorem 9.1]{Kaa:UKM}:

\begin{thm}\label{t:boukas}
The bounded transform $(X,D(1 + D^2)^{-1/2})$ of an unbounded modular cycle $(X,D,\De)$ is a bounded Kasparov module from $A$ to $B$ of the same parity as $(X,D,\De)$. In particular, there is a well-defined element $[X,D(1 + D^2)^{-1/2}] \in KK_*(A,B)$ in the $KK$-group of the pair $(A,B)$ of $\si$-unital $C^*$-algebras. 
\end{thm}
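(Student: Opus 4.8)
The plan is to check directly that the pair $(X, F)$ --- with $F := D(1 + D^2)^{-1/2}$, and with the grading operator $\ga$ retained in the even case --- satisfies the axioms of a bounded Kasparov module from $A$ to $B$: that $X$ is a countably generated $C^*$-correspondence (which it is by hypothesis), that $F \in \sL(X)$ is self-adjoint and, in the even case, anticommutes with $\ga$, and that $\pi(a)(F - F^*)$, $\pi(a)(F^2 - 1)$ and $[\pi(a), F]$ all lie in $\sK(X)$ for every $a \in A$. That $F$ is a well-defined bounded self-adjoint contraction follows from the bounded Borel functional calculus for the regular self-adjoint operator $D$ applied to the bounded odd function $t \mapsto t(1 + t^2)^{-1/2}$; the same functional calculus, together with $\ga D \ga = - D$ (which follows from $\ga D = - D \ga$ and $\ga^2 = 1$) and $\ga \pi(a) = \pi(a) \ga$, gives $\ga F = - F \ga$ in the even case. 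Self-adjointness makes $\pi(a)(F - F^*)$ vanish, and since $F^2 - 1 = - (1 + D^2)^{-1} = - (D + i)^{-1}(D - i)^{-1}$, the factorisation
\[
\pi(a)(F^2 - 1) = - \big( \pi(a)(D + i)^{-1} \big) (D - i)^{-1}
\]
shows that $\pi(a)(F^2 - 1) \in \sK(X)$, using condition $(1)$ of Definition \ref{d:unbkas} and $(D - i)^{-1} \in \sL(X)$.

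The substantive part is the commutator condition $[\pi(a), F] \in \sK(X)$ for all $a \in A$. Since $\| [\pi(a), F] \| \leq 2 \| F \| \cd \| a \|_\infty$ and $\sK(X)$ is norm-closed, I would first reduce to the case $a \in \io(\C A)$, which is dense in $A$. For such $a$ the approach is to start from the integral representation $F = \frac{2}{\pi} \int_0^\infty D R_\mu \, d\mu$, with $R_\mu := (1 + \mu^2 + D^2)^{-1}$, understood in the strict topology of $\sL(X)$, and to expand $[\pi(a), D R_\mu]$ by the resolvent identity. For an untwisted unbounded Kasparov module this produces operators built from the \emph{bounded} commutator $[D, \pi(a)]$ and from resolvents of $D$, for which the $\mu$-integral is readily estimated and shown to be compact. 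In the modular setting the commutator of $D$ with $\pi(a)$ is only available in the \emph{twisted} form $D \pi(a) \De - \De \pi(a) D = \De^{1/2} \rho_\De(a, 0) \De^{1/2}$ on $\sD(D)$ provided by condition $(3)$ of Definition \ref{d:unbkas}. The key manoeuvre is to reorganise each occurrence of a commutator with $D$ so that it is replaced by this bounded operator sandwiched between resolvents of $D$, while the genuinely unbounded factor $\De^{-1}$ is never allowed to act in isolation: one inserts the approximate identity $\{ V_n \}$ for $C^*(\De)$ from condition $(4)$, uses $V_n \pi(a) \to \pi(a)$ in operator norm, and exploits, in the spirit of Lemma \ref{l:imahal}, that bracketed expressions such as $\De^{-1/2} \De^{1/2}$ are bounded. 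The two surplus powers of $R_\mu$ flanking the twisted commutator, together with the self-adjointness of $D$, bound the integrand in operator norm by a constant multiple of $(1 + \mu^2)^{-1}$, so that the $\mu$-integral converges in operator norm.

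Compactness of $[\pi(a), F]$, still for $a \in \io(\C A)$, would then be deduced by combining these estimates with condition $(1)$ of Definition \ref{d:unbkas} --- upgraded by functional calculus to the statement that $\pi(b)(1 + D^2)^{-s} \in \sK(X)$ for all $b \in A$ and $s \in (0, 1]$ --- together with the approximate identity $\{ V_n \}$; the norm-closedness of $\sK(X)$ then transfers compactness from the approximants to the integral, and from $\io(\C A)$ to all of $A$ by density. With all the axioms in place, $(X, F)$ (with grading $\ga$ in the even case) is a bounded Kasparov module from $A$ to $B$ of the stated parity and determines a class $[X, D(1 + D^2)^{-1/2}] \in KK_*(A, B)$. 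The hard part will be the middle step: simultaneously taming the unbounded inverse of the modular operator $\De$ and the merely twisted, non-Lipschitz commutator with $D$ while keeping the $\mu$-integral norm-convergent and valued in the compacts. This is precisely what is carried out in \cite[Theorem 9.1]{Kaa:UKM}, so within the present paper it suffices to invoke that result.
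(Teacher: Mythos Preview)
Your proposal is correct and, in its final sentence, aligns exactly with the paper: the theorem is stated there without proof and simply attributed to \cite[Theorem 9.1]{Kaa:UKM}. Your additional sketch of the argument is accurate and usefully flags the genuine obstacle (the twisted, non-Lipschitz commutator and the potentially unbounded $\De^{-1}$), but the paper itself does not reproduce any of this and relies entirely on the citation.
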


As mentioned earlier, the main result of this section is that the bounded transforms of equivalent unbounded modular cycles define the same class in $KK$-theory:

\begin{thm}\label{t:welbou}
The assignment $(X, D,\De) \mapsto (X, D(1 + D^2)^{-1/2})$ induces an even homomorphism of abelian monoids $F : UK_*(\C A,B) \to KK_*(A,B)$. 
\end{thm}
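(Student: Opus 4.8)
\emph{Overall strategy.} Write $F_D := D(1 + D^2)^{-1/2}$ for the bounded transform of a self-adjoint regular operator $D$. By Theorem \ref{t:boukas} the pair $(X, F_D)$ associated to an unbounded modular cycle $(X, D, \De)$ from $\C A$ to $B$ is a bounded Kasparov module of the same parity, so $F$ is at least a well-defined map on representatives; the content of the statement is that it descends to $\sim_{\T{bmp}}$-classes, is additive, and is degree-preserving. Since $\sim_{\T{bmp}}$ is generated by unitary equivalences and bounded modular perturbations, it suffices to show each of these operations leaves the $KK$-class of the bounded transform unchanged. For a unitary equivalence $\sD_2 = U \sD_1 U^*$ this is immediate: $F_{D_2} = U F_{D_1} U^*$ by the functional calculus, and $U$ intertwines left actions and gradings, so the two bounded Kasparov modules are unitarily equivalent and hence $KK$-equivalent. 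The whole weight of the proof therefore falls on the case of a bounded modular perturbation.

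\emph{Bounded modular perturbations yield compact perturbations.} Let $\sD_1 = (X, D_1, \De_1)$ be a bounded modular perturbation of $\sD_2 = (X, D_2, \De_2)$, with common left action $\pi : A \to \sL(X)$ and common grading in the even case. The plan is to prove that $(X, F_{D_1})$ is a \emph{compact perturbation} of $(X, F_{D_2})$ in the sense of \cite[Definition 17.2.4]{Bla:KOA}, i.e.
\[
\pi(a)\big( F_{D_1} - F_{D_2} \big) \in \sK(X) \q \T{for all } a \in A .
\]
Granting this, the straight-line path $F_t := (1-t) F_{D_2} + t F_{D_1}$, $t \in [0,1]$, is norm continuous and self-adjoint, and for each $a \in A$ one has $[\pi(a), F_t] = (1-t)[\pi(a), F_{D_2}] + t[\pi(a), F_{D_1}] \in \sK(X)$ together with
\[
\pi(a)(F_t^2 - 1) = (1-t)^2 \pi(a)(F_{D_2}^2 - 1) + t^2 \pi(a)(F_{D_1}^2 - 1) + t(1-t)\,\pi(a)\big( F_{D_1}F_{D_2} + F_{D_2}F_{D_1} - 2 \big) ,
\]
where $F_{D_1}F_{D_2} + F_{D_2}F_{D_1} - 2 = (F_{D_1}^2 - 1) + (F_{D_2}^2 - 1) - (F_{D_1} - F_{D_2})^2$; all three summands are compact after multiplication by $\pi(a)$, using Theorem \ref{t:boukas} and the displayed claim. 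Hence $(X, F_t)$ is a bounded Kasparov module for every $t$, so $(X, F_{D_1})$ and $(X, F_{D_2})$ are operator homotopic and define the same class in $KK_*(A,B)$.

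\emph{The compactness estimate, and the main obstacle.} To establish $\pi(a)(F_{D_1} - F_{D_2}) \in \sK(X)$ I would start from the integral formula $F_{D_i} = \frac{2}{\pi}\int_0^\infty D_i(1 + t^2 + D_i^2)^{-1}\, dt$, rewrite the difference of the two integrands by inserting resolvents, and substitute the first-order twisted commutator identity of condition $(2)$ of Definition \ref{d:bouper} (which links $D_1, \De_1$ to $D_2, \De_2$) together with condition $(3)$ of Definition \ref{d:unbkas} applied separately to $\sD_1$ and $\sD_2$. The generally unbounded modular inverses $\De_i^{-1/2}$ entering this manipulation are absorbed exactly as in \cite[Section 9]{Kaa:UKM}: Lemma \ref{l:imahal}, applied to the square roots $\De_i^{1/2}$, turns the relevant combinations into bounded adjointable operators, while the approximate identities $\{ \De_i(\De_i + 1/n)^{-1} \}$ for $C^*(\De_i)$ from condition $(4)$ of Definition \ref{d:unbkas} let one pass from a dense subspace to all of $X$; the resolvent factors provide enough decay in $t$ for the integral to converge in operator norm to a compact operator, using that $\pi(a)(1 + D_i^2)^{-1} = \big[ \pi(a)(D_i + i)^{-1} \big](D_i - i)^{-1} \in \sK(X)$. \textbf{This estimate is the crux and the main difficulty}: because no Lipschitz regularity is imposed, the twisted commutators $|D_i|\pi(a)\De_i - \De_i \pi(a)|D_i|$ need not be bounded, so the compactness of $\pi(a)(F_{D_1} - F_{D_2})$ has to be extracted from the first-order data alone, via the same delicate resolvent analysis that underlies Theorem \ref{t:boukas}.

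\emph{Homomorphism and evenness.} Finally, the bounded transform of a direct sum $\sD_1 \op \sD_2$ is the bounded Kasparov module $\big( X_1 \op X_2, F_{D_1} \op F_{D_2} \big)$ (with grading $\ga_1 \op \ga_2$ in the even case), which is the direct sum of $(X_1, F_{D_1})$ and $(X_2, F_{D_2})$; since direct sum of bounded Kasparov modules induces addition in $KK_*(A,B)$, the induced map $F$ is additive, and it carries the neutral element (the class of a degenerate unbounded modular cycle) to the class of a degenerate bounded Kasparov module, i.e. to $0$. Hence $F : UK_*(\C A, B) \to KK_*(A,B)$ is a homomorphism of abelian monoids, and it is even because, by Theorem \ref{t:boukas}, it takes even (resp. odd) unbounded modular cycles to even (resp. odd) bounded Kasparov modules, hence restricts to homomorphisms $UK_0(\C A,B) \to KK_0(A,B)$ and $UK_1(\C A,B) \to KK_1(A,B)$.
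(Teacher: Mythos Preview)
Your overall reduction matches the paper exactly: unitary equivalence and direct sums are trivial, and everything hinges on showing that a bounded modular perturbation $\sD_1 \rightsquigarrow \sD_2$ forces $\pi(a)(F_{D_1} - F_{D_2}) \in \sK(X)$ for all $a \in A$. The paper states precisely this reduction and then defers to Proposition~\ref{p:compermod}. Your homotopy verification is correct (if slightly more explicit than needed, since compact perturbations of bounded Kasparov modules are already known to give the same $KK$-class).

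The gap is in the paragraph labelled ``The compactness estimate''. What you write there is a plausible-sounding outline, but it does not engage with the actual obstruction, and the naive route you describe does not close. If you insert the standard integral formula for $F_{D_i}$ and try to compare integrands via resolvent identities and the twisted-commutator relation from Definition~\ref{d:bouper}(2), the resulting expression decays only like $(1+\la)^{-1}$ in the integration variable after the necessary multiplication by modular operators; combined with the $\la^{-1/2}$ weight this is not integrable, so the integral does not converge in operator norm and you cannot conclude compactness. Your appeal to ``the same delicate resolvent analysis that underlies Theorem~\ref{t:boukas}'' is not enough: that theorem treats a \emph{single} cycle $(X,D,\De)$, whereas here two different modular operators $\De_1,\De_2$ are in play and one has to compare them.

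The paper's actual argument (Subsection~11.1) introduces a genuinely new device, the \emph{modular transform}, obtained by the formal substitution $\la \mapsto \la\De^2/r$ in the square-root integral. The proof then proceeds in three moves: (i) reduce to showing $\De_1^5\,\pi(a)\,(F_{D_1}-F_{D_2})\,\De_2^5 \in \sK(X)$, using condition~(4) of Definition~\ref{d:unbkas} to strip the outer modular factors afterwards; (ii) replace each bounded transform by its modular analogue up to a compact error (Propositions~\ref{p:modde1} and~\ref{p:modde2}, relying on \cite[Section~8--9]{Kaa:UKM}); (iii) compare the modular integrands via the algebraic identity of Lemma~\ref{l:funalg}, which is the key trick: it rewrites $a\De_2^2 T_\la - S_\la \De_1^2 a$ so that the right-hand side decays like $(1+\la)^{-5/4}$ rather than $(1+\la)^{-1}$, and now the integral converges in $\sK(X)$ (Proposition~\ref{p:emmcom}, Lemmas~\ref{l:funestI} and~\ref{l:funest}). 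None of these ingredients --- the sandwiching by high powers of the modular operators, the modular change of variables, or the decay-improving identity --- appears in your sketch, and they are not routine consequences of Lemma~\ref{l:imahal} or of the arguments in \cite[Section~9]{Kaa:UKM} for a single cycle.
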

\begin{proof}
It is clear that the map $(X,D,\De) \mapsto (X, D(1 + D^2)^{-1/2})$ respects direct sums and unitary equivalences. It therefore suffices to show that the bounded Kasparov modules $(X,D_1(1 + D_1^2)^{-1/2})$ and $(X,D_2(1 + D_2^2)^{-1/2})$ are compact perturbations of each other whenever the two unbounded modular cycles $(X,D_1,\De)$ and $(X,D_2,\Ga)$ are bounded modular perturbations of each other. The proof of this result will occupy the remainder of this section, see Proposition \ref{p:compermod}.
\end{proof}

\subsection{Bounded modular versus compact perturbations}
Let $\C A$ be an operator $*$-algebra satisfying Assumption \ref{a:opealg} and let $B$ be a $\si$-unital $C^*$-algebra.

We fix two unbounded modular cycles $\sD_1 = (X,D_1,\De)$ and $\sD_2 = (X,D_2,\Ga)$ both from $\C A$ to $B$ of the same parity and with the same grading operator $\ga : X \to X$ in the even case. We will assume that $\sD_1$ is a bounded modular perturbation of $\sD_2$. Thus, by Definition \ref{d:bouper}, there exists a completely bounded map
\[
\rho_{\De,\Ga} : \wit{\C A} \to \sL(X)
\]
such that
\[
\big( \De^{1/2} \rho_{\De,\Ga}(a,\mu) \Ga^{1/2} \big)(\xi) = \big( D_1 (a + \mu) \Ga - \De (a + \mu) D_2 \big)(\xi)
\]
for all $(a,\mu) \in \wit{\C A}$ and all $\xi \in \sD(D_2)$. Remark also that the condition $(a + \mu) \Ga (\xi) \in \sD(D_1)$ is part of our assumptions. We apply the notation
\[
F_{D_1} := D_1(1 + D_1^2)^{-1/2} \q  \T{and} \q F_{D_2} := D_2(1 + D_2^2)^{-1/2}
\]
for the bounded transforms of $D_1 : \sD(D_1) \to X$ and $D_2 : \sD(D_2) \to X$. In order to prove our main Theorem \ref{t:welbou} we will show that
\[
a (F_{D_1} - F_{D_2}) \in \sK(X) \q \T{ for all } a \in A 
\]
thus that the bounded Kasparov modules $(X,F_{D_1})$ and $(X,F_{D_2})$ are compact perturbations of each other.

We start by replacing the bounded transforms by modular transform type operators (compare with \cite[Section 8]{Kaa:UKM}). To this end, we let $r \in (\| \De \|_\infty^2 + \| \Ga \|_\infty^2, \infty)$ be fixed and introduce the bounded adjointable operators
\[
S_\la := (\la \De^2/r + 1 + D_1^2)^{-1} \q \T{and} \q  T_\la := (\la \Ga^2/r + 1 + D_2^2)^{-1} 
\]
for all $\la \geq 0$. We recall from \cite[Section 8]{Kaa:UKM} that the \emph{modular transform} of $(X,D_1,\De)$ is defined as the unbounded operator:
\[
G_{D_1,\De} : \De\big( \sD(D_1) \big) \to X \q 
G_{D_1,\De}( \De \xi) := \frac{1}{\pi} \int_0^\infty (\la r)^{-1/2} \De S_\la D_1(\De \xi) \, d\la
\]
for all $\xi \in \sD(D_1)$. We remark that the above integral converges absolutely since \cite[Lemma 11.3]{Kaa:UKM} implies that
\[
\begin{split}
\| \De S_\la D_1(\De \xi) \|_X 
& \leq \| \De S_\la \De \|_\infty \cd \| D_1 \xi \|_X
+ \| \De S_\la \De^{1/2} \|_\infty \cd \| \rho_{\De}(1) \De^{1/2} \xi \|_X \\
& = O\big( (1 + \la)^{-3/4} \big) \q \T{as } \la \to \infty \T{ and as } \la \to 0
\end{split}
\]
where $\rho_\De : \wit{\C A} \to \sL(X)$ is the completely bounded map associated to the unbounded modular cycle $(X,D_1,\De)$. A similar definition applies to the unbounded modular cycle $(X,D_2,\Ga)$. Recall in this respect that $\rho_\Ga : \wit{\C A} \to \sL(X)$ denotes the corresponding completely bounded map.

The modular transform is a useful analytic tool for handling twisted commutators instead of straight commutators (corresponding to $\De \neq 1$ and $\De = 1$, respectively). The modular transform is obtained from the bounded transform by applying the \emph{formal} change of variables $\la \to \la \De^2/r$ in the integral formula for the square root: 
\[
(1 + D_1^2)^{-1/2} = \frac{1}{\pi} \int_0^\infty \la^{-1/2} (\la + 1 + D_1^2)^{-1} \, d\la
\]
\medskip

The following lemma illustrates the main algebraic reason for working with the modular transform instead of the usual bounded transform. Indeed, the operator norm of the right hand side of the identity here below will (at least after multiplication with modular operators) behave like $(1 + \la)^{-5/4}$. The operator norm of the left hand side only behaves like $(1 + \la)^{-1}$ (after multiplication with modular operators). 

\begin{lemma}\label{l:funalg}
We have the identity
\[
\begin{split}
& a \Ga^2 T_\la - S_\la \De^2 a \\ 
& \q = S_\la(a \Ga^2 - \De^2 a) T_\la \\
& \qqq + S_\la \big( \De^{1/2} \rho_\De(1) \De^{1/2} a \Ga + \De^{3/2} \rho_{\De,\Ga}(a) \Ga^{1/2} \big) D_2 T_\la \\ 
& \qqq \qq +  ( D_1 S_\la )^* \big( \De^{1/2} \rho_{\De,\Ga}(a) \Ga^{3/2} + \De a \Ga^{1/2} \rho_\Ga(1) \Ga^{1/2} \big) T_\la
\end{split}
\]
for all $a \in \C A$. 
\end{lemma}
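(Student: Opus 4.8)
The plan is to peel off the resolvent sandwich $S_\la(\,\cd\,)T_\la$ and reduce the assertion to a single algebraic identity for the operator $D_1^2 a\Ga^2 - \De^2 a D_2^2$, obtained by iterating the twisted Leibniz relations carried by the two unbounded modular cycles and by the bounded modular perturbation. Throughout I suppress the left representation $\pi$ of $A$ on $X$.

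First I would record that $S_\la$ is the bounded selfadjoint inverse of the positive regular operator $\la\De^2/r + 1 + D_1^2$, with range $\sD(D_1^2)$, and $T_\la = (\la\Ga^2/r + 1 + D_2^2)^{-1}$, with range $\sD(D_2^2)$; in particular $D_1^2 S_\la = 1 - (1 + \la\De^2/r)S_\la$ is bounded and $(D_1 S_\la)^*$ is the bounded extension of $S_\la D_1$. Using $S_\la(\la\De^2/r + 1 + D_1^2) = 1$ on $\sD(D_1^2)$ and $(\la\Ga^2/r + 1 + D_2^2)T_\la = 1$ on $X$, one obtains
\[
a\Ga^2 T_\la - S_\la\De^2 a = S_\la\big( (\la\De^2/r + 1 + D_1^2)a\Ga^2 - \De^2 a(\la\Ga^2/r + 1 + D_2^2) \big)T_\la ,
\]
where the two $\la$-terms are both equal to $\tfrac{\la}{r}\De^2 a\Ga^2$ and cancel, while the two $1$-terms give $a\Ga^2 - \De^2 a$. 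This reduces the lemma to the identity $D_1^2 a\Ga^2 - \De^2 a D_2^2 = \big( \De^{1/2}\rho_\De(1)\De^{1/2} a\Ga + \De^{3/2}\rho_{\De,\Ga}(a)\Ga^{1/2} \big)D_2 + D_1\big( \De a\Ga^{1/2}\rho_\Ga(1)\Ga^{1/2} + \De^{1/2}\rho_{\De,\Ga}(a)\Ga^{3/2} \big)$.

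To get the algebraic identity I would use the three relations $D_1 a\Ga = \De a D_2 + \De^{1/2}\rho_{\De,\Ga}(a)\Ga^{1/2}$ on $\sD(D_2)$ (condition $(2)$ of Definition \ref{d:bouper}), together with $D_1\De = \De D_1 + \De^{1/2}\rho_\De(1)\De^{1/2}$ on $\sD(D_1)$ and $D_2\Ga = \Ga D_2 + \Ga^{1/2}\rho_\Ga(1)\Ga^{1/2}$ on $\sD(D_2)$ (condition $(3)$ of Definition \ref{d:unbkas} for $(X,D_1,\De)$ resp.\ $(X,D_2,\Ga)$, with the $\wit{\C A}$-variable taken to be the unit). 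Starting from $D_1^2 a\Ga^2 = D_1(D_1 a\Ga)\Ga$, I would substitute the first relation, then push $D_1$ through $\De$ and $D_2$ through $\Ga$ by the other two, and then apply the first relation once more; the resulting terms reassemble into the displayed identity. Inserting this into the reduction above and using $S_\la D_1 = (D_1 S_\la)^*$ yields the claimed formula, since $\big( \De^{1/2}\rho_\De(1)\De^{1/2} a\Ga + \De^{3/2}\rho_{\De,\Ga}(a)\Ga^{1/2} \big)D_2$ and $D_1\big( \De a\Ga^{1/2}\rho_\Ga(1)\Ga^{1/2} + \De^{1/2}\rho_{\De,\Ga}(a)\Ga^{3/2} \big)$ become exactly the second and third terms on the right-hand side after flanking with $S_\la$ and $T_\la$.

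The main obstacle is that this computation is a priori only formal: the relation for $D_1 a\Ga$ lives on $\sD(D_2)$, and iterating it produces a vector $a\Ga^2\xi$ to which $D_1$ is applied twice, whereas $a\Ga^2\xi$ is only guaranteed to lie in $\sD(D_1)$. I would make the argument rigorous by testing the (bounded) operators on each side against a vector $\eta\in\sD(D_1)$: all three operator relations are then applied only to vectors of the form $T_\la\theta$, $D_2 T_\la\theta$ and their images under $\De$, $\Ga$, $\Ga^{1/2}$, which do lie in the relevant domains because $\Ga$ and $\De$, hence also $\Ga^{1/2}$ and $\De^{1/2}$, preserve $\sD(D_2)$ resp.\ $\sD(D_1)$ (condition $(2)$ of Definition \ref{d:unbkas} together with \cite[Proposition 3.12]{BlCu:DNS} and \cite[Example 2 and 3]{Wor:UAQ}), and because condition $(1)$ of Definition \ref{d:bouper} gives $(a + \mu)\Ga\big(\sD(D_2)\big)\su\sD(D_1)$. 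The terms $D_1\big(\text{bounded}\big)T_\la\theta$ whose argument need not lie in $\sD(D_1)$ are precisely the ones that, under the pairing, pass $D_1$ onto $S_\la\eta\in\sD(D_1^2)$, i.e.\ get absorbed into $(D_1 S_\la)^*$ — which explains why $(D_1 S_\la)^*$ rather than $S_\la D_1$ appears in the statement. Since $\sD(D_1)$ is dense, the resulting matrix-element identity forces the operator identity, completing the proof.
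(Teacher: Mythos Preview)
Your proposal is correct and is essentially the paper's argument run in reverse: the paper starts from the two $\rho$-terms on the right-hand side, substitutes the defining relations $\De^{1/2}\rho_\De(1)\De^{1/2}=D_1\De-\De D_1$, $\De^{1/2}\rho_{\De,\Ga}(a)\Ga^{1/2}=D_1 a\Ga-\De a D_2$, $\Ga^{1/2}\rho_\Ga(1)\Ga^{1/2}=D_2\Ga-\Ga D_2$, telescopes to $(D_1 S_\la)^* D_1 a\Ga^2 T_\la - S_\la\De^2 a D_2^2 T_\la$, and then uses the resolvent identities to arrive at the left-hand side minus the first term.

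The one difference worth noting is that the paper's direction of computation never leaves the realm of bounded operators: each step manipulates expressions of the form $S_\la(\,\text{bounded}\,)D_2 T_\la$ or $(D_1 S_\la)^*(\,\text{bounded}\,)T_\la$, and the key relation $(D_1 S_\la)^* D_1 + S_\la(1+\la\De^2/r)=1$ is an identity of bounded operators on $\sD(D_1)$, which is where $a\Ga^2 T_\la$ lands. This sidesteps the formal step you take (writing $a\Ga^2 T_\la = S_\la(\la\De^2/r+1+D_1^2)a\Ga^2 T_\la$, which would need $a\Ga^2 T_\la\xi\in\sD(D_1^2)$) and the subsequent weak-pairing justification. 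Your fix via testing against $\eta\in\sD(D_1)$ is correct and amounts to the same thing, but the paper's ordering makes the domain bookkeeping invisible.
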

\begin{proof}
By the properties of the completely bounded maps $\rho_{\De}, \rho_\Ga$ and $\rho_{\De,\Ga} : \wit{\C A} \to \sL(X)$ we have that
\[
\begin{split}
& S_\la \big( \De^{1/2} \rho_\De(1) \De^{1/2} a \Ga + \De^{3/2} \rho_{\De,\Ga}(a) \Ga^{1/2} \big) D_2 T_\la \\ 
& \qqq +  ( D_1 S_\la )^* \big( \De^{1/2} \rho_{\De,\Ga}(a) \Ga^{3/2} + \De a \Ga^{1/2} \rho_\Ga(1) \Ga^{1/2} \big) T_\la \\
& \q = S_\la \big( (D_1 \De - \De D_1 ) a \Ga + \De (D_1 a \Ga - \De a D_2)  \big) D_2 T_\la \\ 
& \qqq +  ( D_1 S_\la )^* \big( (D_1 a \Ga - \De a D_2) \Ga + \De a (D_2 \Ga - \Ga D_2) \big) T_\la \\
& \q = ( D_1 S_\la )^* D_1 a \Ga^2 T_\la - S_\la \De^2 a D_2^2 T_\la \\
& \q = ( D_1 S_\la )^* D_1 a \Ga^2 T_\la - S_\la \De^2 a D_2^2 T_\la
+ S_\la (\De^2/r) a \Ga^2 T_\la - S_\la \De^2 a (\Ga^2/r) T_\la \\
& \q = a \Ga^2 T_\la - S_\la \De^2 a
+ S_\la (\De^2 a - a \Ga^2) T_\la
\end{split}
\]
This proves the lemma.
\end{proof}

The next two propositions relate the bounded transforms $F_{D_2}$ and $F_{D_1}$ to modular transforms. Thus, up to a compact perturbation we may replace the (rescaled) resolvents $\la^{-1/2} (\la + 1 + D_2^2)^{-1}$ and $\la^{-1/2} (\la + 1 + D_1^2)^{-1}$ (in the integral formulae for the bounded transforms) by bounded adjointable operators of the form $(\la r)^{-1/2} \Ga \cd T_\la$ and $(\la r)^{-1/2} \De \cd S_\la$, respectively.


\begin{prop}\label{p:modde1}
For each $a \in \C A$, there exists a compact operator $K_a : X \to X$ such that
\[
\De^5 a F_{D_2} \Ga^5(\xi) - \frac{1}{\pi} \int_0^\infty (\la r)^{-1/2} \De^5 a \Ga^2 T_\la \Ga^4 \, d\la \cd D_2(\xi)
= K_a(\xi)
\]
for all $\xi \in \sD(D_2)$.
\end{prop}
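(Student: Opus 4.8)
The plan is to prove the identity by expressing the difference between $\De^5 a F_{D_2} \Ga^5$ and the integral of $(\la r)^{-1/2} \De^5 a \Ga^2 T_\la \Ga^4 D_2$ as an absolutely convergent integral whose integrand is compact for each $\la$ and whose operator norm decays fast enough in $\la$, so that the integral defines a compact operator. First I would start from the standard integral formula $F_{D_2} = D_2(1+D_2^2)^{-1/2} = \frac{1}{\pi}\int_0^\infty \la^{-1/2}(\la + 1 + D_2^2)^{-1}\, d\la \cdot D_2$ on $\sD(D_2)$, and then perform the formal change of variables $\la \mapsto \la \Ga^2/r$ (exactly as in \cite[Section 8]{Kaa:UKM}); the point of inserting the high powers $\De^5$ on the left and $\Ga^4$ (plus the $\Ga$ hidden inside a $\Ga^2 T_\la$) on the right is that these modular operators absorb the singularities coming from the unbounded inverse $\Ga^{-1}$ that appears when passing between $(\la + 1 + D_2^2)^{-1}$ and $\Ga T_\la$.

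The key step is to write
\[
\De^5 a \la^{-1/2}(\la + 1 + D_2^2)^{-1} \Ga^5 - (\la r)^{-1/2} \De^5 a \Ga^2 T_\la \Ga^4
\]
as a difference that can be controlled using Lemma \ref{l:funalg} together with the resolvent-type identity relating $(\la + 1 + D_2^2)^{-1}$ and $T_\la = (\la \Ga^2/r + 1 + D_2^2)^{-1}$, namely $(\la+1+D_2^2)^{-1} - T_\la = T_\la(\la \Ga^2/r - \la)(\la+1+D_2^2)^{-1}$. Commuting $a$ and $\Ga$ past the resolvents generates twisted-commutator terms which, by conditions (2) and (3) of Definition \ref{d:unbkas} for $\sD_2$, are governed by the completely bounded map $\rho_\Ga$; the surplus modular factors then yield the decay estimates of the type $O((1+\la)^{-5/4})$ referred to in the paragraph preceding Lemma \ref{l:funalg}, using the norm estimates of \cite[Lemma 11.3]{Kaa:UKM} (or its analogue for $\sD_2$). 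Compactness of each integrand for fixed $\la$ follows from condition (1) of Definition \ref{d:unbkas}: $\pi(a)(i+D_2)^{-1} \in \sK(X)$ for $a \in A$, hence $\pi(a)(\la+1+D_2^2)^{-1}$ and all the bounded-transform-type resolvents composed with $\pi(a)$ lie in $\sK(X)$, and $\sK(X)$ is a closed two-sided ideal so it is preserved under multiplication by $\De^5$, $\Ga^k$, $T_\la$, etc. I would then invoke condition (4) of Definition \ref{d:unbkas} (and the approximate identity $\{V_n\}$ for $C^*(\Ga)$) if needed to reduce to the case where $a$ is suitably approximated so that the relevant products genuinely land in $\sK(X)$ rather than merely $\sL(X)$.

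Finally, since the space of compact operators is norm-closed and the integrand — after all the algebraic manipulations — is a norm-continuous $\sK(X)$-valued function of $\la$ with operator norm $O((1+\la)^{-1-\epsilon})$ near $\infty$ and integrable behaviour near $0$, the Bochner integral $\frac{1}{\pi}\int_0^\infty (\cdots)\, d\la$ defines a compact operator $K_a : X \to X$, and unwinding the change of variables gives precisely the asserted identity on the core $\sD(D_2)$. The main obstacle I expect is the bookkeeping in the change of variables $\la \mapsto \la\Ga^2/r$: because $\Ga$ and $D_2$ do not commute, this substitution is only \emph{formal}, and making it rigorous requires carefully tracking which twisted commutators appear, matching each one against a reservoir of modular operators ($\De^5$ on the left, $\Ga^5$ on the right, split as $\Ga^2 T_\la \Ga^4 \cdot D_2$) so that every resulting term is both compact and decays like $(1+\la)^{-5/4}$ or faster — this is exactly the role played by the high powers in the statement, and getting the powers to balance is the delicate part. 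This is the modular analogue of the corresponding step for ordinary unbounded Kasparov modules, where $\Ga = \De = 1$ and the substitution is literal.
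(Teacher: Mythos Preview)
Your general strategy (express the difference as an integral of compact operators with integrable decay) is sound, but the paper takes a much shorter route: it simply invokes \cite[Lemma 9.3]{Kaa:UKM}, which already provides a compact operator $K_{a^*}$ with
\[
\Ga^5 F_{D_2} a^*(\eta) - \frac{1}{\pi} D_2 \int_0^\infty (\la r)^{-1/2} \Ga^4 T_\la \Ga^2 \, d\la \cdot a^*(\eta) = K_{a^*}(\eta)
\]
for $\eta \in \Ga(\sD(D_2))$, and then takes adjoints (using selfadjointness of $F_{D_2}$, $T_\la$, $\Ga$, $D_2$) to flip the order and obtain the desired identity, finally multiplying by the bounded operator $\De^5$ on the left. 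What you are proposing is essentially to reprove \cite[Lemma 9.3]{Kaa:UKM} from scratch.

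Two specific corrections to your proposal. First, Lemma \ref{l:funalg} is \emph{not} the right tool here: that lemma compares the two resolvents $S_\la$ and $T_\la$ coming from the \emph{pair} of cycles $(X,D_1,\De)$ and $(X,D_2,\Ga)$ via the perturbation map $\rho_{\De,\Ga}$, whereas Proposition \ref{p:modde1} only concerns the single cycle $(X,D_2,\Ga)$; Lemma \ref{l:funalg} enters later, in the proof of Lemma \ref{l:funest} feeding Proposition \ref{p:emmcom}. Second, the factor $\De^5$ plays no analytic role in this proposition: it is an inert bounded left multiplier that can be factored out of both terms, and is only present so that the statement matches the shape needed for the final assembly in Proposition \ref{p:compermod}. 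The ``reservoir of modular operators'' you mention is entirely on the $\Ga$ side here; the $\De$-reservoir is used in Proposition \ref{p:modde2} and in the two-cycle comparison.
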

\begin{proof}
Let $a \in \C A$. By \cite[Lemma 9.3]{Kaa:UKM} there exists a compact operator $K_{a^*} : X \to X$ such that
\[
\Ga^5 F_{D_2} a^*(\eta) - \frac{1}{\pi} D_2 \cd \int_0^\infty (\la r)^{-1/2} \Ga^4 T_\la \Ga^2 \, d\la \cd a^*(\eta)
= K_{a^*}(\eta)
\]
for all $\eta \in \Ga\big( \sD(D_2) \big)$. By taking adjoints we thus obtain that
\[
a F_{D_2} \Ga^5(\xi) - \frac{1}{\pi} a \cd \int_0^\infty (\la r)^{-1/2} \Ga^2 T_\la \Ga^4 \, d\la \cd D_2(\xi)
= (K_{a^*})^*(\xi)
\]
for all $\xi \in \sD(D_2)$. This proves the proposition with $K_a := (K_{a^*})^* : X \to X$.
\end{proof}

In order to obtain the analogue of the above proposition for the bounded transform $F_{D_1}$ we introduce the bounded adjointable operators
\[
R_\la := (\la + 1 + D_1^2)^{-1} \q \T{and} \q Y_\la := \la \cd R_\la \cd (1 - \De^2/r)
\]
for all $\la \geq 0$. We remark that $\| Y_\la \|_\infty < 1$ and that \cite[Lemma 8.2]{Kaa:UKM} states that
\begin{equation}\label{eq:eswyer}
S_\la = (1 - Y_\la)^{-1} R_\la
\end{equation}

\begin{prop}\label{p:modde2}
Let $a \in \C A$ be fixed. There exists a compact operator $L_a : X \to X$ such that
\[
\De^5 a F_{D_1} \Ga^5(\xi) - \frac{1}{\pi} \int_0^\infty (\la r)^{-1/2} \De^3 (1 - Y_\la)^{-1} \De^3 R_\la \, d\la \cd D_1 a \Ga^5 (\xi)
= L_a(\xi)
\]
for all $\xi \in \sD(D_2)$.
\end{prop}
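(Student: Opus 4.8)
The plan is to run the argument in parallel with the proof of Proposition \ref{p:modde1} (and the reasoning behind \cite[Lemma 9.3]{Kaa:UKM}), but now applied to the unbounded modular cycle $\sD_1 = (X, D_1, \De)$ and carrying the auxiliary factor $\Ga^5$ along on the right. First I would record that every term in the claimed identity makes sense: since $\Ga$ preserves $\sD(D_2)$ by condition $(2)$ of Definition \ref{d:unbkas} applied to $\sD_2$ (take $a = 0$, $\la = 1$), we get $\Ga^4(\xi) \in \sD(D_2)$ for $\xi \in \sD(D_2)$, and then condition $(1)$ of Definition \ref{d:bouper} yields $a\Ga(\Ga^4\xi) = a\Ga^5(\xi) \in \sD(D_1)$, so that $D_1 a\Ga^5(\xi)$ is defined. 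Thus the asserted identity compares the bounded operator $\De^5 a F_{D_1}$ with the (a priori only densely defined) operator $\big( \frac{1}{\pi}\int_0^\infty (\la r)^{-1/2}\De^3(1-Y_\la)^{-1}\De^3 R_\la \, d\la \big) D_1 a$ on the set of vectors $\Ga^5(\xi)$, $\xi \in \sD(D_2)$ --- which is exactly the set on which Proposition \ref{p:modde1} will be used when the two are combined.

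Next, starting from the integral representation $(1 + D_1^2)^{-1/2} = \frac{1}{\pi}\int_0^\infty \la^{-1/2}(\la + 1 + D_1^2)^{-1}\, d\la$, so that $F_{D_1} = \frac{1}{\pi}\int_0^\infty \la^{-1/2} R_\la D_1 \, d\la$, I would carry out the \emph{formal} change of variable that replaces $\la^{-1/2}(\la + 1 + D_1^2)^{-1} = \la^{-1/2} R_\la$ by $(\la r)^{-1/2}\De S_\la$, and then substitute the operator identity $S_\la = (1 - Y_\la)^{-1} R_\la$ from \eqref{eq:eswyer}. Because $\De^2$ does not commute with $D_1$, this substitution is only formal and leaves a correction; handling it is precisely what the argument of \cite[Lemma 9.3]{Kaa:UKM} does for $\sD_1$, and it is here that the twisted-commutator structure --- encoded in the completely bounded map $\rho_\De$ of $\sD_1$ --- is used, both to push $a$ past the resolvents and to move the powers of $\De$ across $R_\la$, $(1-Y_\la)^{-1}$ and $D_1$. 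The upshot is that $\De^5 a F_{D_1}\Ga^5(\xi)$ differs from $\frac{1}{\pi}\int_0^\infty (\la r)^{-1/2}\De^3(1-Y_\la)^{-1}\De^3 R_\la \, d\la \cdot D_1 a\Ga^5(\xi)$ by an integral whose integrand is, for each $\la$, manifestly compact, being a product of a compact operator --- supplied by condition $(1)$ of Definition \ref{d:unbkas} for $\sD_1$, i.e.\ $a(i + D_1)^{-1} \in \sK(X)$ --- and bounded factors.

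The main obstacle is the final step: showing this error integral converges absolutely in operator norm, so that $L_a$ is a genuine compact operator. This rests on spectral estimates of the type recalled before Lemma \ref{l:funalg} for $\| \De S_\la D_1(\De\xi) \|_X$ and established for $\De$-dressed powers of $S_\la$ and of $D_1 S_\la$ in \cite[Lemma 11.3]{Kaa:UKM}: after the modular dressings are distributed, each error integrand has operator norm $O\big((1 + \la)^{-1-\varepsilon}\big)$ for some $\varepsilon > 0$, both near $\la = 0$ and as $\la \to \infty$. The delicate part is the bookkeeping --- choosing, at each commutation, which powers of $\De$ to keep on which side of $R_\la$, $S_\la$, $Y_\la$, $D_1$ and $a$ so that simultaneously the decay exponent stays strictly below $-1$ \emph{and} an explicit compact factor survives --- and this is exactly why the surplus powers $\De^3$, $\De^5$ and $\Ga^5$ appear in the statement. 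Once absolute convergence is established, an integral of a norm-convergent family of compact operators is compact, giving $L_a \in \sK(X)$; the identity will then be combined with Proposition \ref{p:modde1} and Lemma \ref{l:funalg} to deduce $a(F_{D_1} - F_{D_2}) \in \sK(X)$ in Proposition \ref{p:compermod}.
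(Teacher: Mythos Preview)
Your plan diverges from the paper's argument in a substantive way, and the sketch as written has a gap.

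The paper does \emph{not} rerun the reasoning behind \cite[Lemma 9.3]{Kaa:UKM}. Instead it invokes \cite[Theorem 8.1]{Kaa:UKM}, which concerns the modular transform of $\sD_1$ alone (no $a$) and only yields a \emph{bounded} $B$-linear operator $Q:X\to X$ satisfying
\[
\De^5 F_{D_1}(1+D_1^2)^{1/4}(\eta)-\frac{1}{\pi}\int_0^\infty (\la r)^{-1/2}\De^3(1-Y_\la)^{-1}\De^3 R_\la\,d\la\cdot D_1(1+D_1^2)^{1/4}(\eta)=Q(\eta)
\]
for $\eta\in\sD(|D_1|^{3/2})$. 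One then substitutes $\eta=(1+D_1^2)^{-1/4}a\Ga^5(\xi)$; compactness is obtained \emph{not} from the error integrand but from the single factor $(1+D_1^2)^{-1/4}a$ (compact because $a^*(1+D_1^2)^{-1/4}\in\sK(X)$) together with the $B$-linearity of $Q$. A final appeal to Theorem~\ref{t:boukas} swaps $aF_{D_1}$ for $F_{D_1}a$ modulo a compact.

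Your proposal instead asserts that the error can be written as an absolutely convergent integral of operators that are ``manifestly compact'' via $a(i+D_1)^{-1}\in\sK(X)$. Two problems arise. First, the citation is off: Lemma 9.3 compares with the genuine modular resolvent $T_\la$ (or $S_\la$), whereas the target integral in the present statement has $\De^3(1-Y_\la)^{-1}\De^3 R_\la$ with $(1-Y_\la)^{-1}$ and $R_\la$ separated; bridging to that form is precisely the content of Theorem~8.1, not Lemma~9.3. Second, and more seriously, after commuting $a$ to the right (which already costs the use of Theorem~\ref{t:boukas}, a step you do not mention) one is left with
\[
\big[\la^{-1/2}\De^5-(\la r)^{-1/2}\De^3(1-Y_\la)^{-1}\De^3\big]R_\la D_1\cdot a\Ga^5 .
\]
Each individual term has norm only of order $\la^{-1/2}\cdot(1+\la)^{-1/2}\sim\la^{-1}$ at infinity, which is not integrable; one needs the cancellation between the two terms, and establishing that cancellation with the required decay is exactly what the proof of \cite[Theorem 8.1]{Kaa:UKM} does (after inserting the smoothing $(1+D_1^2)^{1/4}$). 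Your appeal to the pointwise bounds of \cite[Lemma 11.3]{Kaa:UKM} does not by itself produce this cancellation. So your route, if it can be completed at all, amounts to reproving Theorem~8.1 while simultaneously tracking a compact factor --- considerably more work than the paper's three-line argument, and not justified by the sketch you gave.
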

\begin{proof}
%
By \cite[Theorem 8.1]{Kaa:UKM} there exists a $B$-linear bounded operator $Q : X \to X$ such that
\[
\begin{split}
& \De^5 F_{D_1} \cd (1 + D_1^2)^{1/4}(\eta) - \frac{1}{\pi} \int_0^\infty (\la r)^{-1/2} \De^3 (1 - Y_\la)^{-1} \De^3 R_\la \, d\la \cd D_1 (1 + D_1^2)^{1/4}(\eta) \\ 
& \q = Q(\eta)
\end{split}
\]
for all $\eta \in \sD( |D_1|^{3/2})$. We thus have that
\[
\begin{split}
& \De^5 F_{D_1} a \Ga^5(\xi) - \frac{1}{\pi} \int_0^\infty (\la r)^{-1/2} \De^3 (1 - Y_\la)^{-1} \De^3 R_\la \, d\la \cd D_1 a \Ga^5(\xi) \\
& \q = Q (1 + D_1^2)^{-1/4} a \Ga^5(\xi) 
\end{split}
\]
for all $\xi \in \sD(D_2)$. Since $(1 + D_1^2)^{-1/4} a : X \to X$ is compact and since $Q : X \to X$ is $B$-linear and bounded we conclude that $Q(1 + D_1^2)^{-1/4} a \Ga^5 : X \to X$ is a compact operator. Combining this result with Theorem \ref{t:boukas}, which tells us that $(X,F_{D_1})$ is a bounded Kasparov module, we obtain the result of the present proposition.
\end{proof}

The idea is now to compare the integrands in Proposition \ref{p:modde1} and Proposition \ref{p:modde2}. Indeed, the following proposition will allow us to conclude that the difference $\De^5 a (F_{D_1} - F_{D_2})\Ga^5 : X \to X$ is a compact operator for all $a \in \C A$.

\begin{prop}\label{p:emmcom}
Let $a \in \C A$ be fixed. For each $\la \geq 0$ there exists a compact operator $M_a(\la) : X \to X$ such that
\[
\big( \De^5 a \Ga^2 T_\la \Ga^4 D_2 - \De^3 (1 - Y_\la)^{-1} \De^3 R_\la D_1 a \Ga^5\big)(\xi) = M_a(\la)(\xi)
\]
for all $\xi \in \sD(D_2)$. Moreover we have that the map $M_a : [0,\infty) \to \sK(X)$ is continuous in operator norm and satisfies the estimate:
\begin{equation}\label{eq:mmmest}
\| M_a(\la) \|_\infty = O\big( (1 + \la)^{-3/4}\big) \q \T{as } \la \to \infty \T{ and as } \la \to 0
\end{equation}
\end{prop}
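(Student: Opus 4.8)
The plan is to read off the operator $M_a(\la)$ directly from the algebraic identity of Lemma \ref{l:funalg}, and then to verify compactness, norm-continuity in $\la$, and the decay bound \eqref{eq:mmmest} term by term. First I would multiply the identity of Lemma \ref{l:funalg} on the left by $\De^5$ and on the right by $\Ga^4 D_2$ (applied to $\xi\in\sD(D_2)$; every expression is legitimate since $T_\la$ maps into $\sD(D_2^2)$ and $(a+\mu)\Ga(\xi)\in\sD(D_1)$ by hypothesis), so that the left-hand side becomes $\De^5 a\Ga^2 T_\la\Ga^4 D_2$ and the principal right-hand term becomes $\De^5 S_\la\De^2 a\Ga^4 D_2$. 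It then remains to identify $\De^5 S_\la\De^2 a\Ga^4 D_2$ with the integrand $\De^3(1-Y_\la)^{-1}\De^3 R_\la D_1 a\Ga^5$ appearing in Proposition \ref{p:modde2}, modulo a compact error of the right size. For this I would use the identity \eqref{eq:eswyer}, the defining relation of $\rho_{\De,\Ga}$ in the form $D_1 a\Ga=\De a D_2+\De^{1/2}\rho_{\De,\Ga}(a,0)\Ga^{1/2}$ on $\sD(D_2)$ (to trade the $D_2$ on the right for a $D_1$ on the left), $R_\la D_1=D_1 R_\la$, and the $\la$-decaying commutator identities $[R_\la,\De^2]=-R_\la[\De^2,D_1^2]R_\la$ and $[\,(1-Y_\la)^{-1},\De^2\,]=(1-Y_\la)^{-1}[Y_\la,\De^2](1-Y_\la)^{-1}$ to move modular factors past $R_\la$ and $(1-Y_\la)^{-1}$. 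Carrying this out turns the difference into a finite sum of explicit terms, which by construction is a bounded operator on $X$ extending the left-hand side of the asserted identity; this bounded operator is the candidate for $M_a(\la)$.

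Next I would treat each term of that finite sum separately. Compactness always follows from condition $(1)$ of Definition \ref{d:unbkas}: every term carries a factor $\pi(a)$, or a factor $\rho_\bullet(a,\mu)$ which — after one commutation and using complete boundedness of $\rho_\bullet$ — can be placed adjacent to a resolvent $(i\pm D_1)^{-1}$ or $(i\pm D_2)^{-1}$, so the term lies in the ideal $\sK(X)\subseteq\sL(X)$. The decay estimate \eqref{eq:mmmest} is where the five powers of $\De$ on the left and the (effectively) five powers of $\Ga$ on the right are spent: I would bound the building blocks $\|\De^k S_\la\De^l\|_\infty$, $\|D_1 S_\la\De^l\|_\infty$, $\|\De^k S_\la D_1\|_\infty$ and their $\Ga$-analogues using \cite[Lemma 8.2]{Kaa:UKM} and \cite[Lemma 11.3]{Kaa:UKM} (a well-placed pair of modular factors contributes $(1+\la)^{-1}$, a factor $D_1 S_\la$ or $D_2 T_\la$ contributes $(1+\la)^{-1/2}$), and distribute the available powers so that every term ends up $O\big((1+\la)^{-3/4}\big)$ both as $\la\to\infty$ and, because $S_\la,T_\la,R_\la,(1-Y_\la)^{-1}$ tend to bounded limits, as $\la\to 0$. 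Norm-continuity of $\la\mapsto M_a(\la)\in\sK(X)$ then follows from the norm-continuity of $\la\mapsto S_\la,T_\la,R_\la,(1-Y_\la)^{-1}$, which is the resolvent identity together with \cite[Lemma 8.2]{Kaa:UKM}.

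The main obstacle I anticipate is purely bookkeeping but genuinely tight: one must distribute exactly the ten modular factors among all the pieces so that simultaneously (a) each piece keeps an explicit resolvent adjacent to $\pi(a)$ (or to $\rho_\bullet(a,\mu)$) to remain compact, and (b) each piece retains enough modular factors adjacent to $S_\la$, $T_\la$, $D_1 S_\la$ to reach the $(1+\la)^{-3/4}$ threshold — and the error terms of Lemma \ref{l:funalg} already consume powers $\De^{1/2},\De^{3/2},\Ga^{1/2},\Ga^{3/2}$, while the reconciliation of $D_1$ with $D_2$ consumes further powers through commutators, leaving little margin. A secondary technical point is taming the bare factors $D_2$ (such as the $D_2 T_\la\Ga^4 D_2$ produced when commuting $\Ga^4$ through $T_\la$): these are rendered bounded by absorbing the outer $D_2$ into $T_\la$ via the inequality $\|(1+D_2^2)^{1/2}T_\la(1+D_2^2)^{1/2}\|_\infty\le 1$, each of the intervening commutations of a $\Ga$-power through $T_\la$ being controlled by condition $(3)$ of Definition \ref{d:unbkas}.
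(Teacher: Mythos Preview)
Your proposal is correct and follows essentially the same route as the paper: the decay estimate is obtained exactly via Lemma~\ref{l:funalg}, the identity \eqref{eq:eswyer}, the $\rho_{\De,\Ga}$-relation, commutators of $\De$-powers with $R_\la$ and $S_\la$, and the resolvent bounds of \cite[Lemma~11.3]{Kaa:UKM} (recorded as \eqref{eq:lemest}). The paper organises this into two sublemmas --- Lemma~\ref{l:funestI} controls $\De(1-Y_\la)^{-1}\De^3 R_\la D_1-\De^3 S_\la\De D_1$ on $\sD(D_1)$, and Lemma~\ref{l:funest} controls $\De a\Ga^2 T_\la\Ga^4 D_2-\De S_\la\De D_1 a\Ga^5$ on $\sD(D_2)$ --- and then sandwiches with the remaining $\De$-powers; your inline plan amounts to the same computation. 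One minor simplification available to you: for compactness and norm-continuity the paper does \emph{not} go term by term through the expanded formula but simply notes that each of the two original operators $\De^5 a\Ga^2 T_\la\Ga^4 D_2$ and $\De^3(1-Y_\la)^{-1}\De^3 R_\la D_1 a\Ga^5$ already extends to a compact operator depending continuously on $\la$, so their difference does too --- this spares you the task of locating a resolvent adjacent to $\pi(a)$ in every error term.
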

\begin{proof}
Both of the unbounded operators $\De^5 a \Ga^2 T_\la \Ga^4 D_2 : \sD(D_2) \to X$ and $\De^3 (1 - Y_\la)^{-1} \De^3 R_\la D_1 a \Ga^5 : \sD(D_2) \to X$ have compact extensions to $X$ and it is not hard to see that these compact extensions depend continuously on the parameter $\la \geq 0$. The heart of the matter is therefore to prove the estimate in \eqref{eq:mmmest}. But this estimate will be a consequence of Lemma \ref{l:funestI} and Lemma \ref{l:funest} here below.
\end{proof}

For the convenience of the reader we recall from \cite[Lemma 11.3]{Kaa:UKM} that
\begin{equation}\label{eq:lemest}
\| \De S_\la^{1/2} \|_\infty \, \, , \, \, \, \| \Ga T_\la^{1/2} \|_\infty \leq \frac{\sqrt{2r}}{\sqrt{1 + \la}} \q \T{ for all } \la \geq 0 
\end{equation}

\begin{lemma}\label{l:funestI}
There exists a constant $C > 0$ such that
\[
\big\| \De (1 - Y_\la)^{-1} \De^3 R_\la D_1(\xi) - \De^3 S_\la \De D_1(\xi) \big\|_X \leq C \cd (1 + \la)^{-3/4} \cd \| \xi \|_X
\]
for all $\xi \in \sD(D_1)$ and all $\la \geq 0$.
\end{lemma}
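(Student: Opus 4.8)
The plan is to reduce the operator $W_\la := \De(1-Y_\la)^{-1}\De^3 R_\la D_1 - \De^3 S_\la\De D_1$ (initially defined on $\sD(D_1)$, which $\De$ and all its powers preserve by condition $(2)$ of Definition \ref{d:unbkas}) to a manageable closed form and then estimate that form term by term. A useful consistency check throughout is that $W_\la$ vanishes identically when $[D_1,\De]=0$: in the commuting case $(1-Y_\la)^{-1}=R_\la^{-1}S_\la$, so both summands of $W_\la$ equal $\De^4 S_\la D_1$.

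First I would record the relevant resolvent identities. Writing $P:=1-\De^2/r$, which commutes with $\De$, one has $S_\la^{-1}=R_\la^{-1}-\la P$, hence $\la S_\la P R_\la=S_\la-R_\la=\la R_\la P S_\la$, and, by \eqref{eq:eswyer}, $(1-Y_\la)^{-1}=1+\la S_\la P$. Substituting these into $W_\la$, commuting the powers of $\De$ through the resolvents by $[\De^k,R_\la]=R_\la[D_1^2,\De^k]R_\la$, and replacing the block $\la S_\la P R_\la$ by $S_\la-R_\la$ wherever it appears so as to clear every bare power of $\la$, a direct computation collapses the difference (the purely $R_\la$-pieces cancelling against each other) to
\[
W_\la = \De S_\la [D_1^2,\De^3] R_\la D_1 - \De S_\la [D_1^2,\De^2] S_\la \De D_1 .
\]
Here each $[D_1^2,\De^k]=D_1[D_1,\De^k]+[D_1,\De^k]D_1$, and by condition $(3)$ of Definition \ref{d:unbkas} applied to $(0,1)\in\wit{\C A}$, together with the Leibniz rule, every $[D_1,\De^k]$ is the restriction to $\sD(D_1)$ of a bounded operator of the shape $\De^{1/2}b\De^{1/2}$ with $b$ bounded (for instance $[D_1,\De]=\De^{1/2}\rho_\De(1)\De^{1/2}$).

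To estimate the two surviving terms I would use the toolbox $\|R_\la\|_\infty\leq(1+\la)^{-1}$, $\|R_\la D_1\|_\infty=O((1+\la)^{-1/2})$, $\|D_1 R_\la D_1\|_\infty\leq1$, $\|D_1 S_\la^{1/2}\|_\infty\leq1$, together with \eqref{eq:lemest} and the consequences obtained from the operator inequality $S_\la\leq(\la\De^2/r+1)^{-1}$ via functional calculus in $\De$: namely $\|\De S_\la\De\|_\infty=O((1+\la)^{-1})$, $\|\De^{1/2}S_\la\De^{1/2}\|_\infty=O((1+\la)^{-1/2})$, hence $\|\De^{1/2}S_\la^{1/2}\|_\infty=O((1+\la)^{-1/4})$ and $\|\De S_\la\De^{1/2}\|_\infty\leq\|\De S_\la^{1/2}\|_\infty\|S_\la^{1/2}\De^{1/2}\|_\infty=O((1+\la)^{-3/4})$. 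In each term one expands the commutator and distributes the two $\De^{1/2}$-factors it carries: one routes one of them to the right of the leftmost $\De$-weighted $S_\la$ (picking up $\|\De S_\la\De^{1/2}\|_\infty=O((1+\la)^{-3/4})$) or just to the left of an $S_\la^{1/2}$ (picking up $\|\De^{1/2}S_\la^{1/2}\|_\infty=O((1+\la)^{-1/4})$, which combines with the $O((1+\la)^{-1/2})$ from $\|\De S_\la^{1/2}\|_\infty$), while keeping each residual $D_1$ adjacent to a genuine resolvent factor $R_\la D_1$ or $S_\la^{1/2}D_1$ and commuting $\De$'s past the final $D_1$ through $\De D_1=D_1\De-\De^{1/2}\rho_\De(1)\De^{1/2}$ when necessary. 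This yields $\|W_\la\|_\infty=O((1+\la)^{-3/4})$, uniformly for $\la\in[0,\infty)$ since all operators in sight are bounded for each finite $\la$, which is the assertion.

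The step I expect to be the main obstacle is the reduction to the displayed closed form: it demands a disciplined use of both forms $\la S_\la P R_\la=S_\la-R_\la=\la R_\la P S_\la$ of the resolvent identity along with the commutation relations $[\De^k,R_\la]=R_\la[D_1^2,\De^k]R_\la$, and the bookkeeping is error-prone, since a single misplaced power of $\De$ destroys the cancellation of the purely $R_\la$-terms. A secondary subtlety lies in the final estimates: a careless application of the norm bounds gives only $O((1+\la)^{-1/2})$ for terms of the schematic form $\De S_\la D_1\cd(\T{bounded})\cd S_\la\De D_1$, because $D_1$ is ``resonant'' with $S_\la$; the improvement to $(1+\la)^{-3/4}$ is recovered precisely by feeding the $\De^{1/2}$-factors produced by the twisted commutator into the sharper bounds $\|\De S_\la\De^{1/2}\|_\infty$ and $\|\De^{1/2}S_\la^{1/2}\|_\infty$ instead of discarding them.
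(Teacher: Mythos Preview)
Your proposal is correct and is essentially the paper's own argument. Your closed form
\[
W_\la = \De S_\la [D_1^2,\De^3] R_\la D_1 - \De S_\la [D_1^2,\De^2] S_\la \De D_1
\]
is exactly the paper's four-term expression once one expands $[D_1^2,\De^k]=D_1[D_1,\De^k]+[D_1,\De^k]D_1$ and rewrites $S_\la D_1$ as $(D_1 S_\la)^*$. The only cosmetic difference is that the paper reaches this form more directly: it writes $\De(1-Y_\la)^{-1}\De^3 R_\la D_1 - \De^3 S_\la\De D_1 = \De(1-Y_\la)^{-1}[\De^3,R_\la]D_1 - \De[\De^2,S_\la]\De D_1$ in one line (using $(1-Y_\la)^{-1}R_\la=S_\la$ and that the ``main terms'' $\De S_\la\De^3 D_1$ cancel), then substitutes $[\De^k,R_\la]=R_\la[D_1^2,\De^k]R_\la$ and $[\De^2,S_\la]=S_\la[D_1^2,\De^2]S_\la$. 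Your route through $(1-Y_\la)^{-1}=1+\la S_\la P$ and $\la S_\la P R_\la = S_\la-R_\la$ is a slightly longer but perfectly valid way to arrive at the same place. The estimation step---feeding the $\De^{1/2}$ flanks of $[D_1,\De^k]$ into $\|\De S_\la\De^{1/2}\|_\infty=O((1+\la)^{-3/4})$ and $\|\De^{1/2}S_\la^{1/2}\|_\infty=O((1+\la)^{-1/4})$, and handling the trailing $S_\la\De D_1$ via $\De D_1=D_1\De-\De^{1/2}\rho_\De(1)\De^{1/2}$ together with $\|(D_1 S_\la^{1/2})^*\|_\infty\leq 1$---is precisely what the paper's terse ``follows by \eqref{eq:lemest}'' is hiding.
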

\begin{proof}
Let $\xi \in \sD(D_1)$. We start by recording the algebraic identities (relying on \eqref{eq:eswyer}):
\[
\begin{split}
& \De (1 - Y_\la)^{-1} \De^3 R_\la \cd D_1(\xi) - \De^3 S_\la \De \cd D_1(\xi) \\
& \q = \De(1 - Y_\la)^{-1}[\De^3,R_\la] \cd D_1(\xi) - \De[\De^2,S_\la] \De \cd D_1(\xi) \\
& \q = \De(1 - Y_\la)^{-1} D_1 R_\la [D_1,\De^3] R_\la D_1(\xi)
+ \De(1 - Y_\la)^{-1} R_\la [D_1,\De^3] D_1  R_\la D_1(\xi) \\
& \qqq
- \De (D_1 S_\la)^* [D_1,\De^2] S_\la \De \cd D_1(\xi)
- \De S_\la [D_1,\De^2] D_1 S_\la \De \cd D_1(\xi) \\
& \q = \De (D_1 S_\la)^* [D_1,\De^3] R_\la D_1(\xi)
+ \De S_\la [D_1,\De^3] D_1 R_\la D_1(\xi) \\
& \qqq - \De (D_1 S_\la)^* [D_1,\De^2] S_\la \De \cd D_1(\xi)
- \De S_\la [D_1,\De^2] D_1 S_\la \De \cd D_1(\xi)
\end{split}
\]
The result of the lemma then follows by \eqref{eq:lemest}.
\end{proof}
%
%

\begin{lemma}\label{l:funest}
Let $a \in \C A$ be fixed. There exists a constant $C > 0$ such that
\[
\big\| \De a \Ga^2 T_\la \Ga^4 D_2(\xi) - \De S_\la \De D_1 a \Ga^5(\xi) \big\|_X \leq C \cd (1 + \la)^{-3/4} \cd \| \xi \|_X
\]
for all $\xi \in \sD(D_2)$ and all $\la \geq 0$.
\end{lemma}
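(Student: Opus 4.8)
The plan is to express the unbounded operator $\De a \Ga^2 T_\la \Ga^4 D_2 - \De S_\la \De D_1 a \Ga^5$ on the core $\sD(D_2)$ as an explicit finite sum of bounded operators whose ``leading'' terms cancel, and then to estimate each remaining summand by distributing the modular operators onto the resolvent factors $S_\la^{1/2}$ and $T_\la^{1/2}$. Besides \eqref{eq:lemest} the argument will use the fractional refinements $\| \De^{1/2} S_\la^{1/2} \|_\infty , \| \Ga^{1/2} T_\la^{1/2} \|_\infty = O\big( (1 + \la)^{-1/4} \big)$, which follow from the operator monotonicity of $t \mapsto t^{1/2}$ applied to $\De^2 \leq (r/\la) S_\la^{-1}$ and $\Ga^2 \leq (r/\la) T_\la^{-1}$ (compare \cite[Lemma 11.3]{Kaa:UKM}), and in particular the estimates $\| \De S_\la \De \|_\infty , \| \De S_\la \De^{3/2} \|_\infty = O\big( (1 + \la)^{-1} \big)$ and $\| \De S_\la \De^{1/2} \|_\infty = O\big( (1 + \la)^{-3/4} \big)$.

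I would first record some auxiliary facts. Since $(X, D_2, \Ga)$ is an unbounded modular cycle, $\Ga$ preserves $\sD(D_2)$, the twisted commutator $[D_2, \Ga] = \Ga^{1/2} \rho_\Ga(1) \Ga^{1/2}$ extends to a bounded operator, and hence so does $[D_2, \Ga^4] = \sum_{k = 0}^{3} \Ga^{k + 1/2} \rho_\Ga(1) \Ga^{7/2 - k}$; similarly $[D_1, \De] = \De^{1/2} \rho_\De(1) \De^{1/2}$ is bounded. From $D_1^2 \leq S_\la^{-1}$ and $D_2^2 \leq T_\la^{-1}$ one obtains that $D_1 S_\la^{1/2}$, $D_2 T_\la^{1/2}$, $(D_1 S_\la)^*$, $T_\la D_2 = T_\la^{1/2} (T_\la^{1/2} D_2)$ and $D_2 T_\la D_2 = (D_2 T_\la^{1/2})(T_\la^{1/2} D_2)$ all extend to bounded operators of norm at most one (with $T_\la^{1/2} D_2$ understood as the bounded extension of $(D_2 T_\la^{1/2})^*$), and that $T_\la$ maps $X$ into $\sD(D_2^2)$. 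It follows that $T_\la \Ga^4 D_2 = T_\la D_2 \Ga^4 - T_\la [D_2, \Ga^4]$, $T_\la^{1/2} \Ga^4 D_2 = T_\la^{1/2} D_2 \Ga^4 - T_\la^{1/2}[D_2, \Ga^4]$ and $D_2 T_\la \Ga^4 D_2 = D_2 T_\la D_2 \Ga^4 - D_2 T_\la [D_2, \Ga^4]$ all extend to operators that are bounded uniformly in $\la \geq 0$.

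For the algebraic step, multiplying the identity of Lemma \ref{l:funalg} on the left by $\De$ and on the right by $\Ga^4 D_2$ gives, on $\sD(D_2)$,
\[
\begin{split}
\De a \Ga^2 T_\la \Ga^4 D_2
& = \De S_\la \De^2 a \Ga^4 D_2 + \De S_\la (a \Ga^2 - \De^2 a) T_\la \Ga^4 D_2 \\
& \q + \De S_\la W_1 D_2 T_\la \Ga^4 D_2 + \De (D_1 S_\la)^* W_2 T_\la \Ga^4 D_2 ,
\end{split}
\]
where $W_1 = \De^{1/2} \rho_\De(1) \De^{1/2} a \Ga + \De^{3/2} \rho_{\De, \Ga}(a) \Ga^{1/2}$ and $W_2 = \De^{1/2} \rho_{\De, \Ga}(a) \Ga^{3/2} + \De a \Ga^{1/2} \rho_\Ga(1) \Ga^{1/2}$. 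On the other hand, applying the twisted commutator relation $D_1 a \Ga = \De a D_2 + \De^{1/2} \rho_{\De, \Ga}(a) \Ga^{1/2}$ (valid on $\sD(D_2)$) to the vector $\Ga^4 \xi \in \sD(D_2)$ and then using $D_2 \Ga^4 = \Ga^4 D_2 + [D_2, \Ga^4]$ yields, again on $\sD(D_2)$,
\[
\De S_\la \De D_1 a \Ga^5 = \De S_\la \De^2 a \Ga^4 D_2 + \De S_\la \De^2 a [D_2, \Ga^4] + \De S_\la \De^{3/2} \rho_{\De, \Ga}(a) \Ga^{9/2} .
\]
Subtracting, the terms $\De S_\la \De^2 a \Ga^4 D_2$ cancel and the difference $\De a \Ga^2 T_\la \Ga^4 D_2 - \De S_\la \De D_1 a \Ga^5$ equals, on $\sD(D_2)$,
\[
\begin{split}
& \De S_\la (a \Ga^2 - \De^2 a) T_\la \Ga^4 D_2 + \De S_\la W_1 D_2 T_\la \Ga^4 D_2 + \De (D_1 S_\la)^* W_2 T_\la \Ga^4 D_2 \\
& \q - \De S_\la \De^2 a [D_2, \Ga^4] - \De S_\la \De^{3/2} \rho_{\De, \Ga}(a) \Ga^{9/2} .
\end{split}
\]

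It then remains to bound the five summands in operator norm. Treating the two pieces $a\Ga^2$ and $\De^2 a$ of $a \Ga^2 - \De^2 a$ separately, the $\De^2 a$-piece carries a factor $\De S_\la \De = O\big( (1 + \la)^{-1} \big)$ and the $a\Ga^2$-piece carries factors $\De S_\la^{1/2}$ and $\Ga^2 T_\la^{1/2}$, each $O\big( (1 + \la)^{-1/2} \big)$; since $T_\la^{1/2} \Ga^4 D_2$ is bounded (hence so is $T_\la \Ga^4 D_2$), the first summand is $O\big( (1 + \la)^{-1} \big)$. The fourth and fifth summands are $O\big( (1 + \la)^{-1} \big)$ at once, using the estimates recalled above and boundedness of the remaining factors. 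For the second summand one writes $W_1 = \De^{1/2} W_1'$ with $W_1'$ bounded, so $\De S_\la W_1 = (\De S_\la^{1/2})(S_\la^{1/2} \De^{1/2}) W_1'$ is $O\big( (1 + \la)^{-1/2} \big) \cd O\big( (1 + \la)^{-1/4} \big)$, while $D_2 T_\la \Ga^4 D_2$ is bounded; so this summand is $O\big( (1 + \la)^{-3/4} \big)$. For the third summand, $\De (D_1 S_\la)^* = \De S_\la^{1/2} (D_1 S_\la^{1/2})^*$ is $O\big( (1 + \la)^{-1/2} \big)$, and writing $W_2 = W_2'' \Ga^{1/2}$ with $W_2''$ bounded we get $W_2 T_\la \Ga^4 D_2 = W_2'' (\Ga^{1/2} T_\la^{1/2})(T_\la^{1/2} \Ga^4 D_2)$, which is $O\big( (1 + \la)^{-1/4} \big)$; so this summand is $O\big( (1 + \la)^{-3/4} \big)$ too. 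Adding the five estimates gives the claimed bound. The hard part is the middle, algebraic step: one must arrange the cancellation so that every surviving summand factors into a product in which the decaying resolvent blocks $\De S_\la^{1/2}$, $\Ga T_\la^{1/2}$ (and their half-power versions) contribute in total at least the exponent $3/4$ and in which each stray copy of $D_1$ or $D_2$ coming from a twisted commutator is absorbed into a resolvent factor — which is exactly the purpose of the uniform boundedness of $T_\la D_2$, $D_2 T_\la D_2$, $(D_1 S_\la)^*$ and $[D_2, \Ga^4]$ recorded above. This bookkeeping runs parallel to, but is slightly more intricate than, the proof of Lemma \ref{l:funestI}.
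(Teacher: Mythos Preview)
Your proof is correct and follows essentially the same route as the paper: both arguments reduce the difference to the term $\De(a\Ga^2 T_\la - S_\la\De^2 a)\Ga^4 D_2$ (plus the two easily controlled pieces $\De S_\la\De^2 a[D_2,\Ga^4]$ and $\De S_\la\De^{3/2}\rho_{\De,\Ga}(a)\Ga^{9/2}$), apply Lemma~\ref{l:funalg}, and then estimate each resulting summand using \eqref{eq:lemest}. Your write-up is somewhat more explicit than the paper's, in that you spell out the fractional bounds $\|\De^{1/2}S_\la^{1/2}\|_\infty,\|\Ga^{1/2}T_\la^{1/2}\|_\infty = O\big((1+\la)^{-1/4}\big)$ and track the factorizations term by term, whereas the paper simply invokes \eqref{eq:lemest} at the end; but the underlying decomposition and the mechanism are the same.
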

\begin{proof}
%
Let $\xi \in \sD(D_2)$ and notice that
\[
\begin{split}
& \De a \Ga^2 T_\la \Ga^4 D_2(\xi) - \De S_\la \De D_1 a \Ga^5(\xi)  \\
& \q = \De ( a \Ga^2 T_\la - S_\la \De^2 a) \Ga^4 D_2(\xi) 
+ \De S_\la( \De^2 a \Ga^4 D_2 - \De D_1 a \Ga^5)(\xi)  \\
& \q = \De ( a \Ga^2 T_\la - S_\la \De^2 a) \Ga^4 D_2(\xi) 
+ \De S_\la( \De^2 a \Ga^4 D_2 - \De^2 a D_2 \Ga^4)(\xi) \\
& \qqq - \De S_\la \De^{3/2} \rho_{\De,\Ga}(a) \Ga^{9/2}(\xi)
\end{split}
\]
Since $D_2 \Ga^4 - \Ga^4 D_2 : \sD(D_2) \to X$ is the restriction of an element in $\sL(X)$ and since $\| \De S_\la \De \|_\infty \leq 2r (1 + \la)^{-1}$ by \eqref{eq:lemest} we can focus our attention on providing the required estimate for the unbounded operator
\[
\De ( a \Ga^2 T_\la - S_\la \De^2 a) \Ga^4 D_2 : \sD(D_2) \to X
\]
However, by Lemma \ref{l:funalg} we have that
\[
\begin{split}
& \De ( a \Ga^2 T_\la - S_\la \De^2 a) \Ga^4 D_2 \\
& \q = \De S_\la(a \Ga^2 - \De^2 a) T_\la \Ga^4 D_2 \\
& \qqq + \De S_\la \big( \De^{1/2} \rho_\De(1) \De^{1/2} a \Ga + \De^{3/2} \rho_{\De,\Ga}(a) \Ga^{1/2} \big) D_2 T_\la \Ga^4 D_2 \\ 
& \qqq \qq +  \De ( D_1 S_\la )^* \big( \De^{1/2} \rho_{\De,\Ga}(a) \Ga^{3/2} + \De a \Ga^{1/2} \rho_\Ga(1) \Ga^{1/2} \big) T_\la \Ga^4 D_2
\end{split}
\]
as an identity of unbounded operators from $\sD(D_2)$ to $X$. The desired estimate now follows by \eqref{eq:lemest}. 
\end{proof}

We may now gather our work so far into a proof of the following:

\begin{prop}\label{p:compermod}
Suppose that the unbounded modular cycle $(X,D_1,\De)$ is a bounded modular perturbation of the unbounded modular cycle $(X,D_2,\Ga)$. Then it holds that $a \cd (F_{D_1} - F_{D_2}) \in \sK(X)$ for all $a \in A$. In particular, we may conclude that Theorem \ref{t:welbou} holds.
\end{prop}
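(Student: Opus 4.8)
The plan is to deduce Proposition \ref{p:compermod} by first establishing compactness of the \emph{stabilised} difference $\De^5 \pi(a)(F_{D_1}-F_{D_2})\Ga^5$ for $a$ in the dense operator $*$-subalgebra $\C A$, and then removing the modular operators $\De^5$ and $\Ga^5$ one at a time using condition $(4)$ of Definition \ref{d:unbkas} together with the fact (Theorem \ref{t:boukas}) that the bounded transforms $(X,F_{D_1})$ and $(X,F_{D_2})$ are genuine Kasparov modules from $A$ to $B$. For the first step I would subtract the identity of Proposition \ref{p:modde2} from that of Proposition \ref{p:modde1}: by Proposition \ref{p:emmcom} the two modular-transform integrands match exactly, so for every $\xi \in \sD(D_2)$ and $a \in \C A$ the difference of the two integrals collapses to $-\tfrac1\pi\int_0^\infty (\la r)^{-1/2} M_a(\la)\, d\la$ applied to $\xi$, whence $\De^5 \pi(a)(F_{D_1}-F_{D_2})\Ga^5 \xi = \big( -\tfrac1\pi\int_0^\infty (\la r)^{-1/2} M_a(\la)\, d\la + L_a - K_a\big)\xi$. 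Since $M_a : [0,\infty) \to \sK(X)$ is norm-continuous and $\|M_a(\la)\|_\infty = O\big((1+\la)^{-3/4}\big)$ at both ends, the bound $\|(\la r)^{-1/2}M_a(\la)\|_\infty \le C\,\la^{-1/2}(1+\la)^{-3/4}$ is integrable on $(0,\infty)$, so the integral converges absolutely in operator norm to an element of the closed subspace $\sK(X) \su \sL(X)$. As $\sD(D_2)$ is dense in $X$ and both sides of the displayed identity are bounded, I conclude $\De^5 \pi(a)(F_{D_1}-F_{D_2})\Ga^5 \in \sK(X)$ for all $a \in \C A$, and then for all $a \in A$ since $\io : \C A \to A$ has dense image and $\sK(X)$ is closed.

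Next I would strip off the modular operators. Write $E_n := \De^5(\De^5 + 1/n)^{-1}$ and $E_n' := \Ga^5(\Ga^5 + 1/n)^{-1}$; these are (canonical) countable approximate identities for $C^*(\De)$ and $C^*(\Ga)$, so by condition $(4)$ of Definition \ref{d:unbkas} applied to $(X,D_1,\De)$ and to $(X,D_2,\Ga)$ --- in the form valid for every approximate identity, see the remark after Lemma \ref{l:imahal} --- we have $E_n\pi(a) \to \pi(a)$ and $E_n'\pi(b) \to \pi(b)$ in operator norm for all $a,b \in A$. Since $E_n\pi(a)(F_{D_1}-F_{D_2})\Ga^5 = (\De^5+1/n)^{-1}\big[\De^5\pi(a)(F_{D_1}-F_{D_2})\Ga^5\big] \in \sK(X)$ with the first factor bounded and the bracket the compact operator just produced, letting $n \to \infty$ gives $\pi(a)(F_{D_1}-F_{D_2})\Ga^5 \in \sK(X)$ for all $a \in A$. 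The operator $\Ga^5$ is now adjacent to $F_{D_1}-F_{D_2}$ rather than to $\pi(a)$, so it cannot be absorbed directly; instead I would sandwich: $\pi(a)(F_{D_1}-F_{D_2})E_n'\pi(b) = \big[\pi(a)(F_{D_1}-F_{D_2})\Ga^5\big](\Ga^5+1/n)^{-1}\pi(b) \in \sK(X)$, and letting $n \to \infty$ (using $E_n'\pi(b) \to \pi(b)$ in norm and boundedness of $\pi(a)(F_{D_1}-F_{D_2})$) yields $\pi(a)(F_{D_1}-F_{D_2})\pi(b) \in \sK(X)$ for all $a,b \in A$. Finally, since $(X,F_{D_1})$ and $(X,F_{D_2})$ are Kasparov modules the commutators $[F_{D_i},\pi(b)]$ lie in $\sK(X)$, so $\pi(ab)(F_{D_1}-F_{D_2}) = \pi(a)(F_{D_1}-F_{D_2})\pi(b) - \pi(a)\big([F_{D_1},\pi(b)] - [F_{D_2},\pi(b)]\big) \in \sK(X)$; as $\{ c \in A \mid \pi(c)(F_{D_1}-F_{D_2}) \in \sK(X) \}$ is a norm-closed subspace of $A$ containing all products $ab$ with $a,b\in A$, it is all of $A$. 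This is the assertion of Proposition \ref{p:compermod}, and Theorem \ref{t:welbou} follows.

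I expect the delicate point to be this last part, the removal of the modular operators. Stripping $\De^5$ on the left is immediate from condition $(4)$, but $\Ga^5$ sits next to $F_{D_1}-F_{D_2}$, where the approximate identity $E_n'$ converges to $1$ only strictly and not in norm, so the naive limit fails; the device is to first manufacture the genuinely two-sided compact expression $\pi(a)(F_{D_1}-F_{D_2})\pi(b)$ and only then un-sandwich by means of the Kasparov module property of the bounded transforms. A minor technical care is that the integral identities of Propositions \ref{p:modde1}--\ref{p:modde2} hold a priori only on the core $\sD(D_2)$, so the passage to a global operator identity uses density of $\sD(D_2)$ and boundedness of both sides.
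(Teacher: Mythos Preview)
Your proof is correct and follows the same approach as the paper: establish $\De^5\,a\,(F_{D_1}-F_{D_2})\,\Ga^5\in\sK(X)$ for $a\in\C A$ via Propositions \ref{p:modde1}, \ref{p:modde2} and \ref{p:emmcom}, then strip the modular operators using condition $(4)$ of Definition \ref{d:unbkas} together with the Kasparov-module property of the bounded transforms. The paper only sketches the stripping step by listing these ingredients; the route it has in mind for $\Ga^5$ is slightly shorter than yours: commute $a$ past $F_{D_1}-F_{D_2}$ (using $[F_{D_i},a]\in\sK(X)$) to obtain $(F_{D_1}-F_{D_2})\,a\,\Ga^5\in\sK(X)$, then apply the adjoint form of condition $(4)$, namely $a\,\Ga^5(\Ga^5+1/n)^{-1}\to a$ in norm, to peel off $\Ga^5$ directly and commute back. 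Your sandwich with $\pi(b)$ and density-of-products argument is an equally valid implementation of the same idea.
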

\begin{proof}
We first remark that it suffices to show that $\De^5 a (F_{D_1} - F_{D_2})\Ga^5 \in \sK(X)$ for all $a \in \C A$. Indeed, this follows since $\C A$ is dense in $A$, since both $(X,F_{D_1})$ and $(X,F_{D_2})$ are bounded Kasparov modules (by Theorem \ref{t:boukas}), and since $\De^5 (\De^5 + 1/n)^{-1} a \to a$ and $a \Ga^5(\Ga^5 + 1/n)^{-1} \to a$ in operator norm for all $a \in A$ (by Definition \ref{d:unbkas}).

Let now $a \in \C A$ be fixed. By Proposition \ref{p:modde1} and Proposition \ref{p:modde2} it is then enough to prove that the difference of unbounded operators
\[
\begin{split}
& \frac{1}{\pi} \int_0^\infty (\la r)^{-1/2} \De^5 a \Ga^2 T_\la \Ga^4 \, d\la \cd D_2  \\
& \qqq - \frac{1}{\pi} \int_0^\infty (\la r)^{-1/2} \De^3 (1 - Y_\la)^{-1} \De^3 R_\la \, d\la \cd D_1 a \Ga^5
: \sD(D_2) \to X
\end{split}
\] 
is the restriction of a compact operator on $X$. However, using Proposition \ref{p:emmcom}, we obtain that
\[
\begin{split}
& \frac{1}{\pi} \int_0^\infty (\la r)^{-1/2} \De^5 a \Ga^2 T_\la \Ga^4 \, d\la \cd D_2(\xi)  \\
& \qqq - \frac{1}{\pi} \int_0^\infty (\la r)^{-1/2} \De^3 (1 - Y_\la)^{-1} \De^3 R_\la \, d\la \cd D_1 a \Ga^5(\xi) \\
& \q = \frac{1}{\pi} \int_0^\infty (\la r)^{-1/2} M_a(\la)(\xi) \, d\la
\end{split}
\]
for all $\xi \in \sD(D_2)$. But this proves the proposition, since the map $M_a : [0,\infty) \to \sK(X)$ is continuous in operator norm and satisfies the estimate $\| M_a(\la) \|_\infty = O\big((1 + \la)^{-3/4}\big)$ as $\la \to \infty$ and as $\la \to 0$. 
\end{proof}

\section{Comparison of products}\label{s:compro}
Let $\C A$ and $\C B$ be operator $*$-algebras satisfying Assumption \ref{a:opealg} and let $C$ be a $\si$-unital $C^*$-algebra. We are now going to compare the interior Kasparov product with the unbounded Kasparov product. The main result of this section states that the unbounded Kasparov product is compatible with the bounded Kasparov product after passing to bounded $KK$-theory by means of the bounded transform. This result follows immediately from \cite[Theorem 10.1]{Kaa:UKM}, Theorem \ref{t:boukas} and Theorem \ref{t:welbou}. We emphasize that \cite[Theorem 10.1]{Kaa:UKM} can not be proved using Kucerovsky's criteria for verifying that an unbounded Kasparov module represents a bounded Kasparov product, see \cite{Kuc:KUM}. Instead, one has to rely directly on the Connes-Skandalis notion of an $F$-connection, see \cite{CoSk:LIF}. The reason for this is that we work with a broader class of unbounded cycles than the rather restrictive class of unbounded Kasparov modules.

\begin{thm}
Suppose that the $C^*$-algebra $A$ is separable. Then the following diagram is commutative
\[
\begin{CD}
M(\C A,\C B) \ti UK_*(\C B,C) @>{\hot_{\C B}}>> UK_*(\C A,C) \\
@V{F \ti F}VV @VV{F}V \\
KK_0(A,B) \ti KK_*(B,C) @>>{\hot_B}> KK_*(A,C)
\end{CD}
\]
where the top row is given by the unbounded Kasparov product and the bottom row is given by the bounded Kasparov product (as constructed in \cite{Kas:OFE}). The columns are given by the bounded transform as described in Definition \ref{d:boutramod} and Definition \ref{d:boutraunb}. 
\end{thm}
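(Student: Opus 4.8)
The plan is to reduce the commutativity of the square to the single result \cite[Theorem 10.1]{Kaa:UKM}, which already carries out the analytic work of showing that the bounded transform of the \emph{explicit} unbounded Kasparov product represents the interior Kasparov product. Everything else is organizational: one has to check that all four vertices and all four arrows of the diagram are well-defined, and then unwind the definitions on a single pair of representatives. So the proof will be short, the substance having been isolated elsewhere.

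First I would recall that the vertical arrows are the homomorphisms of abelian monoids given by the bounded transform. On the left, $F : M(\C A,\C B) \to KK_0(A,B)$ sends the class of a compact operator $*$-correspondence $\C X$ to the class $[X,0]$ of the even Kasparov module attached to its $C^*$-completion $X$ (Definition \ref{d:boutramod} and the lemma following it); here the compactness condition on $\C X$ — that $A$ acts on $X$ by $\sK(X)$ — is exactly what makes $(X,0)$ a legitimate Kasparov module. On the right and middle, $F : UK_*(\C A,C) \to KK_*(A,C)$ and $F : UK_*(\C B,C) \to KK_*(B,C)$ send the class of an unbounded modular cycle $(Y,D,\Ga)$ to the class of $(Y, D(1 + D^2)^{-1/2})$, and these are well-defined and additive (and parity-preserving) by Theorem \ref{t:boukas} and Theorem \ref{t:welbou}. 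The top arrow $\hot_{\C B} : M(\C A,\C B) \ti UK_*(\C B,C) \to UK_*(\C A,C)$ descends to the stated quotients by the results of Section \ref{s:unbkas}, and the bottom arrow is Kasparov's interior product \cite{Kas:OFE}, which is defined precisely because $A$ is separable and $B,C$ are $\si$-unital. With all maps in place, commutativity amounts to the identity
\[
F\big( [\C X] \hot_{\C B} [\sD] \big) = F([\C X]) \hot_B F([\sD])
\]
for every compact operator $*$-correspondence $\C X$ from $\C A$ to $\C B$ and every unbounded modular cycle $\sD = (Y,D,\Ga)$ from $\C B$ to $C$.

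To verify this I would fix such $\C X$ and $\sD$, use Lemma \ref{l:diflinI} to choose a generating sequence $\{ \xi_n \}$ for $X$ with $\xi = \sum_n \xi_n e_{1n} \in \ell^2(\nn,\C X)^t$, and form the unbounded Kasparov product $\C X \hot_{\C B} \sD = (X \hot_B Y, D_\xi, \Ga_\xi)$ as in Section \ref{s:unbkas}, with grading $1 \ot \ga$ in the even case. Applying $F$ yields the bounded Kasparov module $(X \hot_B Y, D_\xi(1 + D_\xi^2)^{-1/2})$, whose class in $KK_*(A,C)$ is independent of all choices by the results just cited. Now \cite[Theorem 10.1]{Kaa:UKM} — proved by exhibiting $D_\xi(1 + D_\xi^2)^{-1/2}$ as an $F_D$-connection in the sense of Connes and Skandalis \cite{CoSk:LIF} together with the relevant positivity estimate, which is where separability of $A$ enters — identifies this class with the interior Kasparov product of $[X,0] \in KK_0(A,B)$ and $[Y, D(1 + D^2)^{-1/2}] \in KK_*(B,C)$. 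Since $[X,0] = F([\C X])$ by Definition \ref{d:boutramod} and $[Y, D(1 + D^2)^{-1/2}] = F([\sD])$ by Definition \ref{d:boutraunb}, this is exactly the displayed identity; the parities match because the output of each construction has the parity of $\sD$ and the grading operator on $X \hot_B Y$ is $1 \ot \ga$ on both sides.

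I do not expect a genuine obstacle at this stage: the analytic heart of the matter lies entirely in \cite[Theorem 10.1]{Kaa:UKM}, and the only real verification carried out here is that taking bounded transforms is compatible with the two equivalence relations $\sim_d$ on compact operator $*$-correspondences and $\sim_{\T{bmp}}$ on unbounded modular cycles — which is precisely the content of Theorem \ref{t:welbou} and the well-definedness results of Section \ref{s:unbkas}. The only points I would be careful about are that the separability hypothesis stated here is the same one under which \cite[Theorem 10.1]{Kaa:UKM} is proved, and that the description of $F$ on $M(\C A,\C B)$ via $(X,0)$ is consistent with the conventions used for the interior product in \cite{Kas:OFE}.
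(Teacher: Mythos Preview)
Your proposal is correct and follows essentially the same approach as the paper: the paper states that the theorem follows immediately from \cite[Theorem 10.1]{Kaa:UKM} together with Theorem \ref{t:boukas} and Theorem \ref{t:welbou}, and you have simply unpacked what ``immediately'' means here, including the observation that the Connes--Skandalis $F$-connection framework is what underlies \cite[Theorem 10.1]{Kaa:UKM}.
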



\section{Geometric examples of Morita equivalences}\label{s:geomor}
We are now ready to apply our methods to a couple of interesting examples. More precisely, we shall prove Morita equivalence results for hereditary subalgebras, conformally equivalent metrics and free and proper actions of discrete groups. All of our results are well-known in the topological context of $C^*$-algebras and the existence of the corresponding isomorphisms of bounded $KK$-groups therefore follows by the Morita invariance of bounded $KK$-theory, \cite{BrGrRi:SIS,Kas:OFE}. The novelty of this section is, that with a slight amount of extra geometric information it becomes possible to prove $C^1$-versions of the above topological Morita equivalences and hence that the associated operator $*$-algebras admit the same unbounded modular cycles (up to unitary equivalence and bounded modular perturbations). The $C^*$-algebraic versions of the Morita equivalence results presented in this section can be found in \cite{Bro:SIH,BrGrRi:SIS} and \cite[Proposition 1, Chapter 2.7]{Con:NCG}. Remark however that the result on conformally equivalent metrics has no analogue at the $C^*$-algebraic level since the underlying topological space remains the same even though the Riemannian metric is changed.
%

\subsection{Hereditary subalgebras}
Let $\C A$ be an operator $*$-algebra (satisfying Assumption \ref{a:opealg}) and let $\C L \su \C A$ be a closed right ideal. 

We let $L \su A$ denote the closed right ideal obtained as the norm-closure of $\C L$ with respect to the $C^*$-norm on $\C A$.
%
%

\begin{thm}\label{t:hermor}
Suppose that $L$ is countably generated as a Hilbert $C^*$-module over $A$ and that the hereditary $C^*$-subalgebra $L \cap L^* \su A$ is full in the sense that
\[
L^*\cd L := \T{span}_{\cc}\{ \xi^* \cd \eta \mid \xi, \eta \in L \} \su A
\]
is norm-dense in $A$. Then the operator $*$-algebras $\C L \cap \C L^*$ and $\C A$ are Morita equivalent. In particular, we have an explicit even isomorphism
\[
UK_*(\C L \cap \C L^*,B) \cong UK_*(\C A,B) 
\]
of $\zz/2\zz$-graded abelian monoids for any $\si$-unital $C^*$-algebra $B$. 
\end{thm}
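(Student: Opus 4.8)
The strategy is to realize $\C L$ and its formal dual $\C L^*$ as a pair of mutually inverse morphisms between $\C A$ and the hereditary subalgebra $\C L \cap \C L^*$ in the Morita monoid of Section~\ref{s:morita}, and then to transport the resulting invertibility through the unbounded Kasparov product of Section~\ref{s:unbkas}.

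First I would set up the data. Give $\C L \cap \C L^*$ the operator space structure inherited from $\C A$ and the $C^*$-norm $\| \cd \|_\infty$; since $\C L \cd \C L^* \su \C L \cap \C L^*$ is $\| \cd \|_\infty$-dense in the hereditary $C^*$-subalgebra $L \cap L^* = \ov{L \cd L^*}$ (approximate $\xi,\eta \in L$ in $C^*$-norm by elements of $\C L$), the $C^*$-completion of $\C L \cap \C L^*$ is exactly $L \cap L^* \cong \sK(L)$, which is $\si$-unital because $L$ is countably generated over $A$; thus $\C L \cap \C L^*$ satisfies Assumption~\ref{a:opealg}. Set $\C X := \C L$, viewed as an operator $(\C L \cap \C L^*)$-$\C A$-bimodule with inner product $\inn{\xi,\eta} := \xi^* \eta \in \C A$, and $\C Y := \C L^*$, viewed as an operator $\C A$-$(\C L \cap \C L^*)$-bimodule with inner product $\inn{\xi^*,\eta^*} := \xi \eta^* \in \C L \cap \C L^*$. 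The conditions of Definition~\ref{d:herope} and of Assumption~\ref{a:opecor} for $\C X$ and $\C Y$ follow from the ambient $C^*$-structure: positivity and the $C^*$-identity in $A$, together with complete contractivity of the multiplication and complete isometry of the involution on $\C A$. Their $C^*$-completions are $X = L$ and $Y = L^* = \ov{L}$, which are countably generated (over $A$ by hypothesis, over $L \cap L^*$ since $\sK(L)$ is $\si$-unital), non-degenerate, and carry left actions factoring through the compacts --- for $\C X$ because $\pi(\xi \eta^*) = \theta_{\xi,\eta}$ and $\ov{L \cd L^*} = L \cap L^*$, for $\C Y$ because $A$ acts by compact operators on $L^* \cong \ov{L}$. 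Hence $\C X$ and $\C Y$ are compact operator $*$-correspondences in the sense of Definition~\ref{d:compact}.

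The substantive step is to identify the two composites. The bimodule multiplication maps
\[
\C L \hot_{\C A} \C L^* \to \C L \cap \C L^* \q \xi \ot \eta^* \mapsto \xi \eta^*
\q \T{and} \q
\C L^* \hot_{\C L \cap \C L^*} \C L \to \C A \q \eta^* \ot \xi \mapsto \eta^* \xi
\]
are completely bounded bimodule homomorphisms (they are restrictions of the completely contractive multiplication $\C A \ti \C A \to \C A$ and annihilate the respective null submodules), and on $C^*$-completions they are the classical unitary isomorphisms $L \hot_A L^* \cong \sK(L) \cong L \cap L^*$ and $L^* \hot_{L \cap L^*} L \cong \ov{L^* \cd L} = A$ --- the last identity being exactly where the fullness hypothesis $\ov{L^* \cd L} = A$ is used. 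Applying Lemma~\ref{l:comboudua} twice gives $\C X \hot_{\C A} \C Y \sim_d \C L \cap \C L^*$ and $\C Y \hot_{\C L \cap \C L^*} \C X \sim_d \C A$, so $[\C X] \in M(\C L \cap \C L^*, \C A)$ is invertible with inverse $[\C Y] = [\C X^*]$; in particular $\C L \cap \C L^* \sim_m \C A$ by Definition~\ref{d:morequ}.

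Finally I would conclude by the machinery of Section~\ref{s:unbkas}: the unbounded Kasparov product produces monoid homomorphisms $\C X \hot_{\C A} \colon UK_*(\C A, B) \to UK_*(\C L \cap \C L^*, B)$ and $\C Y \hot_{\C L \cap \C L^*} \colon UK_*(\C L \cap \C L^*, B) \to UK_*(\C A, B)$ (additivity from bilinearity of the product, parity preservation since the product preserves parity). Associativity of the product, the identities $[\C Y] \hot_{\C L \cap \C L^*} [\C X] = [\C A]$ and $[\C X] \hot_{\C A} [\C Y] = [\C L \cap \C L^*]$ from the previous step, and the unit law $[\C A] \hot_{\C A} [\sD] = [\sD]$ for the identity correspondence (which, if needed, one verifies by exhibiting $\C A \hot_{\C A} \sD$ as a bounded modular perturbation of $\sD$ via the canonical unitary $A \hot_A Y \cong Y$) together show that these two homomorphisms are inverse to each other, which is the asserted even isomorphism $UK_*(\C L \cap \C L^*, B) \cong UK_*(\C A, B)$. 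The delicate point in this argument is the third paragraph: one must check that the multiplication pairings genuinely descend to completely bounded bimodule maps on the interior tensor products of operator $*$-correspondences and induce the stated $C^*$-unitaries, i.e.\ that the operator $*$-algebra bookkeeping is compatible with the classical $C^*$-Morita picture $\sK(L) \cong L \cap L^*$; the remaining steps are direct applications of the theory developed above.
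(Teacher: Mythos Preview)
Your argument is correct and matches the paper's proof: endow $\C L$ and $\C L^*$ with the operator $*$-correspondence structures inherited from $\C A$, verify compactness, and then use the multiplication maps together with Lemma~\ref{l:comboudua} to see that $[\C L]$ and $[\C L^*]$ are mutual inverses in the Morita monoid. One minor slip: countable generation of $L^*$ over $L \cap L^*$ follows from $A \cong \sK(L^*)$ being $\si$-unital (Assumption~\ref{a:opealg} and \cite[Proposition 6.7]{Lan:HCM}), not from $\sK(L)$.
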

\begin{proof}
We give the closed right ideal $\C L \su \C A$ the structure of an operator $*$-correspondence from $\C L \cap \C L^*$ to $\C A$. The bimodule structure is induced by the algebraic operations in $\C A$ and the matrix norms $\| \cd \|_{\C L} : M_m(\C L) \to [0,\infty)$, $m \in \nn$, are given by the restriction of the matrix norms $\| \cd \|_{\C A} : M_m(\C A) \to [0,\infty)$. The inner product is given by $\inn{\xi,\eta}_{\C L} := \xi^* \cd \eta$ for all $\xi, \eta \in \C L$. The $C^*$-completion of $\C L$ then coincides with the closed right ideal $L \su A$ considered as a $C^*$-correspondence from $L \cap L^*$ to $A$. Since $L$ is countably generated over $A$ by assumption and since 
\[
L \cd L^* = \T{span}_{\cc} \big\{ \xi \cd \eta^* \mid \xi, \eta \in L \big\} = L \cap L^*
\]
we obtain that $\C L$ is compact. Notice here that the representation $\pi_{L \cap L^*} : L \cap L^* \to \sL(L)$ induces an isomorphism $L \cap L^* \cong \sK(L)$. We thus have a class $[\C L] \in M(\C L \cap \C L^*, \C A)$.

We now give the left ideal $\C L^* \su \C A$ the structure of an operator $*$-correspondence from $\C A$ to $\C L \cap \C L^*$. The bimodule structure on $\C L^*$ is again induced from the algebraic operations in $\C A$ and the matrix norms are restrictions of the matrix norms on $\C A$. Likewise, we have the inner product $\inn{\xi^*,\eta^*}_{\C L^*} := \xi \cd \eta^*$, $\xi, \eta \in \C L$. The $C^*$-completion of $\C L^*$ coincides with the closed left ideal $L^* \su A$ considered as a $C^*$-correspondence from $A$ to $L \cap L^*$. The assumption that $L^* \cd L$ is norm-dense in $A$ implies that the representation $\pi_A : A \to \sL(L^*)$ induces an isomorphism $A \cong \sK(L^*)$. The fact that $L^*$ is also countably generated follows since $A \cong \sK(L^*)$ is $\si$-unital by Assumption \ref{a:opealg}, see \cite[Proposition 6.7]{Lan:HCM}. We thus have that $\C L^*$ is compact and we get a class $[\C L^*] \in M(\C A, \C L \cap \C L^*)$.

To end the proof, we need to show that $[\C L] \hot_{\C A} [\C L^*] = [\C L \cap \C L^*]$ and that $[\C L^*] \hot_{\C L \cap \C L^*} [\C L] = [\C A]$. This follows by Lemma \ref{l:comboudua} since the product on $\C A$ induces completely bounded bimodule maps $u : \C L \hot_{\C A} \C L^* \to \C L \cap \C L^*$ and $v : \C L^* \hot_{\C L \cap \C L^*} \C L \to \C A$. Moreover, these completely bounded bimodule maps extend to unitary isomorphisms of $C^*$-correspondences $U : L \hot_A L^* \to L \cap L^*$ and $V : L^* \hot_{L \cap L^*} L \to A$.
\end{proof}
%

As a special case of the above theorem we have the following important:

\begin{cor}
The operator $*$-algebras $K_{\C A}$ and $\C A$ are Morita equivalent. In particular, we have an explicit even isomorphism
\[
UK_*(\C A,B) \cong UK_*(K_{\C A},B)  
\]
of $\zz/2\zz$-graded abelian monoids for any $\si$-unital $C^*$-algebra $B$. 
\end{cor}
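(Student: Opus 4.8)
The plan is to deduce the corollary directly from Theorem~\ref{t:hermor} by exhibiting $\C A$ as a full hereditary subalgebra of $K_{\C A}$. As a preliminary point one checks that $K_{\C A}$ satisfies Assumption~\ref{a:opealg}: its $C^*$-completion is $K_A = \sK\big( \ell^2(\nn,A) \big)$, which is $\si$-unital since $A$ is, and the inclusion $K_{\C A} \to K_A$ is completely bounded because $\C A \to A$ is. Now let $\C L \su K_{\C A}$ be the norm-closure of $C_c(\nn,\C A)^t$, i.e. the closed subspace of those elements whose $(i,j)$-entry vanishes for all $i \geq 2$ (the ``first row''). Right multiplication by an element of $K_{\C A}$ carries a first-row element to a first-row element, so $\C L$ is a closed right ideal of $K_{\C A}$; concretely $\C L$ is the row correspondence $\ell^2(\nn,\C A)^t$ of Subsection~\ref{ss:rowcol} (with $\C X = \C A$), whose inner product is $\inn{\xi,\eta} = \xi^* \cd \eta$.

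Next I would identify the hereditary subalgebra. If an element $\xi$ lies in both $\C L$ and $\C L^*$, then for every $n$ the truncation $\pi_n(\xi)$ is supported at the corner $(1,1)$, and since the family $\{\pi_n\}$ is separating we get $\C L \cap \C L^* = \{ a\, e_{11} \mid a \in \C A \}$; by the operator space axioms the assignment $a \mapsto a\, e_{11}$ is a completely isometric $*$-isomorphism of $\C A$ onto $\C L \cap \C L^*$. Passing to $C^*$-completions, $L \su K_A$ is the corresponding closed right ideal of the $C^*$-algebra $K_A$ and is thus a Hilbert $C^*$-module over $K_A$, and the same truncation argument gives $L \cap L^* = \{ a\, e_{11} \mid a \in A \} \cong A$ (so that $\sK(L) \cong A$ as well).

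It then remains to verify the two hypotheses of Theorem~\ref{t:hermor}. For countable generation, fix a countable approximate identity $\{u_k\}$ for $A$; then $\{ u_k\, e_{1n} \mid k,n \in \nn \}$ is a countable subset of $L$, and since $(u_k\, e_{1n}) \cd (b\, e_{nj}) = (u_k b)\, e_{1j} \to b\, e_{1j}$ the closed right $K_A$-submodule of $L$ it generates contains every $b\, e_{1j}$, hence is all of $L$. For fullness, $L^* \cd L$ equals the linear span of $\{ a^* b\, e_{ij} \mid a,b \in A,\ i,j \in \nn \}$, which is dense in $K_A$ since $A \cd A$ is dense in $A$. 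Applying Theorem~\ref{t:hermor} with $K_{\C A}$ in place of $\C A$ and the given $\si$-unital $C^*$-algebra $B$, we obtain $\C A \cong \C L \cap \C L^* \sim_m K_{\C A}$ together with the asserted even isomorphism $UK_*(\C A,B) \cong UK_*(K_{\C A},B)$ of $\zz/2\zz$-graded abelian monoids.

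The main point requiring care, and the reason for running the argument through the truncations $\pi_n$ and an approximate identity rather than through the matrix unit $e_{11}$, is that when $\C A$ is non-unital $e_{11}$ does not belong to $K_{\C A}$, so one cannot simply write $\C A \cong e_{11} K_{\C A} e_{11}$. Everything else is a routine application of the constructions of Subsection~\ref{ss:rowcol} and of the proof of Theorem~\ref{t:hermor}.
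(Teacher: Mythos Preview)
Your proof is correct and follows essentially the same approach as the paper: take $\C L = \ell^2(\nn,\C A)^t$ as the closed right ideal in $K_{\C A}$, identify $\C L \cap \C L^* \cong \C A$ via the $(1,1)$-corner, and invoke Theorem~\ref{t:hermor}. You supply more detail than the paper does (the explicit checks that $L$ is countably generated over $K_A$ via an approximate identity, and that $L^* \cd L$ is dense in $K_A$), but the underlying argument is the same.
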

\begin{proof}
We notice that the row correspondence over $\C A$ is a closed right ideal inside the compacts over $\C A$, $\ell^2(\nn,\C A)^t \su K_{\C A}$. The closed left ideal $\big(\ell^2(\nn,\C A)^t\big)^* \su K_{\C A}$ then agrees with the column correspondence over $\C A$, $\big(\ell^2(\nn,\C A)^t\big)^* = \ell^2(\nn,\C A)$ and we conclude that $\ell^2(\nn,\C A)^t \cap \big( \ell^2(\nn,\C A)^t\big)^*  = \C A$. The result of the present corollary now follows by an application of Theorem \ref{t:hermor}.
\end{proof}
%

\subsection{Conformal equivalence}\label{ss:conf}
Throughout this subsection we will fix a smooth Riemannian manifold $\C M$ with smooth Riemannian metric $g : \Ga^\infty(T^*_\rr \C M) \ti \Ga^\infty(T^*_\rr \C M) \to C^\infty(\C M,\rr)$ (where $T^*_\rr \C M \to \C M$ denotes the real cotangent bundle over $\C M$). Furthermore, we will consider a smooth and strictly positive function $\al : \C M \to (0,\infty)$. We then have the conformally equivalent metric 
\[
g_\al : \Ga^\infty(T^*_\rr \C M) \ti \Ga^\infty(T^*_\rr \C M) \to C^\infty(\C M,\rr) \q  g_\al(\om, \si)(x) := \al^2(x) \cd g(\om,\si)(x) 
\]
We will also use the notation $g, g_\al : \Ga^\infty(T^* \C M) \ti \Ga^\infty(T^* \C M) \to C^\infty(\C M)$ for the associated pairings of smooth sections of the complexified cotangent bundle $T^* \C M \to \C M$. We impose no completeness conditions on the Riemannian manifolds involved.

Let $d : C^\infty(\C M) \to \Ga^\infty(T^* \C M)$ denote the exterior derivative.

The Riemannian metrics give rise to two (unitarily isomorphic) $C^*$-correspondences $\Ga_0(T^* \C M)$ and $\Ga_{0,\al}(T^* \C M)$ both of them from $C_0(\C M)$ to $C_0(\C M)$. They are obtained by completing the continuous compactly supported sections of the cotangent bundle $T^* \C M \to \C M$ with respect to the two different (pre)-inner products:
\[
g \, , \, \, g_\al : \Ga_c(T^* \C M) \ti \Ga_c(T^* \C M) \to C_0(\C M)
\]
Associated to these two Hilbert $C^*$-modules we obtain two algebra homomorphisms:
\[
\begin{split}
& \pi : C_c^\infty(\C M) \to \sL\big( C_0(\C M) \op \Ga_0(T^*\C M) \big) \q \T{and} \\
& \pi_{\al} : C_c^\infty(\C M) \to \sL\big( C_0(\C M) \op \Ga_{0,\al}(T^*\C M) \big)
\end{split}
\]
where $\pi$ is given by
\[
\pi(f)\ma{c}{h \\ \om} := \ma{c}{f \cd h \\ f \cd \om + df \cd h} \q f \in C^\infty_c(\C M) \, , \, \, h \in C_0(\C M) \, , \, \, \om \in \Ga_0(T^* \C M) 
\]
and a similar formula defines $\pi_\al$. We remark that even though the $C^*$-correspondences $C_0(\C M) \op \Ga_0(T^* \C M)$ and $C_0(\C M) \op \Ga_{0,\al}(T^* \C M)$ are unitarily isomorphic it might not be possible to find a unitary isomorphism which intertwines the two algebra homomorphisms $\pi$ and $\pi_\al$.

We define the matrix norms $\| \cd \|_1 , \| \cd \|_{1,\al} : M_m(C_c^\infty(\C M)) \to [0,\infty)$, $m \in \nn$, by the formulae
\[
\begin{split}
& \| f \|_1 := \max\big\{ \| \pi(f) \|_\infty , \,  \| \pi(f^*) \|_\infty \big\}
\q \T{and} \\
& \| f \|_{1,\al} := \max\big\{ \| \pi_\al(f) \|_\infty , \| \pi_\al(f^*) \|_\infty \big\}
\end{split}
\]
where the notation ``$\|\cd \|_\infty$'' is applied both for the operator norm on $\sL\big( C_0(\C M) \op \Ga_0(T^*\C M) \big)$ and on $\sL\big( C_0(\C M) \op \Ga_{0,\al}(T^*\C M) \big)$.
%

We then define the operator $*$-algebras $\C A$ and $\C A_{\al}$ as the completions of the $*$-algebra $C_c^\infty(\C M)$ with respect to the norms $\| \cd \|_1 \T{ and } \|\cd \|_{1,\al} : C_c^\infty(\C M) \to [0,\infty)$, respectively. We notice that both of these operator $*$-algebras consist of continuously differentiable functions on $\C M$ vanishing at infinity. The difference is that the exterior derivative vanishes at infinity with respect to two different hermitian structures, namely $g \T{ and } g_\al : \Ga^\infty(T^* \C M) \ti \Ga^\infty(T^* \C M) \to C^\infty(\C M)$. For more details on the operator $*$-algebras $\C A$ (and $\C A_\al$) we refer to \cite[Section 2.3]{KaLe:SFU} and \cite[Proposition 3.4]{Kaa:SBB}. 

The operator $*$-algebras $\C A$ and $\C A_{\al}$ may be equipped with the supremum norm and they then satisfy Assumption \ref{a:opealg}. In both cases, the $C^*$-completion agrees with the $C^*$-algebra of continuous functions on $\C M$ that vanishes at infinity, $C_0(\C M)$.


\begin{thm}
Suppose that $(1 + \al)^{-2} \cd g(d \al,d\al) : \C M \to (0,\infty)$ is a bounded function. Then the operator $*$-algebras $\C A$ and $\C A_\al$ are Morita equivalent. In particular, we have an explicit even isomorphisms
\[
UK_*(\C A,B) \cong UK_*(\C A_\al, B) 
\]
of $\zz/2\zz$-graded abelian monoids for any $\si$-unital $C^*$-algebra $B$. 
\end{thm}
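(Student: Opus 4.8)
The plan is to produce an invertible morphism $[\C X]\in M(\C A,\C A_\al)$ in the Morita monoid and then read off the statement from Definition~\ref{d:morequ} together with the formal properties of the unbounded Kasparov product established in the previous sections: if $[\C Y]\in M(\C A_\al,\C A)$ is the inverse of $[\C X]$, then $[\C X]\hot_{\C A_\al}(-)$ and $[\C Y]\hot_{\C A}(-)$ are mutually inverse homomorphisms of $\zz/2\zz$-graded abelian monoids $UK_*(\C A_\al,B)\to UK_*(\C A,B)$ for every $\si$-unital $C^*$-algebra $B$, by bilinearity, associativity and the unit law $[\C A]\hot_{\C A}[\C X]=[\C X]$. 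Since the $C^*$-completions of $\C A$ and $\C A_\al$ both coincide with $C_0(\C M)$, the underlying Morita equivalence of $C^*$-algebras is the trivial one implemented by $C_0(\C M)$ viewed as a correspondence over itself; the content of the theorem is that it lifts to the operator $*$-algebra level precisely when the conformal factor obeys the stated bound, which I would rewrite as $g(d\al,d\al)^{1/2}\le C\cd(1+\al)$, equivalently $g\big(d\log(1+\al),d\log(1+\al)\big)^{1/2}\le C$.

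First I would build $\C X$ as a suitable completion of $C_c^\infty(\C M)$ carrying the $\C A$-$\C A_\al$-bimodule structure coming from pointwise multiplication and the $\C A_\al$-valued pairing $\inn{f,h}:=\ov f\cd h$, with matrix norms designed so that the Leibniz cross terms produced by the change of metric remain controlled; the symmetric recipe yields $\C Y$ from $\C A_\al$ to $\C A$. The essential step is to check that $\C X$ and $\C Y$ really are operator $*$-correspondences satisfying Assumption~\ref{a:opecor}: positivity and non-degeneracy of the $C_0(\C M)$-valued inner product are immediate, and the module inequalities of Definition~\ref{d:opebim} together with the Cauchy--Schwarz inequality of Definition~\ref{d:herope}(5) all reduce to a single estimate comparing $\|f\cd h\|_1$ and $\|f\cd h\|_{1,\al}$ with $\|f\|_1$ and $\|h\|_{1,\al}$ (and its analogue for $\C Y$). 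It is here that the Leibniz rule $d(fh)=f\,dh+h\,df$ and the hypothesis in the form $\|d\al\|_g\le C(1+\al)$ are invoked to absorb the powers of $\al$ that appear when passing between the inner products $g$ and $g_\al$ on $T^*\C M$. This estimate is the crux of the argument and, I expect, its only genuinely technical ingredient; it is also the step where the boundedness of $(1+\al)^{-2}g(d\al,d\al)$ is indispensable.

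Second I would verify that $\C X$ and $\C Y$ are compact in the sense of Definition~\ref{d:compact}: their $C^*$-completions equal $C_0(\C M)$ regarded as a correspondence over $C_0(\C M)$, which is countably generated and non-degenerate because $C_0(\C M)$ is $\si$-unital by Assumption~\ref{a:opealg}, and the left action on the completion is by multiplication operators, which lie in $\sK\big(C_0(\C M)\big)\cong C_0(\C M)$. Hence $[\C X]\in M(\C A,\C A_\al)$ and $[\C Y]\in M(\C A_\al,\C A)$ are well defined.

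Finally I would establish $[\C X]\hot_{\C A_\al}[\C Y]=[\C A]$ in $M(\C A,\C A)$ and $[\C Y]\hot_{\C A}[\C X]=[\C A_\al]$ in $M(\C A_\al,\C A_\al)$. By Proposition~\ref{p:tendif} the $C^*$-completion of $\C X\hot_{\C A_\al}\C Y$ is the interior tensor product of the $C^*$-completions, and pointwise multiplication induces a completely bounded bimodule homomorphism $\C X\hot_{\C A_\al}\C Y\to\C A$ extending to a unitary isomorphism $C_0(\C M)\hot_{C_0(\C M)}C_0(\C M)\cong C_0(\C M)$ of $C^*$-correspondences; Lemma~\ref{l:comboudua} then gives $\C X\hot_{\C A_\al}\C Y\sim_d\C A$, and symmetrically $\C Y\hot_{\C A}\C X\sim_d\C A_\al$. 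Thus $[\C X]$ is invertible, so $\C A\sim_m\C A_\al$ and the asserted isomorphism $UK_*(\C A,B)\cong UK_*(\C A_\al,B)$ follows. A variant, modelled on the proof of Theorem~\ref{t:hermor}, exhibits $\C A$ and $\C A_\al$ as complementary full corners of a linking operator $*$-algebra $\left(\begin{smallmatrix}\C A&\C X\\\C X^*&\C A_\al\end{smallmatrix}\right)$ with $C^*$-completion $M_2\big(C_0(\C M)\big)$, but it rests on the same estimate.
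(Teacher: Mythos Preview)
Your outline has the right architecture, but there is a genuine gap in the analytic core. The ``single estimate'' you invoke---roughly $\|f\cdot h\|_{1,\al}\le C\,\|f\|_{1}\,\|h\|_{1,\al}$ for the left $\C A$-action on $\C X$---does \emph{not} follow from the hypothesis $g(d\al,d\al)^{1/2}\le C(1+\al)$ alone. Expanding $g_\al(d(fh),d(fh))$ produces a term $\al^2\,|h|^2\,g(df,df)$, and while $\|f\|_1$ controls $g(df,df)$, the factor $\al^2|h|^2$ is \emph{not} dominated by $\|h\|_{1,\al}^2$ when $\al$ is unbounded: the norm $\|\cdot\|_{1,\al}$ controls only $|h|$ and $\al\,|dh|_g$, not $\al\,|h|$. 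Symmetrically, for $\C Y$ (from $\C A_\al$ to $\C A$) the naive inner product $\inn{f,h}=\ov f\,h\in\C A$ fails Cauchy--Schwarz where $\al$ is small, since $\|\cdot\|_{1,\al}$ then gives no control on $g(df,df)$.

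The paper closes both holes with two moves you have not made. First, a reduction: using transitivity of Morita equivalence, it suffices to treat $\C A\sim_m\C A_{1+\al}$ and $\C A_\al\sim_m\C A_{1+\al}$, so one may assume the conformal factor $\al$ (and hence $g(d\al,d\al)$) is bounded. Under this assumption the identity $i:\C A\to\C A_\al$ and multiplication $m(\al):\C A_\al\to\C A$ are completely bounded, and $\C X:=\C A_\al$ (with its own matrix norms) becomes an honest operator $*$-correspondence from $\C A$ to $\C A_\al$. Second, the inner product on $\C Y:=\C A_\al$ is twisted by $\al^2$: one sets $\inn{f,h}_\al:=\al^2\,\ov f\,h/\|m(\al)\|_{\T{cb}}^2$, so that Cauchy--Schwarz reduces to $\|m(\al)f\|_1\cdot\|m(\al)h\|_1\le\|m(\al)\|_{\T{cb}}^2\,\|f\|_{1,\al}\|h\|_{1,\al}$. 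The $C^*$-completion of $\C Y$ is then only \emph{unitarily isomorphic} to $C_0(\C M)$ via $m(\al)/\|m(\al)\|_{\T{cb}}$, and the completely bounded bimodule maps you feed into Lemma~\ref{l:comboudua} must incorporate this factor. Your ``symmetric recipe'' and your linking-algebra variant both founder on the same missing twist.
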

\begin{proof}
We start by arguing that we may assume that $\al$ and $g(d\al,d\al) : \C M \to (0,\infty)$ are bounded. Indeed, since Morita equivalence is an equivalence relation, it suffices to show that $\C A_{1 + \al}$ and $\C A$ are Morita equivalent and that $\C A_{1 + \al}$ and $\C A_{\al}$ are Morita equivalent. But we clearly have that $(1 + \al)^{-1}$ and $\al(1 + \al)^{-1}$ are bounded and moreover that
\[
\begin{split}
& (1 + \al)^2 \cd g\big( d(\al(1 + \al)^{-1}), d(\al(1 + \al^{-1}))\big) \\
& \q = (1 + \al)^2 \cd g\big( d((1 + \al)^{-1}),d((1+\al)^{-1})\big)
= (1 + \al)^{-2} \cd g(d\al,d\al)
\end{split}
\]
is bounded by assumption.

Thus, assume (w.l.o.g.) that $\al$ and $g(d\al,d\al) : \C M \to (0,\infty)$ are bounded. 

It then holds that the identity map $i : C_c^\infty(\C M) \to C_c^\infty(\C M)$ induces a completely bounded $*$-homomorphism $i : \C A \to \C A_\al$. Furthermore, the multiplication by $\al$, $m(\al) : C_c^\infty(\C M) \to C_c^\infty(\C M)$, induces a completely bounded map $m(\al) : \C A_\al \to \C A$.

We equip $\C A_\al$ with the structure of an operator $*$-correspondence $\C X$ from $\C A$ to $\C A_\al$. The right module structure is given by the algebra operations on $\C A_\al$ and the left action of $\C A$ is defined by $a \cd \xi := i(a) \cd \xi$ for all $a \in \C A$, $\xi \in \C A_\al$ (where $\cd$ denotes the product in $\C A_\al$). The inner product on $\C X$ is defined by $\inn{\xi,\eta} := \xi^* \cd \eta$, $\xi, \eta \in \C A_\al$. The matrix norms on $\C X$ agree with the matrix norms $\| \cd \|_{1,\al} : M_m(\C A_\al) \to [0,\infty)$ on $\C A_\al$. The $C^*$-completion of $\C X$ is then nothing but the $C^*$-algebra $C_0(\C M)$ of continuous functions vanishing at infinity when considered as a $C^*$-correspondence from $C_0(\C M)$ to $C_0(\C M)$. It therefore follows that $\C X$ is compact as an operator $*$-correspondence from $\C A$ to $\C A_{\al}$ and we thus have a class $[\C X] \in M(\C A, \C A_\al)$.

We now equip $\C A_\al$ with the structure of an operator $*$-correspondence $\C Y$ from $\C A_\al$ to $\C A$. The left module structure is given by the algebra operations on $\C A_\al$ and the right action of $\C A$ is given by $\xi \cd a := \xi \cd i(a)$ for all $\xi \in \C A_\al$, $a \in \C A$. The matrix norms on $\C Y$ agrees with the matrix norms $\| \cd \|_{1,\al} : M_m(\C A_\al) \to [0,\infty)$ on $\C A_\al$. To define the inner product on $\C Y$ we consider the pairing
\[
\inn{\cd,\cd}_\al : C_c^\infty(\C M) \ti C_c^\infty(\C M) \to C_c^\infty(\C M) \su \C A \q \inn{f,h}_\al := \frac{\al^2}{\|m(\al)\|_{\T{cb}}^2} \cd f^* \cd h
\]
where $\| m(\al) \|_{\T{cb}}$ is the completely bounded norm of the completely bounded map $m(\al) : \C A_\al \to \C A$, see Definition \ref{d:opspa}. We remark that the Cauchy-Schwarz inequality is satisfied since
\[
\begin{split}
\| \inn{f,h}_\al \|_1 
& = \frac{1}{\| m(\al) \|_{\T{cb}}^2 } \cd \| (\al \cd f)^* \cd (\al \cd h) \|_1 \\
& \leq \frac{1}{\| m(\al) \|_{\T{cb}}^2 } \cd \| m(\al)(f) \|_1 \cd \| m(\al)(h) \|_1
\leq \| f \|_{1,\al} \cd \| h \|_{1,\al}
\end{split}
\]
for all $f, h \in M_m(C_c^\infty(\C M))$, $m \in \nn$. The $C^*$-completion $Y$ is unitarily isomorphic to the $C^*$-algebra $C_0(\C M)$ considered as a $C^*$-correspondence from $C_0(\C M)$ to $C_0(\C M)$. The unitary isomorphism is induced by the multiplication operator $m(\al)/\| m(\al) \|_{\T{cb}} : C_c^\infty(\C M) \to C_c^\infty(\C M)$. It thus follows that the operator $*$-correspondence $\C Y$ is compact and we obtain a class $[\C Y]$ in $M(\C A_\al, \C A)$.

To finish the proof of the theorem we need to show that the identities $[\C Y \hot_{\C A} \C X] = [\C A_\al]$ and that $[\C X \hot_{\C A_\al} \C Y]  = [\C A]$ hold in $M(\C A_\al, \C A_\al)$ and $M(\C A,\C A)$, respectively. To this end we apply Lemma \ref{l:comboudua} twice. Indeed, for the first of the identities we note that the composition
\[
\begin{CD}
\C A_\al \wot \C A_\al @>{m}>> \C A_\al @>{m(\al)/\| m(\al) \|_{\T{cb}}}>> \C A @>{i}>> \C A_\al
\end{CD}
\]
is completely bounded (where $m$ is given by multiplication in $\C A_\al$) and that this map induces a completely bounded bimodule map $u : \C Y \hot_{\C A} \C X \to \C A_\al$. Moreover, it holds that $u$ extends to a unitary isomorphism $U : Y \hot_{C_0(\C M)} C_0(\C M) \to C_0(\C M)$. For the second identity we note that the composition
\[
\begin{CD}
\C A_\al \wot \C A_\al @>{m}>> \C A_\al @>{m(\al)/\| m(\al) \|_{\T{cb}}}>> \C A
\end{CD}
\]
is completely bounded and that this composition induces a completely bounded bimodule map $v : \C X \hot_{\C A_\al} \C Y \to \C A$. Furthermore, this bimodule map extends to a unitary isomorphism $V : C_0(\C M) \hot_{C_0(\C M)} Y \to C_0(\C M)$. This ends the proof of the theorem.
\end{proof}

\subsection{Crossed products by discrete groups}
Throughout this subsection we let $\C M$ be a Riemannian manifold and we let $G$ be a countable discrete group. We will assume that we have a right-action $\C M \ti G \to \C M$ by diffeomorphisms of $\C M$. The diffeomorphism associated to an element $g \in G$ will be denoted by $\phi_g : \C M \to \C M$, $\phi_g(x) = x \cd g$. 

For each $g \in G$ we let $(d \phi_g)(x) : T_x \C M \to T_{\phi_g(x)} \C M$ denote the derivative of $\phi_g : \C M \to \C M$ evaluated at the point $x \in \C M$.

The following conditions will be relevant:

\begin{assu}\label{a:grpact}
It will be assumed that
\begin{enumerate}
\item The action of $G$ on $\C M$ is free and proper;
\item For each $g \in G$ we have that
\[
\| d \phi_g \|_\infty := \sup_{x \in \C M} \| (d \phi_g)(x) \|_x < \infty
\]
where $\| \cd \|_x : \sL( T_x \C M, T_{\phi_g(x)} \C M) \to [0,\infty)$ refers to the operator norm;
\item We have that
\[
\sup_{g \in G} \| d \phi_g \|_\infty < \infty
\]
\end{enumerate}
\end{assu}

For each $g \in G$ we define the $*$-isomorphism $\al_g : C_c^\infty(\C M) \to C_c^\infty(\C M)$ by $\al_g : f \mapsto f \ci \phi_g$. Notice that $\al_g \ci \al_h = \al_{gh}$ for all $g,h \in G$.

\subsubsection{The crossed product operator $*$-algebra}
Let us assume that $(2)$ and $(3)$ of Assumption \ref{a:grpact} are satisfied.

We consider the $*$-algebra $C_c(G, C_c^\infty(\C M))$ consisting of finite sums $\sum_{g \in G} f_g U_g$ of elements in $C_c^\infty(\C M)$ indexed by the countable discrete group $G$. We recall that the sum is defined pointwise and that the product is the convolution product: $f U_g \cd h U_k := f \al_g( h ) U_{gk}$. The involution is defined by $(f U_g)^* := \al_g^{-1}(f^*) U_{g^{-1}}$.

We let $\ell^2\big( G, C_0(\C M) \op \Ga_0(T^* \C M) \big)$ denote the Hilbert $C^*$-module over $C_0(\C M)$ consisting of square summable sequences indexed by $G$ with values in the Hilbert $C^*$-module $C_0(\C M) \op \Ga_0(T^* \C M)$. We recall that the $C_0(\C M)$-valued inner product on $\Ga_0(T^* \C M)$ comes from the Riemannian metric on $\C M$. Equivalently, the Hilbert $C^*$-module $\ell^2\big( G, C_0(\C M) \op \Ga_0(T^* \C M) \big)$ can be described as the exterior tensor product $\ell^2(G) \hot \big( C_0(\C M) \op \Ga_0(T^* \C M) \big)$ of the Hilbert space $\ell^2(G)$ and the Hilbert $C^*$-module $C_0(\C M) \op \Ga_0(T^* \C M)$.

We define the algebra homomorphism
\[
\pi : C_c(G, C_c^\infty(\C M)) \to \sL\big( \ell^2\big( G, C_0(\C M) \op \Ga_0(T^* \C M) \big) \big)
\]
by the formula:
\[
\pi(f U_g) \ma{c}{h \\ \om} \de_k := \ma{c}{ \al_{gk}^{-1}(f) \cd h \\ d\big( \al_{gk}^{-1}(f)\big) \cd h + \al_{gk}^{-1}(f) \cd \om} \de_{gk}
\]
The fact that $\pi(f U_g)$ defines a bounded adjointable operator relies on $(2)$ and $(3)$ of Assumption \ref{a:grpact}.

We define the matrix norms $\| \cd \|_1 : M_m\big( C_c\big( G, C_c^\infty(\C M) \big)\big) \to [0,\infty)$, $m \in \nn$, by the formula:
\[
\| f \|_1 := \T{max}\big\{ \| \pi(f) \|_\infty \, , \, \, \| \pi(f^*) \|_\infty \big\}
\]
We let $C^1_0(\C M) \rti_r G$ denote the operator $*$-algebra obtained as the completion of $C_c\big( G, C_c^\infty(\C M) \big)$ with respect to the norm $\| \cd \|_1$.

The operator $*$-algebra $C^1_0(\C M) \rti_r G$ has a $C^*$-norm defined by 
\[
\| f \|_\infty := \big\| \ma{cc}{1 & 0 \\ 0 & 0} \pi(f) \ma{cc}{1 & 0 \\ 0 & 0} \big\|_\infty \q f \in C^1_0(\C M) \rti_r G
\]
and the associated $C^*$-completion agrees with the reduced crossed product $C^*$-algebra $C_0(\C M) \rti_r G$. Remark here that the matrix $\ma{cc}{1 & 0 \\ 0 & 0}$ projects onto the first summand in $\ell^2\big(G, C_0(\C M) \op \Ga_0(T^* \C M) \big)$. It follows that the operator $*$-algebra $C^1_0(\C M) \rti_r G$ satisfies the conditions of Assumption \ref{a:opealg}.

%
%
%

\subsubsection{The operator $*$-correspondence}
Let us assume that all of the conditions in Assumption \ref{a:grpact} are satisfied.
\medskip

We let $C_b^1(\C M)$ denote the unital $*$-algebra consisting of all bounded continuously differentiable functions $f : \C M \to \cc$ with bounded exterior derivative $df \in \Ga(T^* \C M)$. We remark that $C_b^1(\C M)$ becomes an operator $*$-algebra when equipped with the matrix norms
\[
\| \cd \|_1 : M_m\big( C_b^1(\C M) \big) \to [0,\infty) \q \| f \|_1 := \T{max}\big\{ \|\pi(f)\|_\infty \, , \, \, \| \pi(f^*) \|_\infty \big\}
\]
where $\pi : C_b^!(\C M) \to \sL\big( C_0(\C M) \op \Ga_0(T^* \C M) \big)$ denotes the algebra homomorphism defined by $\pi(f) := \ma{cc}{f & 0 \\ df & f}$ (see Subsection \ref{ss:conf} for more details).
%

We let $C_c^1(\C M/G) \su C_b^1(\C M)$ denote the $*$-subalgebra of $G$-invariant compactly supported functions. Thus, $C_c^1(\C M/G)$ consists of all functions $f \in C^1_b(\C M)$ such that $\al_g(f) = f$ for all $g \in G$ and such that the induced map $[f] : \C M/G \to \cc$ has compact support. The completion $C_0^1(\C M/G) \su C_b^1(\C M)$ is then an operator $*$-algebra. We may equip $C^1_0(\C M/G)$ with the $C^*$-norm given by the supremum norm on $\C M/G$ and the operator $*$-algebra $C^1_0(\C M/G)$ then satisfies Assumption \ref{a:opealg}. The $C^*$-completion of $C^1_0(\C M/G)$ agrees with the $C^*$-algebra $C_0(\C M/G)$ of continuous functions vanishing at infinity on the quotient space $\C M/G$.
\medskip

We equip the vector space $C_c^1(\C M)$ with the structure of a $C_c(G, C_c^1(\C M))$-$C_c^1(\C M/G)$-bimodule with left action defined by
\[
(f U_g ) \cd \xi := f \cd \al_g(\xi)
\]
and with right action induced by the algebra structure on $C_b^1(\C M)$. Furthermore, we define the pairing
\begin{equation}\label{eq:rinn}
\inn{\cd,\cd}_R : C_c^1(\C M) \ti C_c^1(\C M) \to C_c^1(\C M/G) \q \inn{\xi,\eta}_R := \sum_{g \in G} \al_g^{-1}(\xi^* \cd \eta) 
\end{equation}

In order to construct an operator $*$-correspondence $\C X$ from $C^1_0(\C M) \rti_r G$ to $C_0^1(\C M/G)$ out of the bimodule $C_c^1(\C M)$ we represent all of our data as bounded adjointable operators between Hilbert $C^*$-modules.

We define two linear maps
\[
\begin{split}
& \pi : C_c^1(\C M) \to \sL\Big( C_0(\C M) \op \Ga_0(T^* \C M), \ell^2\big(G, C_0(\C M) \op \Ga_0(T^* \C M)\big) \Big) \q \T{and} \\
& \pi^t : C_c^1(\C M) \to \sL\Big( \ell^2\big(G, C_0(\C M) \op \Ga_0(T^* \C M)\big) , C_0(\C M) \op \Ga_0(T^* \C M) \Big)
\end{split}
\]
by the formulae:
\[
\begin{split}
& \pi(\xi)\ma{c}{h \\ \om} := \sum_g \ma{c}{ \al_g^{-1}(\xi) \cd h \\ d\big( \al_g^{-1}(\xi)\big) \cd h + \al_g^{-1}(\xi) \cd \om} \de_g \q \T{and} \\
& \pi^t(\xi)\big( \ma{c}{h \\ \om }\de_g \big) := \ma{c}{ \al_g^{-1}(\xi) \cd h \\ d\big( \al_g^{-1}(\xi)\big) \cd h + \al_g^{-1}(\xi) \cd \om}
\end{split}
\]
The fact that these formulae define bounded adjointable operators relies on $(1)$, $(2)$ and $(3)$ of Assumption \ref{a:grpact}.

It can then be verified that our different representations satisfy the following compatibility relations:
\begin{enumerate}
\item $\pi( f U_g) \cd \pi(\xi) = \pi\big( (f U_g)\cd \xi \big)$; 
\item $\pi(\xi) \cd \pi( h ) = \pi( \xi \cd h)$;
\item $\pi^t(\xi^*) \cd \pi\big( (f U_g)^*\big) = \pi^t\big( ((f U_g) \cd \xi)^*\big)$;
\item $\pi(h^*) \cd \pi^t(\xi^*) = \pi^t\big( (\xi \cd h)^*\big)$
\end{enumerate}
for all $f U_g \in C_c(G, C_c^1(\C M))$, $\xi \in C_c^1(\C M)$ and $h \in C_c^1(\C M/G)$. Moreover, it holds that
\[
\pi^t(\eta^*) \cd \pi(\xi) = \pi( \inn{\eta,\xi}_R)
\]
for all $\xi,\eta \in C_c^1(\C M)$. We may thus define matrix norms on $C_c^1(\C M)$, $\| \cd \|_1 : M_m(C_c^1(\C M)) \to [0,\infty)$, $m \in \nn$, by 
\[
\| \xi \|_1 := \T{max}\big\{ \| \pi(\xi) \|_\infty \, , \, \, \| \pi^t(\xi^*) \|_\infty \big\}
\]

We let $\C X$ denote the operator $*$-correspondence from $C_0^1(\C M) \rti_r G$ to $C_0^1(\C M/G)$ obtained as the completion of $C_c^1(\C M)$ in the norm $\| \cd \|_1$. The $C^1_0(\C M/G)$-valued inner product is induced by the pairing in \eqref{eq:rinn} and the bimodule structure is induced by the bimodule structure on $C^1_c(\C M)$. This operator $*$-correspondence satisfies Assumption \ref{a:opecor} and we let $X$ denote the $C^*$-correspondence from $C_0(\C M) \rti_r G$ to $C_0(\C M/G)$ obtained as the $C^*$-completion of $\C X$.
\medskip

Let us now consider the formal dual operator bimodule $\C X^*$ from $C_0^1(\C M/G)$ to $C_0^1(\C M) \rti_r G$ (see Section \ref{s:opestar}). We are going to equip this operator bimodule with the structure of an operator $*$-correspondence from $C_0^1(\C M/G)$ to $C_0^1(\C M) \rti_r G$. We first define the $C_c(G,C_c^1(\C M))$-valued inner product on the formal dual bimodule $C_c^1(\C M)^*$:
\[
\inn{\cd,\cd}_L : C_c^1(\C M)^* \ti C_c^1(\C M)^* \to C_c(G, C_c^1(\C M)) \q \inn{\xi^*,\eta^*}_L := \sum_{g \in G} \xi \cd \al_g(\eta^*) U_g
\]
We then remark that this pairing is compatible with our representations since
\[
\pi(\xi) \cd \pi^t(\eta^*) = \pi( \inn{\xi^*,\eta^*}_L)
\]
for all $\xi, \eta \in C_c^1(\C M)$. The pairing $\inn{\cd,\cd}_L$ therefore extends to a $C_0^1(\C M) \rti_r G$-valued inner product on $\C X^*$. The operator $*$-correspondence obtained in this way satisfies Assumption \ref{a:opecor} and the $C^*$-completion agrees with the dual $C^*$-correspondence $X^* = \sK(X,C_0(\C M/G))$.


\subsubsection{Morita equivalence}

\begin{thm}
Suppose that the conditions in Assumption \ref{a:grpact} are satisfied. Then the operator $*$-algebras $C^1_0(\C M) \rti_r G$ and $C^1_0(\C M/G)$ are Morita equivalent. In particular we have an explicit even isomorphism of abelian monoids
\[
UK_*(C^1_0(\C M) \rti_r G, B) \cong UK_*(C^1_0(\C M/G), B) 
\]
for any $\si$-unital $C^*$-algebra $B$. 
\end{thm}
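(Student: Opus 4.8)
The plan is to follow the template of Theorem \ref{t:hermor} and the conformal equivalence theorem: exhibit the classes $[\C X] \in M(C^1_0(\C M) \rti_r G, C^1_0(\C M/G))$ and $[\C X^*] \in M(C^1_0(\C M/G), C^1_0(\C M) \rti_r G)$ determined by the operator $*$-correspondences $\C X$ and $\C X^*$ constructed above, and then invoke Lemma \ref{l:comboudua} to see that their interior tensor products recover the identity morphisms in the category $\mathfrak{M}$. The first step is to check that $\C X$ and $\C X^*$ are \emph{compact}. Their $C^*$-completions are the Green imprimitivity bimodule $X$ and its dual $X^* = \sK(X, C_0(\C M/G))$ attached to the free and proper action of $G$ on $\C M$ (Assumption \ref{a:grpact} $(1)$). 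Since $G$ is countable and $C_0(\C M)$ is $\si$-unital, $X$ is countably generated over $C_0(\C M/G)$, and non-degeneracy of $X$ and of $X^*$ can be read off directly from the formulas for the bimodule actions. The remaining condition is that the left actions factorize through the Hilbert $C^*$-module compacts, i.e.\ $C_0(\C M) \rti_r G \cong \sK(X)$ and $C_0(\C M/G) \cong \sK(X^*)$; this is precisely the statement that $X$ implements a $C^*$-algebraic Morita equivalence $C_0(\C M) \rti_r G \sim C_0(\C M/G)$ for a free and proper action, see \cite{Bro:SIH, BrGrRi:SIS} and \cite[Chapter 2.7]{Con:NCG}. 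Hence $[\C X]$ and $[\C X^*]$ are well-defined elements of the respective Morita monoids.

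Next I would identify the compositions. The pairings $\inn{\cd,\cd}_R$ from \eqref{eq:rinn} and $\inn{\cd,\cd}_L$ induce bimodule maps $u : \C X^* \hot_{C^1_0(\C M) \rti_r G} \C X \to C^1_0(\C M/G)$, $\xi^* \ot \eta \mapsto \inn{\xi,\eta}_R$, and $v : \C X \hot_{C^1_0(\C M/G)} \C X^* \to C^1_0(\C M) \rti_r G$, $\xi \ot \eta^* \mapsto \inn{\xi^*,\eta^*}_L$. By the compatibility relations $\pi^t(\eta^*) \pi(\xi) = \pi(\inn{\eta,\xi}_R)$ and $\pi(\xi) \pi^t(\eta^*) = \pi(\inn{\xi^*,\eta^*}_L)$ established above, together with the submultiplicativity of the Haagerup-tensor matrix norm and the boundedness of $\pi$ and $\pi^t$, both $u$ and $v$ are completely bounded bimodule homomorphisms. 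On the $C^*$-completions they extend to the imprimitivity-bimodule isomorphisms $U : X^* \hot_{C_0(\C M) \rti_r G} X \to C_0(\C M/G)$ and $V : X \hot_{C_0(\C M/G)} X^* \to C_0(\C M) \rti_r G$, which are unitary isomorphisms of $C^*$-correspondences. Lemma \ref{l:comboudua} then yields $[\C X^*] \hot_{C^1_0(\C M) \rti_r G} [\C X] = [C^1_0(\C M/G)]$ and $[\C X] \hot_{C^1_0(\C M/G)} [\C X^*] = [C^1_0(\C M) \rti_r G]$, so $[\C X]$ is an invertible morphism and the two operator $*$-algebras are Morita equivalent. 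The claimed isomorphism of $\zz/2\zz$-graded abelian monoids $UK_*(C^1_0(\C M) \rti_r G, B) \cong UK_*(C^1_0(\C M/G), B)$ is then furnished by the unbounded Kasparov products with $[\C X]$ and its inverse, exactly as in the previous geometric examples.

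The main obstacle is the compactness condition: the heart of the matter is to recognise $X$ as the Green imprimitivity bimodule for the free proper action, so that $C_0(\C M) \rti_r G \cong \sK(X)$ and $C_0(\C M/G) \cong \sK(X^*)$, and then to ensure on the differentiable side that $u$ and $v$ are genuinely completely bounded \emph{into the operator $*$-algebras}, not merely into their $C^*$-completions. The latter is precisely where Assumption \ref{a:grpact} $(2)$–$(3)$ (the uniform bound $\sup_{g \in G} \| d\phi_g\|_\infty < \infty$) is used: it is what guarantees that $\pi$ and $\pi^t$ take values in bounded adjointable operators, hence that the relevant matrix-norm estimates go through and that $\C X$ and $\C X^*$ satisfy Assumption \ref{a:opecor}. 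Verifying that $X$ is countably generated in the Hilbert $C^*$-module sense, while necessary, is routine given $\si$-unitality of $C_0(\C M)$ and countability of $G$.
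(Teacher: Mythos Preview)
Your proposal is correct and follows essentially the same route as the paper: exhibit $[\C X]$ and $[\C X^*]$ as compact operator $*$-correspondences, use the pairings $\inn{\cd,\cd}_R$ and $\inn{\cd,\cd}_L$ to produce completely bounded bimodule maps into the two operator $*$-algebras, note that these extend to the standard unitary isomorphisms on the $C^*$-completions (the Green imprimitivity bimodule), and conclude via Lemma \ref{l:comboudua}. Your write-up is in fact a bit more explicit than the paper's own proof about where the $C^*$-algebraic Morita equivalence enters (for compactness) and about which parts of Assumption \ref{a:grpact} are used at which step; the paper simply asserts compactness and jumps to the two bimodule maps.
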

\begin{proof}
The operator $*$-correspondences $\C X$ and $\C X^*$ are both compact and we thus obtain classes $[\C X] \in M( C_0^1(\C M) \rti_r G, C_0^1(\C M/G))$ and $[\C X^*] \in M( C_0^1(\C M/G), C_0^1(\C M) \rti_r G)$. It therefore suffices to check that $\C X \hot_{C_0^1(\C M/G)} \C X^* \sim_d C_0^1(\C M) \rti_r G$ and $\C X^* \hot_{C_0^1(\C M) \rti_r G} \C X \sim_d C_0^1(\C M/G)$. To this end, we remark that the inner product $\inn{\cd,\cd}_L$ on $\C X^*$ induces a completely bounded bimodule map
\[
u : \C X \hot_{C_0^1(\C M/G)} \C X^* \to C_0^1(\C M) \rti_r G \q \xi \ot \eta^* \mapsto \inn{\xi^*,\eta^*}_L
\]
and moreover that the inner product $\inn{\cd,\cd}_R$ on $\C X$ induces a completely bounded bimodule map
\[
v : \C X^* \hot_{C^1_0(\C M) \rti_r G} \C X \to C_0^1(\C M/G) \q \xi^* \ot \eta \mapsto \inn{\xi,\eta}_R
\]
Since these two completely bounded maps induce unitary isomorphisms $U : X \hot_{C_0(\C M/G)} X^* \to C_0(\C M) \rti_r G$ and $V : X^* \hot_{C_0(\C M) \rti_r G} X \to C_0(\C M/G)$ we have proved the present theorem by an application of Lemma \ref{l:comboudua}.
\end{proof}


\bibliographystyle{amsalpha-lmp}

\providecommand{\bysame}{\leavevmode\hbox to3em{\hrulefill}\thinspace}
\providecommand{\MR}{\relax\ifhmode\unskip\space\fi MR }
\providecommand{\MRhref}[2]{%
  \href{http://www.ams.org/mathscinet-getitem?mr=#1}{#2}
}
\providecommand{\href}[2]{#2}

\end{document}